\def\mfd{M}
 \newcommand\mat[2]{\left( \begin{array}{#1} #2 \end{array} \right)}
 \newcommand\ttD{\mathtt{D}}
 \newcommand\ttP{\mathtt{P}}
 \newcommand\ttS{\mathtt{S}}
 \newcommand\Mat{\mathrm{Mat}}
 \newcommand\diag{\mathrm{diag}}
 \newcommand\Killing[2]{(#1,#2)}
 \newcommand\ptimes{\raisebox{-3pt}{\boldmath$\times$\unboldmath}}
\numberwithin{equation}{section}
\newtheorem{corollary}[equation]{Corollary}
\newtheorem*{corollary*}{Corollary}
\newtheorem{lemma}[equation]{Lemma}
\newtheorem*{lemma*}{Lemma}
\newtheorem{proposition}[equation]{Proposition}
\newtheorem*{proposition*}{Proposition}
\newtheorem{theorem}[equation]{Theorem}
\newtheorem*{theorem*}{Theorem}
\theoremstyle{remark}
\newtheorem*{assume*}{Assume}
\newtheorem{claim}[equation]{Claim}
\newtheorem*{claim*}{Claim}
\newtheorem{definition}[equation]{Definition}
\newtheorem*{definition*}{Definition}
\newtheorem{example}[equation]{Example}
\newtheorem*{example*}{Example}
\newtheorem*{hint*}{Hint}
\newtheorem*{notation*}{Notation}
\newtheorem*{question*}{Question}
\newtheorem*{answer*}{Answer}
\newtheorem{remark}[equation]{Remark}
\newtheorem*{remark*}{Remark}
\newtheorem*{remarks*}{Remarks}
\numberwithin{HWeq}{section}
\theoremstyle{definition}
\def\a{\alpha}
\def\b{\beta}
\def\c{\gamma}
\def\d{\delta}
\def\e{\varepsilon}
\def\z{\zeta}
\def\m{\mu}
\def\n{\nu}
\def\w{\omega}
\def\x{\xi}
\def\bC{\mathbb C}
\def\bP{\mathbb P}
\def\bZ{\mathbb Z}
\def\cA{\mathcal A}
\def\cB{\mathcal B}
\def\cC{\mathcal C}
\def\cF{\mathcal F}
\def\cG{\mathcal G}
\def\cH{\mathcal H}
\def\cL{\mathcal L}
\def\cM{\mathcal M}
\def\cR{\mathcal R}
\def\fa{\mathfrak{a}}
\def\fb{\mathfrak{b}}
\def\fd{\mathfrak{d}}
\def\fe{\mathfrak{e}}
\def\ff{\mathfrak{f}}
\def\fg{{\mathfrak{g}}}
\def\fgl{\mathfrak{gl}}
\def\fh{\mathfrak{h}}
\def\fk{\mathfrak{k}}
\def\fm{\mathfrak{m}}
\def\fn{\mathfrak{n}}
\def\fp{\mathfrak{p}}
\def\fq{\mathfrak{q}}
\def\fs{\mathfrak{s}}
\def\ft{\mathfrak{t}}
\def\fz{\mathfrak{z}}
\def\sfa{\mathsf{a}}
\def\bb{\mathbf{b}}
\def\tAd{\mathrm{Ad}}
\def\tad{\mathrm{ad}}
\def\td{\mathrm{d}}
\def\sfd{\mathsf{d}}
\def\tdim{\mathrm{dim}}
\def\be{\mathbf{e}}
\def\texp{\mathrm{exp}}
\def\tGr{\mathrm{Gr}}
\def\sfH{\mathsf{H}}
\def\tti{\mathtt{i}}
\def\ttI{\mathtt{I}}
\def\bI{\mathbf{I}}
\def\tId{\mathrm{Id}}
\def\tim{\mathrm{im}}
\def\ttj{\mathtt{j}}
\def\ttJ{\mathtt{J}}
\def\ttk{\mathtt{k}}
\def\tker{\mathrm{ker}}
\def\tM{\mathrm{M}}
\def\sfm{\mathsf{m}}
\def\tmax{\mathrm{max}}
\def\tmod{\mathrm{mod}}
\def\sfN{\mathsf{N}}
\def\sfp{\mathsf{p}}
\def\sfq{\mathsf{q}}
\def\sfr{\mathsf{r}}
\def\sfs{\mathsf{s}}
\def\tspan{\mathrm{span}}
\def\bv{\mathbf{v}}
\def\sfv{\mathsf{v}}
\def\sfw{\mathsf{w}}
\def\op{\oplus}
\def\ot{\otimes}
\def\wt{\widetilde}
\def\wh{\widehat}
\def\lefthook{\hbox{\small{$\lrcorner\, $}}}
\def\tw{\hbox{\small $\bigwedge$}}
\def\sw{\hbox{\tiny $\bigwedge$}}
\def\half{\tfrac{1}{2}}
\newcommand{\stack}[2]{\ensuremath{\genfrac{}{}{0pt}{}{#1}{#2}}}
\newcommand{\rb}[1]{\raisebox{1.5ex}[0pt]{#1}}
\newcounter{numcnt}
\newcounter{cnt}
\newcounter{acnt}
\newenvironment{a_list}{ 
  \begin{list}{{\emph{(\alph{acnt})}}}
   {\usecounter{acnt} \setlength{\itemsep}{3pt}
    \setlength{\leftmargin}{25pt} \setlength{\labelwidth}{20pt} }
   }
   {\end{list}}
\newcounter{Acnt}
\newcounter{icnt}
\newenvironment{i_list}{ 
  \begin{list}{{\emph{(\roman{icnt})}}}
   {\usecounter{icnt} \setlength{\itemsep}{3pt}
    \setlength{\leftmargin}{25pt} \setlength{\labelwidth}{20pt} }
   }
   {\end{list}}
\newenvironment{i_list_nem}{ 
  \begin{list}{{(\roman{icnt})}}
   {\usecounter{icnt} \setlength{\itemsep}{3pt}
    \setlength{\leftmargin}{25pt} \setlength{\labelwidth}{20pt} }
   }
   {\end{list}}
\newcounter{Icnt}
\newcounter{exam_cnt}
\newcounter{mccnt}
\begin{document}
\title[Rigid Schubert varieties -- ROUGH DRAFT]{Rigid Schubert varieties in compact Hermitian symmetric spaces}
\author[Robles]{C. Robles${}^\ast$}
\address{Mathematics Department, TAMU Mail-stop 3368, College Station, TX  77843-3368}
\thanks{${}^\ast$Partially supported by NSF-DMS 1006353.  ${}^\dagger$Partially supported by an NSERC Postdoctoral Fellowship.}
\email{robles@math.tamu.edu, dthe@math.tamu.edu}
\author[The]{D. The${}^\dagger$}
\date{\today}
\begin{abstract}
Given a singular Schubert variety $X_w$ in a compact Hermitian symmetric space $X$ it is a longstanding question to determine when $X_w$ is homologous to a smooth variety $Y$.  We identify those Schubert varieties for which there exist first-order obstructions to the existence of $Y$.  This extends (independent) work of M. Walters, R. Bryant and J. Hong.

Key tools include: (i) a new characterization of Schubert varieties that generalizes the well known description of the smooth Schubert varieties by connected sub-diagrams of a Dynkin diagram; and (ii) an algebraic Laplacian (\`a la Kostant), which is used to analyze the Lie algebra cohomology group associated to the problem.
\end{abstract}
\keywords{Schubert variety, compact Hermitian symmetric space, Lie algebra cohomology}
\subjclass[2010]{14M15, 58A15}
\maketitle

\section{Introduction}

\subsection{Motivation} \label{S:mot}

Let $X$ be an irreducible compact Hermitian symmetric space.  The integral homology $H_*(X)$ is generated by the classes $[X_w]$ of the Schubert varieties $X_w \subset X$ \cite[(6.4.5)]{MR0142697}.  (The Schubert varieties are indexed by elements $w$ of the Hasse diagram associated to $X$; see Section \ref{S:Hasse}.)  The majority of the Schubert varieties are singular.  The following is a special case of a question posed by Borel and Haefliger in \cite{MR0149503}: does there exist a smooth complex variety $Y \subset X$ that is homologous to $X_w$; that is, $[Y]=[X_w]$?  

Consider the case that $X = \tGr(m,n)$ is the Grassmannian of $m$--planes in $\bC^n$.  The following examples are discussed in \cite{SchurRigid}.  Given $k \le n-m$, fix a subspace $W = \bC^{n+1-m-k}\subset\bC^n$.  The set $\sigma(W) = \{ E \in \tGr(m,n) \ | \ E \cap W \not= 0 \}$ is a codimension $k$ Schubert variety.  When $k=1$, $\sigma(W)$ is singular, but homologous to a smooth variety.  When $k=2$, Hartshorne, Rees and Thomas prove that homology class of $\sigma(W)$  cannot be represented by any integral linear combination of smooth, oriented submanifolds in $\tGr(3,6)$ of (real) codimension four \cite{MR0357402}.  On the other hand, while $\sigma(W) \subset \tGr(2,5)$ is not homologous to a smooth variety, its homology class can be expressed as the difference of the homology classes of two smooth subvarieties.

Throughout, all (sub)manifolds are assumed to be connected, all (sub)varieties are assumed to be irreducible, and CHSS will denote an irreducible compact Hermitian symmetric space.

\subsection{History} \label{S:hist}

Given a complex variety $Y \subset X$, let $[Y] \in H_{2k}(X)$ denote its homology class, $\tdim_\bC Y = k$.  The varieties $Y$ satisfying $[Y] = r[X_w]$ for a positive integer $r$ are characterized by the Schur differential system $\cR_w \subset \tGr({|w|},TX)$, with $|w| = \tdim_\bC X_w$; see Section \ref{S:Rw}.  The Schubert variety $X_w$ is \emph{Schur rigid} if, for every integral variety $Y$ of $\cR_w$, there exists $g \in G$ such that $Y = g\cdot X_w$; otherwise we say $X_w$ is \emph{Schur flexible}.  When $X_w$ is Schur rigid the only complex subvarieties of $X$ with homology class $r[X_w]$ are the $G$--translates $g \cdot X_w$.  Therefore 
\begin{quote}
\emph{if the Schubert variety $X_w$ is singular and Schur rigid, then there exist no smooth varieties that are homologous to $X_w$.}
\end{quote}

The Schur system was first studied independently by Bryant \cite{SchurRigid} and M. Walters \cite{Walters}.  Walters studied the Schur rigidity of codimension two Schubert varieties in $\tGr(2,n)$, and smooth Schubert varieties in $\tGr(m,n)$.  Bryant obtained similar results in addition to studying some of the singular Schubert varieties in $\tGr(m,n)$, the smooth Schubert varieties in the Lagrangian grassmannians, and the maximal linear subspaces (which are smooth Schubert varieties) in the classical CHSS.  

In the case that $X_w$ is smooth, the results of Bryant and Walters were generalized to arbitrary CHSS and given a uniform proof by J. Hong.  

\begin{theorem*}[Hong {\cite{MR2276624}}]
Let $X$ be an irreducible compact Hermitian symmetric space in its minimal homogeneous embedding, excluding the quadrics of odd dimension.  Let $X_w \subset X$ be a smooth Schubert variety, excluding the non-maximal linear subspaces and $\bP^1 \subset C_n / P_n$.  Then $X_w$ is Schur rigid.\footnote[2]{The maximal linear space $\bP^1 \subset C_n / P_n$ was accidentally omitted in Hong's theorem.}
\end{theorem*}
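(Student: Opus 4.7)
The plan is to reduce Schur rigidity of a smooth $X_w$ to a vanishing statement for a Lie algebra cohomology group, and then establish that vanishing by means of Kostant's algebraic Laplacian. For the setup, fix the $|1|$-grading $\fg = \fg_{-1} \op \fg_0 \op \fg_1$ associated to $(X,P)$, with $\fp = \fg_0 \op \fg_1$, and identify $T_oX \cong \fg_{-1}$ at the base point $o = eP$. Smoothness of $X_w$ lets one take $T := T_oX_w$ to be the sum of the negative root spaces indexed by a connected sub-diagram of the marked Dynkin diagram of $(\fg,\fp)$ --- this is the combinatorial characterization of smooth Schubert varieties alluded to in the abstract. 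By definition of the Schur system, the tangent space of any integral variety $Y$ of $\cR_w$ at each smooth point lies in the $G$-orbit of $[T] \in \tGr(|w|,\fg_{-1})$.

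Next I would reduce to cohomology. Applying the moving frame/prolongation procedure to $Y$ and adapting coframes along $Y$, one produces a first-order \emph{torsion} tensor $\tau$ valued in $\tHom(T, \fg_{-1}/T)$ modulo the contributions of $\fg_0$ and $\fg_1$ acting by bracket on $T$ (these contributions parameterize the ``trivial'' infinitesimal deformations induced by $G$). The tensor $\tau$ is absorbable --- so that $Y$ is locally a $G$-translate of $X_w$ --- iff its class vanishes in the first cohomology $H^1$ of a Kostant-type Koszul complex built from the inclusion $T \subset \fg_{-1}$ with coefficients in a $T$-submodule of $\fg$ determined by $w$. Schur rigidity thus reduces to $H^1 = 0$.

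The cohomology is the kernel of an algebraic Laplacian $\Box$ that is diagonalizable on the irreducible summands for the reductive stabilizer $\tStab_G(T)^{ss}$; Kostant's formula encodes the null-eigenvalue condition as a combinatorial constraint on dominant weights. I would then check case-by-case, across the classical and exceptional CHSS, that outside the explicitly excluded cases (odd-dimensional quadrics, non-maximal linear $\bP^k$, and $\bP^1 \subset C_n/P_n$) no dominant weight of the candidate obstruction module satisfies this null condition, whence $H^1 = 0$. Once $\tau$ is absorbed, a second prolongation vanishes for the same cohomological reason (the Koszul complex is exact in the relevant degrees), the moving frame terminates, and one recovers a constant $g \in G$ with $Y = g \cdot X_w$ on the smooth locus of $Y$; irreducibility of $Y$ then upgrades this to global equality.

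The main obstacle is the case-by-case highest-weight analysis in the cohomology step, and in particular verifying that the exceptional cases in the statement are precisely those for which a nontrivial harmonic cocycle exists. Carrying out this verification in a uniform way --- rather than type-by-type --- is the principal technical challenge, and is precisely where the algebraic Laplacian advertised in the abstract replaces what would otherwise be an ad hoc differential-geometric computation.
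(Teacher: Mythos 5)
Your sketch conflates the Schur system $\cR_w$ with the Schubert system $\cB_w$, and this is a genuine gap that your argument cannot close as written.

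You assert that ``By definition of the Schur system, the tangent space of any integral variety $Y$ of $\cR_w$ at each smooth point lies in the $G$-orbit of $[T] \in \tGr(|w|,\fg_{-1})$.'' That is not the definition of the Schur system; it is the definition of the \emph{Schubert} system. The Schur system is the incidence variety $R_w = \bP\bI_w \cap \tGr(|w|,\fg_{-1})$, where $\bI_w$ is the irreducible $\fg_0$-summand of $\tw^{|w|}\fg_{-1}$ of highest weight $w(\rho)-\rho$; a subvariety $Y$ is an integral variety of $\cR_w$ precisely when $[Y]=r[X_w]$ for some $r>0$, and its Gauss image at a smooth point need only land in $R_w$, not in the generally smaller $G_0$-orbit $B_w \subset R_w$ of the highest-weight line. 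Because you take the $G$-orbit condition as given, the moving-frame/prolongation analysis you describe only normalizes an adapted frame bundle \emph{over integral varieties of $\cB_w$}, and the Kostant-Laplacian vanishing you invoke proves Schubert rigidity, not Schur rigidity. The missing step is the identification $B_w = R_w$: Hong's argument (and the present paper's, in Sections \ref{S:schur}--\ref{S:RR}) establishes Schur rigidity by combining (i) the cohomological vanishing giving rigidity of $\cB_w$, and (ii) a separate verification --- via Lemma \ref{L:B=R}, i.e.\ the degree-zero part $H^1_0(\fn_w,\fg_w^\perp)$ and the criterion $\bI_w \cap (\ft_w)\bv_w = \{0\}$ --- that $B_w = R_w$. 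Without (ii), your conclusion only rules out deformations through ``Schubert-type'' tangent planes, not through the larger family allowed by $R_w$.

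A secondary issue: even with (i) and (ii) in hand, your claim that a ``second prolongation vanishes for the same cohomological reason'' is not automatic and is not how the argument proceeds. In Hong's setting the reduction terminates because $\cG_w$ is a maximal integral manifold of the Frobenius system $\vartheta_{\fg_w^\perp}=0$ (Lemma \ref{L:Frob_sys} and Proposition \ref{P:Bw_rigid}); it is the vanishing of the whole positive-degree part $\cH^1_+$ of the cohomology (components $\cH^1_1$ and $\cH^1_2$, corresponding to $\lambda$ and $\mu$), not a second independent prolongation step, that forces the reduced bundle to satisfy $\vartheta_{\fg_w^\perp}=0$. So you should restructure the argument: (a) lift $\cB_w$ to the frame bundle and show $\cH^1_1=\cH^1_2=0$ via Kostant (this is where smoothness and the excluded cases enter); (b) compute $\cH^1_0$ and check the criterion of Lemma \ref{L:B=R} to obtain $B_w=R_w$; (c) invoke Proposition \ref{P:Rrigid} to combine (a) and (b) into Schur rigidity.
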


It is straight-forward to see that the cases excluded from Hong's theorem are not Schur rigid:  when $H_{2|w|}(X)$ is generated by a single Schubert variety $X_w$, every $|w|$--dimensional subvariety $Y \subset X$ must satisfy $[Y] = r[X_w]$ for some $r \in \bZ_{>0}$.  Thus, every $|w|$--dimensional $Y$ is an integral variety of $\cR_w$.  This is the case when $X$ is a projective space or an odd dimensional quadric hypersurface.  For similar reasons, we can rule out the cases where $X_w$ is a non-maximal linear space or $X_w = \bP^1 \subset C_n / P_n$.

Making use of \cite{MR2276624} and the foliation structure of Schubert varieties,  Hong also proved that a large class of the singular Schubert varieties in the Grassmannian are Schur rigid \cite{MR2191767}.

There are variants of this `smoothing problem.'  For example, I. Choe and Hong have studied the related notion of Schubert rigidity of linear subspaces of arbitrary homogeneous varieties with second betti number equal to one \cite{MR2076680}.  In another direction, S. Kleiman considered the problem of deforming a cycle $Z$ in an arbitrary projective variety by rational equivalence into the difference of two effective cycles $Z_1 - Z_2$ whose prime components are smooth.  In \cite{MR0285535} Kleiman and J. Landolfi specialized the problem to case the projective variety is a Grassmannian.

\subsection{Strategy} 

The problem is approached as follows.  The bundle $\cR_w$ contains the sub-bundle $\cB_w$ of ${|w|}$--planes tangent to a smooth point of $g\cdot X_w$, for some $g \in G$.  When the only integral varieties of $\cB_w$ are $g\cdot X_w$, we say $X_w$ is \emph{Schubert rigid}; otherwise $X_w$ is \emph{Schubert flexible}.  Hong proved that $X_w$ is Schur rigid if and only if $X_w$ is Schubert rigid and $\cB_w = \cR_w$ \cite{MR2276624}.   Because the Schubert system $\cB_w$ is more amenable to analysis than the Schur system $\cR_w$, the general approach to this problem is to first show that $X_w$ is Schubert rigid, and then prove that $\cB_w = \cR_w$.

The key observation in Hong's proof for the smooth Schubert varieties is that the (partial) vanishing of a certain Lie algebra cohomology group implies the Schubert rigidity of  $X_w$.  When the Schubert variety is smooth, the cohomology group satisfies the hypotheses of Kostant's famous theorem \cite[Theorem 5.14]{MR0142696}, and it is straightforward to determine when the vanishing holds.  Hong then directly computes $\cB_w = \cR_w$, establishing Schur rigidity.

The key difficulty in extending Hong's approach to the singular Schubert varieties is the absence of a Kostant--type theorem for the associated Lie algebra cohomology.  (We do not see a natural extension of Hong's method in the case that $X = \tGr(m,n)$ to arbitrary CHSS.)  

\subsection{Contents} 

The main result of this paper is Theorem \ref{T:RR} which identifies the Schubert varieties for which there exist first-order obstructions to Schur flexibility.    The theorem recovers (and extends) the results of Walters, Bryant and Hong to the general case.  The varieties are described in terms of a new characterization of the $X_w$ by a nonnegative integer $\sfa(w)$ and a marking $\ttJ(w)$ of the Dynkin diagram of $G$ (Proposition \ref{P:nw2graded} and Corollary \ref{C:bigone}).  This description is the \emph{sine qua non} of our analysis of the Schubert system $\cB_w$ and the equality $\cB_w = \cR_w$.  It generalizes the well-known characterization of the smooth Schubert varieties by connected sub-diagrams of the Dynkin diagram of $G$ -- the smooth $X_w$ correspond to $\sfa(w) = 0$ (Proposition \ref{P:a=0}).

Sections \ref{S:homogvars} and \ref{S:CHSS} present the necessary definitions and background on homogeneous varieties, their Schubert subvarieties and compact Hermitian symmetric spaces.  In Section \ref{S:Bsys} the Schubert system is lifted to a frame bundle where the analysis is performed.  To each $X_w$ there is associated a Lie algebra cohomology group $H^1(\fn_w,\fg_w^\perp)$ whose vanishing (in positive degree) ensures Schubert rigidity.  Following the constructions of Kostant in \cite{MR0142696} we define a Laplacian $\square$ in Section \ref{S:cohcomp} and show that there is a natural bijection between $H^1(\fn_w , \fg^\perp_w)$ and $\tker\,\square =: \cH^1$ (Proposition \ref{P:new_coh}).  

There is a reductive Lie algebra $\fg_{0,0}\subset\fg$ with respect to which the  Laplacian acts as a $\fg_{0,0}$--module morphism $\square : \fg^\perp_w\ot\fn_w^* \to \fg^\perp_w\ot\fn_w^*$.  Schur's Lemma implies that $\fg^\perp_w\ot\fn_w^*$ admits a $\fg_{0,0}$--module decomposition into $\square$--eigenspaces.  (The eigenvalues are non-negative.)  Through analysis of the spectrum of $\square$ we see that the desired vanishing occurs if and only if two representation theoretic conditions hold.  In particular, $X_w$ is Schubert rigid when the two conditions are satisfied (Theorem \ref{T:Bw_rigid}).  

Theorem \ref{T:BR} lists the corresponding Schubert rigid $X_w$.  This completes the first step of the approach to the Schur rigidity problem.  It remains to determine when $\cB_w = \cR_w$.  In Section \ref{S:schur} we develop a test \eqref{E:B=Rtest} to determine when $\cB_w = \cR_w$ holds.  In Section \ref{S:RR} we apply the test to the varieties of Theorem \ref{T:BR}; we find that they are all Schur rigid (Theorem \ref{T:RR}).

The present paper leaves a question to address.  The Schubert varieties listed in Theorem \ref{T:BR} are those for which there exist first-order obstructions to the existence of nontrivial integral varieties of the Schur system.  (An integral variety is \emph{trivial} if it is of the form $g\cdot X_w$ for some $g \in G$.)  In particular, if $X_w$ is not listed in Theorem \ref{T:BR}, it does not immediately follow that $X_w$ is Schur flexible: there may exist higher-order obstructions to the existence of nontrivial integral varieties.  It remains to determine which of these $X_w$ are flexible.  This problem will be addressed in a sequel.

\subsection*{Acknowledgements}  
We thank J.M. Landsberg for suggesting the problem.  We are grateful to Landsberg, Bryant, Hong and N. Ressayre for illuminating discussions.  

\setcounter{tocdepth}{1}
\tableofcontents\listoftables

\section{Homogeneous varieties}  \label{S:homogvars}
\subsection{Notation} \label{S:not}
We employ the root and weight conventions of \cite{MR1920389}.  

Let $\fg$ be a complex semi-simple Lie algebra.  Fix a Borel subalgebra $\fb \subset \fg$, and let $\fh$ be the associated Cartan subalgebra.  Let $\Delta$ denote the roots of $\fg$, and $\Delta^\pm$ the positive and negative roots.  Given $\b \in \Delta$, let $\fg_\b \subset \fg$ denote the corresponding root space.  Given a direct sum $\fs \subset \fg$ of root spaces, let $\Delta(\fs) \subset \Delta$ denote the corresponding roots. That is, $\Delta(\fs)$ is defined by 
$$ \textstyle
  \fs \ = \ \bigoplus_{\b\in\Delta(\fs)} \fg_\b \, .
$$

Fix simple roots $\{ \a_1 , \ldots , \a_n \} = \Sigma \subset \Delta^+$.  Let $\fp \supset \fb$ denote the parabolic subalgebra generated by a subset $\Sigma_\fp \subset \Sigma$.  For example, $\Sigma_\fb = \emptyset$, and $\Sigma_\fp = \{ \a_1 , \ldots , \wh \a_\tti , \ldots , \a_n\}$ generates a maximal parabolic.  Let $I_\fp = \{ i \ | \ \a_i \in \Sigma_\fp \}$ denote the corresponding index set.  

Given a dominant integral weight $\nu$ of $\fg$, let $V_\nu$ denote the unique irreducible $\fg$--representation of highest weight $\nu$.  Let $\{ \w_1 , \ldots , \w_n \}$ denote the fundamental weights of $\fg$ and 
$$ \textstyle
  \rho \ := \ \textstyle \sum_{i=1}^n \w_i
  \ = \ \half \sum_{\a\in\Delta^+} \a \, , \quad \hbox{and} \quad
  \rho_\fp \ := \ \sum_{i \not\in I_\fp} \w_i \, .
$$
For example, if $\fp$ is maximal, $\rho_\fp = \w_\tti$.  For the Borel subalgebra, $\rho_\fb = \rho$.

Let $P \subset G$ be connected, complex semi-simple Lie groups with Lie algebras $\fp \subset \fg$.  The $G$--orbit $X \subset \bP V_{\rho_\fp}$ of the highest weight line in $V_{\rho_\fp}$ is the \emph{minimal homogeneous embedding of $G/P$}.  Let $o\in X$ denote the highest weight line.  As a $\fp$--module,
\begin{equation} \label{E:tgtsp}
  T_oX \ = \ \fg/\fp \, .
\end{equation}

Throughout, $g\cdot$ will denote the action of $g \in G$, and $\xi . $ the action of $\xi \in \fg$.

\subsection{Grading elements}\label{S:Z}

The parabolic $\fp$ determines a \emph{grading element} $Z = Z_\fp \in \fh$ by 
$$
  \a_i(Z) \ = \ \left\{ \begin{array}{ll}
    0 \,, & \hbox{ $\a_i \in \Sigma_\fp$,} \\
    1 \,, & \hbox{ $\a_i \not\in \Sigma_\fp$.}
  \end{array} \right.
$$
Let $\fg_j = \{ u \in \fg \ | \ [Z,u] = j\,u\}$ be the $Z$--eigenspace with eigenvalue $j$.  Then 
\begin{equation*} 
  \fg \ = \ \underbrace{ \fg_{q} \op \cdots \op \fg_1 }_{\fg_+} \
  \op \ \fg_0 \ \op \ 
  \underbrace{ \fg_{-1} \op \cdots \op \fg_{-q} }_{\fg_-} \, 
\end{equation*}
is a graded decomposition.  That is, 
  $[\fg_i\,,\,\fg_j] \subset \fg_{i+j} \, .$
In particular, each $\fg_j$ is a $\fg_0$--module.  Note that $\fh \subset \fg_0$ and $\fp = \fg_{\ge0}$.  So \eqref{E:tgtsp} yields a natural identification
\begin{equation} \label{E:tgtsp0}
  T_o X \ = \ \fg_{-} \, 
\end{equation}
as $\fg_0$--modules.  Additionally, $\fg_0$ is a reductive Lie subalgebra of $\fg$.  Indeed 
\begin{equation*} 
  \fg_0 \ = \ \fz \ \op \ \ff \, ,  
\end{equation*}
where $\fz \subset \fh$ is the center of $\fg_0$ and $\ff$ is the semi-simple subalgebra of $\fg$ with simple roots $\Sigma_\fp$.  Let $\{ Z_i \}$ be the basis of $\fh$ dual to the root basis $\{\a_i\}$ of $\fh^*$.  Then $\fz = \tspan \{ Z_i \ | \ i \not\in I_\fp \}$, and the grading element $Z = Z_\fp$ is the sum $\sum_{i \not\in I_\fp} Z_i$.  

\subsection{The Hasse diagram of $\fp$} \label{S:Hasse}

Let $W$ denote the Weyl group of $G$ and let $|w|$ denote the length of $w \in W$.  Define 
\begin{equation} \label{E:Dw}
  \Delta(w) \ := \ w \Delta^- \cap \Delta^+ \, .
\end{equation}

\begin{definition} \label{D:Hasse}
The \emph{Hasse diagram} \cite{MR1038279,MR2532439} of $P$ is 
\begin{eqnarray*}
  W^\fp & := & \{ w \in W \, | \, \Delta^+(\fg_0) \subset w ( \Delta^+) \} 
  \ = \ \{ w \in W \, | \, \Delta(w) \subset \Delta(\fg_+) \} \\
  & = & \{ w \in W \, | \, w(\lambda) \hbox{ is $\fg_0$--dominant } \forall 
           \hbox{ $\fg$--dominant weights } \lambda \} \, .
\end{eqnarray*}
\end{definition}

\begin{definition*}
A set $\Phi \subset \Delta$ is \emph{closed} or \emph{saturated} if given any two $\b,\c \in \Phi$ such that $\b+\c \in \Delta$, it is the case that $\b+\c \in \Phi$.
\end{definition*}

\begin{proposition} \label{P:Phi}
The mapping $w \mapsto \Delta(w)$ is a bijection of $W^\fp$ onto the family of all subsets $\Phi \subset \Delta(\fg_+)$ with the property that both $\Phi$ and $\Delta^+\backslash\Phi$ are closed.
\end{proposition}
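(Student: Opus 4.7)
My plan is to derive the proposition from the classical characterization of Weyl group inversion sets: a subset $\Phi\subset\Delta^+$ equals $\Delta(w)$ for some $w\in W$ if and only if both $\Phi$ and $\Delta^+\backslash\Phi$ are closed. Granting this, the result is immediate from Definition~\ref{D:Hasse}: the second description there says $w\in W^\fp$ is precisely the condition $\Delta(w)\subset\Delta(\fg_+)$, so $w\mapsto\Delta(w)$ sends $W^\fp$ bijectively onto the biclosed $\Phi\subset\Delta^+$ contained in $\Delta(\fg_+)$. Injectivity is supplied by the standard fact that $w$ is determined by its inversion set (e.g.\ via $\ell(w)=|\Delta(w)|$ and a straightforward induction on length). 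Well-definedness of the map is a quick direct check: if $\b,\c\in\Delta(w)$ and $\b+\c\in\Delta$, then $w^{-1}\b,w^{-1}\c\in\Delta^-$, and their sum (being a root and a sum of two negatives) lies in $\Delta^-$; combined with $\b+\c\in\Delta^+$ this gives $\b+\c\in\Delta(w)$. The identical argument with signs reversed shows $\Delta^+\backslash\Delta(w)$ is closed.

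For the converse (surjectivity), I would induct on $|\Phi|$, with the case $|\Phi|=0$ giving $w=e$. The key lemma is that any nonempty biclosed $\Phi$ contains a simple root: pick $\b\in\Phi$ of minimal height; if $\b$ were not simple, write $\b=\b_1+\b_2$ with $\b_i\in\Delta^+$ of strictly smaller height (standard for any non-simple positive root), whereupon closedness of $\Delta^+\backslash\Phi$ forbids both $\b_i$ lying outside $\Phi$, contradicting the minimality of $\b$. Given such a simple root $\a_i\in\Phi$, set $\Phi':=s_i(\Phi\backslash\{\a_i\})\subset\Delta^+\backslash\{\a_i\}$, using that $s_i$ permutes $\Delta^+\backslash\{\a_i\}$. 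One then verifies $\Phi'$ is biclosed in $\Delta^+$ and has size $|\Phi|-1$, so the induction hypothesis provides $w'$ with $\Delta(w')=\Phi'$. The standard recursion $\Delta(s_iw')=\{\a_i\}\cup s_i\Delta(w')$, valid whenever $\a_i\notin\Delta(w')$, then yields $\Delta(s_iw')=\Phi$, and $w:=s_iw'$ completes the step.

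The main obstacle is the bookkeeping in verifying that $\Phi'$ is biclosed. Closedness of $\Phi'$ itself is quick: if $s_i\b,s_i\c\in\Phi'$ with $s_i(\b+\c)\in\Delta$, then $\b,\c\in\Phi\backslash\{\a_i\}$, closedness of $\Phi$ forces $\b+\c\in\Phi$, and the height of $\b+\c$ is at least $2$ so $\b+\c\neq\a_i$; hence $s_i(\b+\c)\in\Phi'$. Closedness of the complement $\Delta^+\backslash\Phi'=\{\a_i\}\cup s_i(\Delta^+\backslash\Phi)$ is the delicate case: one must analyze sums of the form $\a_i+s_i\d$ for $\d\in\Delta^+\backslash\Phi$, which is handled by tracing the $\a_i$--root string through $\d$ and invoking closedness of $\Delta^+\backslash\Phi$. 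No conceptual difficulty should arise, only careful case-analysis on the $s_i$--action, after which the induction runs cleanly and the proposition follows.
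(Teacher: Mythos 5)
Your argument is correct. The paper does not prove Proposition~\ref{P:Phi}; it cites Kostant \cite[Proposition 5.10]{MR0142696} and \CS\ \cite[Proposition 3.2.12]{MR2532439}, and your proof is essentially the standard one found in those references: first establish the classical fact that $\Phi\subset\Delta^+$ equals $\Delta(w)$ for some $w\in W$ if and only if both $\Phi$ and $\Delta^+\backslash\Phi$ are closed, then intersect with the Hasse-diagram characterization $w\in W^\fp\Leftrightarrow\Delta(w)\subset\Delta(\fg_+)$ from Definition~\ref{D:Hasse}. The forward direction, injectivity, and the induction on $|\Phi|$ (a minimal-height element of $\Phi$ is simple; replace $\Phi$ by $\Phi'=s_i(\Phi\backslash\{\a_i\})$; invoke $\Delta(s_iw')=\{\a_i\}\cup s_i\Delta(w')$) are all correctly set out. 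For the one step you flag as delicate: writing $\Delta^+\backslash\Phi'=\{\a_i\}\cup s_i(\Delta^+\backslash\Phi)$, the only nontrivial closedness check is $\a_i+\nu\in\Delta^+$ with $\nu=s_i\d$, $\d\in\Delta^+\backslash\Phi$; since $s_i(\a_i+\nu)=\d-\a_i$, membership $\a_i+\nu\in\Delta^+\backslash\Phi'$ reduces to $\d-\a_i\notin\Phi$, which holds because $\d-\a_i\in\Phi$ together with $\a_i\in\Phi$ and closedness of $\Phi$ would force $\d\in\Phi$, a contradiction. So the argument closes cleanly exactly as you anticipated.
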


\noindent For the proof, see \cite[Proposition 5.10]{MR0142696} or \cite[Proposition 3.2.12]{MR2532439}.

\begin{remark*}
We cite \cite{MR0142696, MR0142697, MR2532439} often in this paper.  We note that our $\Delta(w)$ is their $\Phi_w$.
\end{remark*}

\subsection{Schubert varieties} \label{S:schubvars}

Let $B = \texp(\fb) \subset G$ denote the Borel subgroup with Lie algebra $\fb$.  The Schubert varieties $X_w$ of $X$ are indexed by $w \in W^\fp$.  Their homology classes $[X_w]$ form a basis for the integral homology $H_*(X)$.   Let $C_w := Bw^{-1} \cdot o \subset X$ denote the \emph{Schubert cell}.  Then $X = \bigcup_{w \in W^\fp}\,C_w$ is a disjoint union.  Let $X_w := \overline{C_w} \subset X$ denote the \emph{Schubert variety}.  More generally, any $g \cdot X_w$, where $g \in G$, is a Schubert variety in $X$.  We will abuse notation by referring to any of these varieties as $X_w$.

The cell $w \cdot C_w$ is the orbit $N_w \cdot o$ of a unipotent subgroup $N_w \subset G$.  This is seen as follows.  Let $N \subset B$ be the maximal unipotent subgroup of the Borel.  Let $w_0 \in W$ be the longest element and $N^- := w_0 N w_0{}^{-1} \subset G$ the unipotent subgroup `opposite' to $N$.  Then $N_w := wNw^{-1} \cap N^-$.  The Lie algebra of $N_w$ is 
\begin{equation} \label{E:n_w}
  \fn_w \ := \ \bigoplus_{\a \in \Delta(w)} \fg_{-\a} \, .
\end{equation}
It is well-known that $|\Delta(w)| = {|w|}$ (\cite[Proposition 3.2.14 (3)]{MR2532439}).  Thus the Schubert variety $X_w$ is of dimension ${|w|}$.

\subsection{Conjugation and duality} \label{S:PD}

Recall that any automorphism $\varphi : \d_\fg \to \d_\fg$ of the Dynkin diagram induces automorphisms $\varphi: \Delta \to \Delta$ and $\varphi : W \to W$ of the root system and Weyl group.  The latter is given as follows: if $\sigma_j$ denotes the reflection corresponding to the simple root $\a_j$, and $w = \sigma_{i_1} \cdots \sigma_{i_r}$, then $\varphi(w)$ is $\sigma_{\varphi(i_1)} \cdots \sigma_{\varphi(i_r)}$.
 
\begin{definition*}
Given $w \in W$ and an automorphism $\varphi : \delta_\fg \rightarrow \delta_{\fg}$, let $w' = \varphi(w)$ denote the {\em $\varphi$--conjugate}.
\end{definition*}

\begin{remarks*}
It is clear from Definition \ref{D:Hasse} that $w \in W^{\fp_\tti}$ if and only if $w' \in W^{\fp_{\varphi(\tti)}}$.
The group of Dynkin diagram automorphisms is $\mathfrak{S}_3$ for $D_4$; $\bZ_2$ for $A_n$, $D_n$ ($n>4$) and $E_6$; trivial for the remaining complex simple Lie groups.
\end{remarks*}
 
The Weyl group of the semi-simple part $\ff$ of $\fg_0$ may be identified with the subgroup $W_\fp \subset W$ generated by $\{ \sigma_j \}_{j \in I_\fp}$.  Let $w^0_\fp$ be the longest word in $W_\fp$.  Then 
\begin{equation} \label{E:w0p}
  w^0_\fp \left( \Delta^+(\fg_0) \right) \ = \ \Delta^-(\fg_0) 
  \quad \hbox{ and } \quad
  w^0_\fp \left( \Delta(\fg_+) \right) \ = \ \Delta(\fg_+) \, .
\end{equation}
  
\begin{definition*}
The \emph{dual} of $w \in W$ is $w^* = w^0_\fp w w_0$.
\end{definition*}

\noindent Note that $\varphi(w^0_\fp w w_0) = \varphi(w^0_\fp) \varphi(w) \varphi(w_0) = \w^0_{\varphi(\fp)} \varphi(w) w_0$ implies
$$
  (w^*)' \ = \ (w')^* \, .
$$
Above, $\varphi(\fp)$ is the parabolic subalgebra with $I_{\varphi(\fp)} = \varphi(I_\fp)$.
 
\begin{lemma} \label{L:dual} \quad
\begin{a_list}
\item Duality is an involution $(w^*)^* = w$;
\item $w \in W^\fp$ if and only if $w^* \in W^\fp$;
\item $\Delta(w^*) = w^0_\fp(\Delta(\fg_+) \backslash \Delta(w)) = \Delta(\fg_+) \backslash w^0_\fp\Delta(w)$ for any $w \in W^\fp$.
\end{a_list}
\end{lemma}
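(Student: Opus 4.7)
The plan is to establish (a), then (c), and finally deduce (b) as a corollary of (c) together with Definition \ref{D:Hasse}.

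For part (a), the identity is purely formal: both $w_0$ and $w^0_\fp$ are longest elements of Weyl groups, hence involutions. Direct substitution gives
\[
  (w^*)^* \ = \ w^0_\fp \bigl( w^0_\fp w w_0 \bigr) w_0 \ = \ (w^0_\fp)^2 \, w \, (w_0)^2 \ = \ w.
\]

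For part (c), I would compute $\Delta(w^*) = w^0_\fp w w_0 \Delta^- \cap \Delta^+ = w^0_\fp w \Delta^+ \cap \Delta^+$ by carefully tracking which summands of $w \Delta^+$ are positive or negative. Since $w$ permutes $\Delta$ and $\Delta(w) = w\Delta^- \cap \Delta^+$, one has $w\Delta^+ \cap \Delta^- = -\Delta(w)$, and consequently $w\Delta^+ \cap \Delta^+ = \Delta^+ \setminus \Delta(w)$. Using the hypothesis $w \in W^\fp$, which by Definition \ref{D:Hasse} gives $\Delta(w) \subset \Delta(\fg_+)$, this latter set splits as
\[
  \Delta^+ \setminus \Delta(w) \ = \ \Delta^+(\fg_0) \,\cup\, \bigl( \Delta(\fg_+) \setminus \Delta(w) \bigr).
\]
Thus $w\Delta^+$ is the disjoint union of $\Delta^+(\fg_0)$, $\Delta(\fg_+) \setminus \Delta(w)$, and $-\Delta(w) \subset \Delta(\fg_-)$. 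Now apply $w^0_\fp$ term by term, using \eqref{E:w0p}: it sends $\Delta^+(\fg_0)$ to $\Delta^-(\fg_0) \subset \Delta^-$, preserves $\Delta(\fg_+)$, and sends $-\Delta(w) \subset \Delta(\fg_-)$ into $\Delta(\fg_-) \subset \Delta^-$. Intersecting with $\Delta^+$ discards the two $\Delta^-$--pieces and yields
\[
  \Delta(w^*) \ = \ w^0_\fp \bigl( \Delta(\fg_+) \setminus \Delta(w) \bigr).
\]
The second equality in (c) is immediate from $w^0_\fp \Delta(\fg_+) = \Delta(\fg_+)$, since $w^0_\fp$ is a bijection of $\Delta(\fg_+)$ onto itself.

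For part (b), the forward implication is a direct consequence of (c): $\Delta(w^*) = \Delta(\fg_+) \setminus w^0_\fp \Delta(w) \subset \Delta(\fg_+)$, so $w^* \in W^\fp$ by Definition \ref{D:Hasse}. The converse follows by applying this to $w^*$ in place of $w$ and invoking part (a): if $w^* \in W^\fp$ then $(w^*)^* \in W^\fp$, and $(w^*)^*=w$. There is no real obstacle here; the only step requiring care is the bookkeeping in (c), where one must separately verify that each of the three pieces of $w\Delta^+$ lands on the expected side of $\Delta^\pm$ after applying $w^0_\fp$, using the key fact \eqref{E:w0p} that $w^0_\fp$ acts by $-1$ on the $\fg_0$--roots but preserves $\Delta(\fg_+)$ setwise.
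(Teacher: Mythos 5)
Your proof is correct, but it takes a genuinely different route from the paper's. Part (a) is handled identically. For (c), you compute $\Delta(w^*)$ directly from the definition \eqref{E:Dw} by decomposing $w\Delta^+$ into the three disjoint pieces $\Delta^+(\fg_0)$, $\Delta(\fg_+) \setminus \Delta(w)$, and $-\Delta(w)$, then tracking where $w^0_\fp$ sends each piece via \eqref{E:w0p}; the paper instead establishes two auxiliary identities, $\Delta(ww_0) = \Delta^+ \setminus \Delta(w)$ and $w^0_\fp\Delta(w) = \Delta(w^0_\fp w) \setminus \Delta^+(\fg_0)$, and chains them together. The more significant structural difference is the order of (b) and (c): the paper proves (b) first, by a separate Killing-form argument using the dominance characterization of $W^\fp$ from Definition \ref{D:Hasse}, and then \emph{uses} (b) in the final step of its proof of (c) (to discard $\Delta^+(\fg_0)$ from $\Delta(w^*)$). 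You reverse the dependency: your proof of (c) is self-contained, and (b) then falls out as an immediate corollary of (c) together with $\Delta(w^*) \subset \Delta(\fg_+)$ and the involution (a). Your ordering gives a slightly tighter logical structure and avoids the dominance argument entirely, while the paper's version of (b) illustrates the Killing-form technique that gets reused elsewhere. Both are valid; the bookkeeping in your step for (c) checks out.
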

 
\begin{proof}
\emph{(a)} is a consequence of $(w_0)^{-1} = w_0$ and $(w^0_\fp)^{-1} = w^0_\fp$.

To prove \emph{(b)}, let $\Killing{\cdot}{\cdot}$ denote the Killing form on $\fh^*$, $\lambda$ a $\fg$--dominant weight and $\a \in \Delta^+(\fg_0)$.  Then $\Killing{w^*(\lambda)}{\a}  =  \Killing{\lambda}{(w^*)^{-1}(\a)} =  \Killing{\lambda}{ w_0 w^{-1} w^0_\fp(\a)}$.  Since $w \in W^\fp$, we have $w^{-1}(\Delta^+(\fg_0)) \subset \Delta^+$.  Thus, $w^{-1} w^0_\fp(\a) \in \Delta^-$ and hence $w_0 w^{-1} w^0_\fp (\a) \in \Delta^+$.  From the $\fg$--dominance of $\lambda$ we conclude $\Killing{w^*(\lambda)}{\a} \geq 0$.  Thus, $w \in W^\fp$.  Conversely, if $w^* \in W^\fp$, then $w = (w^*)^* \in W^\fp$. 

The proof of \emph{(c)} requires the identities
\begin{i_list_nem}
\item $\Delta(ww_0) = \Delta^+ \backslash \Delta(w)$ for any $w \in W$;
\item $w^0_\fp \Delta(w) = \Delta(w^0_\fp w)\backslash\Delta^+(\fg_0)$ for any $w \in W^\fp$.
\end{i_list_nem}
The first identity is proved in \cite[p. 324]{MR2532439}.  To prove the second identity, note that \eqref{E:Dw}, Definition \ref{D:Hasse} and \eqref{E:w0p} imply $w^0_\fp \Delta(w) \subset \Delta(\fg_+)$ and $\Delta(w) = w \Delta^- \cap \Delta(\fg_+)$.  Therefore, $w^0_\fp\,\Delta(w) = w^0_\fp(w \Delta^-\cap \Delta(\fg_+)) = w^0_\fp w \Delta^- \cap \Delta(\fg_+) = \Delta(w^0_\fp w)\backslash\Delta^+(\fg_0)$.  
Hence,
\begin{eqnarray*}
 \Delta(\fg_+) \backslash w^0_\fp \Delta(w) 
 & \stackrel{\mathrm{(ii)}}{=} 
 & \Delta(\fg_+) \backslash (\Delta(w^0_\fp w)\backslash\Delta^+(\fg_0))
 \ = \ (\Delta^+ \backslash \Delta(w^0_\fp w)) \backslash \Delta^+(\fg_0) \\
 & \stackrel{\mathrm{(i)}}{=} 
 & \Delta(w^0_\fp w w_0) \backslash \Delta^+(\fg_0) 
 \ = \ \Delta(w^*) \backslash \Delta^+(\fg_0) \ = \ \Delta(w^*) \, ;
\end{eqnarray*}
the final equality is a consequence of part (b) of the lemma, and Definition \ref{D:Hasse}.
\end{proof}


Given $\a \in \Delta$, define $\a^* := -w^0_\fp(\a)$.  It follows from \eqref{E:w0p} that $\Delta(\fg_0)^* = \Delta(\fg_0)$.  Recall that $w^0_\fp (\Sigma_\fp) = -\Sigma_\fp$, cf. \cite[p. 324]{MR2532439}.  This induces $\ast : I_\fp \to I_\fp$ mapping $j \mapsto j^*$ by
\begin{equation} \label{E:j*}
  (\a_j)^* \ = \ -w^0_\fp(\a_j) \ = \ \a_{j^*} \, .
\end{equation}

\begin{remark} \label{R:dual}
Since $W$ preserves the Killing form $\Killing{\cdot}{\cdot}$, we have $\Killing{\a^*}{\beta^*} = \Killing{\a}{\beta}$ for any $\a,\beta \in \Delta$.  This implies that the map $*:I_\fp \to I_\fp$ corresponds to a Dynkin diagram automorphism of the subgraph $\d_\fp \subset \delta_\fg$ generated by $I_\fp$.
\end{remark} 

\section{Compact Hermitian symmetric spaces} \label{S:CHSS}

\subsection{Definition} \label{S:dfnCHSS}

The irreducible compact Hermitian symmetric spaces (CHSS, Table \ref{t:CHSS}) are those $G/P$ with $G$ simple and graded decomposition (cf. \S\ref{S:Z})
\begin{equation} \label{E:CHSS_gr}
  \fg \ = \ \fg_{-1} \op \fg_0 \op \fg_1 \, .
\end{equation}
In particular, the nilpotent subalgebras $\fg_+ = \fg_1$ and $\fg_- = \fg_{-1}$ are abelian 
\begin{equation} \label{E:abelian}
  [\fg_1 , \fg_1] \ = \ \{0\} \ = \ [\fg_{-1} , \fg_{-1}] \, .
\end{equation}
The parabolic $P$ is always maximal.  So $\Sigma \backslash \Sigma_\fp$ consists of a single simple root $\a_\tti$ and $V_{\rho_\fp} = V_{\w_\tti}$. 
\begin{table}[h] \renewcommand{\arraystretch}{1.2}
\caption{The CHSS.}
\begin{tabular}{|c|l|}
\hline
 & Grassmannians $\tGr(\tti,n+1) = A_n/P_\tti$. \\ \cline{2-2}
Classical & Quadric hypersurfaces $B_n/P_1$ with $n\ge 2$, and $D_n/P_1$ with $n \ge 4$. \\ \cline{2-2}
CHSS & Lagrangian grassmannians $C_n/P_n$ with $n\ge3$. \\ \cline{2-2}
 & Spinor varieties $D_n/P_{n-1} \simeq D_n/P_n$ with $n\ge4$. \\ \hline
Exceptional & The Cayley plane $E_6/P_1 \simeq E_6/P_6$. \\ \cline{2-2}
CHSS & The Freudenthal variety $E_7/P_7$. \\ \hline
\end{tabular} 
\label{t:CHSS}
\end{table}

\subsection{Characterization of \boldmath $\fn_w$ \unboldmath for CHSS} \label{S:CHSS_Dw}

Let $Z_\tti$ be the grading element associated to the CHSS $G/P_\tti$ (\S\ref{S:Z}), and $\fg = \fg_{-1} \op \fg_0 \op \fg_1$ the $Z_\tti$--graded decomposition of $\fg$ (\S\ref{S:dfnCHSS}).  Let $\tilde\a$ denote the highest root of $\Delta(\fg_1)$.

Given $J \subset I_\fp= \{ 1,\ldots,n\} \backslash \{\tti\}$, define $Z_J = \sum_{s\in J} Z_s$.  Let $\fg = \op \fg_{j,k}$ be the $(Z_\tti,Z_J)$--bigraded decomposition of $\fg$: that is,
\begin{equation} \label{E:ZiZw}
   \fg_{j,k} \ := \ 
   \{ u \in \fg \ | \ [Z_\tti,u] = j \, u \, , \ [Z_J,u] = k \, u \} \,.
\end{equation}

\begin{lemma} \label{L:graded2nw}
Given an integer $0 \le a \le \tilde\a(Z_J)$, there exists a unique $w = w(J,a) \in W^\fp$ such that $\fg_{-1,0} \op \cdots \op \fg_{-1,-a} = \fn_w$.
\end{lemma}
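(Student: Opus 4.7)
The plan is to invoke Proposition \ref{P:Phi}, which characterizes the subsets arising as $\Delta(w)$ for some $w \in W^\fp$ as precisely those $\Phi \subset \Delta(\fg_+) = \Delta(\fg_1)$ for which both $\Phi$ and $\Delta^+ \setminus \Phi$ are closed in $\Delta$. Given $(J,a)$, I will exhibit the natural candidate subset $\Phi_a \subset \Delta(\fg_1)$ whose negatives index the root spaces of $\fg_{-1,0} \op \cdots \op \fg_{-1,-a}$, verify the two closure conditions, and then read off existence and uniqueness of $w = w(J,a)$ directly from the bijection in Proposition \ref{P:Phi}.

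Set $\Phi_a := \{\b \in \Delta(\fg_1) : \b(Z_J) \le a\}$. By the definition of the bigrading \eqref{E:ZiZw}, $\fg_{-1,-k}$ equals the sum of the root spaces $\fg_{-\b}$ as $\b$ ranges over $\Delta(\fg_1)$ with $\b(Z_J) = k$, so $\fg_{-1,0} \op \cdots \op \fg_{-1,-a} = \bigoplus_{\b\in\Phi_a} \fg_{-\b}$. In view of \eqref{E:n_w}, this is precisely $\fn_w$ for any $w \in W^\fp$ with $\Delta(w) = \Phi_a$, so the entire task reduces to producing such a $w$.

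The closure checks proceed as follows. The set $\Phi_a$ is closed for the trivial reason that a sum $\b + \c$ with $\b,\c \in \Phi_a \subset \Delta(\fg_1)$ would have $(\b+\c)(Z_\tti) = 2$, contradicting the three-step CHSS grading \eqref{E:CHSS_gr}; equivalently, this is the abelian property \eqref{E:abelian}. For the complement, the decomposition $\Delta^+ = \Delta^+(\fg_0) \sqcup \Delta(\fg_1)$ gives $\Delta^+ \setminus \Phi_a = \Delta^+(\fg_0) \cup \{\b \in \Delta(\fg_1) : \b(Z_J) > a\}$. For $\b,\c$ in this set with $\b+\c \in \Delta$, a pair of $\fg_1$-roots is again excluded by the three-step grading; a pair of $\fg_0$-roots sums into $\Delta^+(\fg_0)$; and a mixed pair yields $\b+\c \in \Delta(\fg_1)$ with $(\b+\c)(Z_J) > a$, using the elementary positivity $\b(Z_J) \ge 0$ for $\b \in \Delta^+$ (because $\b(Z_J)$ is a sum of non-negative simple-root coefficients of $\b$).

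Proposition \ref{P:Phi} then produces the unique $w = w(J,a) \in W^\fp$ with $\Delta(w) = \Phi_a$, and the desired identity $\fn_w = \fg_{-1,0} \op \cdots \op \fg_{-1,-a}$ is immediate. The bound $a \le \tilde\a(Z_J)$ never enters the existence/uniqueness argument: for every $a \ge 0$ the set $\Phi_a$ satisfies the closure conditions, and the bound merely reflects the fact that $\Phi_a$ saturates to all of $\Delta(\fg_1)$ once $a$ reaches $\tilde\a(Z_J)$. I do not expect a serious obstacle; the whole verification rides on the CHSS grading being concentrated in $\{-1,0,1\}$ together with the positivity of $Z_J$ on $\Delta^+$, so the only point to watch is the mixed case in the closure of $\Delta^+ \setminus \Phi_a$.
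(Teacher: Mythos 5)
Your proof is correct and follows essentially the same route as the paper's: both appeal to Proposition \ref{P:Phi}, use the abelian property \eqref{E:abelian} to dispose of the closure of $\Phi_a$, and check closure of the complement by splitting into $\fg_0$-roots and $\fg_1$-roots with $Z_J$-grade above $a$. The paper compresses the complement check into a single containment $[\fg_{0,\ge0} \op \fg_{1,>a}\,,\,\fg_{0,\ge0} \op \fg_{1,>a}] \subset \fg_{0,\ge0} \op \fg_{1,>a}$, but your case-by-case verification is the same computation spelled out.
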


\begin{proof}
Define $\Phi  = \Delta( \fg_{1,0} \op\cdots\op \fg_{1,a} ) = \{ \a \in \Delta \ | \ \a(Z_\tti) = 1 \, , \ \a(Z_J) \le a \} \subset \Delta(\fg_1)$.  We will show that $\Phi = \Delta(w)$ for some $w \in W^\fp$.  By Proposition \ref{P:Phi} it suffices to show that $\Phi$ and $\Delta^+ \backslash \Phi$ are closed.  Since $[\fg_1,\fg_1] = \{0\}$, every subset of $\Delta(\fg_1)$ is closed.  Hence $\Phi = \Delta(w)$ for some $w \in W^\fp$ if and only if $\Delta^+ \backslash \Phi$ is closed.  This follows immediately from $\Delta^+\backslash\Phi = \Delta(\fg_{1,>a}) \sqcup \Delta^+(\fg_{0,\ge0})$ and $[\fg_{0,\ge0} \op \fg_{1,>a} , \fg_{0,\ge0} \op \fg_{1,>a}] \subset \fg_{0,\ge0} \op \fg_{1,>a}$.  Thus, $w = w(J,a)$ exists and moreover it is unique by Proposition \ref{P:Phi}.
\end{proof}

We will see below that Lemma \ref{L:graded2nw} has a converse.  We identify $\Sigma_\fp = \{ \a_j \ | \ j \not= \tti\}$ with the simple roots of $\ff$.  The stabilizer $\fq^w \subset \ff$ of $\fn_w \in \tGr(|w|,\fg_{-1})$ is a parabolic subalgebra.  (See Section \ref{S:schubert}.)  Let $I_{\fq^w} = I_w \subset I_\fp$ be the corresponding index set (\S\ref{S:not}), and set 
$$
  \ttJ \ = \ \ttJ(w) \ = \ I_\fp \backslash I_w \, .
$$
Note that
\begin{equation} \label{E:Jw}
  \ttj \in \ttJ \ \Longleftrightarrow \ \fg_{-\a_\ttj}
  \hbox{ does \emph{not} stabilize } \fn_w \, .
\end{equation}

\begin{remark} \label{R:Jw}
Note that $\ttJ = \emptyset$ if and only if $X_w = o$ or $X_w = X$.  We will assume that \emph{$X_w$ is a proper subvariety of $X$}.
\end{remark}

\noindent Define 
\begin{equation} \label{E:Zw}
  Z_w \ := \ Z_{\ttJ} \ = \ \sum_{\ttj \in \ttJ} Z_\ttj \quad \hbox{ and } \quad
  \sfa \ = \ \sfa(w) \ := \ 
  \tmax \{ \a(Z_w) \ | \ \a\in\Delta(w) \} \ \in \ \bZ_{\ge0} \, .
\end{equation}
The $(Z_\tti,Z_w)$--bigraded decomposition $\fg = \op \fg_{j,k}$ is given by \eqref{E:ZiZw} and
\begin{equation} \label{E:nwstab}
\hbox{$\fg_{0,\ge0} = \fz \op \fq^w$ is the stabilizer in $\fg_0$ of $\fn_w \subset \fg_{-1}$.}
\end{equation}  

We may now state the converse to Lemma \ref{L:graded2nw}.

\begin{proposition} \label{P:nw2graded}
Let $G/P_\tti$ be an irreducible compact Hermitian symmetric space.  Given $w \in W^\fp$, let $\fg = \op \fg_{j,k}$ be the $(Z_\tti,Z_w)$--bigraded decomposition \eqref{E:ZiZw} of $\fg$.  Then 
\begin{equation} \label{E:nw}
  \fn_w \ = \ \fg_{-1,0} \ \op \ \cdots \ \op \ \fg_{-1,-\sfa(w)} \, .
\end{equation}
\end{proposition}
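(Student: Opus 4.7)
The plan is to prove the reverse inclusion $\fg_{-1,0}\oplus\cdots\oplus\fg_{-1,-\sfa(w)}\subseteq\fn_w$; the forward inclusion is immediate from $\sfa(w)=\max\{\a(Z_w):\a\in\Delta(w)\}$. Set $\Phi:=\{\b\in\Delta(\fg_1):\b(Z_w)\leq\sfa(w)\}$. Because $\tilde\a-\a\in\bZ_{\geq 0}\Sigma_\fp$ for every $\a\in\Delta(\fg_1)$, we have $\sfa(w)\leq\tilde\a(Z_w)$, and Lemma \ref{L:graded2nw} applied with $(J,a)=(\ttJ,\sfa)$ yields the unique $w':=w(\ttJ,\sfa)\in W^\fp$ with $\Delta(w')=\Phi$. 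The forward inclusion reads $\Delta(w)\subseteq\Phi=\Delta(w')$, so by Proposition \ref{P:Phi} the task reduces to showing $\Phi\subseteq\Delta(w)$, whence $w=w'$.

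The argument rests on two closure properties of $\Delta(w)$. First, (\emph{down-closure}) for $\a\in\Delta(w)$ and $\a-\a_j\in\Delta(\fg_1)$ with $j\in I_\fp$, one has $\a-\a_j\in\Delta(w)$: this follows from Proposition \ref{P:Phi}, since $\Delta^+\setminus\Delta(w)$ is closed and contains $\a_j\in\Delta^+(\fg_0)$, forcing $\a-\a_j\in\Delta(w)$ via $\a=\a_j+(\a-\a_j)$. Second, ($W_{\fl_w}$-\emph{invariance}) for $\a\in\Delta(w)$ and $\a+\a_j\in\Delta$ with $j\in I_w=I_\fp\setminus\ttJ$, one has $\a+\a_j\in\Delta(w)$; this is the definition of $I_w$ as the stabilizer index set. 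Now suppose for contradiction that $\c\in\Phi\setminus\Delta(w)$ is chosen $\preceq$-minimal, where $\m\preceq\n$ iff $\n-\m\in\bZ_{\geq 0}\Sigma_\fp$. Minimality and down-closure force every predecessor $\c-\a_j\in\Delta(\fg_1)$ (with $j\in I_\fp$) to lie in $\Delta(w)$; if any such $j$ were in $I_w$, $W_{\fl_w}$-invariance would give $\c\in\Delta(w)$, contradicting the choice. Hence $\c$ is $W_{\fl_w}$-lowest in its orbit. Since the $\ff$-lowest weight $\a_\tti$ of $\fg_1$ lies in $\Delta(w)$ (down-closure plus nonemptiness) and $\c\neq\a_\tti$, there exists $\ttj\in\ttJ$ with $\c':=\c-\a_\ttj\in\Delta(\fg_1)$, and $\c'\in\Delta(w)$ by minimality.

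The hard step --- and the place where the CHSS hypothesis enters essentially --- is to convert the configuration $\c'\in\Delta(w)$, $\c=\c'+\a_\ttj\notin\Delta(w)$, $\ttj\in\ttJ$ into a contradiction, since on its face this is consistent with the definition of $\ttJ$. The finishing input is that $\fg_1$ is $\ff$-minuscule in the CHSS setting; although the individual graded pieces $\fg_{1,k}$ need not be $\fl_w$-irreducible when $|\ttJ|\geq 2$, iterated application of down-closure and $W_{\fl_w}$-invariance, together with the $W_\ff$-transitivity of $\Delta(\fg_1)$, propagates membership in $\Delta(w)$ through successive $\fl_w$-orbits at each level $\leq\sfa(w)$, ultimately covering $\c$ and giving the desired contradiction $\c\in\Delta(w)$. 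Equivalently, one may verify $\ttJ(w')=\ttJ(w)$ and $\sfa(w')=\sfa(w)$ directly --- the subtle direction being $\ttJ\subseteq\ttJ(w')$, which requires exhibiting, for each $\ttj\in\ttJ$, a weight at level $\sfa$ admitting an $\a_\ttj$-successor in $\Delta$ --- and conclude $w=w'$ via the uniqueness in Proposition \ref{P:Phi}.
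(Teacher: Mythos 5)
Your reduction is sound: converting the claim to $\Delta(w)\subseteq\Phi$ (immediate) and $\Phi\subseteq\Delta(w)$, invoking Lemma~\ref{L:graded2nw} to identify $\Phi=\Delta(w')$ for $w'=w(\ttJ,\sfa)$, and using Proposition~\ref{P:Phi} to reduce to $\Phi\subseteq\Delta(w)$ are all correct. The two closure properties (``down-closure'' from closedness of $\Delta^+\setminus\Delta(w)$, and $W_{\fl_w}$-invariance from the definition of $I_w$) are also correct, as is the minimality setup producing a level-$\le\sfa$ root $\c\notin\Delta(w)$ with $\c-\a_\ttj\in\Delta(w)$ for some $\ttj\in\ttJ$ and no ``predecessor'' of $\c$ in $I_w$-direction.

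The gap is the step you yourself flag as hard, and it is not minor. At that point you have exactly the configuration that the \emph{definition} of $\ttj\in\ttJ$ permits ($\c'\in\Delta(w)$, $\c'+\a_\ttj\notin\Delta(w)$), and your proposed resolution --- ``iterated application of down-closure and $W_{\fl_w}$-invariance, together with the $W_\ff$-transitivity of $\Delta(\fg_1)$, propagates membership \dots\ ultimately covering $\c$'' --- is not an argument; it is a description of the conclusion. Transitivity of $W_\ff$ on $\Delta(\fg_1)$ does not give a path from a root in $\Delta(w)$ to $\c$ consisting only of ``safe'' moves (drops by arbitrary $\a_j$, rises by $\a_j$ with $j\in I_w$). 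The paper handles precisely this point via Kostant's irreducibility of the bigraded pieces (Lemma~\ref{L:Kostant}), the resulting downward-closure of $\fn_w$ in the multi-index partial order (Lemma~\ref{L:g_1A}), a nondegeneracy statement (Lemma~\ref{L:nondeg}), and then a lengthy type-by-type analysis of the classical root systems (Lemma~\ref{L:bigone}, cases (I)--(V)), with the exceptional cases checked by computer. Nothing in your sketch replaces that combinatorial work, and you do not invoke the key irreducibility input at all.

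The closing ``equivalently'' is also logically incomplete. Verifying $\ttJ(w')=\ttJ(w)$ and $\sfa(w')=\sfa(w)$ does not imply $w=w'$: the bijection of Proposition~\ref{P:Phi} is between $W^\fp$ and the set of saturated $\Phi$, not between $W^\fp$ and $(\sfa,\ttJ)$-pairs, and the injectivity of $w\mapsto(\sfa(w),\ttJ(w))$ is itself a consequence of the proposition you are trying to prove. So that route does not close the argument either. To complete the proof along your lines you would need to supply, for each CHSS type, the explicit root-system analysis that the paper carries out (or a new uniform argument that the paper did not find), and in particular show that the set of good multi-indices $\{B:\fg_{-1,-B}\subset\fn_w\}$ is the full interval $\{|B|\le\sfa\}$ and not a proper down-set with several maxima.
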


\noindent See Corollary \ref{C:bigone} for a description of the $(\sfa,\ttJ)$ pairs that occur.  The smooth $X_w$ are characterized by $\sfa(w)=0$ (Proposition \ref{P:a=0}).

The proof of the proposition is given in five lemmas.  The final lemma is proved in a case-by-case argument for each of the classical CHSS (Table \ref{t:CHSS}).  We used the representation theory software LiE \cite{LiE} to confirm Proposition \ref{P:nw2graded} for the two exceptional cases.  The first lemma is due to Kostant.  See \cite[Theorem 8.13.3]{MR928600} for a more general statement and proof.

\begin{lemma}[Kostant] \label{L:Kostant}
Let $\fa$ be a complex semi-simple Lie algebra with a choice of Cartan subalgebra $\ft$ and simple roots $\{ \a_1,\ldots,\a_n \}$.  Define $Z_i \in \ft$ by $\a_i(Z_j) = \d_{ij}$.  Any $I =\{i_1,\ldots,i_p\} \subset \{1,\ldots,n\}$ defines a multi-graded decomposition $\fa = \bigoplus_{A\in\bZ^p} \fa_A$ by $\fa_A = \fa_{a_1,\ldots,a_\sfp} := \{ u \in \fa \ | \ [Z_{i_m} \,,\, u] = a_m\,u \,,\ \forall \ 1\le m\le q\}$.  Each $\fa_A \not= \fa_{0,\ldots,0} =: \fa_0$ is an irreducible $\fa_0$--module.
\end{lemma}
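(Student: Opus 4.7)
The plan is to reduce to the case that $\fa$ is simple, and then to realize each $\fa_A$ with $A \neq 0$ as the irreducible $\fa_0$-module generated by a unique highest weight vector. I would begin by identifying $\fa_0$ explicitly. Since each $Z_{i_m}$ lies in $\ft$, we have $\ft \subset \fa_0$ and
\[
  \fa_0 \ = \ \ft \ \oplus \ \bigoplus_{\beta \in \Delta_0} \fa_\beta,
  \qquad \Delta_0 \ := \ \{\beta \in \Delta \mid \beta(Z_{i_m}) = 0 \ \forall\, m\}.
\]
This $\Delta_0$ is the root subsystem spanned by $\{\a_j \mid j \notin I\}$, so $\fa_0$ is the Levi factor of the standard parabolic determined by $I$. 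For the reduction to simple $\fa$, decompose $\fa = \bigoplus_s \fa^{(s)}$ into simple ideals and note that each $Z_{i_m}$ lies in the Cartan of the unique summand containing $\a_{i_m}$ and centralizes every other summand. If $a_m \neq 0$ and $a_{m'} \neq 0$ for indices $m, m'$ with $\a_{i_m}, \a_{i_{m'}}$ in distinct simple ideals, then $\fa_A = 0$ and there is nothing to prove. Otherwise all nonzero $a_m$ belong to a single simple ideal $\fa^{(s_0)}$ and $\fa_A \subseteq \fa^{(s_0)}$, so I may assume $\fa$ is simple.

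With $\fa$ simple and $A \neq 0$, I would decompose
\[
  \fa_A \ = \ \bigoplus_{\beta \in \Delta_A} \fa_\beta,
  \qquad \Delta_A \ := \ \{\beta \in \Delta \mid \beta(Z_{i_m}) = a_m \ \forall\, m\},
\]
so that $\Delta_A$ collects those roots whose expansion in simple roots has coefficient $a_m$ on each $\a_{i_m}$. Define a partial order on $\Delta_A$ by $\beta' \preceq \beta$ iff $\beta - \beta' \in \bZ_{\ge 0}\{\a_j \mid j \notin I\}$, and let $\mu_A$ be any maximal element. Then $\mu_A + \a_j \notin \Delta$ for every $j \notin I$, so a nonzero $e_{\mu_A} \in \fa_{\mu_A}$ is a highest weight vector for $\fa_0$ and generates an irreducible $\fa_0$-submodule $V \subseteq \fa_A$ of highest weight $\mu_A$.

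The main obstacle is to show $V = \fa_A$, equivalently that $\Delta_A$ is connected under the moves $\beta \mapsto \beta - \a_j$ (for $j \notin I$, staying in $\Delta$). Granting this, repeated application of the lowering operators $f_{\a_j}$, $j \notin I$, starting from $e_{\mu_A}$ hits every one-dimensional root space $\fa_\beta$ with $\beta \in \Delta_A$; since these weights are distinct, this forces $V = \fa_A$ and in passing establishes the uniqueness of $\mu_A$. This connectivity is the technical heart of the result: for simple $\fa$ it follows from the connectedness of the Dynkin diagram together with standard root-string arguments, and the full case analysis is carried out in Theorem~8.13.3 of \cite{MR928600}, which I would invoke directly rather than reprove.
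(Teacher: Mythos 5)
Your proposal is correct and takes essentially the same approach as the paper, which gives no proof of Lemma \ref{L:Kostant} at all but simply cites \cite[Theorem 8.13.3]{MR928600}, the same reference to which you defer for the technical heart of the argument (downward reachability of every $\beta \in \Delta_A$ from the maximal element $\mu_A$ by steps $-\a_j$, $j \notin I$). The preliminary scaffolding you supply -- the reduction to simple $\fa$ via the fact that each $Z_{i_m}$ centralizes all simple ideals except the one containing $\a_{i_m}$, the identification of $\fa_0$ as the Levi factor, and the reformulation of irreducibility as a weight-diagram reachability claim -- is sound and correctly locates where the actual substance of Kostant's theorem lies.
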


Define 
$$ 
  \{ \ttj_1 \,,\, \ttj_2\,,\ldots,\, \ttj_\sfp \} \ := \ \ttJ
  \quad \hbox{ with } \quad \ttj_1 < \ttj_2 < \cdots < \ttj_\sfp \,.
$$
Let $\fg_{-1} = \op_A \fg_{-1,-A}$ denote the multi-graded decomposition induced by $(Z_{\ttj_1},\ldots,Z_{\ttj_\sfp})$.  Since the roots $\Delta(\fg_{-1})$ are non-positive integer linear combinations of the simple roots, we have $A = (a_1,\ldots,a_\sfp)\ge(0,\ldots,0)$.  Define 
$$
  |A| \ := \ a_1 + \cdots + a_\sfp \,.
$$
Note that $|A| = \a(Z_w)$ for all $\a \in \Delta(\fg_{1,A})$, so that $\fg_{1,A} \subset \fg_{1,|A|}$.
Let
$$
  \fn_w^\perp \ := \bigoplus_{\a\in\Delta(\fg_1)\backslash\Delta(w)} 
  \fg_{-\a}\, .
$$

\begin{definition} \label{D:B<A}
Define $B \le A$ if there exists a sequence of $\fg_{0,-C_j} \subset \fg_{0,-1}$ such that $\fg_{1,B} = [\fg_{0,-C_1} \,,\, [ \fg_{0,-C_2} \,,\,\cdots [\fg_{0,-C_r}\,,\,\fg_{1,A}]\cdots]]$.   That is, $\fg_{1,B}$ may be obtained from $\fg_{1,A}$ by successive brackets with the irreducible $\fg_{0,0}$--submodules $\fg_{0,-C}$ of $\fg_{0,-1}$.
\end{definition}

\begin{lemma} \label{L:g_1A}
If $\fg_{-1,-A} \subset \fn_w$ and $B < A$, then $\fg_{-1,-B} \subset \fn_w$.
\end{lemma}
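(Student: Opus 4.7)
My plan is to transport the nested bracket relation of Definition \ref{D:B<A} from the positive side $\fg_1$ to the negative side $\fg_{-1}$ via the Chevalley involution, and then conclude using the stabilization property recorded in \eqref{E:nwstab}.

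Let $\sigma : \fg \to \fg$ denote the Chevalley involution, the Lie algebra automorphism characterized by $\sigma|_\fh = -\tid$ and $\sigma(\fg_\b) = \fg_{-\b}$ for every $\b \in \Delta$. Because the multigrading from \S\ref{S:CHSS_Dw} is defined by $\tad$--eigenspaces of $Z_\tti$ and the $Z_{\ttj_m} \in \fh$, and since $\tad(\sigma(h)) = -\tad(h)$ for every $h \in \fh$, it follows that $\sigma(\fg_{j,D}) = \fg_{-j,-D}$ for every integer $j$ and every multi-index $D$. In particular $\sigma$ interchanges the two sides of the $(Z_\tti, Z_w)$--bigrading in the obvious sign-reversing way.

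Applying $\sigma$ to the defining expression
\begin{equation*}
  \fg_{1,B} \ = \ [\fg_{0,-C_1}, [\fg_{0,-C_2}, \ldots, [\fg_{0,-C_r}, \fg_{1,A}] \cdots ]]
\end{equation*}
and using that $\sigma$ is a Lie algebra homomorphism, the nested bracket transports to
\begin{equation*}
  \fg_{-1,-B} \ = \ [\fg_{0,+C_1}, [\fg_{0,+C_2}, \ldots, [\fg_{0,+C_r}, \fg_{-1,-A}] \cdots ]].
\end{equation*}
Each piece $\fg_{0,+C_j}$ has $|{+C_j}| = 1 > 0$, and therefore lies in $\fg_{0,+1} \subset \fg_{0,\ge 0}$. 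By \eqref{E:nwstab}, $\fg_{0,\ge 0}$ is the stabilizer of $\fn_w$ inside $\fg_0$, so that $[\fg_{0,\ge 0}, \fn_w] \subset \fn_w$. Combining this with the hypothesis $\fg_{-1,-A} \subset \fn_w$ and peeling the nested bracket from the inside out yields $\fg_{-1,-B} \subset \fn_w$, as desired.

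The only point that requires any verification is the compatibility of $\sigma$ with the (bi/multi)--graded pieces, which is immediate from $\sigma|_\fh = -\tid$ and the definition of the grading. In particular, neither an induction on the number $r$ of bracket steps nor an appeal to irreducibility of the intermediate graded pieces as $\fg_{0,0}$--modules is required; the Chevalley involution packages the argument into a single application of \eqref{E:nwstab}.
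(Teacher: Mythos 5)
Your proof is correct, and it takes a genuinely different route from the paper's. The paper reduces to the single-step case $\fg_{1,B}=[\fg_{0,-C},\fg_{1,A}]$, invokes Kostant's Lemma~\ref{L:Kostant} to conclude $\fg_{-1,-B}$ is an irreducible $\fg_{0,0}$-module (hence lies entirely in $\fn_w$ or entirely in $\fn_w^\perp$), and then rules out the second alternative via the closure property in Proposition~\ref{P:Phi} (if $\fg_{-1,-B}\subset\fn_w^\perp$, then adding a root of $\Delta(\fg_{0,C})\subset\Delta^+\backslash\Delta(w)$ to a root of $\Delta(\fg_{1,B})\subset\Delta^+\backslash\Delta(w)$ would stay in $\Delta^+\backslash\Delta(w)$, contradicting $\Delta(\fg_{1,A})\subset\Delta(w)$). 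You instead push Definition~\ref{D:B<A} through the Chevalley involution and then read off the containment directly from the stabilization identity \eqref{E:nwstab}: since each $\fg_{0,C_j}\subset\fg_{0,1}\subset\fg_{0,\geq 0}$ preserves $\fn_w$, iterated bracketing starting from $\fg_{-1,-A}\subset\fn_w$ lands in $\fn_w$. This bypasses both Kostant's irreducibility and the saturation argument, and is arguably the cleaner proof. The one quibble is your closing remark that ``no induction on $r$ is required'': peeling the nested bracket from the inside out \emph{is} an induction on $r$; what you have genuinely eliminated is the appeal to irreducibility. Also note that \eqref{E:nwstab} is a reformulation of the very definition of $\fq^w$ and $\ttJ$, so there is no circularity in using it here, as it precedes the lemma in the text.
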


\begin{proof}
It suffices to consider the case that $\fg_{1,B} = [\fg_{0,-C} \,,\, \fg_{1,A}]$; the general case will follow by induction.  By Kostant's Lemma \ref{L:Kostant}, $\fg_{-1,-B}$ is an irreducible $\fg_{0,0}$--module.  So either $\fg_{-1,-B} \subset \fn_w$, or $\fg_{-1,-B} \subset \fn_w^\perp$.  Observe that $[\fg_{0,C} \,,\, \fg_{-1,-A}] = \fg_{-1,-B}$ if and only if $[\fg_{0,-C} \,,\, \fg_{-1,-B}] = \fg_{-1,-A}$.  It follows from Proposition \ref{P:Phi} that $\fg_{-1,-B} \subset \fn_w$.
\end{proof}

\begin{lemma} \label{L:nondeg}
Define $\sfm = \tilde\a(Z_w)$, where $\tilde\a$ is the highest root of $\Delta(\fg_1)$.

\emph{(a)}  If $\b\in\Delta(\fg_1)$ and $\b(Z_w) < \sfm$, then $[\fg_{0,1} \,,\, \fg_\b] \not= \{0\}$.

\emph{(b)}  If $0 \le b < \sfm$, then $\fg_{1,b} = \left[ \fg_{0,-1} \, , \, \fg_{1,b+1} \right]$. 
\end{lemma}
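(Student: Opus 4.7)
The plan is to establish (a) via the contrapositive using the $\fg_0$--irreducibility of $\fg_1$, and then derive (b) from (a) by a one-step root-string argument.

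For (a), assume $\b\in\Delta(\fg_1)$ satisfies $k:=\b(Z_w)$ and $[\fg_{0,1},\fg_\b]=0$; I aim to conclude $k=\sfm$. The key input is the claim that the nilradical $\fg_{0,>0}$ of the parabolic $\fg_{0,\ge 0}\subset \fg_0$ is generated by its degree-one part, i.e., $\fg_{0,j}\subset [\fg_{0,1},\fg_{0,j-1}]$ for all $j\ge 2$. Granting this, an induction using the Jacobi identity promotes the annihilation $[\fg_{0,1},\fg_\b]=0$ to $[\fg_{0,>0},\fg_\b]=0$. Next, Lemma~\ref{L:Kostant} with $I=\{\tti\}$ yields that $\fg_1$ is irreducible as a $\fg_0$--module, so $\fg_1=U(\fg_0)\cdot \fg_\b$. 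Splitting $U(\fg_0)=U(\fg_{0,<0})\,U(\fg_{0,0})\,U(\fg_{0,>0})$ via PBW and using $U(\fg_{0,>0})\cdot \fg_\b=\fg_\b$, one concludes $\fg_1=U(\fg_{0,\le 0})\cdot \fg_\b$. Since $U(\fg_{0,\le 0})$ has $Z_w$--grade $\le 0$, this subspace lies in $\bigoplus_{i\le k}\fg_{1,i}$. As the highest root vector lies in $\fg_{1,\sfm}$ and is nonzero, $\sfm\le k$, so $k=\sfm$.

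The substantive step I would verify carefully is the generation claim $\fg_{0,j}\subset [\fg_{0,1},\fg_{0,j-1}]$ for $j\ge 2$. By Jacobi, $[\fg_{0,1},\fg_{0,j-1}]$ is a $\fg_{0,0}$--submodule of $\fg_{0,j}$; and by Lemma~\ref{L:Kostant} applied to the multigrading by $(Z_{\ttj_1},\ldots,Z_{\ttj_\sfp})$, $\fg_{0,j}=\bigoplus_{|A|=j}\fg_{0,A}$ with each $\fg_{0,A}$ irreducible over $\fg_{0,0}$. It therefore suffices to show that each $\fg_{0,A}$ meets $[\fg_{0,1},\fg_{0,j-1}]$ nontrivially. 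For this, pick $\a\in\Delta(\fg_{0,A})$ and apply the standard chain lemma to write $\a=\b_1+\cdots+\b_l$ with $\b_i\in\Sigma_\fp$ and every partial sum $\c_s:=\b_s+\cdots+\b_l$ in $\Delta^+(\fg_0)$. The sequence $\c_s(Z_w)$ starts at $\c_1(Z_w)=j\ge 2$ and drops by $0$ or $1$ at each step, so I can pick the largest $t$ with $\c_t(Z_w)=j-1$; then $\c_{t-1}(Z_w)=j$ forces $\b_{t-1}$ to be a simple root of $Z_w$--grade $1$, and $\fg_{\c_{t-1}}=[\fg_{\b_{t-1}},\fg_{\c_t}]\subset [\fg_{0,1},\fg_{0,j-1}]$. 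By the choice of $t$, each of $\b_1,\ldots,\b_{t-2}$ has $Z_w$--grade $0$ and hence multigrade $\vec 0$, so $\c_{t-1}$ and $\a$ share the multigrade $A$; this places $\fg_{\c_{t-1}}$ in $\fg_{0,A}\cap[\fg_{0,1},\fg_{0,j-1}]$, and irreducibility of $\fg_{0,A}$ forces $\fg_{0,A}\subset [\fg_{0,1},\fg_{0,j-1}]$.

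Part (b) follows from (a) quickly. The inclusion $[\fg_{0,-1},\fg_{1,b+1}]\subset \fg_{1,b}$ is immediate from the grading. For the reverse, given $\b\in\Delta(\fg_{1,b})$ with $b<\sfm$, (a) produces $\gamma\in\Delta(\fg_{0,1})$ with $\b+\gamma\in\Delta$, so $\b+\gamma\in\Delta(\fg_{1,b+1})$; the $-\gamma$--string through $\b+\gamma$ contains $\b=(\b+\gamma)-\gamma$, whence $\fg_\b=[\fg_{-\gamma},\fg_{\b+\gamma}]\subset [\fg_{0,-1},\fg_{1,b+1}]$, and summing over $\b$ yields $\fg_{1,b}\subset [\fg_{0,-1},\fg_{1,b+1}]$. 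The main obstacle is the sublemma in (a): specifically, the multigrade-matching step that allows Kostant irreducibility to propagate the single-root containment $\fg_{\c_{t-1}}\subset [\fg_{0,1},\fg_{0,j-1}]$ to the whole $\fg_{0,0}$--irreducible component $\fg_{0,A}$.
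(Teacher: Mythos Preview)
Your argument is correct in outline but contains a slip in the proof of the generation claim: you should pick the \emph{smallest} $t$ with $\c_t(Z_w)=j-1$, not the largest. With the smallest such $t$, non-increase of $s\mapsto\c_s(Z_w)$ together with $\c_1(Z_w)=j$ forces $\c_s(Z_w)=j$ for all $s<t$, which is exactly what you need to conclude $\b_{t-1}(Z_w)=1$ and $\b_1,\ldots,\b_{t-2}$ all have $Z_w$--grade $0$ (hence multigrade $\vec 0$). With ``largest $t$'' neither of these conclusions is justified, since there is nothing preventing $\c_{t-1}(Z_w)=j-1$ as well. Once this is fixed, your sublemma and the rest of the argument go through.

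Your route differs from the paper's. The paper argues (a) directly from the descending chain $\tilde\a=\gamma_0>\gamma_1>\cdots>\gamma_r=\b$ obtained by successive brackets with negative simple root vectors: if $[\fg_{0,1},\fg_\b]=0$ then $\fg_\b\not\subset[\fg_{0,-1},\fg_{1,b+1}]$, which is claimed to contradict the chain since $\b(Z_w)<\sfm$. You instead prove the structural fact $\fg_{0,j}=[\fg_{0,1},\fg_{0,j-1}]$ for $j\ge2$, bootstrap $[\fg_{0,1},\fg_\b]=0$ to $[\fg_{0,>0},\fg_\b]=0$ via Jacobi, and then invoke irreducibility of $\fg_1$ over $\fg_0$ and PBW to force $\b(Z_w)=\sfm$. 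Your approach is longer but more transparent: the chain argument in the paper is terse about why the step where the $Z_w$--grade drops from $b+1$ to $b$ produces a contradiction for $\b$ itself (rather than for some intermediate $\gamma_k$), whereas your generation-plus-PBW argument makes the logical dependencies explicit and yields the intermediate fact that $\fg_\b$ is annihilated by all of $\fg_{0,>0}$, which is of independent interest. Your derivation of (b) from (a) is essentially what the paper has in mind when it writes ``This establishes (a) and (b).''
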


\begin{proof}
Argue by contradiction: suppose that $[\fg_{0,1} \,,\, \fg_\b] = \{0\}$.  Equivalently, the intersection of $\Delta(\fg_{1,b+1})$ with $\Delta(\fg_{0,1}) + \b$ is empty.  This implies $\fg_\b \not\subset [\fg_{0,-1} \,,\, \fg_{1,b+1} ]$, where $b = |B|$.  This is a contradiction as the root space $\fg_{\b}$ is obtained from the highest root space $\fg_{\tilde\a} \subset \fg_{1,\sfm}$ by successive brackets with the $\{\fg_{-\a_j} \ | \ \a_j$ a simple root$\}$.  This establishes (a) and (b).
\end{proof}

\noindent
The following lemma will establish Proposition \ref{P:nw2graded}.

\begin{lemma} \label{L:bigone}
Assume $G/P$ is one of the classical CHSS listed (cf. Table \ref{t:CHSS}).  If $|B| \le \sfa$, then $\fg_{-1,-B} \subset \fn_w$.  
\end{lemma}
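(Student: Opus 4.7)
The plan is to treat each of the four classical families (Table \ref{t:CHSS}) separately via its explicit root model, with a common two-step structure.

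\textbf{Step 1 (reduction to the top level):} I claim it suffices to show that every multi-degree $A$ with $|A|=\sfa$ and $\fg_{1,A}\ne0$ satisfies $\fg_{-1,-A}\subset\fn_w$.  Granting this, for $|B|<\sfa\le\sfm$ one applies Lemma \ref{L:nondeg}(b) to write $\fg_{1,|B|}=[\fg_{0,-1},\fg_{1,|B|+1}]$.  In particular, each $\fg_{0,0}$-irreducible piece $\fg_{1,B}$ at level $|B|$ arises as $[\fg_{0,-C},\fg_{1,A}]$ for some $A$ with $|A|=|B|+1$ and some $C$ with $|C|=1$, so by Definition \ref{D:B<A} we have $B<A$.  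Downward induction on level combined with Lemma \ref{L:g_1A} then yields $\fg_{-1,-B}\subset\fn_w$.

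\textbf{Step 2 (top level, case-by-case):}  Since $\fn_w$ is a $\fg_{0,0}$-submodule of $\fg_{-1}$ by \eqref{E:nwstab}, and each $\fg_{-1,-A}$ is $\fg_{0,0}$-irreducible by Kostant's Lemma \ref{L:Kostant}, the containment $\fg_{-1,-A}\subset\fn_w$ is a dichotomous condition for each multi-degree $A$.  By the very definition of $\sfa$, at least one $A^*$ with $|A^*|=\sfa$ satisfies $\fg_{-1,-A^*}\subset\fn_w$; the content of the lemma is that the same holds for every such $A$.  For each classical family I would use the standard matrix/combinatorial model to parameterize the multi-degrees at level $\sfa$ and then invoke the saturation of $\Delta(w)$ and $\Delta(\fg_1)\backslash\Delta(w)$ from Proposition \ref{P:Phi} to force the containment.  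Concretely, for Grassmannians $A_n/P_\tti$, encode $\Delta(\fg_1)$ as the boxes of a $\tti\times(n+1-\tti)$ rectangle subdivided into sub-rectangles by the marks $\ttJ$; then $\Delta(w)$ is a Young sub-diagram, the multi-degree of a box records which of those sub-rectangles lie weakly northwest, and $\sfa$ counts the sub-rectangles touched by $\Delta(w)$.  Row- and column-convexity of $\Delta(w)$, together with the existence of $A^*$, then force every box of total multi-degree $\le\sfa$ to belong to $\Delta(w)$.  The quadric, Lagrangian Grassmannian, and spinor cases follow the same template with roots parameterized by index pairs (for $B_n/P_1$ and $D_n/P_1$), unordered pairs with repetition (for $C_n/P_n$), and parity-constrained subsets (for $D_n/P_{n-1}$); in each case the closure axiom translates into an analogous convexity statement that immediately gives the claim.

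\textbf{Main obstacle:}  Step 2 at the top level is not a formal consequence of the structural lemmas; it requires a genuine combinatorial argument that, once one multi-degree at level $\sfa$ lies in $\fn_w$, all of them do.  I expect the spinor varieties $D_n/P_{n-1}$ to be the hardest case, since the $\bZ_2$-symmetry of the $D_n$ Dynkin diagram and the parity constraints on the roots in $\Delta(\fg_1)$ force one to split the analysis according to whether $\ttJ$ meets $\{n-1,n\}$ in $0$, $1$, or $2$ elements.  The Lagrangian case requires analogous care at the long-root node $\a_n$.  For the exceptional CHSS these convexity arguments are not available, which is presumably why the authors delegate them to \textsf{LiE}.
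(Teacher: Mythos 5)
Your Step 1 -- the reduction from $|B|\le\sfa$ to $|B|=\sfa$ via Lemma \ref{L:nondeg}(b) and Lemma \ref{L:g_1A} -- is correct and is exactly how the paper opens its proof (the paper does it in one stroke by producing a $B'>B$ with $|B'|=\sfa$ rather than descending level by level, but these are the same idea).

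Step 2, however, has a genuine gap at the sentence ``Row- and column-convexity of $\Delta(w)$, together with the existence of $A^*$, then force every box of total multi-degree $\le\sfa$ to belong to $\Delta(w)$.'' This is not a valid inference as stated. Saturation plus the existence of one $A^*$ at level $\sfa$ are \emph{not} enough: convexity is a property of $\Delta(w)$ alone and does not see the constraint that $\ttJ$ is \emph{determined by} $\fn_w$ via \eqref{E:Jw}. To see the gap concretely, take $A_5/P_3$ and the Young-type region consisting of the top two rows of the $3\times3$ matrix picture. With the block grid subdivided by $\ttJ=\{1,5\}$ this region is row- and column-convex, touches a block at level $-1$, yet does \emph{not} contain the other block at level $-1$. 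What rescues the lemma is that this region is incompatible with $\ttJ=\{1,5\}$: since $\fg_{-\a_1}$ stabilizes it, \eqref{E:Jw} forces $1\notin\ttJ$. That is the crucial ingredient your sketch omits. Once you impose \eqref{E:Jw}/\eqref{E:nwstab} -- that every $\ttj\in\ttJ$ is a line where the boundary of $\fn_w$ genuinely turns -- the boundary in the block picture becomes a \emph{strict} staircase (an anti-diagonal), and only then does convexity plus the existence of $A^*$ force the containment.

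This is precisely the content of the paper's case-by-case argument, but the paper organizes it as a contradiction: assume $\fg_{-1,-B}\not\subset\fn_w$ for some $B$ with $|B|=\sfa$, then (using Lemma \ref{L:g_1A} repeatedly) show that some simple root space $\fg_{-\a_\ell}\subset\fg_{0,-}$ with $\ell\in\ttJ$ must stabilize $\fn_w$, contradicting \eqref{E:Jw}. All of the combinatorial work in the paper -- the enumeration of the admissible tuples $B$ for each classical type and the identification of which $\fg_{-\a_{\ttj_s}}$ stabilizes $\fn_w$ in the hypothetical -- is in service of this contradiction. If you repair your Step 2 by making the strict-staircase observation explicit and tying it to \eqref{E:Jw}, you would have a valid proof that is a positive-direction rephrasing of the paper's contradiction argument; but as written the sketch skips the load-bearing step.

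Your remaining remarks -- that the spinor case is the most involved, that the Lagrangian case needs care at $\a_n$, and that the exceptional cases are delegated to \textsf{LiE} -- all match the paper.
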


\begin{proof}
From Lemma \ref{L:nondeg} we see that if $\fg_{1,B} \subset \fg_1$ and $|B| < \sfa$, then there exists $\fg_{1,B'} \subset \fg_{1}$ with $B < B'$ and $|B'| = \sfa$.  So it follows from Lemma \ref{L:g_1A} that it suffices to show that $|B| = \sfa$ implies $\fg_{-1,-B} \in \fn_w$.   

We will argue by contradiction, showing that the existence of $\fg_{-1,-B} \not\subset \fn_w$ with $|B| = \sfa$ implies that $\fn_w$ is stabilized by a simple root space $\fg_{-\a_\ell} \subset \fg_{0,-}$ that is not contained in the stabilizer $\fg_{0,\ge0}$ of $\fn_w$ in $\fg_0$.  The reader should be aware that the argument implicitly makes (very) frequent use of Lemma \ref{L:g_1A}, often without mention.  The proof proceeds case-by-case through the classical CHSS.  The assertions below regarding the expressions for $\a \in \Delta(\fg_1)$ as integral linear combinations of the simple roots $\{\a_1,\ldots,\a_n\}$ may be found in standard representation theory texts.

For notational convenience we will use superscripts to write multi-indices compactly.  For example,
$$
  ( 0^k \,,\, 1^\ell \,,\, 2^m) \ := \ 
  ( \underbrace{0,\ldots,0}_{k \ \mathrm{terms}} \,,\, 
    \underbrace{1,\ldots,1}_{\ell \ \mathrm{terms}} \,,\,
    \underbrace{2,\ldots,2}_{m \ \mathrm{terms}} ) \, .
$$

{\bf (I)} We begin with $B_n/P_1$.  The roots $\a \in \Delta(\fg_1)$ are of the form $\a = \a_1 + \cdots + \a_j$ or $\a = \a_1+\cdots+\a_{j-1} +2(\a_j+\cdots+\a_n)$, with $1<j\le n$.  Thus the $B = (b_1,\ldots,b_p)$ with $|B| = \sfa$ are of the form $(1^\sfa,0^{\sfp-\sfa})$ or $(1^{\sfa-2m},2^m)$, with $2m\le \sfa$.  Observe that $\sfp \le \sfa+1$, else Lemma \ref{L:g_1A} and some thought imply $\fg_{-\a_{\ttj_\sfp}} \subset \fg_{0,-}$ stabilizes $\fn_w$.  Similarly, $\sfa \le \sfp$, else $\fg_{-\a_{\ttj_\sfp}} \subset \fg_{0,-}$ stabilizes $\fn_w$.

If $\sfp = \sfa$, then $B = (1^\sfa)$ is uniquely determined and the lemma follows, in this case, from an application of Lemma \ref{L:g_1A}.  Moreover, if $\ttj \in \ttJ -\{\ttj_\sfp\}$, then $\fg_{-\a_\ttj} \subset \fg_{0,-1}$ stabilizes $\fn_w$.  Therefore $\sfp=1$, so that $\ttJ = \{\ttj_1\}$, and $\sfa = 1$.

If $\sfp = \sfa+1$, then $B = (1^\sfa,0)$ is uniquely determined and the lemma again follows from Lemma \ref{L:g_1A}.  As in the previous paragraph, we also see that $\ttJ = \{\ttj_1\}$ and $\sfa = 0$.

\smallskip

{\bf (II)} Now consider $D_n/P_1$.  The roots $\a \in \Delta(\fg_1)$ are of the form $\a = \a_1 + \cdots + \a_i$, with $i \le n$; $\a = \a_1 + \cdots + \a_{n-2} + \a_n$ or $\a = \a_1 + \cdots \a_{j-1} + 2(\a_j + \cdots + \a_{n-2}) + \a_{n-1} + \a_n$, with $1<j < n-1$.  

{\bf (II.A)}  Suppose $n-1,n \in \ttJ$.  The $B = (b_1,\ldots,b_p)$ with $|B| = \sfa$ are of the form $(1^\sfa,0^{\sfp-\sfa})$, if $\sfp \ge \sfa+2$; $(1^\sfa,0)$ or $(1^{\sfa-1},0,1)$, if $\sfp = \sfa+1$; $(1^{\sfa-2m-2},2^m,1,1)$, if $\sfp \le \sfa$.  Note that $\sfa+1\le p \le \sfa+2$, else $\fg_{-\a_n}\subset\fg_{0,-1}$ stabilizes $\fn_w$.  If $\ttj \in \ttJ -\{n-1,n\}$, then $\fg_{-\a_\ttj}$ stabilizes $\fn_w$.  Thus, $\ttJ = \{n-1,n\}$ and $\sfa = 0,1$.

If $\sfa=0$ then the lemma is immediate from the irreducibility (Lemma \ref{L:Kostant}) of $\fg_{-1,0}$.  If $\sfa = 1$, then $\fg_{-1,-1} = \fg_{-1,(-1,0)} \op \fg_{-1,(0,-1)}$.  Without loss of generality $\fg_{-1,(0,-1)} \subset \fn_w$.  If $\fg_{-1,(-1,0)} \not\subset \fn_w$, then $\fg_{-\a_n} \subset \fg_{0,-1}$ stabilizes $\fn_w$, a contradiction.  Lemma \ref{L:bigone} now follows, in this case, from Lemma \ref{L:g_1A}.

{\bf (II.B)}  Suppose $n \in \ttJ$ and $n-1\not\in \ttJ$.   The $B = (b_1,\ldots,b_p)$ with $|B| = \sfa$ are of the form: $(1^\sfa,0^{\sfp-\sfa})$, if $\sfa\le \sfp$; or $(1^{\sfa-2m-1},2^m,1)$, if $\sfp \le \sfa$.  Note that $\sfp = \sfa+1$, else $\fg_{-\a_n}\subset \fg_{0,-1}$ stabilizes $\fn_w$.  If $\ttj \in \ttJ - \{n\}$, then $\fg_{-\a_\ttj} \subset \fg_{0,-1}$ stabilizes $\fn_w$.  Thus $\ttJ = \{n\}$ and $\sfa=0$.

A similar argument will show that, if $n-1\in \ttJ$ and $n\not\in \ttJ$, then $\ttJ = \{n-1\}$ and $\sfa = 0$.  In both cases the lemma follows from the fact (Lemma \ref{L:Kostant}) that $\fg_{-1,-\sfa} = \fg_{-1,0}$ is irreducible.

{\bf (II.C)}  Suppose that $n-1,n\not\in \ttJ$.  The $B = (b_1,\ldots,b_p)$ with $|B| = \sfa$ are of the form: $(1^\sfa,0^{\sfp-\sfa})$, if $\sfa\le \sfp$; or $(1^{\sfa-2m},2^m)$, if $\sfp=\sfa-m\le \sfa$.  Note that $\sfa \le p \le \sfa+1$, else $\fg_{-\a_{\ttj_\sfp}} \subset \fg_{0,-1}$ stabilizes $\fn_w$.  In either case, if $\ttj \in \ttJ - \{\ttj_\sfp\}$, then $\fg_{-\a_\ttj} \subset \fg_{0,-1}$ stabilizes $\fn_w$.  Thus $\sfp=1$, so that $\ttJ = \{\ttj_1\}$, and $\sfa = 0,1$.  

If $\sfa=0$, then the lemma follows from the irreducibility (Lemma \ref{L:Kostant}) of $\fg_{-1,0}$.  If $\sfa=1$, then the lemma follows from both Lemma \ref{L:g_1A} and the irreducibility of $\fg_{-1,0}$ and $\fg_{-1,-1}$.

\smallskip

{\bf (III)}  Next consider $C_n/P_n$.  The roots $\a \in \Delta(\fg_1)$ are of the form $\a = \a_i + \cdots + \a_n$, $\a = \a_i + \cdots + \a_{j-1} + 2(\a_j + \cdots + \a_{n-1} ) + \a_n$ or $\a = 2(\a_j + \cdots + \a_{n-1}) + \a_n$.  So the $B = (b_1,\ldots,b_p)$ with $|B| = \sfa$ are of the form $B_m = (0^{\sfp-\sfa+m} , 1^{\sfa-2m} , 2^m)$, where $0\le2m \le \sfa$.  Arguing as above we note that $\sfp \le \sfa+1$, else the root space $\fg_{-\a_{\ttj_1}} \subset \fg_{0,-}$ stabilizes $\fn_w$.  Similarly, $\sfa \le \sfp$ else $\fg_{-\a_{\ttj_\sfp}} \subset \fg_{0,-}$ stabilizes $\fn_w$.

By definition of $\sfa$, there exists $A = B_{m(A)}$ such that $|A| = \sfa$ and $\fg_{-1,-A}\subset\fn_w$.  Suppose that $\fg_{-1,-B_{m(A)+1}} \not\subset \fn_w$.  Let $s = p-\ell(A)-m(A)+1$ be the number of leading zeros in $B_{m(A)+1}$.  Then, with some thought and Lemma \ref{L:g_1A}, we see that the root space $\fg_{-\a_{\ttj_{s}}} \subset \fg_{0,-}$ stabilizes $\fn_w$, a contradiction.  Arguing by induction, we conclude that $\fg_{-1,-B_m} \subset \fn_w$ for all $m \ge m(A)$.

For the other direction, suppose that $\fg_{-1,-B_{m(A)-1}} \not\subset \fn_w$.  Let $s = p-m(A)+1$ be the number of leading 0's and 1's in $B_{m(A)-1}$.  Then, with some thought and Lemma \ref{L:g_1A}, we see that the root space $\fg_{-\a_{\ttj_s}} \subset \fg_{0,-}$ stabilizes $\fn_w$.  Arguing by induction, we conclude that $\fg_{-1,-B_m} \subset \fn_w$ for all $m \le m(A)$.  This establishes the lemma for $C_n/P_n$.

\smallskip

{\bf (IV)}  Next consider $D_n/P_n$.  The roots of $\Delta(\fg_1)$ are of the form $\a = \a_i + \cdots + \a_n$, $\a = \a_i + \cdots + \a_{n-2} + \a_n$ and $\a = \a_i + \cdots + \a_{j-1} + 2 ( \a_j + \cdots + \a_{n-2}) + \a_{n-1} + \a_n$.

{\bf (IV.A)}  Suppose $n-1 \not\in \ttJ$.  The $B = (b_1,\ldots,b_p)$ with $|B| = \sfa$ are of the form $B_m = (0^{\sfp-\sfa+m},1^{\sfa-2m},2^m)$ with $0 \le 2m \le \sfa$.  Note that $\sfp \le \sfa+1$ else $\fg_{-\a_{\ttj_1}} \subset \fg_{0,-1}$ stabilizes $\fn_w$.  Similarly, $\sfp \ge \sfa$ else $\fg_{-\a_{\ttj_\sfp}} \subset \fg_{0,-1}$ stabilizes $\fn_w$.  Thus, $\sfa \le p \le \sfa+1$.

By definition of $A$, there exists an $A = B_{m(A)}$ such that $\fg_{-1,-A}\subset \fn_w$.  Suppose that $\fg_{-1,-B_{m(A)+1}} \not\subset \fn_w$.  Let $s = p-\sfa+m(A)+1$ be the number of leading zeros in $B_{m(A)+1}$.  Then $\fg_{-\a_{\ttj_s}} \subset \fg_{0,-}$ stabilizes $\fn_w$, a contradiction.  Arguing by induction, we conclude that $\fg_{-1,-B_m} \subset \fn_w$ for all $m \ge m(A)$.

In the other direction, suppose that $\fg_{-1,-B_{m(A)-1}} \not\subset \fn_w$.  Let $s = p-m(A)+1$ be the number of leading 0's and 1's in $B_{m(A)-1}$.  Then, with some thought, we see that the root space $\fg_{-\a_{\ttj_s}} \subset \fg_{0,-}$ stabilizes $\fn_w$.  Arguing by induction, we conclude that $\fg_{-1,-B_m} \subset \fn_w$ for all $m \le m(A)$.  This establishes the lemma for $n-1 \not\in \ttJ$.  The following observations will be helpful in Section \ref{S:Dn}.

\begin{subequations} \label{R:Dn}
\begin{remark} \label{R:Dn.A}
Assume that $n-1 \not\in \ttJ$.  If $\sfa=1$, then $\ttJ \not=\{1\}$ (else $\fn_w = \fg_{-1}$ and $X_w = G/P$).

Consider the tuple $B = (0^{\sfp-\sfr},2^\sfr)$ with $\sfr\ge1$, which occurs when $\sfa = 2\sfr\ge2$.  This tuple will only appear if $1<\ttj_{\sfp-\sfr+1} - \ttj_{\sfp-\sfr}$.  If it does not appear, then $\fg_{-\a_{\ttj_{\sfp-\sfr}}}$ will stabilize $\fn_w$, a contradiction.  We may conclude that $1 < \ttj_{\sfp-\sfr+1} - \ttj_{\sfp-\sfr}$.
%

Consider the tuple $B = (0^{\sfp-\sfr},1,2^{\sfr-1})$ with $\sfr\ge1$, which occurs when $\sfa = 2\sfr-1\ge1$.  Note that $\fg_{-\a_{\ttj_{\sfp-\sfr+1}}}$ will stabilize $\fn_w$ if $1 = \ttj_{\sfp-\sfr+1} - \ttj_{\sfp-\sfr}$.  Therefore $1< \ttj_{\sfp-\sfr+1} - \ttj_{\sfp-\sfr}$.
\end{remark}

{\bf (IV.B)}  Suppose $n-1 \in \ttJ$.  The $B = (b_1,\ldots,b_p)$ with $|B| = \sfa$ are of the form $B^0 = (0^{\sfp-\sfa-1},1^\sfa,0)$, or $B_m = (0^{\sfp-\sfa+m},1^{\sfa-2m-1},2^m,1)$ with $0 \le 2m \le \sfa-1$.  Note that $\sfp \le \sfa+2$, else $\fg_{-\a_{\ttj_1}} \subset \fg_{0,-1}$ stabilizes $\fn_w$.  Similarly $\sfa+1 \le \sfp$, else $\fg_{-\a_{n-1}} \subset \fg_{0,-1}$ stabilizes $\fn_w$.  

An argument similar to that for $n-1 \not \in \ttJ$ implies if $\fg_{-1,-B_m} \subset \fn_w$ for some $m$, then $\fg_{-1,-B_m} \subset \fn_w$ for every $m$.  We leave this to the reader.

Assume that no $B_{-1,B_m} \subset \fn_w$.  Then by definition of $\sfa$ the irreducible $\fg_{-1,-B^0}$ must lie in $\fn_w$.  With some thought we see that $\fg_{-\a_{\ttj_{\sfp-\sfa}}} \subset \fg_{0,-}$ will stabilize $\fn_w$, a contradiction.  So we may conclude that every $\fg_{-1,-B_m}$ lies in $\fn_w$.  It remains to show that $\fg_{-1,-B^0}$ also lies in $\fn_w$.  But if it does not, then $\fg_{-\a_{\ttj_1}} \subset \fg_{0,-}$ stabilizes $\fn_w$, a contradiction.  This establishes the lemma for $D_n/P_n$.  The following observations will be helpful in Section \ref{S:Dn}.

\begin{remark} \label{R:Dn.B}
Assume $n-1 \in \ttJ$.  If $\sfa=0$, then $n-2 \not\in \ttJ$ (else $\fg_{-\a_{n-1}}$ stabilizes $\fn_w$, contradicting \eqref{E:Jw}).  If $\sfa=1$, then $n-2 \not \in \ttJ$ (else $\fg_{-\a_{n-2}}$ stabilizes $\fn_w$).
%

Consider the tuple $B = (0^{\sfp-\sfr-1},2^\sfr,1)$ with $\sfr\ge1$, which occurs when $\sfa = 2\sfr+1\ge3$.  This tuple will appear only if $1 < \ttj_{\sfp-\sfr} - \ttj_{\sfp-\sfr-1}$.  If the tuple does not appear, $\fg_{-\a_{\ttj_{\sfp-\sfr-1}}}$ will stabilize $\fn_w$.  Thus, $1 < \ttj_{\sfp-\sfr} - \ttj_{\sfp-\sfr-1}$.

Consider the tuple $B = (0^{\sfp-\sfr-1},1,2^{\sfr-1},1)$ with $\sfr\ge1$, which occurs when $\sfa = 2\sfr\ge2$.  Note that $\fg_{-\a_{\ttj_{\sfp-\sfr}}}$ will stabilize $\fn_w$ if $1 = \ttj_{\sfp-\sfr} - \ttj_{\sfp-\sfr-1}$.  Therefore, $1< \ttj_{\sfp-\sfr} - \ttj_{\sfp-\sfr-1}$.
\end{remark}
\end{subequations}

{\bf (V)}  We finish with $A_n/P_\tti$, for any $1 \le \tti \le n$.  Any root $\a \in \Delta(\fg_1)$ is of the form $\a = \a_i + \cdots + \a_j$ with $i \le \tti \le j$.  In particular, the $B = (b_1,\ldots,b_p)$ with $|B| = \sfa$ are of the form $B_\ell = (0^\ell,1^\sfa,0^{\sfp-\sfa-\ell})$, with $0 \le \ell \le p -\sfa$.  In particular, $\sfa \le \sfp$.  If $\sfa = \sfp$, then there is a unique $B$ with $|B| = \sfa$ and $\fn_w = \fg_{-1}$.  In particular, $X_w = X$ is not a proper subvariety.  

So $\sfa < \sfp$.  By definition of $\sfa$, there exists $A = B_{\ell(A)}$ such that $|A| = \sfa$ and $\fg_{-1,-A} \subset \fn_w$.  Suppose that $\fg_{-1,-B_{\ell(A)+1}}\not \subset \fn_w$.  Then with some thought we see that the root space $\fg_{-\a_{\ttj_{\ell(A)+1}}} \subset \fg_{0,-}$ stabilizes $\fn_w$, a contradiction.  By induction, it follows that $\fg_{-1,-B_\ell} \subset \fn_w$ for all $\ell \ge \ell(A)$.  

Next suppose that $\fg_{-1,-B_{\ell(A)-1}} \not\subset \fn_w$.  Again, with some thought, we see that the root space $\fg_{-\a_{\ttj_{\ell(A)+\sfa}}} \subset \fg_{0,-}$ stabilizes, a contradiction.  We conclude that $\fg_{-1,-B_\ell}$ for all $\ell\le\ell(A)$.  

Given $\ttJ\not=\emptyset$, suppose that $\tti < \ttj_1$.  Observe that $\fg_{-\a_{\ttj_\sfp}}$ will stabilize $\fn_w$ if $\sfp > \sfa+1$.  Thus $|\ttJ| = \sfa+1$.  Moreover, if $\ttj \in \ttJ - \{\ttj_\sfp\}$, then $\fg_{-\a_\ttj} \subset \fg_{0,-1}$ stabilizes $\fn_w$.  Thus $|\ttJ|=1$ and $\sfa=0$.  Similarly, if $\ttj_\sfp < \tti$, then $|\ttJ| = 1$ and $\sfa=0$.

Suppose that $\ttj_1 < \tti < \ttj_\sfp$.  Define $\sfq\in\bZ$ by $\ttj_\sfq < \tti < \ttj_{\sfq+1}$.  Observe that $\sfq \le \sfa+1$, else $\fg_{-\a_{\ttj_1}}$ will stabilize $\fn_w$.  Similarly, $\sfp-\sfq \le \sfa+1$, else $\fg_{-\a_{\ttj_\sfp}}$ will stabilize $\fn_w$.  Also $\sfa \le \sfq$, else $\fg_{-\a_{\ttj_{\sfq+1}}}$ stabilizes $\fn_w$.  Similarly, $\sfa \le \sfp-\sfq$, else $\fg_{-\a_{\ttj_\sfq}}$ stabilizes $\fn_w$.
\end{proof}

\noindent 
The analyses of Sections \ref{S:BR} and \ref{S:RR} require a detailed description of $(\sfa,\ttJ)$.  To that end, the observations made in the proof of Lemma \ref{L:bigone} are collected in the corollary below.  For convenience we set 
$$
  \ttj_0 \ := \ 0 \quad \hbox{ and } \quad 
  \ttj_{\sfp+1} \ := \ 1 + \tmax\{1,\ldots,\hat\tti,\ldots,n\} \,.
$$
so that $\ttj_0 < \ttj_1 < \cdots < \ttj_\sfp < \ttj_{\sfp+1}$.

\begin{corollary} \label{C:bigone}
 Let $G/P_\tti$ be a classical compact Hermitian symmetric space.  Suppose that $X_w \subset X$ is a proper Schubert variety.  Then $0 \le \sfa(w) \in \bZ$ and $\emptyset \neq \ttJ(w) = \{ \ttj_1 , \ldots , \ttj_\sfp \} \subset I_\fp$ satisfy the criteria in Table \ref{t:bigone}.  Conversely, any $(\sfa,\ttJ)$ meeting the criteria in Table \ref{t:bigone} is realized by a unique Schubert variety $X_w \subset X$; that is, there exists a unique $w \in W^\fp$ such that $\sfa = \sfa(w)$ and $\ttJ = \ttJ(w)$.
\end{corollary}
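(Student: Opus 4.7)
The plan is to prove the two directions separately, both case-by-case through the classical CHSS listed in Table \ref{t:CHSS}, leveraging Proposition \ref{P:nw2graded} and Proposition \ref{P:Phi}.

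\textbf{Forward direction.}  Given a proper Schubert variety $X_w$, the restrictions on $(\sfa(w),\ttJ(w))$ are essentially already embedded in the proof of Lemma \ref{L:bigone}, where at many junctures we argued ``otherwise $\fg_{-\a_\ell} \subset \fg_{0,-}$ would stabilize $\fn_w$, contradicting \eqref{E:Jw}''.  The first step is therefore to walk back through cases (I)--(V) of that proof and extract \emph{all} such necessary conditions, rather than only those needed to establish \eqref{E:nw}.  Concretely, for each classical type one records: the allowable size $\sfp = |\ttJ|$ relative to $\sfa$; whether the nodes $\tti\pm 1$, $n-1$, $n$ (depending on type) are forced in or out of $\ttJ$; and the ``gap'' conditions such as $1 < \ttj_{\sfp-\sfr+1} - \ttj_{\sfp-\sfr}$ in type $D_n$ noted in Remarks \ref{R:Dn.A}--\ref{R:Dn.B}.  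These assemble into the entries of Table \ref{t:bigone}.

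\textbf{Converse direction.}  Given $(\sfa,\ttJ)$ satisfying the criteria of Table \ref{t:bigone}, define
\[
  \Phi \ := \ \{\, \a \in \Delta(\fg_1) \ |\ \a(Z_\ttJ) \le \sfa \,\}\,.
\]
Uniqueness is immediate from Proposition \ref{P:Phi} once we show existence: if $w \in W^\fp$ realizes $(\sfa,\ttJ)$, then Proposition \ref{P:nw2graded} forces $\Delta(w) = \Phi$.  For existence, by Proposition \ref{P:Phi} (and since $[\fg_1,\fg_1]=0$ trivially makes $\Phi$ closed), it suffices to check that $\Delta^+\backslash\Phi$ is closed.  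Writing $\Delta^+\backslash\Phi = \Delta^+(\fg_0) \sqcup \Delta(\fg_{1,>\sfa})$, and using that $\Delta^+(\fg_0)$ is closed and $[\fg_{0,\ge 0},\fg_{1,>\sfa}] \subset \fg_{1,>\sfa}$, the only remaining condition is that $[\fg_{0,-},\fg_{1,>\sfa}] \cap \fg_{1,\le \sfa} = 0$, i.e.\ that no root space $\fg_{-\a_\ttj}$ with $\ttj \in \ttJ$ can drop the $Z_\ttJ$-weight of a root in $\Delta(\fg_{1,>\sfa})$ to a value $\le \sfa$ while staying in $\Delta^+\backslash\Phi$.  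This is exactly what the ``gap'' and ``no-forbidden-neighbor'' constraints in Table \ref{t:bigone} ensure.

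\textbf{Main obstacle and organization.}  The hard part is not conceptual but combinatorial: the necessary conditions extracted in the forward direction must match \emph{exactly} the sufficient conditions needed in the converse, and this matching must be verified individually for $A_n/P_\tti$, $B_n/P_1$, $C_n/P_n$, $D_n/P_1$, $D_n/P_n$ (and the sub-cases according to which of $\{\tti,n-1,n\}$ lie in $\ttJ$).  The strategy for each case is: enumerate the admissible multi-indices $B=(b_1,\dots,b_\sfp)$ with $|B|=\sfa$ (already done in the proof of Lemma \ref{L:bigone}), then translate ``$\fg_{-\a_\ttj}$ does not stabilize $\fn_w$'' into a statement about the existence of an admissible $B$ whose bracket with $\fg_{-\a_\ttj}$ produces a new $B'$ with $|B'|=\sfa$ (still in $\Delta(w)$) --- giving the lower bound on certain $\ttj$-gaps --- together with the symmetric statement for $\fg_{-1,-B'} \not\subset \fn_w$ giving the upper bound $\sfp \le \sfa+$ (small constant).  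Once these are tabulated the two directions close against each other, establishing both existence and uniqueness in Table \ref{t:bigone}.
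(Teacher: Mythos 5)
Your forward direction is fine and matches the paper: Corollary \ref{C:bigone} is stated in the paper as a collection of observations already made while proving Lemma \ref{L:bigone} (including Remarks \ref{R:Dn.A}--\ref{R:Dn.B}), and your plan to extract the ``otherwise $\fg_{-\a_\ell}$ stabilizes $\fn_w$'' constraints from cases (I)--(V) of that proof is exactly what the authors do.  The uniqueness claim via Propositions \ref{P:nw2graded} and \ref{P:Phi} is also correct.

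The converse direction, however, has a genuine gap: you have identified the wrong missing condition.  Closedness of $\Delta^+\backslash\Phi$ is \emph{automatic}; this is precisely the content of Lemma \ref{L:graded2nw}, whose proof is the three-line verification that $\Delta^+\backslash\Phi = \Delta(\fg_{1,>\sfa}) \sqcup \Delta^+(\fg_{0,\ge 0})$ is preserved under addition.  The roots in $\Delta(\fg_{0,-})$ are negative roots and hence lie outside $\Delta^+$, so no bracket involving $\fg_{0,-}$ enters the closedness check.  Moreover the condition you wrote, $[\fg_{0,-},\fg_{1,>\sfa}] \cap \fg_{1,\le \sfa} = 0$, is generically \emph{false}: Lemma \ref{L:nondeg}(b) gives $\fg_{1,\sfa} = [\fg_{0,-1},\fg_{1,\sfa+1}]$ whenever $\sfa < \sfm$.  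What actually needs proving in the converse direction is not that $w$ exists with $\Delta(w) = \Phi$ (that always holds), but that for this $w$ one has $\ttJ(w) = \ttJ$ and $\sfa(w) = \sfa$.  The nontrivial inclusion is $\ttJ \subset \ttJ(w)$, which by \eqref{E:Jw} amounts to showing $[\fg_{-\a_\ttj},\fg_{-1,-\sfa}] \neq 0$ for every $\ttj\in\ttJ$ (otherwise $\fg_{-\a_\ttj}$ would stabilize $\fn_w = \fg_{-1,\ge-\sfa}$ and $\ttj$ would drop out of $\ttJ(w)$); the gap and adjacency constraints in Table \ref{t:bigone} are exactly what guarantee these non-vanishings.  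Once $\ttJ(w) = \ttJ$ is known, $\sfa(w) = \sfa$ follows from $\Delta(w) = \Phi$ together with $\Delta(\fg_{1,\sfa}) \neq \emptyset$, the latter encoded in the ``upper bound on $\sfa$'' column.  Your sketch should therefore replace the closedness check with a verification, again case-by-case, that the table criteria force $[\fg_{\a_\ttj},\fg_{1,\sfa}]\neq 0$ for all $\ttj\in\ttJ$.
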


\begin{table}[h] 
\caption{$(\sfa,\ttJ)$ for the classical $G/P_\tti$}
\begin{tabular}{|c|c|l|l|} \hline 
$G/P_\tti$ & Upper bound on $\sfa$
  & \multicolumn{2}{|l|}{Realizability criteria for $(\sfa,\ttJ)$} \\ \hline\hline
  & & \multicolumn{2}{|l|}{Defining $0 \le \sfq \le \sfp$ by 
  $\ttj_\sfq < \tti < \ttj_{\sfq+1}$, we have:} \\ 
\raisebox{1.5ex}[0pt]{$A_n / P_\tti$} 
  & \raisebox{1.5ex}[0pt]{$\min(\tti-1,n-\tti)$}
  & \multicolumn{2}{|l|}{$(\sfp,\sfq) \in \{ (2\sfa,\sfa)\,,\, 
    (2\sfa+1,\sfa)\,,\, (2\sfa + 1, \sfa + 1)\,,\,  (2\sfa + 2, \sfa + 1)\}$.}
\\ \hline
$B_n / P_1$ & $1$ & \multicolumn{2}{|l|}{$\ttJ = \{ \ttj \}$} \\ \hline
 & & \multicolumn{2}{|l|}{$\sfa=0$, $\ttJ = \{ \ttj\}$ or $\{n-1,n\}$;} \\
\raisebox{1.5ex}[0pt]{$D_n / P_1$} & \raisebox{1.5ex}[0pt]{$1$} 
  & \multicolumn{2}{|l|}{$\sfa = 1$, $\ttJ = \{ \ttj \leq n-2 \}$ or $\{n-1,n\}$.}
  \\ \hline
$C_n / P_n$ & $n-1$ & \multicolumn{2}{|l|}{$\sfp = \sfa$ or $\sfa + 1$.} \\ \hline
 & & $\sfp=\sfa$ and $n-1\not\in\ttJ$, or & \\
 & \raisebox{1.5ex}[0pt]{$n-3$} & $\sfp=\sfa+1$ and $n-1\in\ttJ$ 
 & \raisebox{1.5ex}[0pt]{$\ttj_\sfs - \ttj_{\sfs-1} \geq 2$ 
   for $\sfs = \left\lceil \frac{\sfp+1}{2} \right\rceil$;}\\ \cline{2-4}
\raisebox{1.5ex}[0pt]{$D_n / P_n$} & 
 & $\sfp=\sfa+1$ and $n-1\not\in\ttJ$, or & \\
 & \raisebox{1.5ex}[0pt]{$n-4$}
 & $\sfp=\sfa+2$ and $n-1\in\ttJ$
 & \raisebox{1.5ex}[0pt]{$\ttj_{\sfs+1} - \ttj_{\sfs} \geq 2$ for 
   $\sfs = \left\lceil \frac{\sfp}{2} \right\rceil$.} \\ \hline
\end{tabular} 
\label{t:bigone}
\end{table}

\begin{remark*}
 Schubert varieties in the Grassmannian $\tGr(\tti,n+1) = A_n / P_\tti$ are indexed by partitions $\pi \in \ttP(\tti,n+1)$.  Proposition \ref{P:part-aJ} describes $(\sfa,\ttJ)$ in terms of the partition.  
\end{remark*}

We close this section with some definitions.  Given a Schubert variety $X_w$, from this point on $\fg = \op \fg_{j,k}$ will denote the $(Z_\tti,Z_w)$--bigraded decomposition of $\fg$.  Define 
\begin{equation} \label{E:gw_dfns} \renewcommand{\arraystretch}{1.3}
\begin{array}{rcl}
  \fg_w \ := \  \fn_w \ \op \ \fg_{0,\ge0} \ \op \ \fg_{1,\ge\sfa}\,,
  & \quad & \fn_w \ = \ \fg_{-1,\ge-\sfa} \,;\\
  \fg_w^\perp \ := \ \fn_w^\perp \ \op \ \fg_{0,<0} \ \op \ \fg_{1,<\sfa} \,,
  & \quad & \fn_w^\perp \ = \ \fg_{-1,<-\sfa} \, .
\end{array}\end{equation}
Both $\fg_w$ and $\fg_w^\perp$ are subalgebras of $\fg$, and $\fg_w$ is the largest subalgebra of $\fg$ containing $\fn_w$.  Both $\fg = \fg_w \op \fg_w^\perp$ and $\fg_{-1} = \fn_w \op \fn_w^\perp$ are $\fg_{0,0}$--module decompositions.

\subsection{Smooth Schubert varieties}\label{S:sm}

\begin{proposition} \label{P:a=0}
Let $X$ be a compact Hermitian symmetric space.  The Schubert variety $X_w \subset X$ is smooth if and only if the integer $\sfa(w)$ of Proposition \ref{P:nw2graded} is zero.
\end{proposition}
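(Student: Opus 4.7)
The plan is to prove that $\sfa(w)=0$ is equivalent to $X_w$ being (a $G$-translate of) a sub-CHSS of $X$; smoothness of such sub-CHSS is automatic, and conversely smoothness forces this structure.

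For the sufficiency direction, assume $\sfa(w)=0$, so that Proposition \ref{P:nw2graded} gives $\fn_w = \fg_{-1,0}$. I would consider the subspace
\[
   \fl \ := \ \fg_{-1,0} \ \oplus \ \fg_{0,0} \ \oplus \ \fg_{1,0} \, .
\]
The bigraded bracket relations $[\fg_{i,j},\fg_{k,l}]\subset \fg_{i+k,j+l}$ make $\fl$ a reductive Lie subalgebra of $\fg$, with an inherited $Z_\tti$-grading of CHSS type. Let $L\subset G$ be the connected subgroup with Lie algebra $\fl$. Then $L\cap P$ is parabolic in $L$ with Lie algebra $\fg_{0,0}\oplus\fg_{1,0}$, so $L\cdot o\simeq L/(L\cap P)$ is projective, hence closed in $X$, and smooth as a (possibly reducible product of) sub-CHSS in its minimal homogeneous embedding. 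A dimension count gives $\tdim(L\cdot o)=\tdim\fg_{-1,0}=|w|$. Since $N_w\subset L$, the Schubert cell $w\cdot C_w = N_w\cdot o$ is contained in $L\cdot o$; by irreducibility and equality of dimensions, $w\cdot X_w = L\cdot o$, and $X_w$ is smooth.

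For the necessity direction, assume $X_w$ is smooth. The cleanest approach is to invoke the classical characterization of smooth Schubert varieties in a CHSS by connected sub-diagrams of $\delta_\fg$ containing the marked node $\tti$ (cf.\ the discussion of Hong's theorem in \S\ref{S:hist}). Given such a sub-diagram, tracing through the construction identifies it with the connected component containing $\tti$ of $\{1,\ldots,n\}\setminus \ttJ(w)$, forcing $\fn_w=\fg_{-1,0}$ and therefore $\sfa(w)=0$.

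The main obstacle lies in the necessity direction if one wishes a self-contained argument that avoids external classification results. The natural self-contained route is a case-by-case analysis using Corollary \ref{C:bigone} and Table \ref{t:bigone}: for each classical $G/P_\tti$ and each $(\sfa,\ttJ)$ with $\sfa\geq 1$, one would exhibit a $T$-fixed point $p\in X_w$ at which the Zariski tangent space $T_pX_w$ decomposes (by $T$-invariance) as a sum of root spaces whose cardinality strictly exceeds $|w|$, contradicting smoothness. For the two exceptional CHSS, the finitely many proper Schubert varieties with $\sfa(w)\geq 1$ can be verified directly by computer algebra, as was done for Lemma \ref{L:bigone}. This combinatorial case analysis is elementary but laborious, and represents the principal labor of a fully self-contained treatment.
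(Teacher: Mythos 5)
Your argument is correct, and the sufficiency direction takes a genuinely different route from the paper. The paper's proof of $\sfa(w)=0 \Rightarrow$ smooth identifies the connected sub-diagram $\d$ with $\fn_w = \fd\cap\fg_{-1}$ and then invokes the cited Proposition \ref{P:sm_subdiag} (Hong); you instead directly construct the Levi $\fl = \fg_{-1,0}\oplus\fg_{0,0}\oplus\fg_{1,0}$, which is precisely the centralizer $\fz_\fg(Z_w)$ of the grading element, and show $w\cdot X_w = L\cdot o$ is a closed homogeneous orbit, hence smooth. This is more self-contained: it avoids the external classification result for this direction. (One small polish: the bigraded bracket relations establish only that $\fl$ is a Lie subalgebra; reductivity --- which is what you need to know that $\fl\cap\fp = \fg_{0,0}\oplus\fg_{1,0}$ is parabolic in $\fl$ and hence that $L\cdot o$ is closed --- is cleanest to see by observing that $\fl$ is the Levi of the parabolic determined by the semisimple element $Z_w\in\fh$.) In the necessity direction you follow the paper's route essentially verbatim, invoking the sub-diagram characterization to read off $\fn_w=\fg_{-1,0}$, and you only sketch what a self-contained argument would require; this matches the paper, which also relies on the citation here, so your treatment is a faithful parallel.
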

 
\noindent The proposition follows almost immediately from Proposition \ref{P:nw2graded} and the following.

\begin{proposition}[{\cite[\S2.3]{MR2276624}}]  \label{P:sm_subdiag}
Let $G/P_\tti$ be an irreducible compact Hermitian symmetric space.  The Schubert variety $X_w$ is smooth if and only if $X_w$ is homogeneous.  The smooth Schubert varieties (modulo the action of $G$) are in bijective correspondence with the connected sub-diagrams $\d$ of the Dynkin diagram $\d_\fg$ of $G$ that contain the node $\tti$.  In particular, if $D \subset G$ is the simple Lie group associated to $\d \subset \d_\fg$, then $X_w = D/(D\cap P)$, and $\fn_w = \fd \cap \fg_{-1}$.
\end{proposition}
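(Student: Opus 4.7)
The proposition has three linked claims: (a) smooth iff homogeneous, (b) a bijection between smooth Schubert varieties (modulo $G$) and connected sub-diagrams $\d \subset \d_\fg$ containing the node $\tti$, and (c) the identifications $X_w = D/(D\cap P)$ and $\fn_w = \fd \cap \fg_{-1}$. I plan to establish (b) and (c) simultaneously and then derive (a) as an immediate corollary.

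For the constructive direction, given a connected sub-diagram $\d$ containing $\tti$, let $\fd \subset \fg$ be the semisimple sub-algebra generated by the Chevalley triples attached to the simple roots in $\d$, and let $D \subset G$ denote the corresponding connected subgroup. Because $\tti \in \d$, restricting the grading element $Z_\tti$ to $\fd$ produces a CHSS decomposition $\fd = \fd_{-1} \op \fd_0 \op \fd_1$ with maximal parabolic $\fd \cap \fp$, so $D/(D \cap P) \inj G/P$ is a smooth homogeneous sub-variety with tangent space $\fd_{-1} = \fd \cap \fg_{-1}$ at $o$. To identify this orbit with a Schubert variety, set $\Phi := \Delta(\fd_1) \subset \Delta(\fg_1)$ and apply Proposition \ref{P:Phi}: $\Phi$ is closed because $[\fg_1,\fg_1] = 0$, and $\Delta^+ \backslash \Phi$ is closed because any root outside $\Delta(\fd_1)$ must involve some simple $\a_j$ with $j \not\in \d$, and this property is preserved under summing. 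Hence there is a unique $w \in W^\fp$ with $\Delta(w) = \Phi$, giving $\fn_w = \fd \cap \fg_{-1}$.

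For the converse direction, suppose $X_w$ is smooth. After a $G$-translation we may assume $o \in X_w$, so $T_o X_w \supset \fn_w$. Since $X_w$ is $B$-stable with $o$ a $B$-fixed point, $T_o X_w$ is a $\fb$-submodule of $T_o X = \fg_{-1}$; smoothness forces $\tdim T_o X_w = |w|$, hence $T_o X_w = \fn_w$. I would then invoke a tangent-cone/local-ring analysis at $o$ (following \cite[\S2.3]{MR2276624}): the regularity of the local ring of $X_w$ at $o$ implies that $\fn_w$ and the opposite space $\fn_w^\sharp := \bigoplus_{\a \in \Delta(w)} \fg_\a \subset \fg_1$ jointly generate a semisimple sub-algebra $\fd \subset \fg$ with $\fn_w = \fd \cap \fg_{-1}$. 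Irreducibility of $X_w$ forces the Dynkin diagram $\d$ of $\fd$ to be connected, and $\fn_w \neq 0$ forces $\tti \in \d$; a dimension count then identifies $X_w$ with the closed $D$-orbit $D/(D \cap P)$. Claim (a) follows at once: homogeneous sub-varieties of $G/P$ are smooth, and the converse direction identifies any smooth $X_w$ with the homogeneous $D/(D \cap P)$.

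The principal obstacle is the converse step. The tangent-space equality $T_o X_w = \fn_w$ is insufficient on its own: Proposition \ref{P:nw2graded} implies that $\fn_w$ is $\fb$-stable (under the $\fb$-action on $\fg_{-1}$, which factors through $\fg_{0,\ge 0}$) for \emph{every} $w$, not only for the smooth ones. One therefore needs genuinely higher-order information about the ideal of $X_w$ at $o$ -- essentially a second-fundamental-form or tangent-cone computation -- in order to force $\fn_w$ into the special algebraic form $\fd \cap \fg_{-1}$. This is the technical core of Hong's argument, and is where I expect the real work to lie.
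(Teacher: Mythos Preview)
The paper does not give its own proof of this proposition: it is quoted verbatim from Hong \cite[\S2.3]{MR2276624} and then \emph{used} to prove Proposition~\ref{P:a=0}. So there is no in-paper argument to compare against; your proposal is really a sketch of how one might reproduce Hong's result.

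Your constructive direction is essentially sound. One point you glide over: having $\Delta(w) = \Delta(\fd_1)$ (hence $\fn_w = \fd_{-1}$) gives only equality of tangent spaces at $o$, not equality of varieties. You should add the observation that $N_w = \exp(\fd_{-1}) \subset D$, so $N_w \cdot o \subset D/(D\cap P)$; since the latter is closed and irreducible of the same dimension, the closure $w\cdot X_w$ coincides with it. Also, your closure argument for $\Delta^+\backslash\Phi$ is stated loosely: the relevant fact is that for positive roots the support of a sum contains the supports of the summands, so a root in $\Delta(\fg_1)$ whose support leaves $\d$ cannot become a root of $\fd_1$ after adding anything in $\Delta^+$.

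Your converse direction is not really a proof. A small technical slip: after $G$-translating so that $o$ lies in the dense cell $N_w\cdot o$, the variety is $wBw^{-1}$-stable, not $B$-stable, so the $\fb$-module argument for $T_oX_w$ needs adjusting (though the conclusion $T_o(w\cdot X_w)=\fn_w$ is correct for elementary reasons). More importantly, you yourself flag the real gap: you assert that smoothness forces $\fn_w \oplus \fn_w^\sharp$ to generate a semisimple subalgebra with Dynkin diagram a connected sub-diagram, and attribute this to an unspecified ``tangent-cone/local-ring analysis.'' That is precisely the content of Hong's \S2.3, and you have not reproduced it. So as written, the converse direction is a citation, not an argument --- which, to be fair, is exactly what the present paper does as well.
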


\begin{proof}[Proof of  Proposition \ref{P:a=0}]
Assume $X_w$ is smooth and let $\d$ be the associated sub-diagram.  Let $\ttJ$ be given by the nodes of $\d_\fg \backslash \d$ that are adjacent to $\d$.  Then the sub-diagram $\d$ clearly corresponds to the $\fn_w$ associated to $\ttJ$ with $\sfa = 0$.

Conversely, given a Schubert variety $X_w$ with associated $\ttJ$ and $\sfa = 0$, let $\d$ be the largest connected sub-diagram of $\d_\fg$ containing the node $\tti$ and with the property that no node of $\ttJ$ is contained in $\d$.  If $\fd \subset \fg$ is the simple subalgebra associated to $\d$, then we clearly have $\fn_w = \fd \cap \fg_{-1}$.
\end{proof}

\begin{example*} 
The following examples illustrate the relationship between the sub-diagram $\d$ of Proposition \ref{P:sm_subdiag} and $\ttJ(w)$.  The $\tti$--th node is marked with a \boldmath$\times$\unboldmath, and nodes of $\ttJ$ are circled.  These examples all have $\sfa = 0$.

\begin{center} \setlength{\unitlength}{3pt}
\begin{picture}(100,36)
%
\put(13.3,30.5){\small{maximal linear space}}
\put(18.5,26.5){\small{$\bP^4 \subset E_6/P_6$}}
\put(55,26){\line(1,0){40}} 
\put(55,26){\circle*{1.2}}\multiput(75,26)(10,0){2}{\circle*{1.2}}
\put(93.6,26.1){\ptimes}
\put(65,26){\circle*{1}}\put(65,26){\circle{2}}
\put(75,26){\line(0,1){5}} \put(75,31){\circle*{1.2}}
\put(80,26){\oval(41,6)[r]} \put(75,30){\oval(6,10)[t]}
\put(72,31){\line(0,-1){5}}\qbezier(72,26)(72,23)(75,23)
\qbezier(78,30)(78,29)(80,29) \put(75,23){\line(1,0){5}}
\put(7,16.5){\small{non-maximal linear space}}
\put(18.5,12.5){\small{$\bP^4 \subset E_6/P_6$}}
\put(55,13){\line(1,0){40}} 
\put(55,3){\circle*{1}}\put(55,3){\circle{2}}
\multiput(65,13)(10,0){3}{\circle*{1.2}}\put(93.6,13.1){\ptimes}
\put(55,13){\circle*{1}}\put(55,13){\circle{2}}
\put(75,19){\circle*{1}}\put(75,19){\circle{2}}
\put(75,13){\line(0,1){6}} \put(80,13){\oval(40,6)}  
\put(0,4){\small{sub-Lagrangian grassmannian}}
\put(13,0){\small{$C_4/P_4 \subset C_5/P_5$}}
\put(55,3){\line(1,0){30}} \multiput(85,2.5)(0,1){2}{\line(1,0){10}}
\put(88.5,1.9){\Large{$<$}} 
\put(55,3){\circle*{1}}\put(55,3){\circle{2}}
\multiput(65,3)(10,0){3}{\circle*{1.2}}\put(93.8,3.1){\ptimes}
\put(80,3){\oval(40,6)}
\end{picture}\end{center}
\end{example*}

 \subsection{Conjugation and duality in the CHSS} \label{S:CD}
 
Fix $w \in W^\fp$ with associated $\sfa = \sfa(w)$ and $\ttJ = \ttJ(w)$.  Let $w'$ denote the conjugate, defined in Section \ref{S:PD} with respect to a choice of Dynkin diagram automorphism $\varphi : \d_\fg \to \d_\fg$.  It is straight-forward to check that 
\begin{equation} \label{E:a'J'}
  \sfa' \ := \ \sfa(w') \ = \ \sfa \quad\hbox{ and }\quad 
  \ttJ' \ := \ \ttJ(w') \ = \varphi(\ttJ) \, .
\end{equation}

By Corollary \ref{C:pd}, $X_{w^*}$ is Poincar\'e dual to $X_w$.  The rest of this section is devoted to the determination of $\sfa(w^*)$ and $\ttJ(w^*)$.  Given $* : I_\fp \to I_\fp$ from Section \ref{S:PD} which induces \eqref{E:j*}, let $\ttJ^* := *\ttJ$.  As noted in Remark \ref{R:dual}, there is an induced automorphism $\psi$ of the Dynkin sub-diagram $\d_\fp \subset \d_\fg$.  By Lemma \ref{L:PD}(a), identifying $\psi$ will determine $\ttJ(w^*)$.

 \begin{lemma} \label{L:PD} 
Let $G/P_\tti$ be CHSS.  Let $w\in W^\fp$ with $(\sfa,\ttJ) = (\sfa(w), \ttJ(w))$.  Then
 \begin{a_list}
 \item $\ttJ(w^*) = \ttJ^*$;
 \item $w^0_\fp \alpha_{\tti}$ is the highest root $\tilde \a\in\Delta(\fg_1)$ of $\fg$;
 \item $\sfa^* := \sfa(w^*) = \tilde\a(Z_w) - \sfa-1$.
 \end{a_list}
 \end{lemma}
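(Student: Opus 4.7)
The plan is to prove \emph{(b)} first, since it is short and is used in \emph{(c)}. By Lemma~\ref{L:Kostant}, $\fg_1$ is an irreducible $\fg_0$--module (equivalently, an irreducible $\ff$--module) with weights $\Delta(\fg_1) \subset \Delta^+$. The simple root $\a_\tti$ is a weight of $\fg_1$, and for any $j \in I_\fp$ the difference $\a_\tti - \a_j$ has mixed-sign simple root coefficients and so is not a root---in particular not a weight of $\fg_1$. Hence $\a_\tti$ is a lowest weight, and in fact the unique one by irreducibility. Since $w^0_\fp \in W_\fp$ carries the lowest weight to the highest, and the highest weight of $\fg_1$ is the highest root $\tilde\a$ of $\Delta(\fg_1)$, we obtain $w^0_\fp\a_\tti = \tilde\a$.

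For \emph{(a)}, set $\Psi = \Delta(\fg_+) \backslash \Delta(w)$ so that $\Delta(w^*) = w^0_\fp\Psi$ by Lemma~\ref{L:dual}(c). By \eqref{E:Jw}, $j \notin \ttJ(w^*)$ means that every $\gamma \in \Delta(w^*)$ with $\gamma + \a_j \in \Delta$ satisfies $\gamma + \a_j \in \Delta(w^*)$. Writing $\gamma = w^0_\fp\b$ with $\b \in \Psi$ and using $w^0_\fp\a_j = -\a_{j^*}$, one has $\gamma + \a_j = w^0_\fp(\b - \a_{j^*})$; since $w^0_\fp$ preserves $\Delta(\fg_1)$ by \eqref{E:w0p}, the condition rewrites as: every $\b \in \Psi$ with $\b - \a_{j^*} \in \Delta$ satisfies $\b - \a_{j^*} \in \Psi$. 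Taking the contrapositive translates this into the nonexistence of $\a \in \Delta(w)$ with $\a + \a_{j^*} \in \Delta \backslash \Delta(w)$, i.e., into $\fg_{-\a_{j^*}}$ stabilizing $\fn_w$, i.e., into $j^* \notin \ttJ(w)$. Hence $\ttJ(w^*) = \ttJ^*$.

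For \emph{(c)}, expand $\tilde\a = \a_\tti + \sum_{j \in I_\fp} c_j\a_j$ with $c_j = \tilde\a(Z_j)$. Applying $w^0_\fp$ to both sides and using \emph{(b)} together with $w^0_\fp\a_j = -\a_{j^*}$ yields $\a_\tti = \tilde\a - \sum_{j \in I_\fp} c_j\a_{j^*}$; substituting the expansion of $\tilde\a$ back in forces $c_j = c_{j^*}$ for every $j \in I_\fp$. Combined with \emph{(a)} this gives the crucial symmetry $\tilde\a(Z_w) = \sum_{j \in \ttJ}c_j = \sum_{j \in \ttJ}c_{j^*} = \tilde\a(Z_{\ttJ^*}) = \tilde\a(Z_{w^*})$. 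To extract $\sfa^*$, note that by Proposition~\ref{P:nw2graded} and Lemma~\ref{L:dual}(c), $w^0_\fp\Delta(w) = \Delta(\fg_1) \backslash \Delta(w^*) = \{\a \in \Delta(\fg_1) : \a(Z_{w^*}) \geq \sfa^* + 1\}$. For $\a = w^0_\fp\b$ with $\b \in \Delta(w)$, the identity $w^0_\fp Z_k = c_k Z_\tti - Z_{k^*}$ (verified by pairing with all simple roots, using \emph{(b)}) gives $\a(Z_{w^*}) = \tilde\a(Z_{w^*}) - \b(Z_w)$. Minimizing over $\b$ (equivalently, maximizing $\b(Z_w)$ to $\sfa$) yields $\sfa^* + 1 = \tilde\a(Z_{w^*}) - \sfa = \tilde\a(Z_w) - \sfa$.

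The main obstacle is the symmetry identity $\tilde\a(Z_w) = \tilde\a(Z_{w^*})$ at the heart of \emph{(c)}: a priori $Z_w$ and $Z_{w^*}$ are different elements of $\fh$, and the equality relies on the fact that the Dynkin involution $*$ preserves the coefficient vector of $\tilde\a$ in the simple roots---a fact one extracts by applying $w^0_\fp$ to the expansion of $\tilde\a$ and using \emph{(b)}. Once this is in hand, the remainder reduces to the dictionary supplied by Lemma~\ref{L:dual}(c) and the standard Weyl-group bookkeeping for $w^0_\fp$ acting on $\Delta(\fg_+)$.
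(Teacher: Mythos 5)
Your proof is correct, and parts of it take a genuinely different route from the paper's.

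For \emph{(b)}, you argue that $\a_\tti$ is the lowest $\ff$--weight of the irreducible module $\fg_1$ and that $w^0_\fp$ carries it to the highest weight $\tilde\a$; the paper instead observes $w^0_\fp\a_\tti \in \Delta(\fg_1)$ from \eqref{E:w0p} and checks directly that $w^0_\fp\a_\tti + \a_j \notin \Delta$ for every simple $\a_j$. Both are standard, one--line representation--theoretic facts; yours is arguably the more conceptual phrasing. For \emph{(a)}, your argument via the complement $\Psi = \Delta(\fg_+)\backslash\Delta(w)$ and the contrapositive is essentially the paper's chain of equivalences read backwards through Lemma \ref{L:dual}(c) and \eqref{E:Jw}; they are the same argument.

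The genuinely different contribution is in \emph{(c)}. The paper derives the formula $\a^*(Z_{w^*}) = \tilde\a(Z_{w^*}) - \sum_{\ttj\in\ttJ}m^\ttj$ exactly as you do, then concludes with ``it is straightforward to check that $\tilde\a(Z_{w^*}) = \tilde\a(Z_w)$'' -- a claim that in practice is settled case-by-case, as recorded in Remark \ref{R:a*}. You instead give a uniform, type-free proof of this symmetry: applying $w^0_\fp$ to the expansion $\tilde\a = \a_\tti + \sum_j c_j\a_j$ and invoking \emph{(b)} together with $w^0_\fp\a_j = -\a_{j^*}$ forces $c_j = c_{j^*}$, and combining this with \emph{(a)} gives $\tilde\a(Z_w) = \tilde\a(Z_{w^*})$ without any case analysis. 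This is a clean improvement over the paper's ``straightforward to check.'' The auxiliary identity $w^0_\fp Z_k = c_k Z_\tti - Z_{k^*}$ you use is correct and is essentially what is implicit in the paper's computation of $\a^*(Z_{w^*})$. One small remark: both your proof and the paper's silently use that the minimum of $\a(Z_{w^*})$ over $\Delta(\fg_1)\backslash\Delta(w^*)$ is actually attained at $\sfa^*+1$ (equivalently, that $\fg_{1,\sfa^*+1}\neq 0$); this follows from Lemma \ref{L:nondeg} applied to $Z_{w^*}$ and is not a gap particular to your argument.
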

 
\begin{proof}
(a) is deduced from Lemma \ref{L:dual} as follows:
\begin{eqnarray*}
 \ttj \in \ttJ & \stackrel{\eqref{E:Jw}}{\Longleftrightarrow} & 
 \exists\ \beta \in \Delta(w) \mbox{ with } 
 \beta + \alpha_\ttj \in \Delta(\fg_1) \backslash \Delta(w)\\
 & \Longleftrightarrow &  
 \exists\ \c \in \Delta(w^*) \mbox{ with } \c - w^0_\fp\alpha_\ttj \in 
 \Delta(\fg_1) \backslash \Delta(w^*) 
 \quad\hbox{[Set $\c = w^0_\fp(\b+ \alpha_\ttj)$.]}\\
 & \stackrel{\eqref{E:Jw}}{\Longleftrightarrow} &  \ttj^* \in \ttJ(w^*) \, .
\end{eqnarray*}
 
 For (b), we note from \eqref{E:w0p} that $w^0_\fp \alpha_\tti \in \Delta(\fg_1)$, so it suffices to show that $w^0_\fp(\a_\tti) + \a_j \not\in\Delta$ for all simple roots $\a_j$.  The equation \eqref{E:abelian} implies $w^0_\fp \alpha_{\tti} + \alpha_{\tti}\not\in\Delta$.  Let $j \in I_\fp$.  If $w^0_\fp \alpha_{\tti} + \alpha_j \in \Delta(\fg_1)$, then $\alpha_{\tti} - \alpha_{j^*} = \alpha_{\tti} + w^0_\fp \alpha_j \in \Delta(\fg_1)$, a contradiction.

To prove (c), note first that by Proposition \ref{P:nw2graded}, $\Delta(\fg_1)\backslash\Delta(w) = \{ \a
\in \Delta(\fg_1) \ | \ \a(Z_w) > \sfa \}$.  Given $\a \in \Delta(\fg_1)$, write $\a = \a_\tti + \sum_{j\in I_\fp} m^j \a_j$, where $m^j \in \bZ_{\geq 0}$.  Then $\a \in \Delta(\fg_1)\backslash\Delta(w)$ if and only if $\sum_{\ttj\in\ttJ} m^\ttj > \sfa$.  By Lemma \ref{L:dual}(c), every $\a^* \in \Delta(w^*)$ is of the form $w^0_\fp(\a)$ for some $\a \in \Delta(\fg_1)\backslash\Delta(w)$.  From Lemma \ref{L:PD}(b), we deduce
$$
  \a^*(Z_{w^*}) \ = \ w^0_\fp(\a)(Z_{w^*}) \ = \ 
  \tilde\a(Z_{w^*}) \ - \ \textstyle \sum_{\ttj\in\ttJ} m^\ttj \ < \ 
  \tilde\a(Z_{w^*}) \ - \ \sfa \, .
$$
Thus, $\sfa^* = \tilde\a(Z_{w^*}) - \sfa-1$.  It is straight-forward to check that $\tilde\a(Z_{w^*}) = \tilde\a(Z_w)$.
\end{proof}

\begin{remark} \label{R:a*}
If $G/P$ is a classical CHSS, then the values of $\tilde\a(Z_w)$ are as follows.
\begin{a_list}
\item for $A_n/P_\tti$, we have $\tilde\a(Z_w) = | \ttJ(w) |$; 
\item for $B_n/P_1$ and $C_n/P_n$, we have $\tilde\a(Z_w) = 2\,| \ttJ(w) |$; 
\item for $D_n/P_\tti$, with $\tti=1,n-1,n$ and $\ttI = \{1,n-1,n\}\backslash\{\tti\}$, we have 
\begin{i_list_nem}
  \item $\tilde\a(Z_w) = 2\,| \ttJ(w) |$, if $\ttI \cap \ttJ(w) = \emptyset$,
  \item $\tilde\a(Z_w) = 2\,| \ttJ(w) |-2$, if $\ttI \subset \ttJ(w)$, and
  \item $\tilde\a(Z_w) = 2\,| \ttJ(w) |-1$ otherwise.
\end{i_list_nem}
\end{a_list}
\end{remark}
 
\begin{proposition} \label{P:pd}
Let $G/P_\tti$ be an irreducible compact Hermitian symmetric space.  The automorphism $\psi$ of the Dynkin sub-diagram $\d_\fp = \d_\fg \backslash\{\tti\}$ corresponding to $*: I_\fp \to I_\fp$ preserves each connected component $\d_0 \subset \d_\fp$.  In the case $D_n/P_1$, with $n$ odd, $\psi$ acts trivially.  In all other cases $\psi$ acts on $\d_0$ by the unique nontrivial automorphism, when it exists.
\end{proposition}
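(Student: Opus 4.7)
The key observation driving my plan is that $\psi$ is, by its very definition $(\alpha_j)^* = -w^0_\fp(\alpha_j) = \alpha_{j^*}$, nothing other than the classical opposition involution of the semisimple part $\ff = [\fg_0,\fg_0]$ restricted to its simple roots $\Sigma_\fp$. My strategy is therefore to reduce the proposition to the standard description of the opposition involution on each simple type and then to read off the Lie type of $\ff$ for each CHSS in Table \ref{t:CHSS}.

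First I would note that $w^0_\fp$ is the longest element of the Weyl group $W_\fp$ of $\ff$, and $W_\fp$ splits as a direct product of the Weyl groups of the simple ideals of $\ff$. Consequently $w^0_\fp$ is the product of the longest elements acting on each simple ideal separately, and hence $\psi$ preserves each connected component $\d_0 \subset \d_\fp$, acting on $\d_0$ as the opposition involution of the simple subalgebra $\fd_0$ associated to that component. This already yields the first assertion of the proposition.

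Next I would invoke the classical fact that, for a simple Lie algebra, $-w_0$ acts trivially on the Dynkin diagram precisely when $w_0 = -\tId$ on the Cartan, which occurs in types $B_k$, $C_k$, $D_{2k}$, $E_7$, $E_8$, $F_4$, $G_2$; in the remaining simple types $A_{k\ge 2}$, $D_{2k+1}$, $E_6$, the opposition involution is the unique nontrivial automorphism of the Dynkin diagram. With this in hand, the proposition becomes an enumeration: for $A_n/P_\tti$ the components of $\d_\fp$ are of types $A_{\tti-1}$ and $A_{n-\tti}$; for $B_n/P_1$, $\d_\fp$ is of type $B_{n-1}$; for $C_n/P_n$ and $D_n/P_n$, $\d_\fp$ is of type $A_{n-1}$; for $D_n/P_1$, $\d_\fp$ is of type $D_{n-1}$; for $E_6/P_6$, $\d_\fp$ is of type $D_5$; and for $E_7/P_7$, $\d_\fp$ is of type $E_6$.

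Applying the general fact component-by-component, the only CHSS in which some component $\d_0$ admits a nontrivial diagram automorphism and yet $\psi|_{\d_0}$ is trivial is $D_n/P_1$ with $\d_\fp = D_{n-1}$ of even rank, i.e.\ $n$ odd; in every other instance either $\d_0$ has no nontrivial automorphism (so the claim is vacuous) or $\psi|_{\d_0}$ is the unique nontrivial automorphism of $\d_0$. The main (mild) obstacle is the parity dichotomy for $D_{n-1}$, which is responsible for the exceptional $D_n/P_1$ case carved out in the statement; beyond this, the argument is a straightforward reduction to the classification of opposition involutions followed by a finite check.
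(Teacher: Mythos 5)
Your proposal is correct and follows essentially the same route as the paper: both arguments reduce $\psi$ to the opposition involution on each simple ideal of $\ff$, invoke the classical classification of when $-w_0$ is nontrivial (types $A_{\ell\ge 2}$, $D_{\text{odd}}$, $E_6$), and then read off the Lie type of $\delta_\fp$ for each CHSS. You make two steps slightly more explicit than the paper does — that $W_\fp$ factors as a product of Weyl groups of simple ideals, and the full case-by-case list of types of $\delta_\fp$ — but the underlying argument is identical.
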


\begin{remark*}
The sub-diagram $\d_\fp$ is connected for all CHSS, except $A_n/P_\tti$, with $1 < \tti < n$.
\end{remark*}

\begin{proof}
The first half of the proposition follows from Lemma \ref{L:PD} and the following well-known results from representation theory.  Let $\d_0 \subset \d_\fp$ be a connected component.  Since $w^0_\fp$ preserves the root subsystem associated to $\d_0$, it is immediate from the definition \eqref{E:j*} that $\psi$ preserves $\d_0$.   If $\d_0$ is of type $\fa_\ell$, then $w^0_\fp$ acts on $\d_0$ by the unique nontrivial (if $\ell>1$) automorphism.  If $\d_0$ is of type $\fd_\ell$, then $w^0_\fp$ acts on $\d_0$ by the unique nontrivial automorphism if $\ell$ is odd; and trivially if $\ell$ is even.  If $\d_0$ is of type $\fe_6$, then $w^0_\fp$ acts on $\d_0$ by the unique nontrivial automorphism.  In all other cases $\d_0$ admits no nontrivial automorphism.  
\end{proof}

\begin{example} \label{X:A10P5a}
In $A_{10} / P_5$, consider $\sfa = 1$ and $\ttJ = \{ 1,3,7,10 \}$ which corresponds by Corollary \ref{C:bigone} to some $X_w$.  By Remark \ref{R:a*}(a), $\tilde\a(Z_\ttJ) = 4$.  Using \eqref{E:a'J'}, Lemma \ref{L:PD}, and Proposition \ref{P:pd}, we obtain: 
\begin{center} \setlength{\unitlength}{3pt}
\hspace{-0.5in}
\begin{picture}(120,30)
  \put(0,26){\line(1,0){90}}
  \put(0,26){\circle*{1.0}}\put(0,26){\circle{2.0}}
  \put(10,26){\circle*{1.2}}
  \put(20,26){\circle*{1.0}}\put(20,26){\circle{2.0}}
  \put(30,26){\circle*{1.2}}
  \put(38.5,26){$\ptimes$}
  \put(50,26){\circle*{1.2}} 
  \put(60,26){\circle*{1.0}}\put(60,26){\circle{2.0}}
  \put(70,26){\circle*{1.2}}
  \put(80,26){\circle*{1.2}}
  \put(90,26){\circle*{1.0}}\put(90,26){\circle{2.0}}
  \put(99,25.1){\footnotesize $\sfa = 1, \,\,\ttJ = \{1,3,7,10\}$}
  \put(0,19){\line(1,0){90}}
  \put(0,19){\circle*{1.2}}
  \put(10,19){\circle*{1.0}}\put(10,19){\circle{2.0}}
  \put(20,19){\circle*{1.2}}
  \put(30,19){\circle*{1.0}}\put(30,19){\circle{2.0}}
  \put(38.5,19){$\ptimes$}
  \put(50,19){\circle*{1.0}}\put(50,19){\circle{2.0}}
  \put(60,19){\circle*{1.2}} 
  \put(70,19){\circle*{1.2}} 
  \put(80,19){\circle*{1.0}}\put(80,19){\circle{2.0}}
  \put(90,19){\circle*{1.2}}
  \put(99,18.1){\footnotesize $\sfa^* = 2, \,\,\ttJ^* = \{2,4,6,9\}$}
  \put(0,12){\line(1,0){90}}
  \put(0,12){\circle*{1.0}}\put(0,12){\circle{2.0}}
  \put(10,12){\circle*{1.2}}
  \put(20,12){\circle*{1.2}}
  \put(30,12){\circle*{1.0}}\put(30,12){\circle{2.0}}
  \put(40,12){\circle*{1.2}} 
  \put(48.5,12){$\ptimes$}
  \put(60,12){\circle*{1.2}} 
  \put(70,12){\circle*{1.0}}\put(70,12){\circle{2.0}}
  \put(80,12){\circle*{1.2}}
  \put(90,12){\circle*{1.0}}\put(90,12){\circle{2.0}}
  \put(99,11.1){\footnotesize $\sfa' = 1, \,\,\ttJ' = \{1,4,8,10\}$}
  \put(0,5){\line(1,0){90}}
  \put(0,5){\circle*{1.2}}
  \put(10,5){\circle*{1.0}}\put(10,5){\circle{2.0}}
  \put(20,5){\circle*{1.2}}
  \put(30,5){\circle*{1.2}} 
  \put(40,5){\circle*{1.0}}\put(40,5){\circle{2.0}}
  \put(48.5,5){$\ptimes$}
  \put(60,5){\circle*{1.0}}\put(60,5){\circle{2.0}}
  \put(70,5){\circle*{1.2}}
  \put(80,5){\circle*{1.0}}\put(80,5){\circle{2.0}}
  \put(90,5){\circle*{1.2}}
  \put(99,4.1){\footnotesize $(\sfa^*)' = 2, \,\,(\ttJ^*)' = \{2,5,7,9\}$}
\end{picture}
\end{center}
\end{example}

\subsection{Schubert varieties in Grassmannians}\label{S:Sch-Gr}
 
It is well-known that Schubert varieties in $\tGr(\tti, n+1) = A_n / P_\tti$ are indexed by partitions
 \begin{align} \label{E:partition-set}
 \ttP(\tti,n+1) = \{ (a_1,\ldots,a_\tti) \in \bZ^\tti: n+1-\tti \geq a_1 \geq a_2 \geq \cdots \geq a_\tti \geq 0 \}.
 \end{align}
Thus, the Hasse diagram $W^\fp$ and $\ttP(\tti,n+1)$ are in one-to-one correspondence, and we may label a Schubert variety as $X_\pi$ for some $\pi \in \ttP(\tti,n+1)$.  In this section we derive the formulas for $\sfa=\sfa(\pi)$ and $\ttJ = \ttJ(\pi)$.  This will provide a dictionary between the familiar partition description of the Schubert varieties, and our $(\sfa,\ttJ)$-description.

An element $\pi = (a_1,\ldots, a_\tti) \in \ttP(\tti,n+1)$ is a Young diagram with $a_\ell$ boxes in row $\ell$, drawn on an $\tti \times (n+1-\tti)$ rectangular grid.  We define $|\pi| = a_1 + \cdots + a_\tti$. As $\pi$ may contain repetitions, write $\pi = (p_1{}^{q_1},\ldots,p_r{}^{q_r})$, where $\{ p_\ell \}_{\ell=1}^r$ is strictly decreasing, $p_\ell,q_\ell \geq 1$ for all $\ell$, and
 \begin{equation} \label{E:pq-ineq}
 p_1 \leq n+1 - \tti \quad \hbox{and} \quad q_1 + \cdots + q_r \leq \tti \, .
 \end{equation}
 The first $q_1$ rows of $\pi$ each have $p_1$ boxes, et cetera. (The $\tti- \sum_{j=1}^r q_j$ zeros have been suppressed.)
  
We will consider two operations on partitions: conjugation $' : \ttP(\tti,n+1) \rightarrow \ttP(n+1-\tti,n+1)$ and duality $* : \ttP(\tti,n+1) \rightarrow \ttP(\tti,n+1)$.  Given $\pi = (p_1{}^{q_1},\ldots,p_r{}^{q_r}) \in \ttP(\tti,n+1)$, 
\begin{enumerate}
 \item $\pi'$ is the transposed Young diagram, and 
 \item $\pi^*$ is the complement within the $\tti \times (n+1-\tti)$ rectangular grid, rotated 180 degrees.  
\end{enumerate}
It is clear that $(\pi')' = \pi = (\pi^*)^*$, $(\pi^*)' = (\pi')^*$, $|\pi'| = |\pi|$, and $|\pi| + |\pi^*| = \tti(n+1-\tti)$.

If $\pi' = (p_1'{}^{q_1'}, \ldots, p_{r'}'{}^{q_{r'}'})$, then $r' = r$ and
\begin{equation} \label{E:conj}
  p_i' \ = \ q_1 + \cdots + q_{r-i+1} \quad\hbox{and}\quad
  q_i' \ = \ p_{r-i+1} - p_{r-i+2}
\end{equation}
for all $i=1,\ldots,r$ and $p_{r+1} := 0$.  Note that $p_1' = q_1 + \cdots + q_r$.  We decompose the partitions into four types.  

\begin{definition} \label{D:suits}
We decompose $\ttP(\tti,n+1)$ in to four types:
\begin{center}\begin{tabular}{ll}
  $\pi \in \spadesuit$ if $p_1=n+1-\tti$ and $p'_1 = \tti$; $\quad$ & 
  $\pi\in\heartsuit$ if $p_1=n+1-\tti$ and $p'_1 < \tti$; \\
  $\pi\in\diamondsuit$ if $p_1<n+1-\tti$ and $p'_1 = \tti$; &
  $\pi\in\clubsuit$ if $p_1<n+1-\tti$ and $p'_1 < \tti$.
\end{tabular}\end{center}
\end{definition}

\begin{remark*}
Note that $p_1 = n+1 - \tti$ [respectively, $p_1' = \tti$] precisely when the first row [respectively, column] of $\pi$ achieves the maximum possible length.  Hence,
 \begin{center}
 \begin{tabular}{llll}
 Conjugation: & $\spadesuit' = \spadesuit$, & $\clubsuit' = \clubsuit$, & $\heartsuit'=\diamondsuit$.\\
 Duality: & $\spadesuit^* = \clubsuit$, & $\heartsuit^*=\heartsuit$, & $\diamondsuit^*=\diamondsuit$.
 \end{tabular}
 \end{center}
\end{remark*}

The partition $\pi^* = (p_1^*{}^{q_1^*}, \ldots, p_{r^*}^*{}^{q_{r^*}^*} )$ is given by
$\displaystyle
   r^* \ = \ \left\{ \begin{array}{cl}
    r-1, & \hbox{ if $\pi \in \spadesuit$,}\\
    r, & \hbox{ if $\pi \in \heartsuit \cup \diamondsuit$,} \\
    r+1, & \hbox{ if $\pi \in \clubsuit$;}
  \end{array} \right.
$
and
\begin{eqnarray*}
  (p^*_1,q^*_1) & = & \left\{ \begin{array}{ll}
    (n+1-\tti \,,\, \tti-p'_1), & 
    \hbox{ if $\pi \in \heartsuit \cup \clubsuit$,}\\
    (n+1-\tti-p_r \,,\, q_r), & 
    \hbox{ if $\pi \in \spadesuit \cup \diamondsuit$,} 
  \end{array} \right. \\
  (p^*_\ell,q^*_\ell) & = & \left\{ \begin{array}{ll}
    (n+1-\tti-p_{r-\ell+2} \,,\, q_{r-\ell+2}), & 
    \hbox{ if $\pi \in \heartsuit \cup \clubsuit$,}\\
    (n+1-\tti-p_{r-\ell+1} \,,\, q_{r-\ell+1}), & 
    \hbox{ if $\pi \in \spadesuit \cup \diamondsuit$,} 
  \end{array} \right. \quad\mbox{where } 1 < \ell < r \,, \\
  (p^*_r,q^*_r) & = & \left\{ \begin{array}{ll}
    (n+1-\tti-p_2 \,,\, q_2), & 
    \hbox{ if $\pi \in \heartsuit \cup \clubsuit$,}\\
    (n+1-\tti-p_1 \,,\, q_1), & 
    \hbox{ if $\pi \in \diamondsuit$,} 
  \end{array} \right. \\
  (p^*_{r+1},q^*_{r+1}) & = & (n+1-\tti-p_1 \,,\, q_1), 
  \quad\hbox{ if $\pi \in \clubsuit$.}
\end{eqnarray*}

\begin{proposition} \label{P:part-aJ} Given $\pi = (p_1{}^{q_1},\ldots,p_r{}^{q_r}) \in \ttP(\tti,n+1)$, the Schubert variety $X_\pi \subset A_n / P_\tti$ has $\dim(X_\pi) = |\pi^*|$ and 
\begin{eqnarray*}
   \sfa(\pi) & = & r^* - 1 = \mbox{the number of interior corners of } \pi^*,\\
   \ttJ(\pi) & = &\{ q_1\,,\, q_1 + q_2\,,\ldots\,,\, q_1 + \cdots + q_r\,,\, 
   n+1 - p_1\,,\, n+1 - p_2\,,\, \ldots\,,\, n+1 - p_r \} \backslash \{ \tti \}.
\end{eqnarray*}
\end{proposition}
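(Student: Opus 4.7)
The plan is to realize $w_\pi \in W^\fp$ as the $(\tti, n+1-\tti)$-shuffle determined by $\pi$, and then to translate Definition~\ref{D:Hasse} and equations \eqref{E:Jw}, \eqref{E:Zw} into box-language on the Young diagram $\pi^*$.

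Using the standard realization $\fg = \fsl_{n+1}$ with simple roots $\alpha_i = e_i - e_{i+1}$, every root in $\Delta(\fg_1)$ has the form $e_p - e_q$ with $1 \leq p \leq \tti < q \leq n+1$ and so corresponds to the box $(k,\ell) := (\tti+1-p,\,q-\tti)$ of the $\tti \times (n+1-\tti)$ rectangle. I would first verify by a shuffle computation that the minimal coset representative $w_\pi \in W^\fp$ associated to $\pi$ has $\Delta(w_\pi)$ equal to the set of boxes of $\pi^*$ placed in the upper-left corner of the rectangle; this immediately gives $\dim X_\pi = |\Delta(w_\pi)| = |\pi^*|$.

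Next, to compute $\ttJ(\pi)$, I would invoke \eqref{E:Jw}: $\ttj \in \ttJ$ iff some $\beta = e_p - e_q \in \Delta(w)$ admits $\beta + \alpha_\ttj \in \Delta(\fg_1) \setminus \Delta(w)$. Because $\beta + \alpha_\ttj$ is a root precisely when $\ttj = p-1$ or $\ttj = q$, in box-language this amounts to demanding that $\pi^*$ contain $(k,\ell)$ but not $(k+1,\ell)$ (yielding $\ttj = \tti - k$) or not $(k,\ell+1)$ (yielding $\ttj = \tti + \ell$). Thus $\ttJ$ is the union of $\{\tti - k : k$ a column length of $\pi^*$ with $k < \tti \}$ and $\{\tti + \ell : \ell$ a row length of $\pi^*$ with $\ell < n+1-\tti\}$. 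Using the four-case formula for $\pi^*$ already recorded in the excerpt, the distinct row lengths of $\pi^*$ are $\{n+1-\tti-p_i\}_{i=1}^r$, augmented by $n+1-\tti$ when $\pi \in \heartsuit \cup \clubsuit$, while the distinct column lengths are $\{q_1+\cdots+q_i\}_{i=1}^r$, augmented by $\tti$ when $\pi \in \diamondsuit \cup \clubsuit$. Applying $k \mapsto \tti - k$ and $\ell \mapsto \tti + \ell$ and discarding $\tti \notin I_\fp$ recovers the stated formula for $\ttJ(\pi)$, uniformly across the four suits.

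Finally, I will compute $\sfa(\pi)$ directly: for $\alpha \in \Delta(w)$ corresponding to box $(k, \ell) \in \pi^*$, the value $\alpha(Z_w)$ counts the $\ttj \in \ttJ$ lying in $[\tti - k + 1, \tti + \ell - 1]$, which by the previous step equals (the number of distinct column lengths of $\pi^*$ less than $k$) plus (the number of distinct row lengths of $\pi^*$ less than $\ell$). If $(k, \ell)$ lies in the $j$-th row-block (rows of length $p_j^*$) and $s$-th column-block of $\pi^*$, these counts equal $j - 1$ and $r^* - s$ respectively, so $\alpha(Z_w) = (j - 1) + (r^* - s)$. A box of $\pi^*$ forces $s \geq j$, so the maximum $r^* - 1$ is attained on the diagonal $s = j$, i.e., at the outer corners of $\pi^*$, and since $\pi^*$ has $r^*$ outer corners it has $r^* - 1$ interior (concave) corners. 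The hard part will be the bookkeeping required to confirm the $\ttJ$ formula in each suit and to verify that $\tti$ always drops out correctly; the observation that $\alpha(Z_w)$ is constant on rectangular blocks of $\pi^*$ makes the $\sfa$ calculation essentially mechanical.
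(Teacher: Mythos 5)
Your proposal is correct and takes essentially the same route as the paper: both identify $\Delta(w_\pi)$ with the boxes of $\pi^*$ in the $\tti\times(n+1-\tti)$ rectangle (the paper phrases this via the matrix picture of $\fn_\pi$, you via root arithmetic), then read off $\ttJ$ from the stabilizer of $\fn_\pi$ in $\fg_0$ and $\sfa$ from the resulting $Z_w$-grading on the rectangular blocks, obtaining the maximum $r^*-1$ at the outer corners. The only small difference is that you derive a uniform box-level criterion for $\ttJ$ (distinct row and column lengths of $\pi^*$, shifted by $\pm\tti$) before translating to the $(p_j,q_j)$ formula, whereas the paper works directly through the four suit cases; that is a mild streamlining of the same argument rather than a different approach.
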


\begin{remark*}
 By \eqref{E:pq-ineq}, the $\ttJ(\pi)$ in Proposition \ref{P:part-aJ} is a strictly increasing sequence.
 \end{remark*}
 
\begin{example*} 
The example $X_\pi = \sigma(W) \subset \tGr(m,n) = A_{n-1} / P_m$ discussed in Section \ref{S:mot} corresponds to the partition $\pi = (k)$; that is, $(p_1,q_1) = (k,1)$.  If $k=n$, then $\pi \in \spadesuit$, $\sfa = 0$ and $\ttJ = \{ 1 \}$; if $1 \leq k < n$ then $\pi \in \clubsuit$, $\sfa = 1$ and $\ttJ = \{ 1, n-k \} \backslash \{ m \}$.
\end{example*}

There are two immediate corollaries to Proposition \ref{P:part-aJ}.  The first is a second proof of Corollary \ref{C:bigone}, with the correspondences of Table \ref{t:suit}.  The second corollary is the observation that the conjugate and duality notations of Sections \ref{S:PD} \& \ref{S:Sch-Gr} are consistent:  if $w \in W^\fp$ corresponds to $\pi \in \ttP(\tti,n+1)$, then $w^*$ corresponds to $\pi^*$ and $w'$ to $\pi'$.

\begin{table}[h] 
\renewcommand{\arraystretch}{1.2}
\caption{Suits versus $(\sfp,\sfq)$.}
\begin{tabular}{|r||c|c|c|c|}
\hline
Type & $\spadesuit$ & $\heartsuit$ & $\diamondsuit$ & $\clubsuit$ \\
\hline
$(\sfp,\sfq)$ & $(2\sfa+2,\sfa+1)$ & $(2\sfa+1,\sfa+1)$
              & $(2\sfa+1,\sfa)$ & $(2\sfa,\sfa)$\\
\hline
\end{tabular} 
\label{t:suit}
\end{table}

\begin{example} \label{X:A10P5b} By Proposition \ref{P:part-aJ}, the Schubert variety of Example \ref{X:A10P5a} with $\sfa=1$, $\ttJ = \{ 1, 3, 7, 10 \}$ corresponds to $\pi = (6,4^2,1^2) \in \ttP(5,11)$, sitting in a $5\times6$ rectangle.
\begin{center}\setlength{\unitlength}{5pt}
\begin{picture}(80,15)
\put(5,7){\small{$\yng(6,4,4,1,1)$}}  \put(13,3){$\pi$}
\put(25,7){\small{$\yng(5,5,2,2)$}}  \put(33,3){$\pi^*$}
\put(45,7){\small{$\yng(5,3,3,3,1,1)$}} \put(53,3){$\pi'$}
\put(65,7){\small{$\yng(4,4,2,2,2)$}} \put(72,3){$(\pi^*)'$}
\end{picture}\end{center}
We have $\pi,\pi' \in \spadesuit$, while $\pi^*, (\pi^*)' \in \clubsuit$.
\end{example}
 
\begin{proof}[Proof of Proposition \ref{P:part-aJ}]
The stabilizer $P = P_\tti \subset A_n = SL_{n+1}(\bC)$ has block form {\footnotesize $\mat{cc}{* & *\\ 0 & *}$}, where the diagonal square blocks are of sizes $\tti$ and $n+1-\tti$ respectively.  Hence, $T_o (A_n / P_\tti) \cong $ {{\footnotesize $\mat{cc}{ 0 & 0 \\ * & 0}$}}$= \fg_{-1}$ and $T_o X_\pi$ is identified with $\fn_\pi$.  Here, $\fn_\pi \subset \Mat_{n+1-\tti,\tti}(\bC)$ is the matrix subspace such that $(\fn_\pi)_{ij} = 0$ if $i > n+1- \tti - a_j$ and we embed this as {\footnotesize $\mat{cc}{ 0 & 0 \\ \fn_\pi & 0}$}$ \subset \fg_{-1}$, cf. \cite[\S2.2]{SchurRigid}.  Pictorially, the nonzero entries of $\fn_\pi$ are precisely the entries of  $\pi^*$ rotated clockwise by 90 degrees.  Consequently, $\dim(X_\pi) = |\pi^*| = (n+1-\tti)\tti - |\pi|$.

Note that $\fg_0$ is block diagonal with blocks of size $\tti$ and $n+1-\tti$.  Recall that $\ttJ$ is determined by the parabolic subalgebra in $\fg_0$ stabilizing $\fn_\pi$.  We have
$$
  \left[ \mat{cc}{K & 0 \\ 0 & L}, \mat{cc}{0 & 0 \\ \fn_\pi & 0} \right] 
  \ = \ \mat{cc}{0 & 0 \\ L\fn_\pi - \fn_\pi K & 0} \,.
$$
So $\fg_{0,0} = \{ (K,L) \ | \ L \z - \z K \in\fn_\pi \ \forall \ \z\in\fn_\pi\}$.  Before identifying $\fg_{0,0}$, we pause to consider an example.
 
\begin{example} \label{X:A10P5d} 
It will be useful to draw a grid on $\Mat_{n+1-\tti,\tti}(\bC)$: at every corner in $\fn_\pi$, draw a horizontal and vertical line.  This yields a partition of the row and column spaces $\Mat_{n+1-\tti,\tti}(\bC)$.  In turn, these respectively induce natural partitions of $\Mat_{n+1-\tti,n+1-\tti}(\bC)$ and $\Mat_{\tti,\tti}(\bC)$.  For $\pi \in \ttP(5,11)$ as in Example \ref{X:A10P5b}, $\pi^* = (5^2,2^2)$ and the induced grid is
{\small $$
 \fn_\pi \ = \ \mat{c|cc|cc}{ 
 0 & * & * & * & *\\ 
 0 & * & * & * & *\\ \hline
 0 & 0 & 0 & * & *\\
 0 & 0 & 0 & * & *\\ 
 0 & 0 & 0 & * & *\\ \hline
 0 & 0 & 0 & 0 & 0} 
 \ \sim \ \mat{cccc}{0 & \star & \star\\ 0 & 0 & \star\\ 0 & 0 & 0 },
$$ }
where the final matrix above is a block simplification of $\fn_\pi$.  The starred entries in $\fn_\pi$ above are {\em arbitrary}, so to satisfy $L\fn_\pi - \fn_\pi K \subset \fn_\pi$, both $K$ and $L$ are of the form {\footnotesize $\mat{ccc}{ \star & \star & \star \\ 0 & \star & \star\\ 0 & 0 & \star}$}, but the diagonal block sizes for each are different.  For $K$, they are sizes $1,2,2$, while for $L$, they are sizes $2,3,1$.  The parabolic $\fg_{0,\geq 0} \subset \fg_0$ determined by $\diag(K,L)$ has associated index set $\ttJ = \{ 1,3,7,10 \}$.  (Note that we do not include 5 since $\tti = 5$ here.)  The reductive subalgebra $\fg_{0,0}$ is block diagonal with blocks of sizes $1,2,2,2,3,1$.  In the last column of $\fn_\pi$, we have from top to bottom the eigenspaces corresponding to the roots
 \begin{align*}
 -\alpha_5\,,\quad 
 -\alpha_5 - \alpha_6\,, \quad 
 -\alpha_5 - \alpha_6 - \alpha_7\,,\quad
 -\alpha_5 - \cdots - \alpha_8\,, \quad
 -\alpha_5 - \cdots - \alpha_9 \,.
 \end{align*}
These roots all have $Z_\tti$-grading $-1$, and their respective $Z_\ttJ$-gradings are $0,0,-1,-1,-1$.  In the block simplification of $\fn_\pi$, the $Z_\ttJ$-gradings are given below, with $\fn_\pi$ also indicated:
 \[
  \mat{c@{\,}c@{}c@{}c}{ \cline{2-3} -2 & \multicolumn{1}{|c}{-1} & \multicolumn{1}{c|}{0}\\ \cline{2-2} -3 & -2 & \multicolumn{1}{|c|}{-1}\\ \cline{3-3} -4 & -3 & -2}
 \]
 Thus, $\fn_\pi = \fg_{-1,0} \oplus \fg_{-1,-1}$, so $\sfa = 1$ and this is the same as the number of interior corners of $\pi^*$.  Finally, $[\fg_{0,0},\fn_\pi] \subset \fn_\pi$, so $\fg_{0,0}$ preserves $\fn_\pi$.  Moreover, $\fg_{0,0}$ acts {\em irreducibly} on every block of $\fn_\pi$.  Hence, $\fg_{-1,0}$ and $\fg_{-1,-1}$ have one and two $\fg_{0,0}$--irreducible components of dimension $4$ and $4,6$ respectively.
 \end{example}

We now return to the proof of the proposition.  The argument proceeds exactly as in the above example.  There are four types of $\pi^*$ to consider (Definition \ref{D:suits}).  In all cases, $\ttJ$ is found to be
 \[
 \ttJ = \left\{ \tti - \sum_{\ell=1}^{r^*} q_\ell^*, \tti - \sum_{\ell=1}^{r^*-1} q_\ell^*, \ldots, \tti - q_1^*, \tti + p_{r^*}^*, \tti + p_{r^*-1}^*, \ldots, \tti + p_1^* \right\} \backslash \{ 0, n+1 \}
 \]
 and $\sfa = r^* - 1$ is the number of interior corners of $\pi^*$.
Using formulas for $(p_j^*,q_j^*)$ that precede the proposition, we recover the expression for $\ttJ$ given in the statement.
\end{proof}

\begin{remark*}
The Schubert varieties in any classical CHSS admit a partition description; and there exist analogous recipes relating the partitions to the $(\sfa,\ttJ)$ data.
\end{remark*}

\section{The Schubert system $\cB_w$} \label{S:Bsys}

\subsection{Definition}\label{S:schubert}

Corollary 8.2 of \cite{MR0142696} asserts that $\tw^k \fg_{-1}$ decomposes as a direct sum of irreducible $\fg_0$--modules
\begin{equation} \label{E:tw^k}
  \tw^k \fg_{-1} \ = \ \bigoplus_{w \in W^\fp(k)} \bI_w \, .
\end{equation}
Above, $W^\fp(k)$ is the set of elements of the Hasse diagram $W^\fp$ of length $k$ and $\bI_w$ is the irreducible $\fg_0$--module of highest weight $w(\rho) - \rho$.  Let $X \subset \bP V_{\w_\tti}$ be an irreducible compact Hermitian symmetric space in its minimal homogeneous embedding.  By \eqref{E:tgtsp0} and \eqref{E:CHSS_gr}, $T_o X\simeq\fg_{-1}$ as $\fg_0$--modules.  So \eqref{E:tw^k} determines a $\fg_0$--module decomposition of $\tw^k T_o X$.

The ${|w|}$-plane $\fn_w \in \tGr({|w|},\fg_{-1})$ is a highest weight line of $\bI_w$.  Define $B_w$ to be the $G_0$--orbit of $\fn_w \in \tGr({|w|},\fg_{-1})$.  Under the $\fp$--module identification $T_oX \simeq \fg/\fp$ the abelian $\fg_+$ acts trivially, and $B_w$ is $P$--stable.  The orbit $B_w \subset \tGr({|w|},\fg_{-1})$ is precisely the set of tangent spaces $T_o (p\cdot X_w)$, $p \in P$.   

Given $z = gP \in X$, the \emph{Schubert system} $\cB_w \subset \tGr({|w|},TX)$ is defined by $\cB_{w,z} := g_* B_w \in \tGr( {|w|} , T_zX )$.  The fact that $B_w$ is stable under $P$ implies that $\cB_{w,z}$ is well-defined.  A ${|w|}$--dimensional complex submanifold $\mfd \subset X$ is an \emph{integral manifold of $\cB_w$} if $T\mfd \subset \cB_w$.   A subvariety $Y \subset X$ is an \emph{integral variety of $\cB_w$} if the smooth locus $Y^0 \subset Y$ is an integral manifold.  The Schubert system is \emph{rigid} if for every integral manifold $M$, there exists $g \in G$ such that $g\cdot M \subset X_w$.  If every integral variety $Y$ of $\cB_w$ is of the form $g\cdot X_w$, then we say $X_w$ is \emph{Schubert rigid}.  

\begin{remark*} If $Y$ is an (irreducible) integral variety of $\cB_w$, then its smooth locus $Y^0$ is a (connected) integral manifold of $\cB_w$ whose closure is $Y$.  If $\cB_w$ is rigid, then there is some $g \in G$ such that $Y^0 \subset g \cdot X_w$ is open.  Since $g \cdot X_w$ is irreducible, then the closure of $Y^0$ is $g \cdot X_w$, and hence $Y = g \cdot X_w$.  Thus, if $\cB_w$ is rigid, then $X_w$ is Schubert rigid.
\end{remark*}


The first step in our analysis of $\cB_w$ is to lift the problem up to a frame bundle $\cG$ over $X \subset \bP V_{\w_\tti}$.  

\subsection{A frame bundle}\label{S:frame}

Set $V = V_{\w_\tti}$ and $\tdim \, V = \sfN+1$.  Given $v \in V \backslash \{0\}$, let $[v] \in \bP V$ denote the corresponding point in projective space.  Given any subset $Y \subset \bP V$, let 
$\widehat Y = \{ v \in V\backslash\{0\} \ | \ [v] \in Y \}$ denote the \emph{cone} over $Y$.  Suppose that $y \in Y$ is a smooth point and $v \in \hat y$.  Then the tangent space $T_v\widehat Y$ (an intrinsic object) may be naturally identified with a linear subspace $\widehat T_y Y \subset V$ (an extrinsic object).  If $X \subset \bP V$ is the minimal homogeneous embedding of $G/P$, then $o \in G/P$ is naturally identified with $[\sfv_0] \in \bP V$, where $\sfv_0 \in V \backslash \{0\}$ is a highest weight vector.  Because $\sfv_0$ is a highest weight vector we have 
\begin{equation} \label{E:g.v0}
  \wh T_o X \ = \ 
  \fg \, . \, \sfv_0 \ = \ \bC \sfv_0 \op \fg_{-1} \, . \, \sfv_0 \, .
\end{equation}
Fix a basis $\sfv = ( \sfv_0 , \sfv_1 , \ldots , \sfv_\sfN )$ for $V$, so that 
$$
  \tspan\{ \sfv_1 , \ldots , \sfv_{|w|} \} \ = \ \fn_w \, . \, \sfv_0 \,.
$$
Define $\cG$ to be the set of frames (bases) 
$$
  \cG \ := \ \{ g \cdot \sfv \ | \ g \in G \} \, .
$$
Note that the bundle $\cG$ is naturally identified with the Lie group $G$.  Elements of $\cG$ are sometimes expressed as $v = (v_0 , \ldots , v_\sfN)$ where $v_i = g \cdot \sfv_i$.  The set of frames $\cG$ is a right $P$--bundle over $X$ under the projection
$$
  \pi : \cG \to X
$$ 
mapping $v = (v_0,\ldots,v_\sfN)$ to $[v_0]$.  (Alternatively, under the identification $\cG \simeq G$, $\pi$ maps $g \in G$ to $[g\cdot\sfv_0] \in \bP V$.)

\subsection{The Maurer-Cartan form}

Let $\vartheta$ denote the $\fg$--valued, left-invariant \emph{Maurer-Cartan 1-form} on $\cG$.  At a point $v = g\cdot\sfv \in \cG$, $\vartheta_v = \vartheta_g$ is defined by $\td v_j = \vartheta^i_j \, v_i = \vartheta \, . \, v_j$.  Regard each $v_i$ as a map $G \to V$ sending $g \mapsto g \cdot \sfv_i$.  Then \eqref{E:g.v0} yields
\begin{equation} \label{E:de0}
  \td v_0 \ \equiv \ \tAd_g(\vartheta_{\fg_{-1}}) \, . \, v_0 
  \quad \hbox{mod } \ \tspan\{v_0\} \, .
\end{equation}
In particular, the semi-basic forms for the projection $\cG \to X$ at $v = g \cdot \sfv$ are spanned by the $\vartheta_{\fg_{-1}}$. 

Given $\fg$--valued 1-forms $\varphi$ and $\chi$, let $[\varphi,\chi]$ denote the $\fg$--valued 2-form given by 
$$
  [\varphi,\chi](u,v) \ := \ 
  [\varphi(u),\chi(v)] \ - \ [\varphi(v),\chi(u)] \, .
$$
Note that $[\varphi,\chi] = [\chi,\varphi]$; we make frequent use of this in computations without mention.  The derivative of the Maurer-Cartan form is given by the \emph{Maurer-Cartan equation} 
\begin{equation} \label{E:mce}
  \td \vartheta = - \half \,[\vartheta,\vartheta] \, .
\end{equation}

Given any $\fg$--valued form $\varphi$ on $\cG$ and a direct sum decomposition $\fg = \fa \op \fa^\perp$, let $\varphi_\fa$ denote the component of $\varphi$ taking value in $\fa$ at $v \in \cG$.  The following lemma is classical.  

\begin{lemma} \label{L:Frob_sys}
Let $\fa \subset \fg$ be a subalgebra and $\fg = \fa \op \fa^\perp$ vector space direct sum decomposition.  Set $\cA = \texp(\fa)\cdot\sfv \subset \cG$.   The system $\vartheta_{\fa^\perp} = 0$ is Frobenius.  The maximal integral manifolds are $g \cdot \mathcal{A} \subset \cG$, where $g \in G$ is fixed.
\end{lemma}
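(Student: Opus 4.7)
The proof is a standard Frobenius-style argument once the Maurer--Cartan equation is unpacked, so I will lay out the steps compactly.

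\textbf{Step 1: Frobenius integrability.} I begin by splitting $\vartheta = \vartheta_{\fa} + \vartheta_{\fa^\perp}$ and applying the Maurer--Cartan equation \eqref{E:mce}. Expanding,
\begin{equation*}
  [\vartheta,\vartheta] \ = \ [\vartheta_\fa,\vartheta_\fa] \ + \ 2\,[\vartheta_\fa,\vartheta_{\fa^\perp}] \ + \ [\vartheta_{\fa^\perp},\vartheta_{\fa^\perp}] \, .
\end{equation*}
Since $\fa$ is a subalgebra, $[\vartheta_\fa,\vartheta_\fa]$ takes values in $[\fa,\fa] \subset \fa$, so its $\fa^\perp$--component vanishes identically. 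The remaining two terms are manifestly multiples of $\vartheta_{\fa^\perp}$. Thus
\begin{equation*}
  \td\,\vartheta_{\fa^\perp} \ \equiv \ 0 \quad \mathrm{mod}\ \vartheta_{\fa^\perp} \, ,
\end{equation*}
and the system is Frobenius.

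\textbf{Step 2: The $g\cdot\cA$ are integral manifolds.} Identify $\cG$ with $G$ via $g \leftrightarrow g\cdot\sfv$; under this identification $\vartheta$ becomes the standard left-invariant Maurer--Cartan form on $G$, and $\cA = \texp(\fa)\cdot\sfv$ corresponds to the subgroup $A = \texp(\fa) \subset G$. For any $g \in G$, the left translate $g\cdot\cA$ corresponds to the left coset $gA$. At a point $ga \in gA$, the tangent space is $(L_g)_\ast T_a A$, and by left-invariance
\begin{equation*}
  \vartheta_{ga}\bigl( (L_g)_\ast X \bigr) \ = \ (L_{a^{-1}})_\ast X \ \in \ \fa
  \qquad \text{for all } X \in T_a A \, .
\end{equation*}
Hence $\vartheta_{\fa^\perp}$ vanishes on $g\cdot\cA$, so $g\cdot\cA$ is an integral manifold.

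\textbf{Step 3: Maximality and exhaustiveness.} The dimension of $g\cdot\cA$ equals $\dim\fa$, which matches the expected dimension of a maximal integral manifold (the codimension of the system $\vartheta_{\fa^\perp}=0$ in $\cG$ is $\dim \fa^\perp$, and $\dim\cG = \dim\fg$). So each $g\cdot\cA$ is maximal. Conversely, every point of $\cG$ equals $g\cdot\sfv$ for some $g \in G$ and lies on $g\cdot\cA$; the uniqueness clause of the Frobenius theorem therefore shows that $g\cdot\cA$ is the unique maximal integral manifold through $g\cdot\sfv$. No step here is a genuine obstacle — the only thing to be careful about is keeping track of left versus right translation once $\cG$ has been identified with $G$ so that the left-invariance of $\vartheta$ gives integrality of the cosets in the correct sense.
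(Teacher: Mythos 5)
Your Step 1 is essentially identical to the paper's Frobenius argument: both split $\vartheta = \vartheta_\fa + \vartheta_{\fa^\perp}$, note that $[\fa,\fa]\subset\fa$ kills the $\fa^\perp$-component of $[\vartheta_\fa,\vartheta_\fa]$, and conclude $\td\vartheta_{\fa^\perp}\equiv 0 \bmod \vartheta_{\fa^\perp}$. Where you diverge is in the second half. The paper disposes of the identification of the maximal integral manifolds with a single citation to Cartan's theorem on Lie-algebra-valued 1-forms satisfying the Maurer--Cartan equation (the theorem that such a form on a simply connected manifold is the pullback of the Maurer--Cartan form of the corresponding group, unique up to left translation). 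You instead give a hands-on argument: identify $\cG\simeq G$, use left-invariance of $\vartheta$ to check that the cosets $gA$ are integral, and then invoke the dimension count together with the uniqueness clause of the Frobenius theorem to conclude these are precisely the maximal leaves. Your route is more elementary and self-contained, at the cost of being slightly longer; the paper's citation is more compressed but rests on a heavier external result. Both are correct. One small presentational point: your displayed computation $\vartheta_{ga}\bigl((L_g)_\ast X\bigr) = (L_{a^{-1}})_\ast X$ is right but terse — it silently uses $(L_{(ga)^{-1}})_\ast = (L_{a^{-1}})_\ast (L_{g^{-1}})_\ast$ and that $(L_{a^{-1}})_\ast$ maps $T_aA$ to $T_eA = \fa$; spelling that out would make the left-invariance argument airtight.
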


\begin{proof}
To see that $\vartheta_{\fa^\perp} = 0$ is Frobenius note that the Maurer-Cartan equation implies $\td\vartheta_{\fa^\perp} = - \half\,\left[ \vartheta , \vartheta \right]_{\fa^\perp}$.  Using $[\fa,\fa] \subset \fa$ and $\vartheta = \vartheta_{\fa} + \vartheta_{\fa^\perp}$ we have $\td\vartheta_{\fa^\perp} \equiv 0$ modulo $\vartheta_{\fa^\perp}$; the system is Frobenius.  The remainder of the lemma follows from Cartan's Theorem \cite[Theorem 1.6.10]{MR2003610} on Lie algebra valued 1-forms satisfying the Maurer-Cartan equation.
\end{proof}

To simplify notation set $\vartheta_{j,k} :=\vartheta_{\fg_{j,k}}$.

\subsection{Lifting the Schubert system to the frame bundle}\label{S:lift}

Let $\mfd \subset X$ be a (connected) complex submanifold of dimension $|w|$.  (We are interested in the case that $\mfd$ is the set of smooth points $Y^0$ of a variety $Y$.)  By definition $\mfd$ is an integral manifold of $\cB_w$ if and only if $T\mfd \subset \cB_w$.  Equivalently, every point in $\mfd$ admits an open neighborhood $U \subset \mfd$ with local section $\sigma : U \to \cG$ such that $\{ \sigma_0(u) , \ldots , \sigma_{|w|}(u) \}$ spans $\widehat T_u\mfd$ for all $u \in U$.   

Define the \emph{adapted frame bundle $\cF$} over an integral manifold $\mfd$ of $\cB_w$ to be 
\begin{equation}\label{E:cY}
 \cF \ := \ \left\{ v \in \cG \ : \ [v_0] \in \mfd \, , \ 
 \widehat T_{[v_0]} \mfd = \tspan\{ v_0 , \ldots , v_{|w|} \} \right\} \, .
\end{equation}
Note that $\cF$ is a right $P_w$--bundle over $\mfd$, where $P_w \subset P$ is the parabolic subgroup preserving the flag $\bC \sfv_0 \subset \bC \sfv_0 \,\op\, \fn_w \, . \, \sfv_0 \subset V$.

Since $\fp = \fg_0 \op \fg_1$ is the stabilizer of $\bC \sfv_0$, the Lie algebra of $P_w$ is the subalgebra $\fp_w \subset \fp$ stabilizing $\bC \sfv_0 \,\op\, \fn_w \, . \, \sfv_0$.  Given Proposition \ref{P:nw2graded}, it is straight-forward to see that $\bC \sfv_0 \,\op\, \fn_w \, . \, \sfv_0$ is stabilized by $\fg_{0,\ge0}$ in $\fg_0$ and by all of $\fg_1$.  Thus 
\begin{equation} \label{E:pw}
  \fp_w \ = \ \fg_{0,\ge0 } \ \op \ \fg_{1} \, .
\end{equation}

Observe that \eqref{E:de0} and \eqref{E:cY} imply $\vartheta_{\fn_w^\perp} = 0$ when restricted to $\cF$, cf. \eqref{E:gw_dfns}.  In particular, $\vartheta_{\fn_w}$ spans the semi-basic 1-forms on $\cF$.  Since $\vartheta_{\fp_w}$ spans the vertical 1-forms, it follows that $\vartheta_{\fn_w\op\fp_w}$ is a coframing of $\cF$.

Let $\{ \sfw_1 , \ldots , \sfw_s \}$ be a basis of $\fn_w \op \fp_w$.  Then $\vartheta_{\fn_w\op\fp_w} = \theta^a \, \sfw_a$ uniquely defines 1-forms $\theta^a$, $a = 1 , \ldots , s$.  Set $\tw \vartheta_{\fn_w\op\fp_w} := \theta^1 \wedge \cdots \wedge \theta^s$.  While the form $\tw{\vartheta_{\fn_w\op\fp_w}}$ depends on our choice of basis, its vanishing (or nonvanishing) on any $s$--dimensional tangent subspace does not.  The statement that $\vartheta_{\fn_w \op \fp_w}$ is a coframing on $\cF$ is equivalent to the statement that $\tw \vartheta_{\fn_w\op\fp_w}$ is nowhere vanishing on $\cF$.

\begin{definition*}  
The \emph{Schubert system on $\cG$} is the linear Pfaffian system
  \begin{equation} \label{E:Bw_eds}
  \vartheta_{\fn_w^\perp} \ = \ 0 \quad\hbox{ with }\quad
  \tw \vartheta_{\fn_w\op\fp_w} \ \not= \ 0 \, .
  \end{equation}
An \emph{integral manifold} of \eqref{E:Bw_eds} is a submanifold $\cM \subset \cG$ such that $\tdim\,\cM = \tdim\,\fn_w\op\fp_w$, $\vartheta_{\fn_w^\perp}$ vanishes when restricted to $\cM$, and $\tw \vartheta_{\fn_w\op\fp_w}$ is nowhere zero on $\cM$.
\end{definition*}

\begin{lemma} \label{L:Bw_eds}
The integral manifolds of \eqref{E:Bw_eds} are precisely the adapted bundles $\cF$ over integral manifolds $\mfd$ of the Schubert system $\cB_w$.
\end{lemma}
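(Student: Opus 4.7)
The lemma is a correspondence between two descriptions of the same object: integral manifolds of a Pfaffian system on a frame bundle $\cG$ versus adapted lifts of integral manifolds of a differential system on the base $X$. I will prove the two inclusions separately.

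\emph{Forward direction.} Given an integral manifold $\mfd$ of $\cB_w$, the adapted frame bundle $\cF$ defined in \eqref{E:cY} is a $P_w$--bundle over $\mfd$; in particular $\dim \cF = |w| + \dim \fp_w = \dim(\fn_w \op \fp_w)$. The authors have already observed that the coframing $\vartheta_{\fn_w \op \fp_w}$ is nowhere vanishing on $\cF$ (so $\tw \vartheta_{\fn_w \op \fp_w} \neq 0$) and that $\vartheta_{\fn_w^\perp} = 0$ on $\cF$. For the latter, the key input is \eqref{E:de0}: the push-forward $\pi_*\td v_0$ lies in $\wh T_{[v_0]}\mfd = \tspan\{v_0,\dots,v_{|w|}\}$, while $\fg_{-1}$ acts on $v_0$ with kernel $\{0\}$ modulo $\bC v_0$, so any $\fg_{-1}$--valued component of $\vartheta$ along $\cF$ must lie in $\fn_w$. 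Thus $\cF$ is an integral manifold of \eqref{E:Bw_eds}.

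\emph{Reverse direction.} Let $\cM \subset \cG$ be an integral manifold of \eqref{E:Bw_eds}. The semi-basic forms for $\pi : \cG \to X$ are $\vartheta_{\fg_{-1}}$, and on $\cM$ the identity $\fg_{-1} = \fn_w \op \fn_w^\perp$ together with $\vartheta_{\fn_w^\perp} = 0$ implies that the pulled-back horizontal 1-forms are spanned by $\vartheta_{\fn_w}$. Since $\vartheta_{\fn_w \op \fp_w}$ is a coframing of $\cM$ and $\vartheta_{\fp_w}$ is vertical, the $|w|$ forms $\vartheta_{\fn_w}$ are linearly independent on $\cM$. Hence $\pi|_\cM$ is of constant rank $|w|$, and (locally, then globally after passing to the image) $\mfd := \pi(\cM)$ is a $|w|$--dimensional complex submanifold of $X$.

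\emph{Matching the tangent plane and the bundle structure.} For $v = g\cdot \sfv \in \cM$ with $[v_0] \in \mfd$, equation \eqref{E:de0} combined with $\vartheta_{\fn_w^\perp} = 0$ gives $\td v_0 \equiv \tAd_g(\vartheta_{\fn_w}) \cdot v_0 \pmod{\tspan\{v_0\}}$. Since $\tspan\{\sfv_1,\dots,\sfv_{|w|}\} = \fn_w \cdot \sfv_0$ by construction of the frame, this computes $\wh T_{[v_0]}\mfd = \tspan\{v_0,v_1,\dots,v_{|w|}\}$, so $T_{[v_0]}\mfd$ corresponds to $g_* \fn_w \in \cB_{w,[v_0]}$; i.e., $\mfd$ is an integral manifold of $\cB_w$, and every $v \in \cM$ satisfies the defining condition \eqref{E:cY} for the adapted bundle of $\mfd$. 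Thus $\cM \subset \cF$. A dimension count ($\dim \cM = \dim(\fn_w \op \fp_w) = \dim \cF$) together with the fact that both are smooth manifolds fibering over $\mfd$ with the same fiber dimension $\dim \fp_w = \dim P_w$ upgrades the inclusion to equality.

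\emph{Expected difficulty.} The computations themselves are short; the only real care is in the reverse direction, where one must (i) confirm that $\mfd = \pi(\cM)$ is smooth of dimension exactly $|w|$ (this rests on the coframing hypothesis $\tw \vartheta_{\fn_w\op\fp_w} \neq 0$ and the structure \eqref{E:pw} of $\fp_w$), and (ii) promote the pointwise inclusion $\cM \subset \cF$ to equality, which requires verifying that $\cM \to \mfd$ has full fibers, i.e., that $\cM$ is closed under the right $P_w$--action. The last point follows because $\cM$ is cut out by the left-invariant system $\vartheta_{\fn_w^\perp} = 0$, which is $P_w$--invariant on $\cG$ since $\fn_w^\perp$ is stabilized by $\fp_w$ under the adjoint action (this in turn is a consequence of \eqref{E:nwstab} and \eqref{E:gw_dfns}).
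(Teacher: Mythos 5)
The forward direction of your argument matches the paper's and is fine. The substantive gap is in the reverse direction, at the moment you write that ``(locally, then globally after passing to the image) $\mfd := \pi(\cM)$ is a $|w|$--dimensional complex submanifold of $X$.'' The constant--rank theorem only gives you that $\pi(U)$ is a submanifold for \emph{small} neighborhoods $U \subset \cM$; to glue these into a global submanifold you must know that the tangent plane $\wh T_{[v_0]}\mfd = \tspan\{v_0,\ldots,v_{|w|}\}$ is the same for \emph{every} $v \in \cM$ over a given base point $[v_0]$, i.e.\ that the fibres of $\pi|_\cM$ lie inside the frame-stabilizer $P_w$. This is exactly what the paper's Claim~\ref{cl:Bw_eds} proves: differentiating $\vartheta_{\fn_w^\perp} = 0$ via the Maurer--Cartan equation \eqref{E:mce} gives $0 = [\vartheta_{0,-}, \vartheta_{\fn_w}]_{\fn_w^\perp}$, and because \eqref{E:nwstab} makes the pairing $\fg_{0,-} \times \fn_w \to \fn_w^\perp$ non-degenerate in the first slot, Cartan's Lemma forces $\vartheta_{0,-} \equiv 0 \pmod{\vartheta_{\fn_w}}$, hence $\vartheta_{0,-}$ vanishes along the fibres. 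Your proof skips this entire computation.

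The justification you offer in the ``Expected difficulty'' paragraph does not repair the gap. First, the premise is false: $\fn_w^\perp = \fg_{-1,<-\sfa}$ is \emph{not} stable under $\tad(\fp_w)$, since $[\fg_{0,1}, \fg_{-1,-\sfa-1}] \subset \fg_{-1,-\sfa} \subset \fn_w$. (What is true, and what makes $\vartheta_{\fn_w^\perp} = 0$ a well-posed $P_w$-invariant Pfaffian system, is that the \emph{complement} $\fn_w \oplus \fp$ of $\fn_w^\perp$ is $\tAd(P_w)$-stable.) Second, and more fundamentally, even granting that the Pfaffian system is $P_w$-invariant, invariance of the \emph{system} only says that $p\cdot\cM$ is again an integral manifold for $p\in P_w$; it does not say $\cM$ is saturated by $P_w$-orbits. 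That saturation is precisely what needs to be proved, and it requires the Maurer--Cartan/Cartan's-Lemma argument above. You correctly identified where the subtlety lies, but the argument supplied would not close it.
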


\begin{proof}
First assume that $\cF$ is adapted frame bundle over an integral manifold $\mfd$ of $\cB_w$.  That $\cF$ is an integral manifold of \eqref{E:Bw_eds} follows directly from the discussion following \eqref{E:pw}.

Now assume that $\cM$ is an integral manifold of \eqref{E:Bw_eds}.  Set $\mfd = \pi(\cM)$.  We will show that \emph{(1)} $\mfd$ is a submanifold of $X$, \emph{(2)} $\mfd$ is an integral manifold of $\cB_w$, and \emph{(3)} $\cM$ is the adapted bundle \eqref{E:cY} over $\mfd$.  The independence condition and the vanishing of $\vartheta_{\fn_w^\perp}$ imply that $\pi : \cM \to X$ is of constant rank ${|w|}$: so every $e \in \cM$ admits a neighborhood $U \subset \cM$ such that $\pi(U)$ is a submanifold of $X$.  To see that $\mfd$, the union of the various $\pi(U)$, is itself a submanifold of $X$ we need the following.

\begin{claim} \label{cl:Bw_eds}
The form $\vartheta_{0,-}$ vanishes when restricted to fibres of $\cM$.
\end{claim}

\noindent The claim is proven below.  Notice that the fibre-wise vanishing of $\vartheta_{0,-}$ implies that the linear span of $\{v_0 , v_1 , \ldots , v_{|w|} \}$ is constant on the fibre $\cM_{[v_0]}$ over $[v_0] \in \mfd$.  Thus $\widehat T_{[v_0]} \mfd$ is well-defined and $\mfd$ is a submanifold of $X$.  

That $\mfd$ is an integral manifold of $\cB_w$ is a consequence of $\vartheta_{\fn_w^\perp} = 0$.  Finally, it follows from \eqref{E:Bw_eds} and the definition \eqref{E:cY} that $\cM$ is the adapted frame bundle $\cF$ over $\mfd$.
\end{proof}

\begin{proof}[Proof of Claim \ref{cl:Bw_eds}]
The fibre-wise vanishing of $\vartheta_{0,-}$ is equivalent to $\vartheta_{0,-} \equiv 0$ modulo $\vartheta_{\fn_w}$ on $\cF$.  Differentiating $\vartheta_{\fn_w^\perp} = 0$ with the Maurer-Cartan equation \eqref{E:mce} yields
\begin{equation} \label{E:0neg}
  0 \ = \ -\td \vartheta_{\fn_w^\perp} 
  \ = \ [ \vartheta_{0,-} \, , \, \vartheta_{\fn_w} ]_{\fn_w^\perp} \, .
\end{equation}
Suppose that $\xi \in \fg_{0,-}$ and $[\xi , \fn_w]_{\fn_w^\perp} = \{0\}$.  Then $\xi$ preserves $\fn_w$ and \eqref{E:nwstab} implies $\xi=0$.  We may apply Cartan's Lemma \cite{MR1083148, MR2003610} to conclude that $\vartheta_{0,-} \equiv 0$ modulo $\vartheta_{\fn_w}$.
\end{proof}

\subsection{Distinguishing Schubert varieties}\label{S:Xw_eds}

Now consider the case that $Y = X_w$ is a Schubert variety, and review the definitions \eqref{E:nw} and \eqref{E:gw_dfns}.  Let $N_w\subset G_w$ be the connected Lie subgroups of $G$ with Lie algebras $\fn_w \subset \fg_w$.  The orbit $N_w \cdot o \subset X$ is a dense subset of the smooth points $X_w^0$ of $X_w$, and $\cG_w := G_w \cdot \sfv \subset \cG$ is a sub-bundle of the adapted frames \eqref{E:cY} over $N_w \cdot o$.

When restricted to $\cG_w$, the Maurer-Cartan form takes values in $\fg_w$.  In particular, $\cG_w \subset \cG$ is a maximal integral submanifold of the system $\vartheta_{\fg_w^\perp} = 0$.  Conversely, Lemma \ref{L:Frob_sys} implies that every integral manifold of 
\begin{equation} \label{E:Xw_eds}
  0 \ = \vartheta_{\fg_w^\perp}
  \quad \hbox{with} \quad
  \tw \vartheta_{\fg_w} \not= 0
\end{equation}
is an open submanifold of $\cG_w := G_w \cdot \sfv$ (modulo the action of $G$ on $\cG$).  
%
%
\noindent This establishes the following.  

\begin{proposition} \label{P:Bw_rigid}
The Schubert system $\cB_w$ is rigid if and only if every integral manifold $\cF$ of \eqref{E:Bw_eds} admits a sub-bundle on which $\vartheta_{\fg_w^\perp}$ vanishes.
\end{proposition}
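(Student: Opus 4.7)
The plan is to reduce the proposition to the combined content of Lemma~\ref{L:Bw_eds} and Lemma~\ref{L:Frob_sys}, the latter applied with $\fa = \fg_w$. Lemma~\ref{L:Bw_eds} identifies integral manifolds of \eqref{E:Bw_eds} with the adapted frame bundles $\cF$ over integral manifolds $M$ of $\cB_w$, while Lemma~\ref{L:Frob_sys} describes the maximal integral manifolds of $\vartheta_{\fg_w^\perp}=0$ as the $G$-translates of $\cG_w = G_w \cdot \sfv$, which project to the $G$-translates of $N_w \cdot o \subset X_w^0$. Both directions of the equivalence then amount to matching these two pictures.

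For the forward direction, suppose $\cB_w$ is rigid and let $\cF$ be an integral manifold of \eqref{E:Bw_eds}. Set $M = \pi(\cF)$; by rigidity some $g \in G$ satisfies $g \cdot M \subset X_w$. Smoothness and the dimension equality $\tdim M = |w|$ force $g \cdot M \subset X_w^0$, and density of $N_w \cdot o$ in $X_w^0$ makes $(g \cdot M) \cap (N_w \cdot o)$ open and dense in $g \cdot M$. Using $\fg_w \cap \fp = \fg_{0,\ge 0} \op \fg_{1,\ge \sfa} \subset \fp_w$, the bundle $\cG_w$ sits inside the adapted frame bundle over $N_w \cdot o$ as a reduction of structure group from $P_w$ to $G_w \cap P$; pulling back by $g^{-1}$ produces a sub-bundle of $\cF$ on which $\vartheta$ takes values in $\fg_w$ by construction, so $\vartheta_{\fg_w^\perp}$ vanishes.

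For the reverse direction, suppose $\cF' \subset \cF$ is a sub-bundle with $\vartheta_{\fg_w^\perp}|_{\cF'} = 0$. The vanishing forces the semi-basic and vertical tangent directions of $\cF'$ to lie in $\fg_w$; the semi-basic directions span $\fn_w \subset \fg_w$ automatically, while the vertical directions impose that the fiber of $\cF'$ has Lie algebra inside $\fp_w \cap \fg_w = \fg_{0,\ge 0} \op \fg_{1,\ge \sfa}$. Taking this maximal reduction gives $\tdim \cF' = \tdim \fg_w$, so $\cF'$ is a maximal integral submanifold of $\vartheta_{\fg_w^\perp} = 0$. Lemma~\ref{L:Frob_sys} then identifies $\cF'$ with an open subset of some $g \cdot \cG_w$, so $\pi(\cF')$ is open in $M$ and contained in $g \cdot (N_w \cdot o) \subset g \cdot X_w$. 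Since $M$ is connected and $\cB_w$ is $G$-invariant, this local containment propagates globally to $M \subset g \cdot X_w$, establishing rigidity.

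The main obstacle is the final propagation step in the reverse direction: one must check that the translate $g$ furnished on the open subset $\pi(\cF') \subset M$ actually works for all of $M$, rather than having to be replaced by a different $G$-translate on a complementary piece. This follows from a short analytic-continuation/connectedness argument exploiting the $G$-invariance of $\cB_w$ together with irreducibility of $X_w$, plus the fact that the closed condition ``$M \subset g \cdot X_w$'' is open in $g$ once valid on an open set of $M$. The rest is direct book-keeping translating between $M$, its adapted bundle $\cF$, and the distinguished sub-bundle $\cG_w$.
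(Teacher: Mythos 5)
Your argument follows the same route the paper takes: the proposition is an immediate juxtaposition of Lemma~\ref{L:Bw_eds} (which identifies integral manifolds of \eqref{E:Bw_eds} with adapted bundles over integral manifolds of $\cB_w$) and Lemma~\ref{L:Frob_sys} applied to $\fa = \fg_w$, together with the observation that $\cG_w$ is the maximal integral manifold of $\vartheta_{\fg_w^\perp}=0$ through $\sfv$. Two remarks. First, the propagation step you flag at the end as the main obstacle is not actually needed: the ``sub-bundle'' $\cF'$ in the statement lies over the \emph{whole} base $M$ (compare $\cF^0 = \overline p(\cF)$ from Lemma~\ref{L:norm_lambda}, which is a bundle map of $\cF$ over all of $M$), so $\pi(\cF')=M$ and $\cF'\subset g\cdot \cG_w$ gives $M \subset g\cdot(N_w\cdot o)\subset g\cdot X_w$ with no local-to-global argument; nor do you need the normalization $\tdim\cF'=\tdim\fg_w$, since any connected integral submanifold of a Frobenius system is contained in a single leaf. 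Second, there is a small imprecision in your forward direction that you do not flag: $\cG_w$ lies over $N_w\cdot o$, which is only \emph{dense} in $X_w^0$, so pulling back by $g^{-1}$ gives a sub-bundle of $\cF$ over a dense open subset of $M$ rather than over all of $M$. The paper's terse ``this establishes the following'' passes over the same point; since only the ``if'' direction is used downstream (Corollary~\ref{C:new_coh}), the imprecision is harmless, but under a strict reading of ``sub-bundle'' a further argument is owed there.
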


\subsection{Torsion} \label{S:torsion}

In proving Claim \ref{cl:Bw_eds}, we observed that $\vartheta_{0,-} \equiv 0$ modulo $\vartheta_{\fn_w}$ on any integral manifold $\cF$ of \eqref{E:Bw_eds}.  Equivalently, there exists a function $\lambda : \cF \to \fg_{0,-} \ot \fn_w^*$ such that 
\begin{equation} \label{E:th0neg}
  \vartheta_{0,-} \ = \ \lambda( \vartheta_{\fn_w})
\end{equation}
on $\cF$.  The $\lambda$ are not arbitrary -- they are constrained by torsion.  To identify the torsion constraints substitute \eqref{E:th0neg} into \eqref{E:0neg}.  We find 
\begin{equation} \label{E:tor}
  0 \ = \ \Big(
  \big[ u \,,\, \lambda(v) \big] \ - \ \big[ v \,,\, \lambda(u) \big]
  \Big)_{\fn_w^\perp} \,, \qquad \forall \ u,v \in \fn_w \, .
\end{equation}
Equivalently, $\lambda$ must lie in the kernel of the map
\begin{equation}\label{E:uld1}
  \d^1 : \fg_{0,-} \ot \fn_w^* \to \fn_w^\perp \ot \tw^2 \fn_w^*
\end{equation}
defined by 
$$
  (\d^1\lambda)\, u\wedge v \ := \ \Big(
  \big[ u \,,\, \lambda(v) \big] \ - \ \big[ v \,,\, \lambda(u) \big]
  \Big)_{\fn_w^\perp} \, , \quad u , v \in \fn_w \, .
$$
These are the torsion constraints on $\lambda$.

\subsection{Fibre motions in \boldmath$\cF$\unboldmath} \label{S:fibre}

Let $\cF$ be a maximal integral manifold of the Schubert system \eqref{E:Bw_eds}.  In this section we investigate the variation of $\lambda$ along the fibres of $\cF$.  It is often possible to normalize (components of) $\lambda$ to zero via fibre motions within $\cF$.  Equivalently, $\cF$ will admit a sub-bundle on which $\lambda$ (equivalently, $\vartheta_{0,-}$) vanishes.

These normalizations are obtained as follows.  Recall that $\cF$ is a right $P_w$--bundle.  Any smooth function $p : \cF \to P_w$ naturally induces a bundle map $\overline{p} : \cF \to \cF$ mapping $e \in \cF$ to $e \cdot p(e)$.  There are two types of fibre motions to consider:  those with values in $G_{0,\ge0} = \texp(\fg_{0,\ge0}) \subset P_w$, and those with values in $G_1 = \texp(\fg_1) \subset P_w$.  The $G_{0,\ge0}$--valued fibre motions are essentially changes of coordinates and so can not be used to normalize $\lambda$ to zero; see the proof of Lemma \ref{L:norm_lambda}.  So we assume that $p$ takes value in $G_1$.

Let $\varrho$ be the left-invariant, $\fg_1$--valued Maurer-Cartan form on $G_1$.  Then
$$
  \overline p^* \, \vartheta_{\overline p(e)} 
  \ = \ p^* \varrho_{\sfp(e)} \ + \ \tAd_{\sfp(e)^{-1}} \vartheta_e \, .
$$

Pick $t : \cF \to \fg_1$ with $\mathrm{exp}(-t) = p^{-1}$.  Dropping the base point $e$ from our notation, and recalling that $\tAd \circ \texp = \texp \circ \tad$, we have
\begin{eqnarray*}
  (\overline p^* \, \vartheta)_{\fn_w} & = & 
  \left( \tAd_{\sfp^{-1}} \vartheta \right)_{\fn_w} \ = \ 
  \vartheta_{\fn_w} \\ 
  (\overline p^* \, \vartheta)_{0-} & = & 
  \left( \tAd_{\sfp^{-1}} \vartheta \right)_{0,-}  \ = \ 
  \vartheta_{0,-} \ - \ 
  \left( \tad_t \, . \, \vartheta_{\fn_w} \right)_{0,-}\, .
\end{eqnarray*}
(The second line above, along with $(\overline p^* \, \vartheta)_{\fn_w^\perp}  =  0$, confirms that $\overline p^*$ preserves the EDS.)  Since $t \in \fg_1$ and $\fn_w \subset \fg_{-1}$ we may regard $\tad_t$ as an element of $\fg_0 \ot \fn_w^*$.  Let $\tad_{t,-}$ be the projection of $\tad_t$ to $\fg_{0,-} \ot \fn_w^*$.  Then the second equation above reads $(\overline p^* \, \vartheta)_{0,-} = \vartheta_{0,-} - \tad_{t,-} (\vartheta_{\fn_w})$.  On the other hand, $(\overline p^* \, \vartheta)_{0,-} = \overline p^* \, \lambda(\vartheta_{\fn_w}) = ( \lambda \circ \overline p ) \, \vartheta_{\fn_w}$.  Thus, 
\begin{equation} \label{E:p*lambda} 
  \overline p^* \lambda \ = \ \lambda \circ \overline p 
  \ = \ \lambda \ - \ \tad_{t,-} \, .
\end{equation}
In particular, $\lambda - \overline p^* \lambda$ lies in the image of the $\fg_{0,0}$--module map 
\begin{equation} \label{E:uld0}
  \d^0 \,:\, \fg_1 \ \to \ \fg_{0,-} \ot \fn_w^*
\end{equation} 
defined by 
\begin{equation} \label{E:d0} 
  (\d^0 t) u \ := \ [ t,u ]_{0,-} \, ,
\end{equation}
where $t \in \fg_1$ and $u \in \fn_w$.  

\begin{lemma} \label{L:kerd0}
The kernel of $\d^0$ is $\fg_{1,\ge \sfa}\ \subset\ \fg_w$.  Thus, $\d^0 : \fg_{1,<\sfa} \to \fg_{0,-} \ot \fn_w^*$ is injective.
\end{lemma}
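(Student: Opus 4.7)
The plan is to prove $\ker\d^0 = \fg_{1,\ge\sfa}$; the injectivity statement follows at once. The easy inclusion $\fg_{1,\ge\sfa}\subseteq\ker\d^0$ is pure bigrading bookkeeping: by Proposition \ref{P:nw2graded}, $\fn_w=\bigoplus_{0\le\ell\le\sfa}\fg_{-1,-\ell}$, so for $t\in\fg_{1,k}$ with $k\ge\sfa$ and $u\in\fg_{-1,-\ell}\subset\fn_w$ the bracket $[t,u]$ lies in $\fg_{0,k-\ell}$ with $k-\ell\ge 0$, and hence has no $\fg_{0,-}$--component.

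For the reverse inclusion, suppose $t\in\fg_1$ has a nonzero component outside $\fg_{1,\ge\sfa}$. Decompose $t=\sum_k t_k$ by the $Z_w$--grading and let $k$ be minimal with $t_k\neq 0$; necessarily $k<\sfa$. I will produce an explicit $u\in\fn_w$ for which $[t,u]_{0,-}\neq 0$. Pick a root $\a\in\Delta(\fg_{1,k})$ occurring with nonzero coefficient $c_\a$ in $t_k$. Since $k=\a(Z_w)<\sfa\le\sfm:=\tilde\a(Z_w)$, Lemma \ref{L:nondeg}(a) furnishes $\c\in\Delta(\fg_{0,1})$ with $\b:=\a+\c\in\Delta(\fg_{1,k+1})$; because $k+1\le\sfa$, Proposition \ref{P:nw2graded} gives $-\b\in\Delta(\fn_w)$, so $u:=e_{-\b}\in\fn_w$ is a legitimate test vector.

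I then examine the $\fg_{-\c}\subset\fg_{0,-1}$ component of $[t,u]$. For $j>k$, the bracket $[t_j,e_{-\b}]\in\fg_{0,\,j-k-1}$ has nonnegative $Z_w$--grading and so cannot hit $\fg_{0,-1}$; hence only $[t_k,e_{-\b}]$ contributes. Expanding $t_k=\sum_{\a'}c_{\a'}e_{\a'}$, each summand $c_{\a'}[e_{\a'},e_{-\b}]$ sits in the distinct root space $\fg_{\a'-\b}$, so the $\fg_{-\c}$--component reduces to $c_\a[e_\a,e_{-\b}]$, which is nonzero because $\a-\b=-\c$ is a root of $\fg$ and the corresponding structure constant is therefore nonzero. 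This contradicts $\d^0 t=0$.

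The main obstacle is the choice of the test vector $u$: one must locate a root $-\b\in\Delta(\fn_w)$ that pairs with the minimal-grading component $t_k$ to land in $\fg_{0,-1}$ rather than in a higher $\fg_{0,\ge 0}$ slot. Lemma \ref{L:nondeg}(a) is the precise tool that produces such a $\b$, and the strict inequality $k<\sfa\le\sfm$ is exactly what licenses its application; the remaining verification is root-space bookkeeping enforced by the $(Z_\tti,Z_w)$--bigrading.
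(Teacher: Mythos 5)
Your proof is correct and takes essentially the same approach as the paper: both reduce to the lowest $Z_w$-graded component of $t$ and invoke the nondegeneracy of the bracket $[\fg_{0,1},\cdot]$ on $\fg_1$ (Lemma \ref{L:nondeg}) to produce a test vector $u\in\fn_w$ with $[t,u]_{0,-}\neq 0$. The only cosmetic difference is that you jump directly to the minimal nonzero component and cite Lemma \ref{L:nondeg}(a), whereas the paper phrases this as an induction on the grading and re-derives inline the content of Lemma \ref{L:nondeg}(b).
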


\begin{proof}
It is clear from Proposition \ref{P:nw2graded} that $\fg_{1,\ge\sfa} \subset \tker\,\d^0$.  So suppose that $t \in \fg_{1,<\sfa}$ lies in the kernel of $\d^0$.  That is, $[t,u]_{0,-} = 0$ for all $u \in \fn_w$.  It suffices to show that $t = 0$.  Express $t$ as a linear combination $t=\sum_{\a\in\Delta(\fg_{1,<\sfa})} t^\a \, E_\a$ of root vectors $E_\a$ in $\fg_{1,<\sfa}$ with coefficients $t^\a \in \bC$.

Observe that $\d^0(t)(\fg_{-1,-1}) = [ t_0 \,,\, \fg_{-1,-1}] \in \fg_{0,-1}$, where $t_a$ is the component of $t$ taking values in $\fg_{1,a}$.  So $t \in \ker\,\d^0$ forces $[E_\a , \fg_{-1,-1}] = \{0\} \subset \fg_{0,-1}$ for every $\a \in \Delta(\fg_{1,0})$ with $t^\a \not=0$.  This in turn implies that $E_\a \not\in [\fg_{0,-1} \,,\, \fg_{1,1} ]$.  This is a contradiction as every root space $\fg_{\a}$ is obtained from the highest root space $\fg_{\tilde\a} \subset \fg_{1,\sfm}$ by successive brackets with $\fg_{-\a_j} \subset \fg_{0,-1}$, with $\a_j$ simple.  It follows that $t_0 = 0$.

The claim that $t = 0$ now follows by induction: assume that $t_a = 0$ for all $a < a_0$.  Then, as above, $\{0\} = \d^0(t)(\fg_{-1,-1-a_0}) = [t_{a_0} \,,\, \fg_{-1,-1-a_0}] \subset \fg_{0,-1}$ forces $t_{a_0} = 0$.
\end{proof}

Next we claim that $\tim \, \d^0 \subset \tker \, \d^1$.  To see this compute
\begin{eqnarray*}
  \d^1 \circ \d^0(t) \, (u \wedge v) & = & 
  \left[ u \, , \, \d^0(t) \, v \right]_{\fn_w^\perp} \ - \ 
  \left[ v \, , \, \d^0(t) \, u \right]_{\fn_w^\perp} \\
  & = & \left[ u \, , \, [t,v]_{0,-} \right]_{\fn_w^\perp} \ - \ 
  \left[ v \, , \, [t,u]_{0,-} \right]_{\fn_w^\perp} \\
  & = & \left[ u \, , \, [t,v] \right]_{\fn_w^\perp} \ - \ 
  \left[ v \, , \, [t,u] \right]_{\fn_w^\perp} 
  \ \stackrel{(\star)}{=} \ 
  [ t \, , \, [ u \, , \, v ] ]_{\fn_w^\perp} \ \stackrel{(\dagger)}{=} \ 0 \, .
\end{eqnarray*}
The equality $(\star)$ is the Jacobi identity; and the equality $(\dagger)$ follows from \eqref{E:abelian}.  

\begin{lemma} \label{L:norm_lambda}
Let $\tker\,\d^1 = \tim\,\d^0 \op (\tim\,\d^0)^\perp$ be a $\fg_{0,0}$--module decomposition.  Then any integral manifold $\cF$ of \eqref{E:Bw_eds} admits a normalization, via the map $\overline p$, to a sub-bundle $\cF^0 := \overline p(\cF)$ on which $\lambda$ takes values in $(\tim\,\d^0)^\perp$.  Additionally, the $\vartheta_{\fg_w}$ compose a coframing of $\cF^0$ and there exists $\mu : \cF^0 \to \fg_{1,<\sfa} \ot \fn_w^*$ such that $\vartheta_{1,<\sfa} = \mu(\vartheta_{\fn_w})$.
\end{lemma}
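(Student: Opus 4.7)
The plan is to use the fibre-motion formula \eqref{E:p*lambda} to kill the $\tim\,\d^0$--component of $\lambda$ by an appropriate $G_1$--valued fibre motion, and then to read off the coframing and the existence of $\mu$ from the structure of the resulting sub-bundle. Since $\lambda$ takes values in $\tker\,\d^1$ by the torsion analysis of \S\ref{S:torsion}, the hypothesized direct sum lets me write $\lambda = \lambda_\parallel + \lambda_\perp$ with $\lambda_\parallel \in \tim\,\d^0$ and $\lambda_\perp \in (\tim\,\d^0)^\perp$. Lemma \ref{L:kerd0} says $\d^0$ restricted to $\fg_{1,<\sfa}$ is a linear isomorphism onto $\tim\,\d^0$, so there is a unique smooth $t : \cF \to \fg_{1,<\sfa}$ with $\d^0(t) = \lambda_\parallel$. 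Taking $p := \texp(-t)$ and applying \eqref{E:p*lambda} gives $\overline p^*\lambda = \lambda - \d^0(t) = \lambda_\perp \in (\tim\,\d^0)^\perp$, which is the desired normalization.

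Next I would show that $\cF^0 := \overline p(\cF)$, equivalently the locus $\{e \in \cF : \pi_\parallel\lambda(e) = 0\}$ with $\pi_\parallel$ the projection onto $\tim\,\d^0$, is a principal sub-bundle of $\cF$ with structure group $P_w^0 := \texp(\fg_{0,\ge 0})\cdot\texp(\fg_{1,\ge\sfa})$. The group $P_w^0$ is a subgroup of $P_w$ because $\fg_{1,\ge\sfa}$ is an ideal of $\fp_w$: $\fg_1$ is abelian so $[\fg_1,\fg_{1,\ge\sfa}] = 0$, while the bigrading forces $[\fg_{0,\ge 0},\fg_{1,\ge\sfa}]\subset \fg_{1,\ge\sfa}$. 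That $\cF^0$ is a principal $P_w^0$--bundle follows from the fibre-wise analysis: motions in $\texp(\fg_{0,\ge 0})$ preserve the normalization (the decomposition is $\fg_{0,0}$--equivariant), motions in $\texp(\fg_{1,\ge\sfa}) = \texp(\tker\,\d^0)$ leave $\lambda$ unchanged by \eqref{E:p*lambda}, and $\fg_{1,<\sfa}$--motions shift $\lambda_\parallel$ by the isomorphism $\d^0$ -- so the normalization equation picks out a unique point per $\fg_{1,<\sfa}$--orbit in the fibre.

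Finally, the Lie algebra of $P_w^0$ is $\fg_{0,\ge 0}\oplus\fg_{1,\ge\sfa}$; combined with $\fn_w$, this equals $\fg_w$ by \eqref{E:gw_dfns}. Hence $\vartheta_{\fg_w}$ -- comprising the semi-basic $\vartheta_{\fn_w}$ together with the vertical $\vartheta_{\fg_{0,\ge 0}}$ and $\vartheta_{\fg_{1,\ge\sfa}}$ -- coframes $\cF^0$. For $\mu$: evaluating $\vartheta_{1,<\sfa}$ on a vector tangent to a fibre of $\cF^0\to\mfd$ returns the $\fg_{1,<\sfa}$--component of the corresponding element of $\fg_{0,\ge 0}\oplus\fg_{1,\ge\sfa}$, which is zero. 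So $\vartheta_{1,<\sfa}$ is semi-basic on $\cF^0$, and since the semi-basic forms there are spanned by $\vartheta_{\fn_w}$, a unique $\mu : \cF^0 \to \fg_{1,<\sfa}\ot\fn_w^*$ with $\vartheta_{1,<\sfa} = \mu(\vartheta_{\fn_w})$ exists. The one delicate step is verifying that $\cF^0$ is a genuine smooth sub-bundle rather than just a subset; this rests on the injectivity of $\d^0|_{\fg_{1,<\sfa}}$ from Lemma \ref{L:kerd0} together with the implicit function theorem applied along the fibres. Everything else is routine bookkeeping with the bigrading.
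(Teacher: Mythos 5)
Your proof follows essentially the same route as the paper's: use the fibre-motion formula \eqref{E:p*lambda} together with Lemma \ref{L:kerd0} to pin down a unique $t \in \fg_{1,<\sfa}$ normalizing $\lambda$ into $(\tim\,\d^0)^\perp$; identify the residual fibre freedom with $\fg_{0,\ge 0}\oplus\fg_{1,\ge\sfa}$; and read off both the coframing and the semi-basicity of $\vartheta_{1,<\sfa}$ from that identification. You are a bit more explicit than the paper about the principal bundle structure (observing $\fg_{1,\ge\sfa}$ is an ideal and so $P_w^0$ is an honest subgroup), which is a fine touch.

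One step is under-justified. You assert that $\texp(\fg_{0,\ge 0})$-motions preserve the normalization because ``the decomposition is $\fg_{0,0}$--equivariant.'' That reason covers the $\fg_{0,0}$ part only: the lemma's hypothesis gives a $\fg_{0,0}$--module decomposition, which says nothing about the action of $\fg_{0,>0}$. The paper fills this in with the explicit computation
$$
(\bar p^*\lambda)\,(u - [t,u]) \ = \ \lambda(u) - [t,\lambda(u)]_{0,-}\,, \qquad t : \cF \to \fg_{0,\ge0}\,,\ u \in \fn_w\,,
$$
which combined with the $\fg_{0,\ge0}$--equivariance of $\d^0$ (checked via the Jacobi identity and $[\fg_{0,\ge0},\fg_{0,\ge0}]\subset\fg_{0,\ge0}$) shows that these motions preserve $\tim\,\d^0$. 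You should replace the parenthetical by a version of that computation, or simply note that $\d^0$ is $\fg_{0,\ge0}$--equivariant. Everything else checks out.
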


\begin{proof}
The first statement follows from the observations above; it remains to establish the second half of the lemma.  Let $\cF^0 \subset \cF$ denote the sub-bundle on which the normalization holds.  From Lemma \ref{L:kerd0} it follows that $\vartheta_{1,<\sfa}$ vanishes when pulled back to the fibres of $\cF^0$.  Equivalently, $\vartheta_{1,<\sfa}$ is semi-basic:  there exists $\mu : \cF^0 \to \fg_{1,<\sfa} \ot \fn_w^*$ such that $\vartheta_{1,<\sfa} = \mu(\vartheta_{\fn_w})$.

To see that the $\vartheta_{\fg_w}$ are linearly independent on $\cF^0$ and thus a coframing (cf. Lemma \ref{L:Bw_eds}), it suffices to observe that the fibre motions associated to $\fg_{0,\ge0} \op \fg_{1,\ge\sfa} \subset \fg_w$ preserve $\tim\,\d^0$.  This is immediate for fibre motions in the directions of $\fg_{1,\ge\sfa}$ by Lemma \ref{L:kerd0}.  In the case that $t : \cF \to \fg_{0,\ge0}$ a computation similar to that above for \eqref{E:p*lambda} shows that 
$$
  (\bar p^*\lambda)\,(u - [t,u]) \ = \ 
  (\lambda\circ\bar p) \,( u - [t,u]) \ = \ \lambda(u) - [t,\lambda(u)]_{0,-}
$$
for all $u \in \fn_w$.  Thus, $\tim\,\d^0$ is preserved under fibre motions in the directions of $\fg_{0,\ge0}$.
\end{proof}

\subsection{When \boldmath $\lambda$ \unboldmath can be normalized to zero} \label{S:lambda=0}

Assume $\cF$ admits a sub-bundle $\cF^0$ on which $\lambda$ vanishes.  Equivalently, $\vartheta_{0,-} = 0$ on $\cF^0$.  Thus, 
\begin{equation} \label{E:preT}
  0 \ = \ -\td \vartheta_{0,-} \ = \ \half \, [\vartheta , \vartheta]_{0,-}
  \ = \ [ \vartheta_{\fn_w} , \vartheta_{1,<\sfa} ]_{0,-} \, .
\end{equation}
By Lemma \ref{L:norm_lambda}, $\vartheta_{1,<\sfa} = \mu(\vartheta_{\fn_w^\perp})$.  Substituting this into \eqref{E:preT} yields torsion, forcing $\mu$ to lie in the kernel of the map
\begin{equation} \label{E:ule}
  \e^1 : \fg_{1,<\sfa} \ot \fn_w^* \to \fg_{0,-} \ot \tw^2 \fn_w^* \, ,
\end{equation}
defined by 
$$
  \e^1(\mu) u \wedge v \ := \ [ u , \mu(v) ]_{0,-} \ - \ [v,\mu(u)]_{0,-} \, .
$$
Definition \eqref{E:gw_dfns}, Proposition \ref{P:Bw_rigid}, Lemma \ref{L:norm_lambda} and the discussion above yield the following key observation.



\subsection{Lie algebra cohomology} \label{S:LAC}

Let $\fa$ be a Lie algebra and $\Gamma$ a $\fa$--module.  Then the Lie algebra cohomology differential $\partial = \partial^k : \Gamma \ot \tw^k\fa^* \to \Gamma \ot \tw^{k+1}\fa^*$ is defined as follows.  Given $\phi \in \Gamma \ot \tw^k\fa^*$ and $u_0,\ldots, u_k \in \fa$,
\begin{equation} \label{E:d} \renewcommand{\arraystretch}{1.4}
\begin{array}{rcl}
  (\partial \phi)(u_0,\ldots,u_k) & := &
  \sum_i \,(-1)^i \,u_i \cdot \phi(u_0 , \ldots , \wh u_i , \ldots , u_k )  \\
  &  & + \ \sum_{i<j} \,(-1)^{i+j}\,\phi([u_i,u_j],u_0,\ldots,\wh u_i,\ldots,\wh u_j,\ldots,u_k ) \,.
\end{array}\end{equation} 
The differential satisfies $\partial\circ\partial = 0$ and so defines cohomology groups
$$
  H^k(\fa,\Gamma) \ := \ 
  \frac{\tker\{\partial: \Gamma \ot \tw^k \fa^* \ 
        \to \ \Gamma \ot \tw^{k+1}\fa^*\}}
       {\tim\{\partial: \Gamma \ot \tw^{k-1} \fa^* \ 
        \to \ \Gamma \ot \tw^k\fa^* \}} \, .
$$

From \eqref{E:abelian} and $\fn_w \subset \fg_{-1}$ we see that $\fn_w$ is an abelian Lie algebra.  Given $A \in \fg$, let $A_{\fg_w^\perp}$ denote the image of $A$ under the natural projection $\fg = \fg_w \op \fg_w^\perp \to \fg_w^\perp$.  Define an action of $\fn_w$ on $\fg_w^\perp$ by 
$$
  u \cdot z \ := \ [u,z]_{\fg_w^\perp} \,, \quad u \in \fn_w \,,\ 
  z \in \fg_w^\perp \, .
$$
Making use of \eqref{E:nw}, \eqref{E:gw_dfns} and the Jacobi identity it is straight-forward to confirm that this action respects the Lie algebra structure of $\fn_w$; that is, $\fg_w^\perp$ is a $\fn_w$--module.  

In the case that $\fa = \fn_w$ and $\Gamma = \fg_w^\perp$, the differential $\partial$ is a $\fg_{0,0}$--module map.  Thus $H^k(\fn_w,\fg^\perp_w)$ is a $\fg_{0,0}$--module, and therefore admits a $Z_\tti$-graded decomposition.  For $k=1$,
\begin{equation} \label{E:Hk_deg}
  H^1(\fn_w,\fg_w^\perp) \ = \ H^1_{0}(\fn_w,\fg^\perp_w) \ \op \
  H^1_{1}(\fn_w,\fg^\perp_w) \ \op \ H^1_{2}(\fn_w,\fg^\perp_w)  \, .
\end{equation}
Here $H^1_m(\fn_w,\fg^\perp_w)$ is the $Z_\tti$--eigenspace associated to the eigenvalue $m$.

Keeping in mind that $\fn_w$ is abelian, we see that 
\begin{equation} \label{E:H1}
  H^1_{1}(\fn_w,\fg^\perp_w) \ = \ \frac{\tker \,\d^1}{\tim \,\d^0}
  \quad\hbox{ and } \quad 
  H^1_{2}(\fn_w,\fg^\perp_w) \ = \ \tker \,\e^1 \, .
\end{equation}


\section{Cohomological computations} \label{S:cohcomp}

\subsection{The adjoint operators} \label{S:adj}

In this section we construct an adjoint $\partial^*$ to the Lie algebra differential $\partial$ of Section \ref{S:LAC} for $\fa = \fn_w$ and $\Gamma = \fg^\perp_w$.

Define a linear map $\sfd^* : \fg \ot \tw^{k+1}\fg \to \fg \ot \tw^k\fg$ on  decomposable elements $v \ot (z_0 \wedge \cdots \wedge z_k)$ by  
\begin{equation*} 
\renewcommand{\arraystretch}{1.4}
\begin{array}{rcl}
  \sfd^* (v \ot (z_0 \wedge \cdots \wedge z_k)) & := &
  \sum_i\,(-1)^i \,[z_i , v] \ot (z_0 \wedge \cdots \wh z_i \cdots \wedge z_k ) \\
  &  &+ \sum_{i<j}\,(-1)^{i+j+1}\,v \ot ([z_i,z_j]\wedge z_0\wedge\cdots \wh z_i \cdots
  \wh z_j \cdots\wedge z_k)\,. 
\end{array}\end{equation*}

The Killing form $(\cdot,\cdot)$ on $\fg$ provides a canonical identification 
\begin{equation} \label{E:nw+}
  \fn_w^* \ \simeq \
  \fn_w^+ \ := \ \fg_{1,0} \,\op\,\cdots\,\op\,\fg_{1,\sfa}
  \quad \hbox{ by } \quad 
  u \in \fn_w^+ \ \mapsto \ (u,\cdot) \in \fn_w^*
\end{equation}
as $\fg_{0,0}$--modules.  In particular, 
\begin{equation} \label{E:*+}
  \fg^\perp_w \ot \tw^k\fn_w^* \ \simeq \ 
  \fg^\perp_w \ot \tw^k\fn_w^+ \ \subset \ \fg \ot \tw^k\fg \, .
\end{equation}
Using this identification, we define $\partial^* :\fg_w^\perp \ot \tw^{k+1} \fn_w^* \to \fg^\perp_w \ot \tw^k\fn_w^*$ to be the restriction
\begin{equation} \label{E:d*}
  \partial^* \ = \ {\sfd^*}_{| \fg^\perp_w \ot \sw^{k+1}\fn_w^+} \, .
\end{equation}
A priori, the image of $\partial^*$ may not lie in $\fg^\perp_w \ot \tw^k\fn_w^+$.  However, \eqref{E:gw_dfns} and \eqref{E:nw+} imply $[\fg_w^\perp,\fn_w^+] \subset \fg_w^\perp$.  Finally, note that $\partial^*$ is a $\fg_{0,0}$--module map.


\begin{proposition} \label{P:adjoint}
There exists a natural positive definite Hermitian inner product $\langle \cdot , \cdot \rangle$ on $\fg^\perp_w \ot \tw^k \fn_w^*$ with respect to which the operators $\partial$ and $\partial^*$ are adjoint.
\end{proposition}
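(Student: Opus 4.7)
The plan is to realize $\langle\cdot,\cdot\rangle$ as the restriction of a standard Hermitian product on $\fg\ot\tw^k\fg$ built from a compact real form of $\fg$, and then to verify adjointness by a direct comparison of the defining formulas. Choose a Chevalley basis $\{E_\a\}_{\a\in\Delta}$ of $\fg$, together with its associated compact real form $\fg_\bR$ and compact conjugation $\theta : \fg \to \fg$. Then $\theta$ is an antilinear involution satisfying $\theta[x,y]=[\theta x,\theta y]$ and $\theta(\fg_\a)=\fg_{-\a}$, and the Killing form is negative definite on $\fg_\bR$. The formula $\Killing{x}{y}_\theta := -\Killing{x}{\theta y}$ defines a positive definite Hermitian inner product on $\fg$, which I extend to $\fg\ot\tw^k\fg$ by the standard tensor-determinant recipe and then restrict via \eqref{E:nw+} and \eqref{E:*+} to $\fg^\perp_w \ot \tw^k \fn_w^*$; positive definiteness is inherited from the ambient space.

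Two orthogonality observations make this restriction compatible with $\partial$ and $\partial^*$. First, since distinct root spaces are Killing-orthogonal and $\theta$ sends $\fg_\b$ to $\fg_{-\b}$, distinct root spaces are $\Killing{\cdot}{\cdot}_\theta$-orthogonal; as $\fg_w$ and $\fg_w^\perp$ are disjoint unions of root spaces (with $\fh\subset\fg_{0,0}\subset\fg_w$), the decomposition $\fg=\fg_w\op\fg_w^\perp$ is Hermitian-orthogonal. Consequently the projection $[u,v]_{\fg_w^\perp}$ appearing in the $\fn_w$-action on $\fg^\perp_w$ is invisible to inner products with elements of $\fg^\perp_w$: $\Killing{[u,v]_{\fg^\perp_w}}{v'}_\theta = \Killing{[u,v]}{v'}_\theta$ for all $u\in\fn_w$ and $v,v'\in\fg^\perp_w$. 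Second, the abelian-ness \eqref{E:abelian} of $\fg_{\pm1}$ kills the $[u_i,u_j]$-summand in \eqref{E:d} and the $[z_i,z_j]$-summand in the definition of $\sfd^*$, so only the bracket-with-the-coefficient terms survive in both $\partial\phi$ and $\partial^*\psi$.

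Adjointness then reduces to the single identity
$$
  \Killing{[u,v]}{w}_\theta \ = \ -\Killing{v}{[\theta u, w]}_\theta,
  \qquad u,v,w\in\fg,
$$
an immediate consequence of the invariance of the Killing form together with $\theta^2=\tId$ and $\theta[x,y]=[\theta x,\theta y]$. Applying this termwise and unwinding the determinant formula for $\langle\cdot,\cdot\rangle$ on $\tw^k\fn_w^+$, each surviving summand of $\langle\partial\phi,\psi\rangle$ is converted into the corresponding summand of $\langle\phi,\partial^*\psi\rangle$ under the Killing-form identification $\fn_w^+\simeq\fn_w^*$ of \eqref{E:nw+}. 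En route one must also confirm that $\partial^*$ actually maps into $\fg^\perp_w\ot\tw^k\fn_w^+$: the tensor factor is handled by $[\fg^\perp_w,\fn_w^+]\subset\fg^\perp_w$, already recalled in the paper, while the wedge factor is automatic from the vanishing of the second sum in $\sfd^*$ just noted.

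I expect the main obstacle to be purely bookkeeping---tracking the exterior-algebra signs and keeping sesquilinearity straight---rather than any genuine conceptual difficulty; this is essentially Kostant's adjoint calculation \cite[\S3]{MR0142696}, transplanted to the non-standard $\fn_w$-module $\fg^\perp_w$, with the two orthogonality observations above providing the only real input beyond the classical setting.
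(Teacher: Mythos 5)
Your proposal follows the same approach as the paper: define $\langle u,v\rangle = -(u,\theta v)$ using a compact real form and Cartan involution, note the Hermitian orthogonality $\langle\fg_w^\perp,\fg_w\rangle=0$, use $\tad(\theta z)$ as the adjoint of $-\tad(z)$, and unwind the induced product on $\fg^\perp_w\ot\tw^k\fn_w^+$. The paper's Lemma \ref{L:adjoint} carries out the termwise comparison you sketch (with the same substitution $z_j=-\theta(u_j)$), and the abelian-ness observation you make explicit is used there implicitly to drop the $[u_i,u_j]$ and $[z_i,z_j]$ sums.
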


\begin{proof}
The inner product is defined as follows.  Let $(\cdot,\cdot) : \fg \times \fg \to \bC$ denote the Killing form on $\fg$.  Let $\fk \subset \fg$ be a compact form of $\fg$.  Set $\mathrm{i} = \sqrt{-1}$.  Define a conjugate--linear Cartan involution $\theta : \fg \to \fg$ by $\theta_{|\fk} = \tId_{|\fk}$ and $\theta_{|\mathrm{i}\fk} = -\tId_{|\mathrm{i}\fk}$.  It is straight-forward to check that 
$\theta( [u,v] ) = [ \theta u \,,\, \theta v ]$, for all $u,v\in\fg$.  It is well-known that $\fk$ may be selected so that $\theta(Z_j) = -Z_j$ and $\theta(\fg_\b) = \fg_{-\b}$ for all roots $\beta$.  (See \cite[Section 2.3]{MR2532439}.)  Then 
\begin{equation} \label{E:herm_prod}
  \langle u,v \rangle \ := \ -( u , \theta v )
\end{equation}
defines a (positive definite) Hermitian inner product on $\fg$.  Moreover, 
\begin{equation} \label{E:ad_adj}
  \langle [ z , u] \,,\, v \rangle \ = \ 
  -\big( [z , u] \,,\, \theta v \big) \ = \ \big( u \,,\, [z,\theta v] \big) \ = \ 
  \big( u \,,\, \theta([\theta z , v ]) \big) \ = \ 
  -\langle u \, , \, [\theta z , v ] \rangle \, .
\end{equation}
Thus $\tad(z)^* = -\tad(\theta z)$ is the adjoint of $\tad(z) \in \fgl(\fg)$ with respect to $\langle \cdot , \cdot \rangle$.  (Alternatively, $\fg \subset \fgl(\fg)$ is skew-Hermitian.)  We also observe that $\langle \fg_\a , \fg_\b\rangle = 0$, for all roots $\a\not=\beta$, and $\langle \fh , \fg_\b\rangle = 0$ for all $\beta$.  Hence,
\begin{equation} \label{E:Gamma_perp}
  \langle {\fg_w^\perp} \,,\, \fg_w \rangle \ = \ 0 \, .
\end{equation}

Abusing notation, we also let $\langle \cdot , \cdot \rangle$ denote the induced Hermitian inner product on $\fg \ot \tw^k\fg$, and its restriction to $\fg^\perp_w \ot \tw^k\fn_w^*$ under the identification \eqref{E:*+}.  Proposition \ref{P:adjoint} will now follow from the lemma below.
\end{proof}

\begin{lemma} \label{L:adjoint}
$\partial$ and $\partial^*$ are adjoint with respect to $\langle \cdot ,\cdot \rangle$.
\end{lemma}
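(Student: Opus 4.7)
The plan is to reduce the adjointness to the $\tad$-adjoint relation \eqref{E:ad_adj} on $\fg$ via a direct computation on decomposable tensors, exploiting the fact that both $\fn_w$ and $\fn_w^+$ are abelian.

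First I would extend $\langle \cdot , \cdot \rangle$ multiplicatively to tensor and wedge powers, and verify that the identification \eqref{E:nw+} between $\fn_w^+$ and $\fn_w^*$ is an isometry. Concretely, normalize root vectors $\{E_\beta\}_{\beta \in \Delta}$ so that $\langle E_\beta, E_\gamma \rangle = \delta_{\beta\gamma}$ and $\theta E_\beta = -E_{-\beta}$; then the Killing pairing satisfies $(E_\beta, E_{-\gamma}) = -\langle E_\beta, \theta E_{-\gamma}\rangle = \delta_{\beta\gamma}$, so $\iota : E_\beta \mapsto (E_\beta, \cdot)$ sends the orthonormal basis $\{E_\beta\}_{\beta\in\Delta(\fn_w^+)}$ of $\fn_w^+$ to the dual basis of the orthonormal basis $\{E_{-\beta}\}$ of $\fn_w$. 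By sesquilinearity, it then suffices to verify the identity $\langle \partial\phi, \psi\rangle = \langle \phi, \partial^*\psi\rangle$ on decomposable tensors $\phi = v \ot E_{\beta_1}\wedge\cdots\wedge E_{\beta_k}$ and $\psi = w \ot E_{\gamma_0}\wedge\cdots\wedge E_{\gamma_k}$ with $v,w\in\fg^\perp_w$ and $\beta_i, \gamma_j \in \Delta(\fn_w^+)$.

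The key simplification is that by \eqref{E:abelian}, $\fn_w$ is abelian (so the bracket term in \eqref{E:d} vanishes) and $\fn_w^+ \subset \fg_1$ is abelian (so the bracket term in $\sfd^*$ vanishes). Moreover, by \eqref{E:Gamma_perp} the projection $[\cdot,\cdot]_{\fg^\perp_w}$ appearing in the $\fn_w$-action on $\fg^\perp_w$ can be dropped inside any inner product pairing with a vector in $\fg^\perp_w$. With these reductions, both sides expand as sums $\sum_{i=0}^k (-1)^i D_i$, where the factor $D_i$ multiplies the bracket $\langle [E_{-\gamma_i}, v], w\rangle$ (on the $\partial$-side) or $\langle v, [E_{\gamma_i}, w]\rangle$ (on the $\partial^*$-side) with a determinant of pairings among the remaining wedge factors. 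Since $\theta E_{-\gamma_i} = -E_{\gamma_i}$, relation \eqref{E:ad_adj} gives $\langle [E_{-\gamma_i}, v], w\rangle = -\langle v, [\theta E_{-\gamma_i}, w]\rangle = \langle v, [E_{\gamma_i}, w]\rangle$, so the terms match.

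The main obstacle will be combinatorial bookkeeping: tracking the signs $(-1)^i$ together with the determinants $\det[\langle E_{\beta_j}, E_{\gamma_{j'}}\rangle]_{j'\neq i}$ that arise when expanding the wedge products under the inner product, and ensuring that transporting $\partial$ from $\fg^\perp_w \ot \tw^k\fn_w^*$ to $\fg^\perp_w \ot \tw^k\fn_w^+$ via $\iota$ matches $\partial^*$ term-by-term. Once these accounting details are handled, the identity follows formally from \eqref{E:ad_adj}.
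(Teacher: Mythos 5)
Your proposal is correct, and it rests on the same core tools as the paper's proof: the Cartan involution $\theta$, the adjoint relation \eqref{E:ad_adj}, and the fact that both $\fn_w$ and $\fn_w^+ \subset \fg_1$ are abelian (so the bracket terms in $\partial$ and $\sfd^*$ drop out). The route differs in how the wedge combinatorics are handled. You plan to normalize to an orthonormal root-vector basis and expand both sides on decomposable tensors into sums weighted by determinants of pairings among the remaining wedge factors — and you correctly identify this bookkeeping as the main residual obstacle. The paper avoids it entirely with the identity \eqref{E:herm2}: for $\Phi \in \fg_w^\perp \ot \tw^{k+1}\fn_w^*$, $\gamma \in \fg_w^\perp$, $u_0,\ldots,u_k \in \fn_w$, and $z_j := -\theta(u_j) \in \fn_w^+$, one has
\[
\big\langle \Phi ,\, \gamma \ot (z_0 \wedge \cdots \wedge z_k)\big\rangle
 \ = \ \big\langle \Phi(u_0,\ldots,u_k),\, \gamma\big\rangle .
\]
This replaces the determinant-of-pairings expansion with a single evaluation of $\Phi$, after which the $(-1)^i$ signs and brackets in the expressions for $\langle\partial\phi,\cdot\rangle$ and $\langle\phi,\partial^*\cdot\rangle$ line up term-by-term via \eqref{E:ad_adj} with no further sign chase. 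Your computation would reach the same answer, but \eqref{E:herm2} buys a cleaner, basis-free argument and is worth adding to your toolkit; with it in hand, the "accounting details" you defer at the end evaporate.
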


\begin{proof}
Let $\Phi \in {\fg_w^\perp} \ot \tw^{k+1}\fn_w^*$.  Choose $\gamma \in {\fg_w^\perp}$ and $u_0 ,\ldots , u_k \in \fn_w$.  Set $z_j  = -\theta(u_j)$.  By construction we have  
\begin{equation} \label{E:herm2}
  \big\langle \Phi \,,\, \gamma \ot ( z_0 \wedge \cdots \wedge z_k) 
  \big\rangle_{{\fg_w^\perp} \ot \bigwedge^{k+1}\fn_w^*} \ = \ 
   \big\langle \Phi(u_0,\ldots,u_k) \,,\, \gamma \big\rangle_{\fg_w^\perp} \,.
\end{equation}
Applying \eqref{E:herm2} with $\Phi = \partial\phi$ yields
\begin{eqnarray*}
  \langle \partial \phi \,,\, \gamma \ot (z_0 \wedge \cdots \wedge z_k) \rangle
  & = &  \langle \partial\phi(u_0,\ldots,u_k) \,,\, \gamma \rangle\\
  &  \stackrel{\eqref{E:d}}{=} &  \Big\langle \sum_{i=0}^k (-1)^i
  \left[ u_i \,,\, 
        \phi(u_0 , \ldots , \wh u_i , \ldots , u_k )\right]_{\fg_w^\perp} \,,\, 
  \gamma \Big\rangle\\
  & \stackrel{\eqref{E:Gamma_perp}}{=} &  \Big\langle \sum_{i=0}^k 
  (-1)^i \, \left[ u_i \,,\, 
                  \phi(u_0 , \ldots , \wh u_i , \ldots , u_k) \right] \,,\, 
  \gamma \Big\rangle \,.
\end{eqnarray*}
On the other hand
\begin{eqnarray*}
  \big\langle  \phi \,,\, \partial^* \big(\gamma \ot (z_0 \wedge \cdots \wedge z_k) 
  \big) \big\rangle & = & 
  \Big\langle \phi \,,\,  \sum_{i=0}^k (-1)^i\, [z_i , \gamma] \ot (z_0 \wedge \ldots \wedge \wh z_i \wedge \ldots \wedge z_k ) \Big\rangle \\
  & \stackrel{\eqref{E:herm2}}{=} & \sum_{i=0}^k (-1)^{i}\, \Big\langle
  \phi(u_0,\ldots , \wh u_i , \ldots , u_k ) \,,\, [z_i , \gamma]
  \Big\rangle \\
  & = & \sum_{i=0}^k (-1)^{i+1} \Big\langle
  \phi(u_0,\ldots , \wh u_i , \ldots , u_k ) \,,\, [\theta(u_i) , \gamma]
  \Big\rangle \\
  & \stackrel{\eqref{E:ad_adj}}{=} & \sum_{i=0}^k (-1)^{i} \Big\langle
  \left[u_i , \phi(u_0,\ldots , \wh u_i , \ldots , u_k )\right] \,,\, \gamma
  \Big\rangle \, .
\end{eqnarray*}
The lemma follows.
\end{proof}

The \emph{Laplacian} is the $\fg_{0,0}$--module map defined by 
$$
  \square 
  \ := \ \partial \partial^* \,+\, \partial^* \partial : {\fg_w^\perp} \ot \tw^k\fn_w^* \ \to \ 
  {\fg_w^\perp} \ot \tw^k \fn_w^* \, .
$$
Let $\cH^k := \tker\,\square \subset {\fg_w^\perp} \ot \tw^k\fn_w^*$.  By now standard arguments (see \cite[Proposition 2.1]{MR0142696} or \cite[Corollary 3.3.1]{MR2532439}) we have the following.

\begin{proposition} \label{P:new_coh}
Each element in $H^k(\fn_w,\fg^\perp_w)$ admits a unique representative in $\cH^k$.   The corresponding identification $H^k(\fn_w,\fg^\perp_w)\,\simeq\,\cH^k$ is a $\fg_{0,0}$--module isomorphism.
\end{proposition}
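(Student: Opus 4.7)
The plan is to run the standard finite-dimensional Hodge decomposition argument. The key inputs are already in hand: $\partial^2 = 0$, the adjoint relation of Proposition \ref{P:adjoint}, the $\fg_{0,0}$--equivariance of both $\partial$ and $\partial^*$, and the finite-dimensionality of $V^k := \fg_w^\perp \ot \tw^k \fn_w^*$ equipped with the positive-definite Hermitian form $\langle\cdot,\cdot\rangle$.

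First I will show
$$\cH^k \ = \ \tker\,\partial \,\cap\, \tker\,\partial^* \, .$$
The nontrivial inclusion uses $\langle \square\phi,\phi\rangle = \langle\partial\phi,\partial\phi\rangle + \langle\partial^*\phi,\partial^*\phi\rangle$ together with positive definiteness. Next I will establish the orthogonal Hodge decomposition
$$V^k \ = \ \cH^k \ \op \ \tim\,\partial \ \op \ \tim\,\partial^* \, .$$
Mutual orthogonality of the three summands follows from adjointness and $\partial^2 = 0$; fullness of the sum follows from the fact that $\square$ is self-adjoint, so in finite dimensions $V^k = \tker\,\square \op \tim\,\square$, combined with $\tim\,\square \subset \tim\,\partial + \tim\,\partial^*$, which is immediate from the very formula for $\square$.

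The isomorphism $\cH^k \xrightarrow{\sim} H^k(\fn_w,\fg_w^\perp)$ is then read off from the decomposition: $\tker\,\partial = \cH^k \op \tim\,\partial$, since if $\phi = \psi + \partial\alpha + \partial^*\beta$ lies in $\tker\,\partial$ then $\partial\partial^*\beta = 0$ forces $\|\partial^*\beta\|^2 = \langle\partial\partial^*\beta,\beta\rangle = 0$, whence $\partial^*\beta = 0$. Passing to the quotient by $\tim\,\partial$ gives a linear bijection with $\cH^k$, which is precisely the uniqueness-of-representatives assertion. The map is $\fg_{0,0}$--equivariant because $\partial$ and $\square$ are $\fg_{0,0}$--module maps, so $\cH^k$, $\tker\,\partial$, and $\tim\,\partial$ are all $\fg_{0,0}$--submodules and the projection intertwines them.

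I do not expect any substantive obstacle: once Proposition \ref{P:adjoint} is in place, the entire content of the proposition is the standard formal argument above. The only points worth checking carefully are that $\partial^*$ genuinely restricts to a map $V^{k+1}\to V^k$ (already noted after \eqref{E:d*} via $[\fg_w^\perp,\fn_w^+]\subset\fg_w^\perp$) and that the Killing-form identification \eqref{E:nw+} is $\fg_{0,0}$--equivariant, so that all decompositions in sight are $\fg_{0,0}$--natural.
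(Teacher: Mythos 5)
Your proposal is correct and is precisely the standard finite-dimensional Hodge decomposition argument that the paper invokes by reference, citing Kostant's Proposition 2.1 and \v{C}ap--Slov\'ak's Corollary 3.3.1 as ``standard arguments'' rather than writing it out. The chain you give (positive definiteness $\Rightarrow$ $\cH^k = \tker\,\partial\cap\tker\,\partial^*$; self-adjointness of $\square$ $\Rightarrow$ orthogonal decomposition $V^k = \cH^k\op\tim\,\partial\op\tim\,\partial^*$; hence $\tker\,\partial = \cH^k\op\tim\,\partial$; $\fg_{0,0}$--equivariance because $\partial$ and $\square$ are $\fg_{0,0}$--module maps) is exactly what those references contain, so there is nothing to correct.
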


Let 
\begin{equation} \label{E:cH_deg}
  \cH^1 \ = \ \cH^1_0 \,\op\, \cH^1_1 \,\op\, \cH^1_2 \ \subset \ 
              \left( \fg_{-1,<-\sfa} \,\op\, \fg_{0,-} \,\op\, \fg_{1,<\sfa}
              \right) \ \ot \ \fn_w^+ \, .
\end{equation}
denote the $Z_\tti$--graded decomposition of $\cH^1$.  

\begin{corollary} \label{C:new_coh}
Let $\cF$ be the adapted frame bundle \eqref{E:cY} over an integral manifold of the Schubert system $\cB_w$.  The bundle $\cF$  admits a sub-bundle $\cF^0$ on which $\lambda : \cF \to \tker\,\d^1$ restricts to take values in $\cH^1_1$.  Additionally, the $\vartheta_{\fg_w}$ compose a coframing of $\cF^0$, and there exists $\mu : \cF^0 \to \fg_{1,<\sfa} \ot \fn_w^+$ such that $\vartheta_{1,<\sfa} = \mu(\vartheta_{\fn_w})$ on $\cF^0$.

If $\lambda$ vanishes on $\cF^0$, then $\mu$ takes value in $\cH^1_2$.  If, for any integral manifold of $\cB_w$, $\lambda$ and $\mu$ vanish on $\cF^0$, then the Schubert system is rigid.
\end{corollary}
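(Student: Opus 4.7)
First I would establish the assertion about $\lambda$ by refining Lemma \ref{L:norm_lambda} via Proposition \ref{P:new_coh}. The lemma produces a sub-bundle on which $\lambda$ takes values in an arbitrary $\fg_{0,0}$-module complement of $\tim\,\d^0$ inside $\tker\,\d^1$, so my goal is to show that $\cH^1_1 \subset \fg_{0,-} \ot \fn_w^+$ is such a complement. By \eqref{E:H1} the quotient $\tker\,\d^1 / \tim\,\d^0$ is $H^1_1(\fn_w, \fg_w^\perp)$, and Proposition \ref{P:new_coh} identifies this quotient with the harmonic subspace $\cH^1_1$, which sits inside $\tker\,\d^1$ and projects isomorphically to the quotient as a $\fg_{0,0}$-module. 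Thus $\tker\,\d^1 = \tim\,\d^0 \oplus \cH^1_1$, and feeding this decomposition into Lemma \ref{L:norm_lambda} yields the desired statement. The coframing of $\cF^0$ by $\vartheta_{\fg_w}$ and the existence of $\mu$ with $\vartheta_{1,<\sfa} = \mu(\vartheta_{\fn_w})$ are restatements of conclusions from the same lemma.

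Next I would handle the refinement of $\mu$ under the assumption $\lambda = 0$ on $\cF^0$. Substituting $\vartheta_{0,-} = 0$ and $\vartheta_{1,<\sfa} = \mu(\vartheta_{\fn_w})$ into the Maurer--Cartan computation \eqref{E:preT} produces precisely the torsion condition $\e^1(\mu) = 0$ (see \eqref{E:ule}). By \eqref{E:H1}, $\tker\,\e^1 = H^1_2(\fn_w, \fg_w^\perp)$; moreover, since $\fg_w^\perp$ has maximum $Z_\tti$-weight $+1$ and $\partial$ preserves $Z_\tti$-grading (under the identification \eqref{E:nw+}), there is no image of $\partial$ in grading $+2$, so the harmonic-representative identification of Proposition \ref{P:new_coh} equates $\tker\,\e^1 \subset \fg_{1,<\sfa} \ot \fn_w^+$ with $\cH^1_2$ as subspaces. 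Hence $\mu$ takes values in $\cH^1_2$, as asserted.

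For the rigidity claim, I would argue that if $\lambda$ and $\mu$ vanish on $\cF^0$ for every integral manifold of \eqref{E:Bw_eds}, then the three relations $\vartheta_{\fn_w^\perp} = 0$, $\vartheta_{0,-} = \lambda(\vartheta_{\fn_w}) = 0$, and $\vartheta_{1,<\sfa} = \mu(\vartheta_{\fn_w}) = 0$ combine via the decomposition \eqref{E:gw_dfns} to give $\vartheta_{\fg_w^\perp} = 0$ on $\cF^0$. Thus $\cF^0$ is an integral manifold of \eqref{E:Xw_eds}, and Lemma \ref{L:Frob_sys} forces $\cF^0$ to lie inside some $g \cdot \cG_w$; projecting to $X$, the underlying integral manifold of $\cB_w$ is contained in $g \cdot X_w$, and Proposition \ref{P:Bw_rigid} concludes that $\cB_w$ is rigid. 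The only real obstacle is the identification of the harmonic subspaces $\cH^1_1, \cH^1_2$ with the correct complements/kernels arising from the differential system; this hinges on the Hodge-theoretic content of Proposition \ref{P:new_coh} and careful tracking of the $Z_\tti$-grading, while the remaining steps are direct bookkeeping from the structures already in place.
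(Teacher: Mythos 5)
Your proposal is correct and follows essentially the same route as the paper: the $\fg_{0,0}$-module decomposition $\tker\,\d^1 = \tim\,\d^0 \oplus \cH^1_1$ (via \eqref{E:H1} and Proposition \ref{P:new_coh}) fed into Lemma \ref{L:norm_lambda}, then the torsion constraint $\e^1(\mu)=0$ from Section \ref{S:lambda=0} together with the identification $\tker\,\e^1 = \cH^1_2$, and finally the translation of $\lambda = \mu = 0$ into $\vartheta_{\fg_w^\perp}=0$ feeding into Proposition \ref{P:Bw_rigid}. Your explicit $Z_\tti$-grading argument for why $\tker\,\e^1$ coincides with $\cH^1_2$ as a subspace (no coboundaries in grading $+2$) makes precise a point the paper leaves implicit, and your extra appeal to Lemma \ref{L:Frob_sys} in the last step is harmless but redundant, since Proposition \ref{P:Bw_rigid} already encodes that Frobenius argument.
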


\begin{proof}
From \eqref{E:H1} and Proposition \ref{P:new_coh} we see that 
$$
  \tker\,\d^1 \ = \ \tim\,\d^0 \,\op\, \cH^1_1
  \quad \hbox{ and } \quad 
  \ker\,\e^1 \ = \ \cH^1_2 \, ;
$$ 
moreover, the former is a $\fg_{0,0}$--module decomposition.  Setting $(\tim\,\d^0)^\perp := \cH^1_1$, Lemma \ref{L:norm_lambda} yields a sub-bundle $\cF^0$ on which $\lambda$ takes values in $\cH^1_1$ and $\vartheta_{1,<\sfa} = \mu(\vartheta_{\fn_w})$.  

If $\lambda$ vanishes on $\cF^0$, then from Section \ref{S:lambda=0} we see that $\mu$ takes values in $\cH^1_2$.  

Finally, Proposition \ref{P:Bw_rigid} asserts that the Schubert system is rigid if and only if every integral manifold $\cF$ of \eqref{E:Bw_eds} admits a sub-bundle on which $\vartheta_{\fg_w^\perp}$ vanishes.  Recall that $\fg_w^\perp = \fn_w^\perp \op \fg_{0,-} \op \fg_{1,<\sfa}$.  The vanishing of $\vartheta_{\fn_w^\perp}$ on $\cF^0\subset \cF$ is automatic by Lemma \ref{L:Bw_eds}.  By \eqref{E:th0neg}, $\vartheta_{0,-}$ vanishes on $\cF^0$ if and only if $\lambda$ does.   Similarly, the vanishing of $\vartheta_{1,<\sfa}$ is equivalent to the vanishing of $\mu$ by Lemma \ref{L:norm_lambda}.
\end{proof}
\begin{equation} \label{E:F0}
  \hbox{ \emph{From this point on, we restrict to the bundle 
  $\cF^0$ of Corollary \ref{C:new_coh}.} }
\end{equation}

\begin{remark*}
It is an immediate consequence of Corollary \ref{C:new_coh} that the Schubert system is rigid if $\cH^1_{+} = \{0\}$.  We will see in Sections \ref{S:induct} \& \ref{S:finish} that it suffices for subspaces $\cH^1_{1,\sfa-1} \subset \cH^1_1$ and $\cH^1_{2,2\sfa-1} \subset \cH^1_2$ to vanish.  These subspaces are components of the $(Z_\tti,Z_w)$--bigraded decomposition
\begin{equation} \label{E:cH1bi}
  \cH^1 \ = \ \oplus_{i,s} \, \cH^1_{i,s} \,.
\end{equation}
The integers $s=\sfa-1$ and $s=2\sfa-1$ are respectively the maximal $Z_w$--eigenvalues on $\cH^1_1$ and $\cH^1_2$.
\end{remark*}

\subsection{Action of the Laplacian} \label{S:action}

In this section we derive general formulas, \eqref{E:sqL1} and \eqref{E:sqL2} respectively, that will be used to determine $\cH^1_1$ and $\cH^1_2$.

Given $\xi \in \fg$, let $\epsilon_\xi : \fg\ot \tw^k \fg \, \to \, \fg\ot \tw^{k+1}\fg$ denote the natural map induced by exterior product with $\xi$.  Let $\iota_\xi : \fg \ot \tw^k\fg \, \to \, \fg \ot \tw^{k-1} \fg$ denote the natural map induced by the interior product with $\xi$; the interior product is computed with respect to the Killing form.  Let $L'_\xi : \fg \to \fg$ denote the adjoint action of $\xi$ on $\fg$, and let $L''_\xi : \tw \fg \to \tw\fg$ denote the induced action on the exterior algebra.  Then the induced action $ \cL_\xi : \fg \ot \tw \fg \, \to \, \fg \ot \tw\fg$ is given by $\cL_\xi = \cL'_\xi + \cL''_\xi$ where $\cL'_\xi = L'_\xi \ot \mathbf{1}$ and $\cL''_\xi = \mathbf{1} \ot L''_\xi$.  

The following lemma will be very useful.  

\begin{lemma}[{\cite[Lemma 3.3.2]{MR2532439}}] \label{L:formulas}
Fix $\xi,\z \in \fg$, and let $\{\xi_\ell\}$ and $\{\z_\ell\}$ be Killing dual bases of $\fg$.  Then $\cL'_\xi$ commutes with $\cL''_\z$, $\epsilon_\z$ and $\iota_\z$,
\begin{a_list}
\item $\sfd^* \circ \epsilon_\xi \,+\, \epsilon_\xi \circ \sfd^* \,=\, \cL_\xi$,
\item $\cL'_\xi \circ \sfd^* \,-\, \sfd^* \circ \cL'_\xi \,=\,
       \sum_\ell \iota_{[\z_\ell , \xi]} \circ \cL'_{\xi_\ell}$,
\item $\sum_\ell \epsilon_{\z_\ell} \circ \iota_{[\xi,\xi_\ell]} \,=\, -\cL''_\xi$.
\end{a_list}
\end{lemma}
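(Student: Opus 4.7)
The preliminary commutation assertions are essentially formal: $\cL'_\xi = L'_\xi \ot \mathbf{1}$ acts only on the first tensor factor, while $\cL''_\z$, $\epsilon_\z$, and $\iota_\z$ act only on the second, so all three commute with $\cL'_\xi$ on decomposable elements and hence everywhere. My overall strategy for (a)--(c) is to evaluate both sides on a decomposable element $v\ot(z_0\wedge\cdots\wedge z_k) \in \fg\ot\bw^{k+1}\fg$, expand $\sfd^*$ according to its two-sum definition, and use two basic tools: (i) the Jacobi identity in $\fg$, and (ii) the Killing-dual reconstruction identity $w = \sum_\ell (w,\xi_\ell)\z_\ell = \sum_\ell(w,\z_\ell)\xi_\ell$ together with invariance $([a,b],c)=(a,[b,c])$ of the Killing form.

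For (a), I would first compute $\epsilon_\xi\circ\sfd^*$ on $v\ot(z_0\wedge\cdots\wedge z_k)$: it produces a sum of terms in which $\xi$ is wedged in at the front of each $z_0\wedge\cdots\wh z_i\cdots\wedge z_k$ (the ``single bracket'' piece) and $[z_i,z_j]$ is wedged at the front of $\xi\wedge\cdots$ (the ``double bracket'' piece). Next I would expand $\sfd^*\circ\epsilon_\xi$ applied to the same element, where $\xi$ is now in position $0$ of the exterior factor: the single-bracket sum now has an extra ``$i=0$'' term equal to $[\xi,v]\ot(z_0\wedge\cdots\wedge z_k)=\cL'_\xi(\,\cdot\,)$, and the double-bracket sum acquires extra ``$j=0$'' terms of the form $(-1)^{j+1}v\ot([\xi,z_j]\wedge z_0\wedge\cdots\wh z_j\cdots\wedge z_k)$ which, after moving $[\xi,z_j]$ into position $j$, sum to $\cL''_\xi(\,\cdot\,)$. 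The remaining terms are exactly those in $\epsilon_\xi\circ\sfd^*$ with opposite sign, and they cancel; what remains is $\cL'_\xi+\cL''_\xi=\cL_\xi$.

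For (b), I would apply $\cL'_\xi$ and $\sfd^*$ in both orders to $v\ot(z_0\wedge\cdots\wedge z_k)$. The double-bracket piece of $\sfd^*$ does not involve $v$, so it commutes with $\cL'_\xi$. On the single-bracket piece, the commutator collapses to $\sum_i(-1)^i\bigl([\xi,[z_i,v]]-[z_i,[\xi,v]]\bigr)\ot(z_0\wedge\cdots\wh z_i\cdots\wedge z_k)$, which by Jacobi equals $\sum_i(-1)^i[[\xi,z_i],v]\ot(\cdots)$. Evaluating the right-hand side of (b) on the same element gives $\sum_{i,\ell}(-1)^i(z_i,[\z_\ell,\xi])\,[\xi_\ell,v]\ot(\cdots)$; invariance yields $(z_i,[\z_\ell,\xi])=([\xi,z_i],\z_\ell)$, and reconstruction $\sum_\ell([\xi,z_i],\z_\ell)\xi_\ell=[\xi,z_i]$ matches the left-hand side term-by-term.

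For (c), the computation is of the same flavor: $\iota_{[\xi,\xi_\ell]}$ contracts the exterior factor against $[\xi,\xi_\ell]$ via the Killing form, and then $\epsilon_{\z_\ell}$ re-inserts $\z_\ell$ at the front. The sum over $\ell$ is collapsed by $\sum_\ell(z_i,[\xi,\xi_\ell])\z_\ell = \sum_\ell([z_i,\xi],\xi_\ell)\z_\ell = [z_i,\xi] = -[\xi,z_i]$, and one then verifies that moving $[\xi,z_i]$ from position $0$ to position $i$ in the exterior factor absorbs the $(-1)^i$ sign, producing precisely $-\cL''_\xi$. I expect no real obstacle beyond careful sign bookkeeping; the only conceptual ingredient is the combination of Jacobi with Killing-dual reconstruction, and the rest is a mechanical verification on decomposable elements, which suffices by multilinearity.
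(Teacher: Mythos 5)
Your proof is correct, and it fills in a computation the paper does not itself carry out: the lemma is cited directly from \cite[Lemma~3.3.2]{MR2532439}, and the paper only remarks that its $\sfd^*$ differs from that reference by an overall sign. Your verification---expanding $\sfd^*$ on decomposable elements, isolating the ``extra'' $i=0$ terms that appear when $\xi$ occupies position~$0$, and collapsing the dual-basis sums via Killing invariance $([a,b],c)=(a,[b,c])$ and reconstruction $\sum_\ell(w,\z_\ell)\xi_\ell=w$---is exactly the standard argument one would find in the reference, adjusted to the paper's sign convention, so there is nothing to compare beyond noting that you have supplied what the paper delegates to a citation. One small slip: in part (a) the extra double-bracket contributions come from setting the \emph{smaller} index $i$ equal to $0$ in the sum $\sum_{i<j}$, not $j=0$; the formula you wrote, $(-1)^{j+1}\,v\ot([\xi,z_j]\wedge\cdots)$, is the correct one, so this is purely a labeling typo and does not affect the argument.
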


\begin{remark*}
When consulting \cite{MR2532439}, note that their version of $\sfd^*$ differs from ours by a sign.
\end{remark*}

Fix an orthogonal basis $\{ H_1,\ldots , H_n\}$ of $\fh$.  Let $E_\a \in \fg_{\a}$ be root vectors scaled so that $(E_\a , E_{-\b}) = \d_{\a\b}$.  Then $\{\xi_\ell\} = \{ E_\a \}_{\a\in\Delta} \cup \{H_j\}_{j=1}^n$ defines a basis of $\fg$.  Let $\{\z_\ell\}$ denote the Killing dual basis.

Given $0\le a \in \bZ$, set
$$
   \Delta(w,a) \ = \ \{ \a \in \Delta(w) \ | \ \a(Z_w) = a \} \, .
$$
From \eqref{E:abelian} and \eqref{E:d}, we see that $\partial: \fg_w^\perp \ot \tw^k\fn_w^+ \to \fg_w^\perp \ot \tw^{k+1}\fn_w^+$ is given by 
\begin{eqnarray}
  \nonumber
  \partial_{|\fg_{-1,-s}\ot\sw^k\fn_w^+} & = & 0 \, ,
  \quad \hbox{ for } s > \sfa\,;\\
  \label{E:partial}
  \displaystyle \partial_{|\fg_{0,-\ell}\ot\sw^k\fn_w^+} & = &
  \sum_{\stack{\a \in \Delta(w,a)}{a+\ell>\sfa}}
  \epsilon_{E_\a} \circ \cL'_{E_{-\a}} \, ,
  \quad\hbox{ for }  \ell > 0 \,;\\
  \nonumber
  \partial_{|\fg_{1,b}\ot\sw^k\fn_w^+} & = & 
  \sum_{\stack{\a \in \Delta(w,a)}{b-a < 0}}
  \epsilon_{E_\a} \circ \cL'_{E_{-\a}} \, , \quad\hbox{ for }
  0 \le b < \sfa \, .
\end{eqnarray}

\subsection*{Bidegree \boldmath $(1,k)$ \unboldmath}
The component of $\fg_w^\perp \ot \fn_w^* \simeq \fg_{0,-} \ot \fn_w^+$ of $(Z_\tti,Z_w)$--bidegree $(1,k)$ is
\begin{subequations} \label{E:Lambdak}
\begin{equation}
  \Lambda^k \ := \ \bigoplus_{\stack{0<\ell}{k+\ell\le\sfa}}
  \Lambda^k_{-\ell} \quad \hbox{with} \quad
  \Lambda^k_{-\ell} \ := \ \fg_{0,-\ell} \ot \fg_{1,k+\ell} \,,\quad
  k \le \sfa-1 \, .
\end{equation}
Note that $\cH^1_{1,k}= \Lambda^k \cap \cH^1_1$, see \eqref{E:cH1bi}.  If $k+\ell<0$, then $\fg_{1,k+\ell} = \{0\}$ so that 
\begin{equation}
  \Lambda^k_{-\ell} \ = \ \{0\} \quad\hbox{when}\quad k+\ell < 0 \, .
\end{equation}
\end{subequations}  
We have $\partial^*(\Lambda^k) \subset \fg_{1,k}$.  Making use of \eqref{E:partial}, we compute
\begin{eqnarray*}
  \square_{|\Lambda^k_{-\ell}} & = & 
  \sum_{\stack{\a\in\Delta(w,a)}{a>k}} 
  \epsilon_{E_\a} \circ \cL'_{E_{-\a}} \circ \partial^*
  \ + \sum_{\stack{\a\in\Delta(w,a)}{a+\ell>\sfa}} 
  \partial^* \circ \epsilon_{E_\a} \circ \cL'_{E_{-\a}} \\
  & \stackrel{(\ast)}{=} & 
  \sum_{\stack{\a\in\Delta(w,a)}{k<a\le \sfa-\ell}} 
  \epsilon_{E_\a} \circ \cL'_{E_{-\a}} \circ \partial^*
  \ + \sum_{\stack{\a\in\Delta(w,a)}{a+\ell>\sfa}} 
  \left( \partial^* \circ \epsilon_{E_\a} \circ \cL'_{E_{-\a}} \,+\,
  \epsilon_{E_\a} \circ \cL'_{E_{-\a}} \circ \partial^* \right) \, .
\end{eqnarray*}
Lemma \ref{L:formulas}(a,b) allows us to rewrite the second summand in $(\ast)$ as
$$
  \partial^* \circ \epsilon_{E_\a} \circ \cL'_{E_{-\a}} \,+\,
  \epsilon_{E_\a} \circ \cL'_{E_{-\a}} \circ \partial^* \ = \ 
  \cL_{E_\a}\circ\cL'_{E_{-\a}} \ + \ \epsilon_{E_\a} \circ
  \textstyle \sum_j \iota_{[\z_j,E_{-\a}]} \circ \cL'_{\xi_j} \, .
$$
Given $\a \in \Delta(w)$, \eqref{E:abelian} implies $\cL_{E_{\a}} \circ \cL'_{E_{-\a}} = \cL'_{E_{\a}} \circ \cL'_{E_{-\a}}$ on $\fg_w^\perp \ot \tw^k\fn_w^+$.  Also note that 
\begin{equation} \label{E:iota}
  \hbox{$\iota_\z$ will vanish on $\Lambda^k_{-\ell}$ 
        unless $\z \in \fg_{-1,-k-\ell}$.}
\end{equation}  Therefore,  the second summand in $(\ast)$ may be further refined to
$$
  \partial^* \circ \epsilon_{E_\a} \circ \cL'_{E_{-\a}} \,+\,
  \epsilon_{E_\a} \circ \cL'_{E_{-\a}} \circ \partial^* \ = \ 
  \cL'_{E_\a}\circ\cL'_{E_{-\a}} \ + \ \epsilon_{E_\a} \circ
  \sum_{\stack{\z_j \in \fg_{0,m}}{m=a-k-\ell}} 
  \iota_{[\z_j,E_{-\a}]} \circ \cL'_{\xi_j} \, .
$$
Thus
\begin{eqnarray*}
  \square_{|\Lambda^k_{-\ell}} & = & 
  \sum_{\stack{\a\in\Delta(w,a)}{k<a\le \sfa-\ell}} 
  \epsilon_{E_\a} \circ \cL'_{E_{-\a}} \circ \partial^* \ + 
  \sum_{\stack{\a\in\Delta(w,a)}{a+\ell>\sfa}} \cL'_{E_\a} \circ \cL'_{E_{-\a}}
  \\ & & + 
  \sum_{\stack{\a\in\Delta(w,a)}{a+\ell>\sfa}} 
  \sum_{\stack{\z_j \in \fg_{0,m}}{m=a-k-\ell}}
  \epsilon_{E_\a} \circ \iota_{[\z_j,E_{-\a}]} \circ \cL'_{\xi_j} \, .
\end{eqnarray*}
Swapping the order of summation in the third term above yields
$$
  \sum_{\stack{\a\in\Delta(w,a)}{a+\ell>\sfa}} 
  \sum_{\stack{\z_j \in \fg_{0,m}}{m=a-k-\ell}}
  \epsilon_{E_\a} \circ \iota_{[\z_j,E_{-\a}]} \circ \cL'_{\xi_j} \ = \ 
  \sum_{\stack{\z_j \in \fg_{0,m}}{\sfa-k-2\ell<m\le\sfa-k-\ell}}
  \sum_{\stack{\a\in\Delta(w,a)}{a=m+k+\ell}} 
  \epsilon_{E_\a} \circ \iota_{[\z_j,E_{-\a}]} \circ \cL'_{\xi_j}
$$
Again, \eqref{E:iota} allows us to replace the summation over $\{E_{\a} \ | \ \a\in\Delta(w,a) \,, \ a=m+k+\ell\}$ with a summation over all $\z_p$.  Then Lemma \ref{L:formulas}(c) yields
$$
  \sum_{\stack{\a\in\Delta(w,a)}{a+\ell>\sfa}} 
  \sum_{\stack{\z_j \in \fg_{0,m}}{m=a-k-\ell}}
  \epsilon_{E_\a} \circ \iota_{[\z_j,E_{-\a}]} \circ \cL'_{\xi_j} \ = \ 
  -\sum_{\stack{\z_j \in \fg_{0,m}}{\sfa-k-2\ell<m\le\sfa-k-\ell}}
  \cL''_{\z_j} \circ \cL'_{\xi_j}
$$
The computations above yield the following expression for the Laplacian on $\Lambda^k_{-\ell}$
\begin{equation} \label{E:sqL1}
  \square_{|\Lambda^k_{-\ell}} \ = \ 
  \sum_{\stack{\a\in\Delta(w,a)}{\sfa-\ell<a}} 
  \underbrace{ \cL'_{E_\a}\circ\cL'_{E_{-\a}} 
  }_{\Lambda^k_{-\ell}} -
  \sum_{\stack{\z_j\in\fg_{0,m}}{\sfa-k-2\ell<m\le\sfa-k-\ell}}
  \underbrace{ \cL''_{\z_j}\circ\cL'_{\xi_j} 
  }_{\Lambda^k_{-\ell-m}} + 
  \sum_{\stack{\a\in\Delta(w,a)}{k<a\le\sfa-\ell}}
  \underbrace{ \epsilon_{E_\a}\circ\cL'_{E_{-\a}}\circ\partial^*
  }_{\Lambda^k_{k-a}} 
\end{equation}
The underbraces above indicate which $\Lambda^k_{-\bullet}$ the operator takes values in.  

\subsection*{Bidegree \boldmath $(2,k)$\unboldmath}  
Let 
\begin{subequations}\label{E:Muk}
\begin{equation}
  \tM^k \ := \ \bigoplus_{c =k-\sfa}^{\sfa-1} 
  \tM^k_c \quad \hbox{ with } \quad
  \tM^k_c \ := \ \fg_{1,c} \ot \fg_{1,k-c} \,, \quad 
  0 \le k \le 2\sfa-1 
\end{equation}
be the component of $\fg_w^\perp \ot \fn_w^*$ of $(Z_\tti , Z_w)$--bidegree $(2,k)$.  Note that $\cH^1_{2,k} = \tM^k \cap \cH^1_2$, cf. \eqref{E:cH1bi}.  If $c < 0$ then $\fg_{1,c} = \{0\}$; if $k < c$, then $\fg_{1,k-c} = \{0\}$.  Thus, 
\begin{equation}
  \tM^k_c \ = \ \{0\} \quad \hbox{if either} \quad c < 0 \quad\hbox{or}\quad 
  k < c \, .
\end{equation}
\end{subequations}

By \eqref{E:abelian} the adjoint $\partial^*$ vanishes on $\tM^k$.  So, \eqref{E:partial} yields
$$
    \square_{|\tM^k_c} \ = \ \sum_{\stack{\a \in \Delta(w,a)}{c<a}} \left(
      \partial^*\circ\epsilon_{E_\a}\circ\cL'_{E_{-\a}} \,+\, 
      \epsilon_{E_\a}\circ\cL'_{E_{-\a}}\circ\partial^*
    \right) \,.
$$
Applying Lemma \ref{L:formulas}, a computation similar to that for \eqref{E:sqL1} yields 
\begin{equation} \label{E:sqL2}
  \square_{|\tM^k_c} \ = \ 
  \sum_{\stack{\a\in\Delta(w,a)}{c<a}}
  \underbrace{ \cL'_{E_\a}\circ\cL'_{E_{-\a}}
  }_{\mathrm{valued \ in \ } \tM^k_c} \ - 
  \sum_{\stack{\z_j\in\fg_{0,m}}{2c-k<m}} 
  \underbrace{ \cL''_{\z_j}\circ\cL'_{\xi_j} 
  }_{\mathrm{valued \ in \ } \tM^k_{c-m}}
\end{equation}

\subsection*{Casimirs}

Note that the $\sum_{\z_j\in\fg_{0,0}} \cL''_{\z_j} \circ \cL'_{\x_j}$ term of \eqref{E:sqL1} and \eqref{E:sqL2} satisfies
\begin{equation}\label{E:C00}
  \sum_{\z_j\in\fg_{0,0}} \cL_{\z_j}'' \circ\cL'_{\x_j} \ = \ 
  \half \sum_{\z_j\in\fg_{0,0}} \left( 
    \cL_{\z_j}\circ\cL_{\x_j} \,-\, \cL'_{\z_j}\circ\cL'_{\x_j} \,-\, 
    \cL''_{\z_j}\circ\cL''_{\x_j} \right) \, .
\end{equation}
The terms $\sum \cL \circ \cL$, $\sum \cL' \circ \cL'$ and $\sum \cL'' \circ \cL''$ appearing in the right-hand side of \eqref{E:C00} are the $\fg_{0,0}$--Casimirs on $\fg^\perp_w\ot\fn_w^+$, $\fg^\perp_w$ and $\fn_w^+$, respectively.  

\begin{lemma} \label{L:C00}
Let $U_\b \subset \fg^\perp_w$ and $U_\c \subset \fn_w^+$ be irreducible $\fg_{0,0}$--modules of highest weights $\b,\c\in\Delta(\fg)$.  Let $U_\pi \subset U_\b \ot U_\c$ be an irreducible $\fg_{0,0}$--module of highest weight $\pi$.  Then \eqref{E:C00} acts on $U_\pi$ by a scalar $c \le (\b,\c)$ with equality if and only if $\pi = \b+\c$.
\end{lemma}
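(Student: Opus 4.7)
The plan is to reduce the lemma to a standard inequality about Casimir eigenvalues on summands of tensor products of irreducible representations. The three operators $\sum_j \cL_{\z_j} \circ \cL_{\x_j}$, $\sum_j \cL'_{\z_j} \circ \cL'_{\x_j}$ and $\sum_j \cL''_{\z_j} \circ \cL''_{\x_j}$ appearing on the right-hand side of \eqref{E:C00} are, respectively, the $\fg_{0,0}$--Casimir acting on $U_\b \ot U_\c$ via the tensor action, the Casimir acting only through the first factor $U_\b$, and the Casimir acting only through the second factor $U_\c$. Each is $\fg_{0,0}$--invariant, so Schur's lemma forces each to act by a scalar on the respective irreducible summand.

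I would next invoke the standard Harish-Chandra/Freudenthal formula: the Casimir acts on any irreducible $\fg_{0,0}$--module $V_\lambda$ by the scalar $(\lambda,\lambda + 2\rho_{0,0})$, where $\rho_{0,0}$ denotes the half-sum of the positive roots of $\fg_{0,0}$. This identifies the three scalars as $(\pi,\pi + 2\rho_{0,0})$, $(\b,\b + 2\rho_{0,0})$ and $(\c,\c + 2\rho_{0,0})$ on $U_\pi$, $U_\b$ and $U_\c$ respectively. Substituting into \eqref{E:C00} and then using the identity $(\lambda,\lambda + 2\rho_{0,0}) = |\lambda + \rho_{0,0}|^2 - |\rho_{0,0}|^2$ (with $|\cdot|^2 := (\cdot,\cdot)$), a short and routine expansion converts
\[
  2c \ = \ (\pi,\pi + 2\rho_{0,0}) - (\b,\b + 2\rho_{0,0}) - (\c,\c + 2\rho_{0,0})
\]
into the compact form
\[
  2\bigl( c - (\b,\c) \bigr) \ = \ |\pi + \rho_{0,0}|^2 \ - \ |\b + \c + \rho_{0,0}|^2.
\]

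The heart of the argument, and what I expect to be the only real content, is to show that the right-hand side is $\le 0$ with equality if and only if $\pi = \b + \c$. Setting $\eta := \b + \c - \pi$ and using that every weight of $U_\pi \subset U_\b \ot U_\c$ decomposes as a sum of weights of $U_\b$ and $U_\c$ (each obtained from the corresponding highest weight by subtracting a non-negative integer combination of positive roots of $\fg_{0,0}$), I observe that $\eta$ itself is such a non-negative combination. Then
\[
  |\b + \c + \rho_{0,0}|^2 - |\pi + \rho_{0,0}|^2 \ = \ 2(\pi + \rho_{0,0},\eta) + |\eta|^2,
\]
and I would conclude by showing both terms on the right are non-negative: the first because $\pi$ is $\fg_{0,0}$--dominant and $\rho_{0,0}$ is strictly $\fg_{0,0}$--dominant, so $\pi + \rho_{0,0}$ pairs non-negatively with every positive root of $\fg_{0,0}$; the second because the restriction of the Killing form to the real span of the roots is positive definite. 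The equality case forces $|\eta|^2 = 0$ and hence $\eta = 0$, yielding $\pi = \b + \c$. No step beyond Schur's lemma, the Casimir eigenvalue formula, and positive definiteness of the root form should present any difficulty.
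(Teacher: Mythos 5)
Your proof is correct and follows essentially the same approach as the paper's: both use Schur's lemma plus the Casimir eigenvalue formula, then reduce to the inequality forced by $\pi \le \b+\c$ together with dominance of $\pi$ (or $\pi + \rho_{0,0}$). The paper reaches the conclusion via two separate inequalities, $|\pi|^2 \le |\b+\c|^2$ and $(\pi-\b-\c,\rho_0) \le 0$, whereas you package the same content into the single inequality $|\pi+\rho_{0,0}|^2 \le |\b+\c+\rho_{0,0}|^2$; this is a purely cosmetic rearrangement of the identical underlying argument.
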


\begin{proof}
Recall that the Casimir acts on an irreducible module of highest weight $\nu$ by the scalar $|\n|^2 + 2(\n,\rho_0)$, where $\rho_0:=\half \sum_{\a\in\Delta^+(\fg_{0,0})} \a$.

The highest weight occurring in $U_\b \ot U_\c$ is $\b+\c$.  Because both $\pi$ and $\b+\c$ lie in the dominant Weyl chamber, we have $|\pi| \le |\b+\c|$.  The weight $\rho_0$ lies in the interior of the dominant Weyl chamber, and so has the property that $(\c+\b-\pi , \rho_0) \ge 0$, with equality if and only if $\pi = \c+\b$.  It follows that \eqref{E:C00} acts by the scalar
\begin{eqnarray*}
  c & = & \half \left( |\pi|^2 - |\b|^2 - |\c|^2  \right) 
  \ + \ (\pi-\b-\c,\rho_0) \\
  & \stackrel{(\star)}{\le} & \half \left( |\b+\c|^2 - |\b|^2 - |\c|^2  \right) \ + \ (\pi-\b-\c,\rho_0)  \\
  & = & (\c , \b) \ + \ (\pi-\b-\c,\rho_0) \ \stackrel{(\dagger)}{\le} \ (\c,\b) \, .
\end{eqnarray*}
Equality holds in $(\star)$ and $(\dagger)$ if and only if $\pi = \b+\c$.
\end{proof}

\begin{lemma} \label{L:knapp}
\begin{a_list}
\item  Let $\varsigma,\tau \in \Delta(\fg)$.  Then $L'_{E_\varsigma} \circ L'_{E_{-\varsigma}}$ acts on $\fg_\tau$ by a scalar $c\ge0$.  Moreover, $c = 0$ if and only if $\varsigma-\tau\not\in\Delta(\fg)$.
\item Assume $\varsigma\in\Delta(\fg_1)$ and $\tau \in \Delta(\fg_0)$.  If both $\varsigma\pm\tau\in\Delta(\fg)$, then $c = |\varsigma|^2$.  If $\varsigma-\tau \in \Delta(\fg)$ and $\varsigma+\tau\not\in\Delta(\fg)$, then  $c = (\varsigma,\tau) > 0$.
\item Assume $\varsigma,\tau\in\Delta(\fg_1)$.  If $\varsigma-\tau \in \Delta(\fg)$, then $c = (\varsigma,\tau)>0$.  Otherwise $c=0$.
\end{a_list}
\end{lemma}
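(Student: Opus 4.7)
The plan is to reduce the assertion to the representation theory of the $\mathfrak{sl}_2$-triple generated by the Killing-dual pair $E_{\pm\varsigma}$. Since $\fg_\tau$ is one-dimensional and $L'_{E_{-\varsigma}}$ sends $\fg_\tau$ into $\fg_{\tau-\varsigma}$ while $L'_{E_\varsigma}$ sends $\fg_{\tau-\varsigma}$ back into $\fg_\tau$, the composition $L'_{E_\varsigma}\circ L'_{E_{-\varsigma}}$ automatically acts on $\fg_\tau$ by a scalar $c\in\bC$; moreover the scalar is zero whenever $\tau-\varsigma$ is not a weight of the adjoint representation.

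To evaluate $c$ I will pass to a normalized triple. The Killing-duality $(E_\varsigma,E_{-\varsigma})=1$ forces $[E_\varsigma,E_{-\varsigma}]=H_\varsigma$ with $\beta(H_\varsigma)=(\beta,\varsigma)$ for every root $\beta$. Rescaling to
$$
  h \;=\; \tfrac{2}{|\varsigma|^2}\,H_\varsigma, \qquad
  e \;=\; \sqrt{\tfrac{2}{|\varsigma|^2}}\,E_\varsigma, \qquad
  f \;=\; \sqrt{\tfrac{2}{|\varsigma|^2}}\,E_{-\varsigma}
$$
produces the standard generators $[h,e]=2e,\ [h,f]=-2f,\ [e,f]=h$ of $\mathfrak{sl}_2$ and the operator identity $L'_{E_\varsigma}\circ L'_{E_{-\varsigma}} = \tfrac{|\varsigma|^2}{2}\,ef$ on the adjoint $\mathfrak{sl}_2$-module $\fg$. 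Let $\tau-p\varsigma,\ldots,\tau+q\varsigma$ be the $\varsigma$-string of weights occurring in the $\mathfrak{sl}_2$-irreducible submodule generated by $E_\tau$; this submodule is the unique $(p+q+1)$-dimensional irrep, with $E_\tau$ sitting at $h$-weight $p-q$. A routine calculation (the operator $ef$ on the $k$-th weight vector of the $(n+1)$-dimensional irrep acts by $(k+1)(n-k)$) yields $ef\cdot E_\tau = p(q+1)\,E_\tau$, and hence
$$
  c \;=\; \tfrac{p(q+1)}{2}\,|\varsigma|^2 \;\geq\; 0,
$$
which vanishes iff $p=0$; in the generic case $\tau\neq\pm\varsigma$ this is equivalent to $\varsigma-\tau\notin\Delta(\fg)$. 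This proves (a).

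For (b) and (c) I read off $p$ and $q$ from the CHSS grading. Since $\tau+k\varsigma$ has $Z_\tti$-grading $Z_\tti(\tau)+k\,Z_\tti(\varsigma)$ and $\fg_j=0$ for $|j|\geq 2$, the integers $p,q$ are sharply constrained. In (b), with $\varsigma\in\Delta(\fg_1)$ and $\tau\in\Delta(\fg_0)$, we are forced $p,q\in\{0,1\}$: the subcase $\varsigma\pm\tau\in\Delta$ gives $(p,q)=(1,1)$ and hence $c=|\varsigma|^2$, while the subcase $\varsigma-\tau\in\Delta,\ \varsigma+\tau\notin\Delta$ gives $(p,q)=(1,0)$ and hence $c=\tfrac{|\varsigma|^2}{2}$, which by the string relation $p-q=2(\tau,\varsigma)/|\varsigma|^2$ equals $(\varsigma,\tau)$. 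In (c), both $\varsigma,\tau\in\Delta(\fg_1)$ forces $q=0$ (else $\tau+\varsigma$ would land in $\fg_2=0$); then $p=2(\tau,\varsigma)/|\varsigma|^2$ and $c=\tfrac{p}{2}|\varsigma|^2=(\varsigma,\tau)$, which is positive precisely when $p\geq 1$, i.e.\ when $\varsigma-\tau\in\Delta$.

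There is no deep obstacle; the argument is essentially bookkeeping once $\mathfrak{sl}_2$-theory is invoked. The points requiring care are the normalization constants in passing from $E_{\pm\varsigma}$ to $(e,f,h)$ (which produce the uniform prefactor $|\varsigma|^2/2$), and the mild degeneracy $\tau=\pm\varsigma$ in which the string of $E_\tau$ passes through the zero weight of the Cartan. This degeneracy is automatically excluded by the hypotheses of (b) (where $\tau$ and $\varsigma$ have distinct $Z_\tti$-gradings) and by the nonvanishing case of (c), so it has no bearing on subsequent applications of the lemma.
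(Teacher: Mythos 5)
Your proof is correct and takes essentially the same approach as the paper, which simply cites the standard root-string result of Knapp (Corollary 2.37) specialized to the $\fg = \fg_1 \op \fg_0 \op \fg_{-1}$ grading; you have merely unwound that citation into an explicit $\mathfrak{sl}_2$-triple computation, and your careful flag of the degenerate case $\tau = \pm\varsigma$ (where the string passes through the Cartan) correctly identifies the one spot where the lemma's wording is strictly false but harmless for the paper's applications.
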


\begin{proof}
The lemma is the specialization of a standard result \cite[Corollary 2.37]{MR1920389} in representation theory to the case that $\fg = \fg_1 \op \fg_0 \op \fg_{-1}$.
\end{proof}

\subsection{Bidegree \boldmath $(1,\sfa-1)$ \boldmath} \label{S:step1}

Recall the bi-graded decomposition \eqref{E:cH1bi}.  The computation that follows will determine necessary and sufficient conditions for $\cH^1_{1,\sfa-1} = \{0\}$, cf. Lemma \ref{L:1,a-1}.  

Equations \eqref{E:sqL1} and \eqref{E:C00} yield
\begin{equation}\label{E:La-1}
  \square_{|\Lambda^{\sfa-1}} \ = \
  \sum_{\a \in \Delta(w,\sfa)} \cL'_{E_\a} \circ \cL'_{E_{-\a}} \ -
  \half \sum_{\z_j\in\fg_{0,0}} \left( 
    \cL_{\z_j}\circ\cL_{\x_j} \,-\, \cL'_{\z_j}\circ\cL'_{\x_j} \,-\, 
    \cL''_{\z_j}\circ\cL''_{\x_j} \right) \, .
\end{equation}
The Laplacian acts on an irreducible $\fg_{0,0}$--submodule $U \subset \Lambda^{\sfa-1}$ by a scalar, which is determined as follows.  Recall from Section \ref{S:CHSS_Dw} that the irreducible $\fg_{0,0}$--submodules of $\fg_{0,-1}$ are the $\fg_{0,-B}$ with $|B| = 1$.  The highest weight of $\fg_{0,-B}$ is $-\b=-\a_\ttj$ for some $\ttj \in \ttJ$.  (Recall that $\a_\ttj$ is a simple root of $\fg$.)  Similarly, the irreducible $\fg_{0,0}$--submodules of $\fg_{1,\sfa}$ are the $\fg_{1,C}$ with $|C| = \sfa$.  Let $\c \in \Delta(w,\sfa)$ be the highest weight of $\fg_{1,C}$.  Let $U \subset \fg_{0,-B} \ot \fg_{1,C}$ be an irreducible $\fg_{0,0}$--submodule of highest weight $\pi$.  From Lemmas \ref{L:C00} \& \ref{L:knapp} it follows that 
\begin{equation} \label{E:scalar1}
  \square \hbox{ acts on $U$ by a scalar } 
  \stackrel{(\diamond)}{\ge} \ 
  (\c,\b) \ + 
  \underbrace{ \sum_{\a\pm\b\in\Delta} |\a|^2 \ - 
    \sum_{\stack{\a+\b\in\Delta}{\a-\b \not\in \Delta}} (\a,\b)
  }_{\mathrm{sums \ over \ } \a \in \Delta(w,\sfa)}
   \ \stackrel{(\star)}{\ge} \ 0 \, ,
\end{equation}
and equality holds at $(\diamond)$ if and only if $\pi = \c-\b$.

We now determine when this eigenvalue is zero (equivalently, when $\cH^1_{1,\sfa-1} \not=\{0\}$).  There are three cases to consider:

$\bullet$ $(\c,\b) = 0$:  If equality holds at $(\star)$, then it must be the case that $[\fg_{1,\sfa} \,,\, \fg_{-\b}] = \{0\}$.  By Proposition \ref{P:nw2graded}, this implies that $\fg_{-\b} \subset \fg_{0,-}$ stabilizes $\fn_w$.  This contradicts \eqref{E:nwstab}.  Thus, strict inequality holds at $(\star)$, and $\square$ acts on $U$ by a positive scalar. \smallskip

$\bullet$ $(\c , \b) > 0$:  Then $\square$ acts on $U$ by a scalar $\ge (\c , \b) > 0$. \smallskip

$\bullet$ $(\c , \b) < 0$:
Then $\square$ acts on $U$ by zero if and only if $\pi = \c-\b$ and $\{ \a \in \Delta(w,\sfa) \ | \ \a+\b\in\Delta\} = \{\c\}$.  The latter condition is equivalent to $[ \fg_\b \,,\, \fg_{1,\sfa}] = [ \fg_\b \,,\, \fg_\c] \not=\{0\}$.  Recall that $(\c,\b) < 0$ implies $\c-\b\not\in\Delta$, which is equivalent to $[\fg_\c \,,\, \fg_{-\b}] = \{0\}$.  \smallskip 

\begin{definition} \label{D:H1}
We say that \emph{$\sfH_1$ is satisfied} if there exist no irreducible $\fg_{0,0}$--sub-modules $\fg_{0,-B} \subset \fg_{0,-1}$ and $\fg_{1,C} \subset \fg_{1,\sfa}$ with highest weights $-\b \in \Delta(\fg_{0,-B})$ and $\c\in\Delta(\fg_{1,C})$, respectively, such that the following three conditions hold
$$
  \left[ \fg_{-\b} \,,\, \fg_\c \right] \ = \ \{0\} \ \not= \ 
  \left[ \fg_\b \,,\, \fg_\c \right] \ = \ 
  \left[ \fg_\b \,,\, \fg_{1,\sfa} \right] \, .
$$
\end{definition}

\noindent Let $\lambda^k$ denote the $\cH^1_{1,k}$--valued component of $\lambda$, see \eqref{E:cH1bi}.  We have established the following.

\begin{lemma}\label{L:1,a-1}
The eigenvalues of the Laplacian on $\Lambda^{\sfa-1}$ are strictly positive if and only if $\sfH_1$ is satisfied.  Equivalently, $\cH^1_{1,\sfa-1} = \{0\}$ if and only if $\sfH_1$ is satisfied.  In particular, if $\sfH_1$ is satisfied, then $\lambda^{\sfa-1}$ vanishes on $\cF^0$, cf. \eqref{E:F0}.

If $\sfH_1$ fails for a pair $-\b,\c$ then $\cH^1_{1,\sfa-1}$ contains an irreducible $\fg_{0,0}$--module of highest weight $\c-\b$.
\end{lemma}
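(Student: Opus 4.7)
My plan is to organize the material already developed in the three bullet-point cases preceding the statement into a clean proof. First I would observe that when $k = \sfa-1$, the conditions $0 < \ell$ and $k+\ell \le \sfa$ in \eqref{E:Lambdak} force $\ell = 1$, so $\Lambda^{\sfa-1} = \Lambda^{\sfa-1}_{-1} = \fg_{0,-1}\otimes \fg_{1,\sfa}$. By Kostant's Lemma~\ref{L:Kostant}, this splits as a direct sum of $\fg_{0,0}$--modules $\fg_{0,-B}\otimes\fg_{1,C}$ with $|B|=1$ and $|C|=\sfa$, and each such summand decomposes further into $\fg_{0,0}$--irreducibles $U_\pi$ of some highest weight $\pi$. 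Schur's Lemma then guarantees that $\square$ acts on each $U_\pi$ by a scalar, so the question reduces to identifying those $U_\pi$ on which this scalar vanishes.

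Next I would compute the scalar by specializing \eqref{E:sqL1} at $k = \sfa-1$, $\ell=1$. The third sum in \eqref{E:sqL1} is vacuous (the range $k < a \le \sfa-1$ is empty), and the second sum runs only over $\z_j \in \fg_{0,0} \cup \fg_{0,-1}$. Rewriting the $\fg_{0,0}$--part via the Casimir identity \eqref{E:C00} and applying Lemma~\ref{L:C00} with $\b,\c$ the highest weights of $\fg_{0,-B}$ and $\fg_{1,C}$ respectively, together with Lemma~\ref{L:knapp} for the $\cL'_{E_\a}\circ\cL'_{E_{-\a}}$ terms evaluated on the highest weight vector $E_\c\otimes E_{-\b}$, recovers precisely the bound \eqref{E:scalar1}. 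Lemma~\ref{L:C00} moreover shows that the inequality at $(\diamond)$ becomes an equality if and only if $\pi = \c-\b$, which isolates the only candidate for a zero eigenvalue.

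The case analysis by the sign of $(\c,\b)$ in the excerpt then finishes the first half of the lemma: the cases $(\c,\b)>0$ and $(\c,\b)=0$ both yield strict positivity (the latter using \eqref{E:nwstab} and Proposition~\ref{P:nw2graded} to rule out $[\fg_{1,\sfa},\fg_{-\b}]=\{0\}$), while $(\c,\b)<0$ yields a zero eigenvalue on $U_{\c-\b}$ exactly when the three bracket conditions of Definition~\ref{D:H1} hold simultaneously. Combined with Proposition~\ref{P:new_coh}, this proves the equivalence $\cH^1_{1,\sfa-1}=\{0\}\Leftrightarrow\sfH_1$. The vanishing of $\lambda^{\sfa-1}$ on $\cF^0$ is then immediate from Corollary~\ref{C:new_coh}, since $\lambda$ takes values in $\cH^1_1$ there. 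The last sentence follows by exhibiting the explicit summand: when $\sfH_1$ fails for $(\b,\c)$, the Cartan component of $\fg_{0,-B}\otimes\fg_{1,C}$ gives a copy of $U_{\c-\b}$ in $\ker\square$. The step that will take the most care is verifying that the equality case of Lemma~\ref{L:knapp}(b) matches the bracket conditions \emph{exactly}, rather than up to a residual term, since an off-diagonal contribution from a stray $\a\in\Delta(w,\sfa)$ with $\a+\b\in\Delta$ but $\a\neq\c$ would spoil the identification; this is where the structural description of the $\fg_{0,-B}\subset\fg_{0,-1}$ provided in Section~\ref{S:CHSS_Dw} is essential.
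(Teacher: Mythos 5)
Your proposal is correct and reproduces the paper's own derivation of this lemma (the bullet-point case analysis on the sign of $(\c,\b)$ following \eqref{E:scalar1}). One small imprecision worth noting: at $k=\sfa-1$, $\ell=1$, the constraint $\sfa-k-2\ell < m \le \sfa-k-\ell$ in \eqref{E:sqL1} gives $-1 < m \le 0$ with a \emph{strict} lower bound, so the middle sum runs only over $\z_j\in\fg_{0,0}$, not $\fg_{0,0}\cup\fg_{0,-1}$; this is harmless because the would-be $m=-1$ contribution would land in $\Lambda^{\sfa-1}_{0}$, which does not appear in the decomposition \eqref{E:Lambdak}.
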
 



\subsection{Induction} \label{S:induct}

In this section we prove the following.

\begin{proposition} \label{P:lambda=0}
If $\sfH_1$ is satisfied, then $\lambda$ (equivalently, $\vartheta_{0,-}$) is identically zero on $\cF^0$.
\end{proposition}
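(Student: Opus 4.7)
The proof should proceed by downward induction on the $Z_w$-bidegree $k$ to show that the $\cH^1_{1,k}$-valued component $\lambda^k$ of $\lambda = \sum_{k=0}^{\sfa-1} \lambda^k$ vanishes on $\cF^0$. The base case $k = \sfa - 1$ is exactly Lemma \ref{L:1,a-1}; for the inductive step I would assume $\lambda^j \equiv 0$ on $\cF^0$ for every $j > k$, and aim to conclude $\lambda^k \equiv 0$. Under the inductive hypothesis, the semi-basic expression for $\vartheta_{0,-}$ collapses to $\vartheta_{0,-\ell}|_{\cF^0} = \sum_{k' \le k} \lambda^{k',-\ell}(\vartheta_{\fn_w})$, where $\lambda^{k',-\ell}$ denotes the $\Lambda^{k'}_{-\ell}$-valued component.

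The key computation would then be to differentiate this normalized identity, substitute the Maurer--Cartan equation \eqref{E:mce}, and use both $\vartheta_{\fn_w^\perp} = 0$ and $\vartheta_{1,<\sfa} = \mu(\vartheta_{\fn_w})$ from Corollary \ref{C:new_coh}. Separating the resulting torsion relation into its $\vartheta_{\fn_w}\wedge\vartheta_{\fn_w}$ (purely semi-basic) component and its $\vartheta_{\fn_w}\wedge\vartheta_{1,\ge\sfa}$ (mixed) component at bidegree $(1,k)$ yields two pieces of information. The mixed component describes the fibre-variation of $\lambda^k$, which can be normalized away by an admissible reduction of $\cF^0$ of the type studied in Section \ref{S:fibre} (and which does not disturb the already-vanishing $\lambda^j$ for $j > k$). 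The semi-basic component, after projection by $\partial^*$ and elimination of the $\lambda^{<k}$ contributions via the inductive hypothesis, should reduce to an algebraic identity of the form
$$ \tad(t) \cdot \lambda^k \ \in \ \tim\,\partial, \qquad \forall \ t \in \fg_{0,1}, $$
so that the image class of $\lambda^k$ under the $\fg_{0,1}$-raising map in $\cH^1_{1,k+1}$ vanishes.

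The main obstacle will be the final algebraic step: showing that the iterated $\fg_{0,+}$-action $\cH^1_{1,k} \to \cH^1_{1,\sfa-1}$ is injective under the hypothesis $\sfH_1$. Since $\cH^1_{1,\sfa-1} = 0$ by Lemma \ref{L:1,a-1}, such injectivity would close the induction. At the level of $\fg_{0,0}$-isotypic components, each irreducible summand $U \subset \cH^1_{1,k}$ carries a highest weight $\pi$ related to the distinguished pairs $(\c,\b)$ identified in Definition \ref{D:H1} and \eqref{E:scalar1}, and one must verify that iterated brackets with elements of $\fg_{0,1}$ produce nonzero weight vectors at each intermediate bigrade $k+1, k+2, \ldots, \sfa-1$. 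This amounts to extending the eigenvalue analysis that proved Lemma \ref{L:1,a-1} to every intermediate level, and is likely the most delicate piece of the argument.
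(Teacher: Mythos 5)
The outline has the right scaffolding — downward induction on the $Z_w$-grade $k$, base case from Lemma \ref{L:1,a-1}, and differentiation of the normalized identity via the Maurer--Cartan equation — but the closing mechanism you propose is genuinely different from the paper's and, as stated, would fail. Two concrete problems. First, the ``$\fg_{0,1}$-raising map'' $\cH^1_{1,k} \to \cH^1_{1,k+1}$ is not obviously well-defined: $\cH^1$ is the kernel of a Laplacian that is only $\fg_{0,0}$-equivariant, so $\cH^1_{1,k}$ is merely a $\fg_{0,0}$-module, and $\tad(t)$ for $t \in \fg_{0,1}$ need not preserve harmonicity. Second, and more fundamentally, your strategy would amount to proving $\cH^1_{1,k} = \{0\}$ for every $k \leq \sfa-1$ under $\sfH_1$ alone — a purely algebraic vanishing statement. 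That is stronger than what the proposition asserts and is not true: $\sfH_1$ only controls the top grade $\cH^1_{1,\sfa-1}$. The lower $\cH^1_{1,k}$ can be non-trivial, and the proposition is a statement about the \emph{function} $\lambda$ on $\cF^0$ being forced to zero, not about the ambient cohomology groups vanishing. You also misread the normalization: the reduction to $\cF^0$ in Corollary \ref{C:new_coh} already absorbs all admissible $G_1$-fibre motions, so there is no further ``reduction that normalizes away the mixed component,'' and the inductive hypothesis concerns $\lambda^{>k}$, not $\lambda^{<k}$.

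What the paper actually does (Lemma \ref{L:lambdak=0}) exploits the fibre variation as a \emph{constraint} rather than something to normalize. Evaluating the Maurer--Cartan identity on $\hat u \wedge \hat\z$ with $\z \in \fg_{0,>0}$, and using the inductive hypothesis $\lambda^{>k} = 0$, yields the relation $\cL'_\z(\lambda^k_{k-\sfa}) = -\cL''_\z(\lambda^k_{-\ell})$ (part (a)), which ties every $\ell$-piece of $\lambda^k$ to the extremal component $\lambda^k_{k-\sfa}$. Substituting this into the harmonicity equation $\square\lambda^k = 0$ converts its $\Lambda^k_{k-\sfa}$-component into $\wh\square(\lambda^k_{k-\sfa}) = 0$, where $\wh\square$ is a \emph{modified} Laplacian on the single extremal summand $\Lambda^k_{k-\sfa} = \fg_{0,k-\sfa} \ot \fg_{1,\sfa}$ — not the full $\Lambda^k$, and not a raising map. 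The extra term $\sum_{\z_j \in \fm^{\sfa-k-1}} \cL'_{\xi_j} \circ \cL'_{\z_j}$ is positive, and since $\fg_{0,1} \subset \fm^{\sfa-k-1}$ (this is where $k < \sfa-1$ enters) it cannot annihilate $\fg_{-\b}$ when $\b(Z_w) = \sfa - k > 1$, so $\wh\square$ is strictly positive — notably without further input from $\sfH_1$. Part (b) then propagates $\lambda^k_{k-\sfa}=0$ across the other $\ell$-pieces. The conclusion is that torsion plus harmonicity forces the geometric data to zero even though the intermediate cohomology spaces may survive; your proposed purely algebraic injectivity is the wrong closure step.
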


We argue by induction.  By Lemma \ref{L:1,a-1}, $\lambda^{>\sfa-2}$ vanishes on $\cF^0$.  Given any $k < \sfa-1$ with the property that $\lambda^{>k}=0$ on $\cF^0$, we will show that $\lambda^{\ge k}$ vanishes on $\cF^0$.  This will establish Proposition \ref{P:lambda=0}.

Write $\lambda^k = (\lambda^k_{-1} , \cdots , \lambda^k_{k-\sfa})$, where $\lambda^k_{-\ell}$ denotes the $\Lambda^k_{-\ell}$ component of $\lambda^k$; see \eqref{E:Lambdak}, and recall that $\lambda^k_{-\ell} = 0$ if $k+\ell<0$.  

\begin{lemma} \label{L:lambdak=0} \quad
\begin{a_list}
\item Given $0 < \ell < \sfa-k$ and $\z \in \fg_{0,\sfa-k-\ell}$, we have $\cL'_\z(\lambda^k_{k-\sfa}) = -\cL''_\z(\lambda^k_{-\ell})$.
\item 
If $\lambda^k_{k-\sfa} = 0$, then 
$\lambda^k = (\lambda^k_{-1},\ldots,\lambda^k_{k-\sfa}) = 0$.
\item %
The component $\lambda^k_{k-\sfa}$ vanishes.
\end{a_list}
\end{lemma}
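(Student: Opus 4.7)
The three assertions together deliver the inductive step of Proposition~\ref{P:lambda=0}: assuming $\lambda^{>k}=0$ on $\cF^0$, (c) forces $\lambda^k_{k-\sfa}=0$, (a) feeds this vanishing into (b), and (b) cascades it across all $\Lambda^k_{-\ell}$ to give $\lambda^k=0$. I plan to prove the parts in the stated order.

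For (a), the input is the harmonic condition $\partial^*\lambda^k=0\in\fg_{1,k}$, which follows from $\lambda\in\cH^1_1$. I apply $\cL'_\z$ for $\z\in\fg_{0,\sfa-k-\ell}$ (with $0<\ell<\sfa-k$) and use Lemma~\ref{L:formulas}(b): $\cL'_\z\circ\sfd^*-\sfd^*\circ\cL'_\z=\sum_p \iota_{[\z_p,\z]}\cL'_{\xi_p}$. The bigrading forces only $\z_p\in\fg_{1,\sfa-k-\ell}$ to contribute, so that $[\z_p,\z]\in\fg_{1,\sfa-k}\subset\fn_w^+$; expanding $\lambda^k=\sum_{\ell'}\lambda^k_{-\ell'}$, tracking bidegrees, and using $\lambda^{>k}=0$ to kill intermediate terms, the identity projected onto $\fg_{0,-\ell}\otimes\fg_{1,\sfa}$ collapses to $\cL'_\z\lambda^k_{k-\sfa}+\cL''_\z\lambda^k_{-\ell}=0$.

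For (b), granted $\lambda^k_{k-\sfa}=0$, part (a) gives $\cL''_\z\lambda^k_{-\ell}=0$ for every $\z\in\fg_{0,\sfa-k-\ell}$ and every admissible $\ell$. The operator $\cL''_\z$ acts on the second factor of $\lambda^k_{-\ell}\in\fg_{0,-\ell}\otimes\fg_{1,k+\ell}$ via $\tad(\z):\fg_{1,k+\ell}\to\fg_{1,\sfa}$. By iterated application of Lemma~\ref{L:nondeg}(b), $\tad(\fg_{0,\sfa-k-\ell})$ surjects $\fg_{1,k+\ell}$ onto $\fg_{1,\sfa}$; combined with the distinctness of roots in $\fg_{1,\sfa}$, the dual map $v\mapsto(\z\mapsto[\z,v])$, $\fg_{1,k+\ell}\to\tHom(\fg_{0,\sfa-k-\ell},\fg_{1,\sfa})$, is injective. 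Hence the $\fg_{1,k+\ell}$-factor of $\lambda^k_{-\ell}$ vanishes, giving $\lambda^k_{-\ell}=0$ for every $\ell$.

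Part (c) is the main obstacle and requires a Casimir-eigenvalue argument parallel to Lemma~\ref{L:1,a-1}. Using the identity of (a) to substitute $\cL''_\z\lambda^k_{-\ell}=-\cL'_\z\lambda^k_{k-\sfa}$ into the harmonic equation $\square\lambda^k=0$ and extracting the $\Lambda^k_{k-\sfa}$-component via~\eqref{E:sqL1} (with $\ell=\sfa-k$), every off-diagonal contribution from $\Lambda^k_{-\ell}$ ($\ell<\sfa-k$) collapses into a multiple of $\lambda^k_{k-\sfa}$, leaving a single eigenvalue equation for $\lambda^k_{k-\sfa}\in\fg_{0,k-\sfa}\otimes\fg_{1,\sfa}$ driven by the diagonal Casimir piece $\sum_{\a\in\Delta(w,a),\,a>k}\cL'_{E_\a}\cL'_{E_{-\a}}-\sum_{\z_j\in\fg_{0,0}}\cL''_{\z_j}\cL'_{\xi_j}$. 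Decomposing by $\fg_{0,0}$-irreducibles with highest weights $(-\b,\c)$ and applying Lemmas~\ref{L:C00} and \ref{L:knapp}, this scalar has the same shape as~\eqref{E:scalar1}. The delicate point, and the heart of the induction, is to verify that $\sfH_1$ still forces strict positivity when $\b$ ranges over highest weights of $\fg_{0,\sfa-k}$ (rather than of $\fg_{0,-1}$): any such $\b$ is a positive sum of simple roots indexed by $\ttJ$, and the combinatorial stabilizer criterion~\eqref{E:nwstab} propagates from the simple-root case $\fg_{0,-1}$ to arbitrary $\fg_{0,-m}$, yielding $\lambda^k_{k-\sfa}=0$ and closing the induction.
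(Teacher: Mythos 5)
Your proposal diverges from the paper's argument at each of the three parts, and I see genuine gaps in (a) and, to a lesser degree, in (b) and (c).

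For part (a), you propose to derive the identity $\cL'_\z(\lambda^k_{k-\sfa}) = -\cL''_\z(\lambda^k_{-\ell})$ from the harmonic condition $\partial^*\lambda^k = 0$ together with Lemma~\ref{L:formulas}(b). This does not produce the statement. The harmonic equation $\partial^*\lambda^k = 0$ is a \emph{single} equation valued in $\fg_{1,k}$; applying $\cL'_\z$ to it yields another equation in $\fg_{1,\sfa-\ell}$, with terms coming from several of the $\partial^*\cL'_\z\lambda^k_{-\ell'}$ (and with the terms where the first factor of $\cL'_\z\lambda^k_{-\ell'}$ escapes $\fg_w^\perp$ simply leaving the domain of $\partial^*$). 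There is no a priori mechanism to decouple this into the family of \emph{pairwise} relations between $\lambda^k_{k-\sfa}$ and each individual $\lambda^k_{-\ell}$. Indeed, if (a) were an algebraic consequence of harmonicity plus $\lambda^{>k}=0$, it would hold for every element of $\cH^1_{1,k}$, and parts (b),(c) would then force $\cH^1_{1,k}=\{0\}$ for all $k<\sfa-1$ --- a stronger, purely representation-theoretic assertion the paper neither claims nor needs. In the paper, \eqref{E:more_tor} (from which (a) follows by setting $a=\sfa$, $m=\sfa-k-\ell$) is obtained by differentiating the relation $\vartheta_{0,-\ell}=\lambda_{-\ell}(\vartheta_{\fn_w})$ via the Maurer--Cartan equation and evaluating on vector fields $\hat u \wedge \hat\z$ dual to the coframing on $\cF^0$; it is \emph{additional torsion} imposed by the inductive hypothesis, i.e., a geometric constraint on $\lambda$ as a structure function of the system, not a linear constraint on harmonic tensors.

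For part (b), you propose to deduce $\lambda^k_{-\ell}=0$ from $\cL''_\z\lambda^k_{-\ell}=0$ for all $\z\in\fg_{0,\sfa-k-\ell}$, by asserting injectivity of $\fg_{1,k+\ell}\to\tHom(\fg_{0,\sfa-k-\ell},\fg_{1,\sfa})$. Lemma~\ref{L:nondeg} gives the \emph{surjectivity} of $\tad(\fg_{0,-1})$ applied to $\fg_{1,b+1}$ onto $\fg_{1,b}$; it does not immediately yield the injectivity you need for $m=\sfa-k-\ell>1$, and your appeal to ``distinctness of roots'' is not a proof. The paper instead uses \eqref{E:more_tor} with $m=1$ (namely $[\z,\lambda_{k-a}(u)]=\lambda_{k-a+1}([\z,u])$ for $\z\in\fg_{0,1}$) and runs a one-step-at-a-time induction using the clean surjectivity $[\fg_{0,1},\fg_{-1,-a}]=\fg_{-1,1-a}$ from Lemma~\ref{L:nondeg}(b). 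This chain avoids any injectivity claim about maps into larger weight spaces.

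For part (c), your outline is closer to the paper: substitute (a) into the $\Lambda^k_{k-\sfa}$-component of $\square\lambda^k=0$ to obtain an eigenvalue problem for $\lambda^k_{k-\sfa}$ alone. However, you then say the key point is ``to verify that $\sfH_1$ still forces strict positivity when $\b$ ranges over highest weights of $\fg_{0,\sfa-k}$.'' This is not needed, and is not what the paper does: the modified operator $\wh\square$ on $\Lambda^k_{k-\sfa}$ picks up a \emph{positive} correction $\sum_{\z_j\in\fm^{\sfa-k-1}}\cL'_{\xi_j}\circ\cL'_{\z_j}$ that is absent from the original Laplacian. Strict positivity of $\wh\square$ is then shown unconditionally: if equality were to hold, one would need $[\fm^{\sfa-k-1},\fg_{-\b}]=\{0\}$, in particular $[\fg_{0,1},\fg_{-\b}]=\{0\}$, which forces $\fg_{-\b}\subset[\fg_{-1,0},\fg_{1,-b}]$ with $b=\sfa-k>1$, but $\fg_{1,-b}=\{0\}$. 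There is no ``propagation of $\sfH_1$'' needed; $\sfH_1$ enters only in the base case of Proposition~\ref{P:lambda=0} (Lemma~\ref{L:1,a-1}), not in the inductive step.
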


\noindent Lemma \ref{L:lambdak=0} yields $\lambda^{\ge k}=0$, completing the induction and establishing Proposition \ref{P:lambda=0}.  

\begin{proof}[Proof of Lemma \ref{L:lambdak=0}(a)]
Let $\lambda_{-\ell}$ denote the component of $\lambda$ taking values in $\fg_{0,-\ell} \ot \fn_w^*$, so that $\vartheta_{0,-\ell} = \lambda_{-\ell}(\vartheta_{\fn_w})$.  Applying the Maurer-Cartan equation \eqref{E:mce} to this equation yields 
\begin{equation} \label{E:dlambda_deg} \renewcommand{\arraystretch}{1.5}
\begin{array}{rcl}
  0 & \equiv & \td \lambda_{-\ell} \wedge \vartheta_{\fn_w}
  \ + \ \sum_{a=\ell}^\sfa [ \vartheta_{1, a-\ell} \,,\, \vartheta_{-1,-a} ]
  \ + \ \sum_{m\ge\ell} 
  [ \vartheta_{0,m-\ell} \,,\, \lambda_{-m}(\vartheta_{\fn_w}) ] \\
  &  & 
  \ - \ \lambda_{-\ell} \big( [ \vartheta_{0,\ge0} \,,\, \vartheta_{\fn_w}] \big) 
  \ - \ \lambda_{-\ell} 
  \big( [ \vartheta_{\fn_w} \,,\, \lambda(\vartheta_{\fn_w}) ] \big) \\
  & &  + \ \half \, \sum_{0 < m < \ell} 
  [ \lambda_{-m}(\vartheta_{\fn_w}) \, , \, \lambda_{m-\ell}(\vartheta_{\fn_w}) ]
\end{array}\end{equation}

The vanishing of $\lambda^{>k}$ on $\cF^0$ is equivalent to 
\begin{equation} \label{E:induct}
  \lambda_{-\ell}(\fg_{-1,-a}) \ = \ \{0\} \quad\hbox{when}\quad 
  0 \le a \le \sfa \ \hbox{ and } \ k+1 \le a-\ell \le \sfa-1 \, .
\end{equation}  

Given $u \in \fg_{-1,-a}$ and $\z \in \fg_{0,m}\subset\fg_{0,>0}$, the coframing of Lemma \ref{L:norm_lambda} defines vector fields $\hat u$ and $\hat \z$ on $\cF^0$ by the conditions $\vartheta_{\fg_w}(\hat u) = u$ and $\vartheta_{\fg_w}(\hat \z) = \z$.  Evaluating the 2-form \eqref{E:dlambda_deg} on $\hat u \wedge \hat\z$, and making use of \eqref{E:induct} yields\footnote{In the context of Section \ref{S:torsion}, \eqref{E:more_tor} is additional torsion in the system imposed by \eqref{E:induct}.}
\begin{equation}\label{E:more_tor}
  \left[ \z \,,\, \lambda_{-\ell-m}(u) \right] \ = \ 
  \lambda_{-\ell}\left( [ \z , u ] \right)\,,
\end{equation} 
for all $u \in \fg_{-1,-a}\subset \fn_w$, such that $k+1 \le a-\ell \le \sfa-1$, and $\z\in\fg_{0,m} \subset\fg_{0,>0}$.  Setting $a = \sfa$ and $m = \sfa-k-\ell$ yields $\lambda_{-\ell}\left( [\z , u]\right) = \left[ \z \,,\, \lambda_{k-\sfa}(u) \right]$ for all $u \in \fg_{-1,-\sfa}$ and $\z \in \fg_{0,m}$.  Under the identification $\fn_w^* \simeq \fn_w^+$ this is equivalent to $\cL'_\z(\lambda^k_{k-\sfa}) = -\cL''_\z(\lambda^k_{-\ell})$
\end{proof}

\begin{proof}[Proof of Lemma \ref{L:lambdak=0}(b)]
By \eqref{E:more_tor} we have $[ \z \,,\, \lambda^k_{k-a}(u)] = \lambda^k_{k-a+1}( [\z,u])$, for all $\z \in \fg_{0,1}$ and $u \in \fg_{-1,-a}$.  We will show that $\lambda^k_{k-a} = 0$ implies $\lambda^k_{k-a+1}=0$.  Part (b) will then follow from an inductive argument.  

Assume $\lambda^k_{k-a} = 0$.  Equivalently, $\lambda_{k-a}(u) = 0$ for all $u \in \fg_{-1,-a}$.  Then $0 = [\z \,,\, \lambda_{k-a}(u)] = \lambda_{k-a+1}([\z,u])$ for all $\z\in\fg_{0,1}$.    Lemma \ref{L:nondeg} implies $[\fg_{0,1} \,,\,\fg_{-1,-a}] = \fg_{-1,1-a}$.  It follows that $\lambda_{k-a+1}(\fg_{-1,1-a}) = 0$.  Equivalently, $\lambda^k_{k-a+1} = 0$.  
\end{proof}

\begin{proof}[Proof of Lemma \ref{L:lambdak=0}(c)]
From \eqref{E:sqL1}, the component of $\square_{|\Lambda^k}$ taking values in $\Lambda^k_{k-\sfa}$ is 
\begin{equation} \nonumber 
  \square(\lambda^k_{-1},\cdots,\lambda^k_{k-\sfa}
         )_{\Lambda^k_{k-\sfa}} \ = \ 
  \sum_{\stack{\a\in\Delta(w,a)}{k<a}} 
  \cL'_{E_\a}\circ\cL'_{E_{-\a}}(\lambda^k_{k-\sfa}) \ - \ 
  \sum_{\ell=1}^{\sfa-k} 
  \left( \sum_{\z_j\in\fg_{0,\sfa-k-\ell}}
  \cL''_{\z_j}\circ\cL'_{\x_j}(\lambda^k_{-\ell}) \right) .
\end{equation}
An application of Lemma \ref{L:lambdak=0}(a) to this expression yields 
\begin{equation} \label{E:sqk} \renewcommand{\arraystretch}{1.5}
\begin{array}{rcl}
  \displaystyle
  \square(\lambda^k_{-1},\cdots,\lambda^k_{k-\sfa}
         )_{\Lambda^k_{k-\sfa}} 
  & = & \displaystyle
  \sum_{\stack{\a\in\Delta(w,a)}{k<a}} 
  \cL'_{E_\a}\circ\cL'_{E_{-\a}}(\lambda^k_{k-\sfa}) - 
  \sum_{\z_j\in\fg_{0,0}}
  \cL''_{\z_j}\circ\cL'_{\x_j}(\lambda^k_{k-\sfa}) \\
  & & \displaystyle
  + \sum_{\z_j\in\fm^{\sfa-k-1}}
  \cL'_{\x_j}\circ\cL'_{\z_j}(\lambda^k_{k-\sfa})\, ,
\end{array}\end{equation}
where 
$$
  \fm^{\sfa-k-1} \ := \ \fg_{0,1} \op \cdots \op \fg_{0,\sfa-k-1} \,.
$$ 
Define $\wh\square : \Lambda^k_{k-\sfa} \to \Lambda^k_{k-\sfa}$ by 
$$
  \wh\square \ := \ 
  \sum_{\stack{\a\in\Delta(w,a)}{k<a}} \cL'_{E_\a}\circ\cL'_{E_{-\a}} 
  + \sum_{\z_j\in\fm^{\sfa-k-1}}
  \cL'_{\x_j}\circ\cL'_{\z_j} 
  - \sum_{\z_j\in\fg_{0,0}}  \cL''_{\z_j}\circ\cL'_{\x_j}\, .
$$
Then \eqref{E:sqk} reads
$\square(\lambda^k_{-1},\cdots,\lambda^k_{k-\sfa}
  )_{\Lambda^k_{k-\sfa}} = \wh\square(\lambda^k_{k-\sfa})$.  The left-hand side must vanish, since $\lambda^k$ takes values in $\cH^1_{1,k}$.  So it remains to show that 
$\tker\,\wh\square=\{0\}$.

Recall that $\Lambda^k_{k-\sfa} = \fg_{0,k-\sfa} \ot \fg_{1,\sfa}$.  Note that $\wh\square$ is a $\fg_{0,0}$--module map.  Therefore $\Lambda^k_{k-\sfa}$ admits a $\fg_{0,0}$--module decomposition into $\wh\square$--eigenspaces.    The eigenvalues of $\wh\square$ are computed as follows.  Let $\fg_{0,-B} \subset \fg_{0,k-\sfa}$ be an irreducible $\fg_{0,0}$--submodule of highest weight $-\b \in \Delta(\fg_{0,-B})$; let $\fg_{1,C} \subset \fg_{1,\sfa}$ be an irreducible module of highest weight $\c \in \Delta(\fg_{1,C})$.  Let $U \subset \fg_{0,-B} \ot \fg_{1,C} \subset \Lambda^k_{k-\sfa}$ be an irreducible module of highest weight $\pi$.  From Lemmas \ref{L:C00} \& \ref{L:knapp} we deduce that 
\begin{equation} \label{E:scalar2}
  \wh\square \hbox{ acts on $U$ by a scalar } 
  \stackrel{(\ast)}{\ge} \ 
  (\c,\b) \ + 
  \underbrace{ \sum_{\a\pm\b\in\Delta} |\a|^2 \ -
    \sum_{\stack{\a+\b\in\Delta}{\a-\b \not\in \Delta}} (\a,\b)
  }_{\mathrm{sums \ over \ } \{\a \in \Delta(w,a) \ | \ k < a \} }
   \ \ge \ 0 \, ,
\end{equation}
and equality holds at $(\ast)$ if and only if $\pi = \c-\b$ and $[\fm^{\sfa-k-1} \,,\, \fg_{-\b}] = \{0\}$.   The latter implies $[\fg_{0,1} \,,\, \fg_{-\b}]=\{0\}$.  Equivalently, $\fg_{-\b} \not\subset [\fg_{0,-1} \,,\, \fg_{0,1-b}]$ where $b = \beta(Z_w) = \sfa-k>1$.  On the other hand $\fg_{-\b}$ is obtained from the highest root space by successive brackets with $\fg_{-\a_j}$, where $\a_j$ is a simple root. Since $-\b$ is a highest $\fg_{0,0}$--weight, this implies $\fg_{-\b} \subset [\fg_{-1,0} \,,\, \fg_{1,-b} ]$; but $\fg_{1,-b} = \{0\}$, yielding a contradiction.  Thus, strict inequality holds at $(\ast)$, and $\wh\square$ acts by a positive scalar.  Thus $\tker\,\wh\square=\{0\}$.
\end{proof}

\subsection{The finish} \label{S:finish}

Assume that condition $\sfH_1$ is satisfied.  By Proposition \ref{P:lambda=0} and Corollary \ref{C:new_coh} there exists $\mu : \cF^0 \to \cH^1_2$ such that $\vartheta_{1,<\sfa}=\mu(\vartheta_{\fn_w^\perp})$.

\begin{definition} \label{D:H2}
We say that \emph{$\sfH_2$ is satisfied} if there exist no irreducible $\fg_{0,0}$--sub-modules $\fg_{1,E} \subset \fg_{1,\sfa-1}$ and $\fg_{1,C} \subset \fg_{1,\sfa}$ with highest weights $\e \in \Delta(\fg_{1,E})$ and $\c\in\Delta(\fg_{1,C})$, respectively, such that the following two conditions hold
$\{0\} \not= \left[ \fg_\e \,,\, \fg_{-\c} \right] = 
  \left[ \fg_\e \,,\, \fg_{-1,-\sfa} \right]$.  

We say that \emph{$\sfH_+$ is satisfied} if both $\sfH_1$ (Definition \ref{D:H1}) and $\sfH_2$ are satisfied.
\end{definition}

\begin{proposition} \label{P:lm=0}
If $\sfH_+$ is satisfied (Definition \ref{D:H2}), then $\vartheta_{1,<\sfa}=0$ on $\cF^0$.
\end{proposition}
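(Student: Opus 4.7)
The plan is to leverage the already-established strategy for $\lambda$. Since $\sfH_+$ includes $\sfH_1$, Proposition \ref{P:lambda=0} gives $\lambda \equiv 0$ on $\cF^0$, and then Corollary \ref{C:new_coh} produces $\mu \colon \cF^0 \to \cH^1_2$ with $\vartheta_{1,<\sfa} = \mu(\vartheta_{\fn_w})$. The aim is then to show $\mu \equiv 0$. This will be carried out by mirroring the two-stage strategy that killed $\lambda$ in Sections \ref{S:step1}--\ref{S:induct}, now applied to the bidegree $(2,k)$ pieces $\tM^k$ of \eqref{E:Muk}. Let $\mu^k$ denote the component of $\mu$ taking values in $\cH^1_{2,k} := \cH^1_2 \cap \tM^k$; the argument will be a downward induction on $k$, starting at the top $Z_w$-degree $k = 2\sfa-1$.

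For Stage 1, I note that by \eqref{E:Muk} the top piece $\tM^{2\sfa-1}$ consists of the single block $\fg_{1,\sfa-1}\otimes\fg_{1,\sfa}$. Specializing \eqref{E:sqL2} and absorbing the $\fg_{0,0}$-term via \eqref{E:C00} produces an operator formally parallel to \eqref{E:La-1}. On an irreducible $\fg_{0,0}$-summand $U \subset \fg_{1,E}\otimes\fg_{1,C}$ of highest weight $\pi$, where $\fg_{1,E}\subset\fg_{1,\sfa-1}$ and $\fg_{1,C}\subset\fg_{1,\sfa}$ are irreducible of highest weights $\e,\c$ respectively, Lemmas \ref{L:C00} and \ref{L:knapp}(c) will give an eigenvalue bound analogous to \eqref{E:scalar1}. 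The expectation is that this bound is strictly positive except when $\pi=\e+\c$, $[\fg_{-\e},\fg_\c]=0$ and $[\fg_\e,\fg_{-\c}]=[\fg_\e,\fg_{-1,-\sfa}]\neq 0$ --- precisely the configuration Definition \ref{D:H2} forbids. Under $\sfH_2$ this yields $\cH^1_{2,2\sfa-1}=\{0\}$, hence $\mu^{2\sfa-1}\equiv 0$ on $\cF^0$.

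For Stage 2, assuming inductively that $\mu^{>k}\equiv 0$ on $\cF^0$, I would differentiate $\vartheta_{1,c}=\mu_c(\vartheta_{\fn_w})$ via the Maurer-Cartan equation \eqref{E:mce}, using both $\vartheta_{0,-}=0$ (since $\lambda\equiv 0$) and the inductive hypothesis, to derive bracket relations on the sub-components $\mu^k_c$ of $\mu^k$ in direct analogy with \eqref{E:more_tor}. Following the template of Lemma \ref{L:lambdak=0}(a,b), these relations should reduce the vanishing of $\mu^k$ to the vanishing of its extremal $c$-component. On that piece, substituting the bracket relations back into \eqref{E:sqL2} should produce a reduced operator $\wh\square$ of the same flavour as the one constructed after \eqref{E:sqk}, whose spectrum is to be analyzed via Lemmas \ref{L:C00} and \ref{L:knapp}(c).

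The main obstacle will be Stage 2: verifying that any candidate zero eigenvector of $\wh\square$ (for $k<2\sfa-1$) forces a configuration already excluded by $\sfH_2$, rather than one only ruled out at top $Z_w$-degree. The argument should parallel the one immediately after \eqref{E:scalar2}: a zero eigenvector yields a $\fg_{0,0}$-highest weight on which the positive $\fg_{0,m>0}$-brackets all annihilate; combined with the fact that every root space of $\fg_1$ is obtained from the highest one by successive brackets against simple root spaces, this forces some $\fg_{-\a_\ttj}$ to stabilize $\fn_w$, contradicting either \eqref{E:Jw} or the absence of the $\sfH_2$ configuration. Iterating downward in $k$ then yields $\mu\equiv 0$, whence $\vartheta_{1,<\sfa}=0$ on $\cF^0$.
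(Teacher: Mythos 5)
Your proposal matches the paper's proof step for step: the paper's Lemma \ref{L:2,2a-1} is your Stage~1 (top bidegree $(2,2\sfa-1)$ killed via $\square$ on $\tM^{2\sfa-1}=\fg_{1,\sfa-1}\ot\fg_{1,\sfa}$ and $\sfH_2$), and your Stage~2 is exactly the downward induction of Lemma \ref{L:muk=0}(a),(b),(c), with \eqref{E:dmu} supplying the bracket identity analogous to \eqref{E:more_tor} and $\wt\square$ playing the role of your $\wh\square$. Two small corrections to the details you sketch: in Stage~1 the extra condition $[\fg_{-\e},\fg_\c]=0$ you conjoin with $[\fg_\e,\fg_{-\c}]\neq 0$ is vacuous (for $\e\ne\c\in\Delta(\fg_1)$, $\c-\e\in\Delta\iff\e-\c\in\Delta$, so the two brackets vanish simultaneously); the correct zero-eigenvalue configuration is just $\pi=\e+\c$ with $\{0\}\neq[\fg_\e,\fg_{-\c}]=[\fg_\e,\fg_{-1,-\sfa}]$, which is exactly what $\sfH_2$ excludes, while the alternative possibility $[\fg_\e,\fg_{-1,-\sfa}]=\{0\}$ is ruled out by Lemma \ref{L:nondeg}. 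And your Stage-2 concern is unfounded: for $k<2\sfa-1$ the paper shows $\tker\,\wt\square=\{0\}$ unconditionally, since equality in the eigenvalue bound would force $[\fg_{0,1},\fg_\b]=\{0\}$, contradicting Lemma \ref{L:nondeg} without any further appeal to $\sfH_2$.
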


Propositions \ref{P:Bw_rigid}, \ref{P:lambda=0} and Proposition \ref{P:lm=0} establish the following.

\begin{theorem} \label{T:Bw_rigid}
If $\sfH_+$ is satisfied (Definition \ref{D:H2}), then the Schubert system $\cB_w$ is rigid.  In particular, $X_w$ is Schubert rigid.
\end{theorem}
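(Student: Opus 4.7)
My plan is to deduce the theorem by a short formal assembly of the three preceding propositions, since each addresses one of the three summands in the decomposition $\fg_w^\perp = \fn_w^\perp \op \fg_{0,-} \op \fg_{1,<\sfa}$ from \eqref{E:gw_dfns}. By Proposition \ref{P:Bw_rigid}, rigidity of $\cB_w$ reduces to showing that every integral manifold $\cF$ of the linear Pfaffian system \eqref{E:Bw_eds} admits a sub-bundle on which $\vartheta_{\fg_w^\perp}$ vanishes. The natural choice of sub-bundle is $\cF^0 \subset \cF$ furnished by Corollary \ref{C:new_coh}; I would then verify the vanishing summand by summand.

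First, $\vartheta_{\fn_w^\perp}$ vanishes on $\cF^0$ by Lemma \ref{L:Bw_eds}: it is built into the defining equations of \eqref{E:Bw_eds} and is inherited upon restriction to any sub-bundle. Next, since $\sfH_+$ presupposes $\sfH_1$, Proposition \ref{P:lambda=0} applies and gives $\vartheta_{0,-} = \lambda(\vartheta_{\fn_w}) \equiv 0$ on $\cF^0$. Finally, once $\lambda = 0$, the second half of Corollary \ref{C:new_coh} promotes $\mu : \cF^0 \to \cH^1_2$ to a well-defined map with $\vartheta_{1,<\sfa} = \mu(\vartheta_{\fn_w})$; Proposition \ref{P:lm=0} (which invokes the full strength of $\sfH_+$, using condition $\sfH_2$) then forces $\mu = 0$, and hence $\vartheta_{1,<\sfa} \equiv 0$ on $\cF^0$. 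Combining the three vanishings gives $\vartheta_{\fg_w^\perp} \equiv 0$ on $\cF^0$, whereupon Proposition \ref{P:Bw_rigid} yields that the Schubert system $\cB_w$ is rigid.

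For the second assertion, that $X_w$ itself is Schubert rigid, I would appeal to the remark following the definition in Section \ref{S:schubert}: any irreducible integral variety $Y$ of $\cB_w$ has smooth locus $Y^0$ which is a connected integral manifold whose closure is $Y$; rigidity of $\cB_w$ produces $g \in G$ such that $Y^0 \subset g \cdot X_w$ is open, and irreducibility of $g \cdot X_w$ forces $Y = \overline{Y^0} = g \cdot X_w$.

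Since the analytic heart of the argument — the inductive descent in Lemma \ref{L:lambdak=0} underlying Proposition \ref{P:lambda=0}, and the parallel Laplacian spectrum computation on $\tM^k$ that must underpin Proposition \ref{P:lm=0} — has already been shouldered by the cited results, I do not anticipate any genuine obstacle at this step. The only care required is bookkeeping: confirming that passage from $\cF$ to $\cF^0$ preserves both the independence condition $\tw \vartheta_{\fn_w \op \fp_w} \neq 0$ and the vanishing $\vartheta_{\fn_w^\perp} = 0$ needed to legitimately apply Proposition \ref{P:Bw_rigid}. Both are built into Corollary \ref{C:new_coh}, so the assembly is essentially mechanical.
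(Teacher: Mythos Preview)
Your proposal is correct and follows exactly the paper's own approach: the paper's proof is the single sentence ``Propositions \ref{P:Bw_rigid}, \ref{P:lambda=0} and Proposition \ref{P:lm=0} establish the following,'' and you have simply unpacked that assembly (together with the remark after the definition in Section \ref{S:schubert} for the ``Schubert rigid'' consequence). There is nothing to add or correct.
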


\noindent In Section \ref{S:BR} we identify those Schubert varieties $X_w$ for which $\sfH_+$ is satisfied.  The remainder of this section is devoted to the proof, by induction, of Proposition \ref{P:lm=0}.  Let $\mu^k$ denote the component of $\mu$ taking values in $\tM^k$ (cf. Section \ref{S:action}).  We begin with $\tM^{2\sfa-1} = \fg_{1,\sfa-1} \ot \fg_{1,\sfa}$.

\begin{lemma}\label{L:2,2a-1}
Assume $\sfH_1$ is satisfied.  If $\sfH_2$ is satisfied, then the eigenvalues of the Laplacian on $\tM^{2\sfa-1}$ are strictly positive.  In particular, $\cH^1_{2,2\sfa-1} = \{0\}$ and $\mu^{2\sfa-1}$ vanishes.

If $\sfH_2$ fails for a pair $\b,\c$, then $\cH^1_{2,2\sfa-1}$ contains an irreducible $\fg_{0,0}$--module of highest weight $\c+\b$.
\end{lemma}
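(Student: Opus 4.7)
The plan is to specialize the Laplacian formula \eqref{E:sqL2} to the bidegree $(2,2\sfa-1)$ and analyze its spectrum on $\fg_{0,0}$--irreducible constituents, in close parallel with the treatment of $\cH^1_{1,\sfa-1}$ in Section \ref{S:step1}.  The first observation is that the constraint $k-\sfa \le c \le \sfa-1$ in \eqref{E:Muk} collapses at $k=2\sfa-1$ to the single value $c=\sfa-1$, so $\tM^{2\sfa-1} = \fg_{1,\sfa-1}\ot\fg_{1,\sfa}$; by the same token every $m>0$ summand in the second term of \eqref{E:sqL2} has image in $\tM^{2\sfa-1}_{\sfa-1-m}=\{0\}$ and therefore vanishes.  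What remains is
$$\square_{|\tM^{2\sfa-1}} \ = \ \sum_{\a\in\Delta(w,\sfa)} \cL'_{E_\a}\circ\cL'_{E_{-\a}} \ - \ \sum_{\z_j\in\fg_{0,0}} \cL''_{\z_j}\circ\cL'_{\x_j}.$$

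Next I would fix, by Lemma \ref{L:Kostant}, irreducible $\fg_{0,0}$--submodules $\fg_{1,B}\subset\fg_{1,\sfa-1}$ and $\fg_{1,C}\subset\fg_{1,\sfa}$ with highest weights $\b$ and $\c$, and let $U\subset\fg_{1,B}\ot\fg_{1,C}$ be an irreducible constituent of highest weight $\pi$.  Using \eqref{E:C00} together with Lemma \ref{L:C00}, the Casimir term contributes the scalar $-(\b,\c)$ when $\pi=\b+\c$ and something strictly larger otherwise.  Since $\b(Z_w)=\sfa-1\ne\sfa=\a(Z_w)$ forces $\b\ne\a$, Lemma \ref{L:knapp}(c) (applied with $\varsigma=\a,\tau=\b$, both in $\Delta(\fg_1)$) shows that the first sum acts on $\fg_{1,B}$, and hence on $U$, by the scalar $\sum_{\a-\b\in\Delta}(\a,\b)$, each summand being strictly positive.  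The resulting bound on the Laplacian eigenvalue is
$$\sum_{\stack{\a\in\Delta(w,\sfa)}{\a-\b\in\Delta}}(\a,\b) \ - \ (\b,\c),$$
with equality iff $\pi = \b+\c$.

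The core of the argument is then to show that this bound vanishes precisely when $\pi=\b+\c$ and $\sfH_2$ fails for $(\b,\c)$.  If $\sfH_2$ fails, then $\c$ is the unique $\a\in\Delta(w,\sfa)$ contributing to the sum, the bound reduces to $(\c,\b)-(\b,\c)=0$, and the $\pi=\b+\c$ constituent furnishes the promised irreducible submodule of $\cH^1_{2,2\sfa-1}$.  Conversely, assuming $\sfH_2$ is satisfied: if $[\fg_\b,\fg_{-\c}]\ne 0$ then $\sfH_2$ forces some $\a\ne\c$ to contribute as well, pushing the sum strictly above $(\b,\c)$; if $[\fg_\b,\fg_{-\c}]= 0$ then $\b-\c\notin\Delta$ and distinctness of the two positive roots $\b,\c$ forces $(\b,\c)\le 0$, while the remaining sum is non-negative.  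The delicate degenerate sub-case in which the sum vanishes \emph{and} $(\b,\c)=0$ must be excluded structurally: I would write $E_\b \in \fg_{1,\sfa-1}=[\fg_{0,-1},\fg_{1,\sfa}]$ via Lemma \ref{L:nondeg}(b) and run a Jacobi/root-string argument modeled on the $(\c,\b)=0$ treatment following \eqref{E:scalar1} in Section \ref{S:step1}.  This degenerate sub-case is where I expect the main technical obstacle to arise.

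Finally, once $\cH^1_{2,2\sfa-1}=\{0\}$ is established under $\sfH_+$, the vanishing of $\mu^{2\sfa-1}$ on $\cF^0$ is immediate: the standing hypothesis $\sfH_1$ together with Proposition \ref{P:lambda=0} forces $\lambda\equiv 0$ on $\cF^0$, whence Corollary \ref{C:new_coh} places $\mu$ into $\cH^1_2$, and $\mu^{2\sfa-1}$ is by definition its $(Z_\tti,Z_w)$--bidegree $(2,2\sfa-1)$ component.
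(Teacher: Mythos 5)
Your proposal follows the paper's proof precisely — specialize \eqref{E:sqL2} to $\tM^{2\sfa-1}=\fg_{1,\sfa-1}\ot\fg_{1,\sfa}$ (the $m>0$ summands vanish), apply Lemmas \ref{L:C00} and \ref{L:knapp}(c) to obtain the eigenvalue bound $\sum_{\a-\e\in\Delta}(\a,\e)-(\e,\c)$, identify the vanishing with the failure of $\sfH_2$ at the Cartan product $\pi=\e+\c$, and obtain $\mu^{2\sfa-1}=0$ via Proposition \ref{P:lambda=0} and Corollary \ref{C:new_coh}.

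The one point you flag as a potential ``main technical obstacle'' is, in fact, a one-line containment contradiction, and the tool you cite is exactly the right one; no Jacobi or root-string argument is needed. Suppose the sum is empty, i.e.\ $\a-\e\notin\Delta$ for every $\a\in\Delta(w,\sfa)=\Delta(\fg_{1,\sfa})$. Then $\e$ cannot be written as $\a+\d$ with $\a\in\Delta(\fg_{1,\sfa})$ and $\d\in\Delta(\fg_{0,-1})$ (such a $\d$ would give $\a-\e=-\d\in\Delta$), so $E_\e\notin[\fg_{0,-1},\fg_{1,\sfa}]$. But Lemma \ref{L:nondeg}(b), applied with $b=\sfa-1<\sfm$, yields $\fg_{1,\sfa-1}=[\fg_{0,-1},\fg_{1,\sfa}]\ni E_\e$. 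Contradiction. This is precisely the paper's treatment of that sub-case, and with it filled in, your argument is complete and matches the paper's.
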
 

\begin{proof}
It suffices to show that the eigenvalues of $\square$ on $M_{2\sfa-1} = \fg_{1,\sfa-1} \ot \fg_{1,\sfa}$ are strictly positive.  The computations involved are similar to those of Section \ref{S:step1}.  Let $\e \in \Delta(\fg_{1,\sfa-1})$ be a highest weight of an irreducible $\fg_{1,E} \subset \fg_{1,\sfa-1}$, and let $\gamma \in \Delta(\fg_{1,\sfa})$ be a highest weight of an irreducible $\fg_{1,C} \subset \fg_{1,\sfa}$.  Fix an irreducible $\fg_{0,0}$--module $U \subset \fg_{1,E} \ot \fg_{1,C}$ of highest weight $\pi$.  From \eqref{E:sqL2}, Lemmas \ref{L:C00} \& \ref{L:knapp}(c) we deduce
\begin{equation*} 
  \square \hbox{ acts on $U$ by a scalar } \ge \ 
  -(\c,\e) \ +  \sum_{\stack{\a \in \Delta(w,\sfa)}{\a-\e\in\Delta}} (\a,\e)
   \ \ge \ 0 \, .
\end{equation*}
This sum may vanish under two conditions.  Both require $\pi = \e+\c$.  First, it may be the case that $[\fg_\e \,,\, \fg_{-1,-\sfa}] = \{0\} \subset \fg_{0,-1}$.  This forces $\fg_\e \not\in [ \fg_{0,-1} \,,\, \fg_{1,\sfa}]$, which contradicts Lemma \ref{L:nondeg}.  Second, it may be the case that $[\fg_\e \,,\, \fg_{-1,-\sfa}] = [\fg_\e \,,\, \fg_{-\c} ] \not= \{0\}$.  This is precisely the case that the condition $\sfH_2$ fails.  The lemma is established.
\end{proof}

Assume that $\sfH_+$ is satisfied, and that there exists $0 \le k \le 2\sfa-2$ such that $\mu^{>k}$ vanishes on $\cF^0$.  Lemma \ref{L:2,2a-1} assures us that the inductive hypothesis holds for $k = 2\sfa-2$.  We will show that $\mu^k$ also vanishes $\cF^0$.  Let $\mu^k_c$ denote the component of $\mu$ taking value in $\tM^k_c$, see \eqref{E:Muk} and recall that $\mu^k_c = 0$ if either $c < 0$ or $k<c$.  

\begin{lemma} \label{L:muk=0} \quad
\begin{a_list}
\item Given $c>k-\sfa$ and $\z\in\fg_{0,\sfa-k+c}$, we have  $\cL''_\z(\mu^k_c) = -\cL'_\z(\mu^k_{k-\sfa})$.
\item
If $\mu^k_{k-\sfa} = 0$, then 
$\mu^k = (\mu^k_{k-\sfa},\ldots,\mu^k_{\sfa-1}) = 0$.
\item %
The component $\mu^k_{k-\sfa}$ vanishes.
\end{a_list}
\end{lemma}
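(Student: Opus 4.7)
The plan mirrors Lemma \ref{L:lambdak=0} with $\mu$ replacing $\lambda$ and bidegree $(2,k)$ replacing $(1,k)$; throughout we use the induction hypothesis $\lambda = 0$ (Proposition \ref{P:lambda=0}) together with $\mu^{>k} = 0$, and Lemma \ref{L:2,2a-1} furnishes the base case $k = 2\sfa - 1$ via $\sfH_2$. For part (a), differentiate the identity $\vartheta_{1,b} = \mu_b(\vartheta_{\fn_w})$ using the Maurer-Cartan equation \eqref{E:mce}. The vanishing of $\vartheta_{0,<0}$ combined with $\mu^{>k} = 0$ collapses the expansion to an analog of \eqref{E:dlambda_deg}. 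Evaluating the resulting 2-form on $\hat\z \wedge \hat u$ for $\z \in \fg_{0, \sfa - k + c}$ vertical and $u \in \fg_{-1,-\sfa}$ horizontal, and transporting through the Killing-form identification $\fn_w^* \simeq \fn_w^+$, yields $\cL''_\z(\mu^k_c) = -\cL'_\z(\mu^k_{k-\sfa})$.

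For part (b), specialize (a) to $\z \in \fg_{0,1}$ to obtain the propagation rule $[\z, \mu^k_c(u)] = \mu^k_{c+1}([\z, u])$. The hypothesis $\mu^k_c = 0$ then forces $\mu^k_{c+1}$ to vanish on $[\fg_{0,1}, \fg_{-1, c-k}] = \fg_{-1, c-k+1}$, with surjectivity supplied by Lemma \ref{L:nondeg}(b); hence $\mu^k_{c+1} = 0$, and induction on $c$ from $k - \sfa$ up to $\sfa - 1$ gives $\mu^k = 0$.

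For part (c), extract the $\tM^k_{k-\sfa}$-component of the equation $\square(\mu^k) = 0$ via \eqref{E:sqL2}. The off-diagonal terms $\cL''_{\z_j} \circ \cL'_{\x_j}(\mu^k_c)$ with $\z_j$ of positive $\fg_{0,>0}$-grade are rewritten via (a) as $-\cL'_{\x_j} \circ \cL'_{\z_j}(\mu^k_{k-\sfa})$, collapsing the equation to $\widehat\square(\mu^k_{k-\sfa}) = 0$ for a modified $\fg_{0,0}$-module map $\widehat\square : \tM^k_{k-\sfa} \to \tM^k_{k-\sfa}$. On an irreducible summand $U_\pi \subset \fg_{1,E} \ot \fg_{1,C}$ with highest weights $\e \in \Delta(\fg_{1,k-\sfa})$ and $\c \in \Delta(\fg_{1,\sfa})$, Lemmas \ref{L:C00} and \ref{L:knapp}(c) bound the eigenvalue of $\widehat\square$ from below by a nonnegative quantity. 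Its vanishing forces $\pi = \e + \c$ together with a root-space incidence condition: at $k = 2\sfa - 1$ this is precisely failure of $\sfH_2$, recovering Lemma \ref{L:2,2a-1}; at $k < 2\sfa - 1$ the condition reduces to $\fg_\e \not\subset [\fg_{0,-1}, \fg_{1, k-\sfa+1}]$, contradicting Lemma \ref{L:nondeg}(b) since $\e$ is a highest $\fg_{0,0}$-weight obtained from $\fg_{\tilde\a}$ by successive brackets with simple negative-root spaces. Thus $\widehat\square$ is positive definite, $\mu^k_{k-\sfa} = 0$, and (b) yields $\mu^k = 0$.

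The main obstacle is part (c): the substitution from $\square$ to $\widehat\square$ via (a) requires careful sign and index tracking in each subblock $\tM^k_c$, and the eigenvalue bound must be shown strictly positive via two distinct mechanisms depending on whether we are at the top of the induction (where $\sfH_2$ enters directly, as in Lemma \ref{L:2,2a-1}) or below it (where Lemma \ref{L:nondeg}(b) together with the highest-weight property of $\e$ closes the argument). This exactly parallels the role of $\sfH_1$ in Lemma \ref{L:lambdak=0}(c) and makes part (a) the linchpin of the full induction.
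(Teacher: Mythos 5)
Your proof is correct and follows the same route as the paper's: part (a) from the Maurer--Cartan equation applied to $\vartheta_{1,c}=\mu_c(\vartheta_{\fn_w})$ and evaluated on $\hat\z\wedge\hat u$; part (b) by propagating the $m=1$ torsion identity up through the grades using surjectivity from Lemma \ref{L:nondeg}(b), exactly as in Lemma \ref{L:lambdak=0}(b); and part (c) by collapsing the $\tM^k_{k-\sfa}$-component of $\square\mu^k=0$ to a modified operator $\wt\square$ via (a), then showing strict positivity of its $\fg_{0,0}$-eigenvalues by Lemmas \ref{L:C00}, \ref{L:knapp} and \ref{L:nondeg}. Two small remarks: in (b), "specialize (a) to $\z\in\fg_{0,1}$" should more precisely read "use the torsion identity underlying (a) with $m=1$," since (a) as stated fixes the grade of $\z$; and the aside in (c) about $k=2\sfa-1$ is illuminating but not part of this lemma's induction (that base case is Lemma \ref{L:2,2a-1}).
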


\noindent Lemma \ref{L:muk=0} yields $\mu^{\ge k}=0$, completing the induction and establishing Proposition \ref{P:lm=0}.  

\begin{proof}[Proof of Lemma \ref{L:muk=0}(a)]
Let $\mu_c$ denote the component of $\mu$ taking values in $\fg_{1,c}\ot\fn_w^*$.  The vanishing of $\mu^{>k}$ is equivalent to $\mu_c(u) = 0$ for all $u \in \fg_{-1,-a}$ with $c+a > k$.  An application of the Maurer-Cartan equation \eqref{E:mce} to $\vartheta_{1,c} = \mu_c(\vartheta_{\fn_w})$ yields
\begin{equation} \label{E:dmu}
  \half \, \td \mu_c \wedge \vartheta_{\fn_w} \ = \ 
  \mu_c\left( [ \vartheta_{0,\ge0} \,,\, \vartheta_{\fn_w} ] \right) \ - \ 
  \left[ \vartheta_{0,\ge0} \,,\, \vartheta_{1,<\sfa} \right]_{1,c} \, .
\end{equation}
In particular, if $\z\in\fg_{0,m}\subset \fg_{0,>0}$ and $u \in \fg_{-1,-a}\subset\fn_w$ with $a+c > k$, then evaluating \eqref{E:dmu} on $\hat\z\wedge\hat u$ yields
$0 = \mu_c( [ \z,u] ) - [ \z , \mu_{c-m}(u) ]$.  
Set $a = \sfa$ and $m = \sfa-k+c$.  Under the identification $\fn_w^* \simeq \fn_w^+$, this yields $\cL'_\z(\mu^k_c) = -\cL''_\z(\mu^k_{c+p})$.
\end{proof}

\begin{proof}[Proof of Lemma \ref{L:muk=0}(b)]
The proof, which is identical to that of Lemma \ref{L:lambdak=0}(b), is left to the reader. 
\end{proof} 

\begin{proof}[Proof of Lemma \ref{L:muk=0}(c)]
From \eqref{E:sqL2}, the component of $\square_{|\tM^k}$ taking value in $\tM^k_{k-\sfa}$ is
\begin{eqnarray} \nonumber
  \square(\mu^k_{k-\sfa} , \ldots , \mu^k_{\sfa-1})_{\tM^k_{k-\sfa}}
   & = & \sum_{\stack{\a\in\Delta(w,a)}{k-\sfa<a}}
  \cL'_{E_\a}\circ\cL'_{E_{-\a}} (\mu^k_{k-\sfa}) \ - 
  \sum_{\stack{k-\sfa\le c<\sfa}{\z_j\in\fg_{\sfa-k+c}}}
  \cL''_{\z_j}\circ\cL'_{\xi_j} (\mu^k_c) \\
  \label{E:sq_mu}   & \stackrel{(\dagger)}{=} & 
  \sum_{\stack{\a\in\Delta(w,a)}{k-\sfa<a}}
  \cL'_{E_\a}\circ\cL'_{E_{-\a}} (\mu^k_{k-\sfa}) \ + 
  \sum_{\z_j\in\fm^{2\sfa-1-k}} 
  \cL'_{\x_j}\circ\cL'_{\z_j} (\mu^k_{k-\sfa}) \\
  \nonumber & &  
  - \  \sum_{\z_j\in\fg_{0,0}} 
  \cL''_{\z_j}\circ\cL'_{\xi_j} (\mu^k_{k-\sfa})  \, ;
\end{eqnarray}
the equality $(\dagger)$ is a consequence of Lemma \ref{L:muk=0}(a).

Define $\wt\square : \tM^k_{k-\sfa} \to \tM^k_{k-\sfa}$ by
$$
  \wt\square \ := \ 
  \sum_{\stack{\a\in\Delta(w,a)}{k-\sfa<a}} \cL'_{E_\a}\circ\cL'_{E_{-\a}} \ + \ 
  \sum_{\z_j\in\fm^{2\sfa-1-k}} \cL'_{\z_j}\circ\cL'_{\xi_j} \ - \ 
  \sum_{\z_j\in\fg_{0,0}} \cL''_{\z_j}\circ\cL'_{\xi_j} \, .
$$
Then \eqref{E:sq_mu} reads $\square(\mu^k_{k-\sfa} , \ldots , \mu^k_{\sfa-1})_{\tM^k_{k-\sfa}} = \wt\square(\mu^k_{k-\sfa})$.  Since $\mu^k$ is $\cH^1_{2,k}$--valued, the left-hand side of this equation vanishes.  To complete the proof, it suffices to show that $\tker\,\wt\square = \{0\}$. 

Note that $\wt\square$ is a $\fg_{0,0}$--module map.  Therefore $\tM^k_{k-\sfa}$ admits a $\fg_{0,0}$--module decomposition into $\wt\square$--eigenspaces.  To see that $\tker\,\wt\square$ is trivial, let $\b \in \Delta(\fg_{1,k-\sfa})$ be a highest weight of an irreducible $\fg_{1,B} \subset \fg_{1,k-\sfa}$, and let $\gamma \in \Delta(\fg_{1,\sfa})$ be a highest weight of an irreducible $\fg_{1,C} \subset \fg_{1,\sfa}$.  Fix an irreducible $\fg_{0,0}$--module $U \subset \fg_{1,B} \ot \fg_{1,C}$ of highest weight $\pi$.  From \eqref{E:sqL2}, Lemmas \ref{L:C00} and \ref{L:knapp}.c we deduce
\begin{equation*} 
  \wt\square \hbox{ acts on $U$ by a scalar } \stackrel{(\ast)}{\ge} \ 
  -(\c,\b) \ +  \sum_{\stackrel{\a \in \Delta(w,a)}{k-\sfa<a}} (\a,\b)
   \ \ge \ 0 \, ,
\end{equation*}
and equality holds in $(\ast)$ if and only if $[\fm^{2\sfa-1-k} \,,\, \fg_{\b}] = \{0\}$.
Since $\fg_{0,1} \subset \fm^{2\sfa-1-k}$, equality at $(\ast)$ implies $[ \fg_{0,1} \,,\, \fg_{\b}] = \{0\}$.  This contradicts Lemma \ref{L:nondeg}.  Thus, strict inequality holds at $(\ast)$, and $\wt\square$ acts on $U$ by a positive scalar.  
\end{proof}

\section{Schubert rigidity}\label{S:BR}

By Theorem \ref{T:Bw_rigid}, the Schubert system $\cB_w$ is rigid when $\sfH_+$ is satisfied (Definition \ref{D:H2}).  In this section we will prove Theorem \ref{T:BR}.  Throughout the section 
$$
  \ttJ \ = \ \{\ttj_1 , \ldots , \ttj_\sfp\}
$$
is the index set of Section \ref{S:CHSS_Dw}, ordered so that $\ttj_1 <\cdots < \ttj_\sfp$; for convenience we set 
$$
  \ttj_0 \ := \ 0 \quad \hbox{ and } \quad \ttj_{\sfp+1} \ := \ 1 + \tmax\{1,\ldots \hat \tti \ldots , n \} \,.
$$
We assume that $X_w$ is proper, so that $\ttJ \not= \emptyset$ (Remark \ref{R:Jw}).  The admissible pairs $(\ttJ,\sfa)$ are listed in Corollary \ref{C:bigone}.

\begin{theorem}\label{T:BR}
The Schubert varieties satisfying $\sfH_+$ are listed below. \smallskip\\
{\bf (a)} In $A_n / P_\tti$ with $1 < \tti< n$.  Define $\sfq\in\bZ$ by $\ttj_\sfq<\tti<\ttj_{\sfq+1}$.
\begin{i_list}
\item[$\spadesuit:$] $(\sfp,\sfq)=(2\sfa+2,\sfa+1)$ with $1 < \ttj_{\ell} - \ttj_{\ell-1}$, for all $1 < \ell \le \sfp$, and $1 < \tti - \ttj_\sfq,\,\ttj_{\sfq+1} -\tti$;
\item[$\heartsuit:$] $(\sfp,\sfq)=(2\sfa+1,\sfa+1)$ with $1 < \ttj_{\ell} - \ttj_{\ell-1}$, for all $1 < \ell \le \sfp+1$, and $1 < \ttj_{\sfq+1} -\tti$;
\item[$\diamondsuit:$] $(\sfp,\sfq)=(2\sfa+1,\sfa)$ with $1 < \ttj_{\ell} - \ttj_{\ell-1}$, for all $1 \le \ell \le \sfp$, and $1 < \tti -\ttj_\sfq$;
\item[$\clubsuit:$] $(\sfp,\sfq)=(2\sfa,\sfa)$ with $1 < \ttj_{\ell} - \ttj_{\ell-1}$ for all $1 \le \ell \le \sfp+1$.
\end{i_list}
\noindent {\bf (b)} In $D_n/P_1$: the $X_w$ with $\sfa=0$ and $\ttJ = \{ n-1 \}$ or $\ttJ = \{n\}$.
\smallskip\\
\noindent{\bf (c)} In $C_n/P_n$: 
\begin{i_list}
  \item when $\sfp=\sfa$, any $\ttJ$ with $1< \ttj_\ell - \ttj_{\ell-1}$ for all $1 \le \ell \le \sfp$;
  \item when $\sfp=\sfa+1$, any $\ttJ$ with $1< \ttj_\ell - \ttj_{\ell-1}$ for all $2 \le \ell \le \sfp+1$.
\end{i_list}
\noindent{\bf (d)} In $D_n/P_n$, set $\sfs_0 = \lceil \frac{\sfp+1}{2} \rceil$ and $\sfs_1 = \lceil \frac{\sfp}{2} \rceil$:
\begin{i_list}
\item Either $\sfp=\sfa$ and $n-1\not\in\ttJ$, or $\sfp = \sfa+1$ and $n-1 \in \ttJ$; in both cases $1 < \ttj_\ell-\ttj_{\ell-1}$ for all $1 \le \ell\le \sfp$ and $2 < \ttj_\sfs-\ttj_{\sfs-1}$ with $\sfs = \lceil \frac{\sfp+1}{2} \rceil$.
\item $\sfp=\sfa+1$ and $n-1 \not\in \ttJ$ with $1<\ttj_\ell - \ttj_{\ell-1}$ for all $2 \le \ell \le \sfp$, and $2<\ttj_{\sfs+1} - \ttj_{\sfs}$ with $\sfs=\lceil \frac{\sfp}{2} \rceil$.
\end{i_list}
\noindent{\bf (e)} In $E_6/P_1 \simeq E_6/P_6$ and $E_7/P_7$: see Tables \ref{t:E6} \& \ref{t:E7}.\footnote{
The first column of Tables \ref{t:E6} \& \ref{t:E7} expresses $w$ as a composition of reflections associated to simple roots (acting on the left): for example, $(6542)$ denotes $\sigma_6\sigma_5\sigma_4\sigma_2 \in W^\fp$, where $\sigma_i \in W$  is the reflection associated to the simple root $\a_i$.  The values $\sfa^*$ and $\ttJ^*$ are given Lemma \ref{L:PD} and Proposition \ref{P:pd}, respectively.}
\end{theorem}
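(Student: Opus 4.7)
\medskip

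My plan is to verify Definition \ref{D:H2} case-by-case across the classification of CHSS, using the $(\sfa,\ttJ)$ parameterization of Corollary \ref{C:bigone}. The setup: by Kostant's Lemma \ref{L:Kostant}, $\fg_{0,-1}$ decomposes into irreducible $\fg_{0,0}$--submodules $\fg_{0,-B}$ with $|B|=1$, one for each $\ttj\in\ttJ$, with highest weight $-\a_\ttj$. Likewise $\fg_{1,\sfa}$ decomposes into irreducible pieces $\fg_{1,C}$ with $|C|=\sfa$, and the precise list of such $C$ (as multi-indices relative to the $Z_{\ttj_r}$) is already tabulated in the proof of Lemma \ref{L:bigone} for each classical type.

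Next I would translate both hypotheses into elementary root combinatorics. For $\sfH_1$: the bad pair condition on $(-\a_\ttj,\c)$ reduces to $\c-\a_\ttj\notin\Delta$ together with $\c+\a_\ttj\in\Delta(\fg_{1,\sfa})$, and moreover $[\fg_{\a_\ttj},\fg_{1,\sfa}]$ is supported in a \emph{single} irreducible piece containing $\fg_{\c+\a_\ttj}$; equivalently, no other highest weight $\c'$ of an irreducible component of $\fg_{1,\sfa}$ admits $\c'+\a_\ttj\in\Delta$. For $\sfH_2$: the bad pair on $(\e,\c)$ reduces to $\e-\c\in\Delta$ with $[\fg_\e,\fg_{-1,-\sfa}]$ consisting only of the root vector $[E_\e,E_{-\c}]$; equivalently, for each other highest weight $\c'$ of $\fg_{1,\sfa}$, either $\e-\c'\notin\Delta$ or the result is absorbed into the same component. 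These conditions are entirely combinatorial in the simple-root expansions of $\c$ and $\e$, which are explicit in each classical type (types $A,B,C,D$) from the formulas listed at the start of the proof of Lemma \ref{L:bigone}.

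Third, I would run the case analysis for each admissible $(\sfa,\ttJ)$ from Table \ref{t:bigone}. In type $A_n/P_\tti$, a failure of $\sfH_1$ or $\sfH_2$ occurs precisely when two consecutive $\ttj$'s are adjacent in the Dynkin diagram (so $\ttj_\ell-\ttj_{\ell-1}=1$): one can then exhibit an explicit $\c$ and $\a_\ttj$ (or $\e$ and $\c$) with the forbidden bracket relations, by shifting one index of $C$ across the adjacency. The gap conditions listed in Theorem \ref{T:BR}(a) are exactly the negations of these local coincidences, and the four suit-cases $(\sfp,\sfq)$ distinguish boundary behavior near $\tti$. Analogous, but shorter, checks handle $B_n/P_1$ (only $\sfa\le 1$, $|\ttJ|\le 1$ cases arise) and $C_n/P_n$. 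In $D_n/P_n$ the roots of $\fg_1$ include the ``branching'' structure $\a_{n-1}\leftrightarrow\a_n$, and the tuples $(0^\ast,1,2^\ast,1)$ and $(0^\ast,2^\ast,1)$ appearing in Remark \ref{R:Dn} produce additional highest weights of $\fg_{1,\sfa}$; this is where the proof must be most delicate, and it is the source of the stronger gap condition $2 < \ttj_\sfs-\ttj_{\sfs-1}$ in parts (d.i) and (d.ii). In $D_n/P_1$, only $\sfa=0$ with $\ttJ\subset\{n-1,n\}$ survives, because any other $\ttJ$ admits an adjacent pair triggering a failure of $\sfH_1$. The exceptional types $E_6/P_1$ and $E_7/P_7$ I would handle by direct machine computation with LiE, iterating over all $w\in W^\fp$ and checking the two bracket conditions; the output is recorded in Tables \ref{t:E6} and \ref{t:E7}.

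The main obstacle is the $D_n/P_n$ analysis, where one must track simultaneously the position of $n-1$ in $\ttJ$, the parity of the length of the multi-indices, and the appearance of the ``double-$2$'' tuples described in Remark \ref{R:Dn}. In every other case, the failure mechanism reduces to a single elementary observation: a failed $\sfH_1$ arises from an adjacency $\ttj_\ell-\ttj_{\ell-1}=1$ in $\ttJ$, and a failed $\sfH_2$ reflects the same phenomenon applied one layer higher, at a highest weight of $\fg_{1,\sfa-1}$ whose shift by a simple root lands inside $\fg_{1,\sfa}$. The explicit enumeration in Theorem \ref{T:BR} is then obtained by intersecting the Corollary \ref{C:bigone} constraints with these no-adjacency conditions.
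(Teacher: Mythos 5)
Your overall plan is the paper's: a case-by-case verification of $\sfH_1$ and $\sfH_2$ over the admissible $(\sfa,\ttJ)$ from Corollary \ref{C:bigone}, using the explicit root descriptions tabulated in the proof of Lemma \ref{L:bigone}, with LiE handling $E_6/P_6$ and $E_7/P_7$. That is exactly the structure of Section \ref{S:BR}, where Sections \ref{S:B1}--\ref{S:Dn} run the classical cases and the exceptional cases are checked by machine.

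However, your ``equivalently'' restatements of the failure conditions are not correct, and if taken literally they would give the wrong answer. In Definition \ref{D:H1} the third equality $[\fg_\b,\fg_\c]=[\fg_\b,\fg_{1,\sfa}]$ asserts that the bracket $[\fg_\b,\fg_{1,\sfa}]$ is at most one-dimensional, equal to the single root space $\fg_{\b+\c}$. This is a statement about \emph{all} of $\Delta(\fg_{1,\sfa})$, not only about the highest weights of its irreducible components, and it is strictly stronger than ``supported in a single irreducible piece.'' In particular a non-highest-weight root $\nu$ in the same component $\fg_{1,C}$ with $\nu+\b\in\Delta$ already causes failure; this is precisely how the paper extracts the adjacency conditions such as $\ttj_{\sfa-q}+1=\ttj_{\sfa-q+1}$ in the $C_n/P_n$ analysis. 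The same objection applies to your paraphrase of $\sfH_2$: $[\fg_\e,\fg_{-1,-\sfa}]=[\fg_\e,\fg_{-\c}]$ again requires one-dimensionality across all roots of $\fg_{1,\sfa}$, not just the highest weights. (Also, $\c+\a_\ttj$ has $Z_w$-grading $\sfa+1$, so it sits in $\Delta(\fg_{1,\sfa+1})$, not $\Delta(\fg_{1,\sfa})$; and for $D_n/P_1$ the surviving sets are the two singletons $\{n-1\}$ and $\{n\}$, since $\ttJ=\{n-1,n\}$ fails $\sfH_1$.) None of this changes your overall strategy — you still scan the explicit root sets of the $\fg_{1,C}$ and $\fg_{1,E}$ for each admissible $(\sfa,\ttJ)$ — but the summary shortcuts should be dropped in favor of the full bookkeeping, as in the proofs in Sections \ref{S:Cn} and \ref{S:Dn}.
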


\begin{table}[h] 
\renewcommand{\arraystretch}{1.2}
\caption{Proper $X_w \subset E_6/P_6$ satisfying $\sfH_+$.}
\begin{tabular}{|r|c|c|c|c|c|}
\hline
$w$ & $\sfa$ & $\ttJ$ & $\tdim\,X_w$ & $\sfa^*$ & $\ttJ^*$ \\
\hline
$(6542)$ & $0$ & $\{3\}$ & 4 & $1$ & $\{3\}$ \\
$(65431)$ & $0$ & $\{2\}$ & $5$ & $1$ & $\{5\}$ \\
$(65432413)$ & $1$ & $\{4\}$ & $8$ & $1$ & $\{4\}$ \\
$(65432456)$ & $0$ & $\{1\}$ & $8$ & $0$ & $\{1\}$ \\
$(65432451342)$ & $1$ & $\{5\}$ & $11$ & $0$ & $\{2\}$ \\
$(654324561345)$ & $1$ & $\{3\}$ & $12$ & $0$ & $\{3\}$ \\
\hline 
\end{tabular} \label{t:E6}
\end{table}

\begin{table}[h]
\renewcommand{\arraystretch}{1.2}
\caption{Proper $X_w \subset E_7/P_7$ satisfying $\sfH_+$.}
\begin{tabular}{|r|c|c|c|c|c|}
\hline
$w$ & $\sfa$ & $\ttJ$ & $\tdim\,X_w$ & $\sfa^*$ & $\ttJ^*$ \\
\hline 
$(76542)$ & $0$ & $\{3\}$ & $5$ & $2$ & $\{5\}$ \\
$(765431)$ & $0$ & $\{2\}$ & $6$ & $1$ & $\{2\}$ \\
$(765432413)$ & $1$ & $\{4\}$ & $9$ & $2$ & $\{4\}$ \\
$(7654324567)$ & $0$ & $\{1\}$ & $10$ & $1$ & $\{6\}$ \\
$(765432451342)$ & $1$ & $\{5\}$ & $12$ & $1$ & $\{3\}$ \\
$(7654324561345)$ & $2$ & $\{3,6\}$ & $13$ & $2$ & $\{1,5\}$ \\
$(76543245671342)$ & $2$ & $\{1,5\}$ & $14$ & $2$ & $\{3,6\}$ \\
$(765432456713456)$ & $1$ & $\{3\}$ & $15$ & $1$ & $\{5\}$ \\
$(76543245613452431)$ & $1$ & $\{6\}$ & $17$ & $0$ & $\{1\}$ \\
$(765432456713456245)$ & $2$ & $\{4\}$ & $18$ & $1$ & $\{4\}$ \\
$(765432456713456245342)$ & $1$ & $\{2\}$ & $21$ & $0$ & $\{2\}$ \\
$(7654324567134562453413)$ & $2$ & $\{5\}$ & $22$ & $0$ & $\{3\}$ \\
\hline
\end{tabular} \label{t:E7}
\end{table}

\begin{remarks*} 
By Theorem \ref{T:Bw_rigid} the varieties listed in Theorem \ref{T:BR} are Schubert rigid.  They are the Schubert varieties for which there exist first-order obstructions to the existence of nontrivial integral varieties of the Schubert system.  In particular, those varieties not listed need not be flexible: there may exist higher-order obstructions.

By Proposition \ref{P:conj}, a Schubert variety is Schubert rigid if and only if its conjugate is Schubert rigid.  Making use of \eqref{E:a'J'}, we see that the list of Schubert varieties satisfying $\sfH_+$ is also closed under conjugation.
\end{remarks*}

By Corollary \ref{C:pd}, $[X_{w^*}]$ is the Poincar\'e dual of $[X_w]$.

\begin{corollary} \label{C:pdH+}
A Schubert variety satisfies $\sfH_+$ if and only if its Poincar\'e dual does.
\end{corollary}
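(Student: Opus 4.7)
The strategy is a combinatorial verification built on Theorem~\ref{T:BR}, which characterizes the Schubert varieties satisfying $\sfH_+$ by explicit $(\sfa,\ttJ)$--conditions depending on the type of the CHSS. By Corollary~\ref{C:pd}, the Poincar\'e dual of $X_w$ is $X_{w^*}$, and $w \mapsto w^*$ is an involution on $W^\fp$ (Lemma~\ref{L:dual}(a)); hence it suffices to show that if $(\sfa,\ttJ) = (\sfa(w), \ttJ(w))$ meets the criteria of Theorem~\ref{T:BR}, then so does $(\sfa(w^*),\ttJ(w^*)) = (\tilde\a(Z_w) - \sfa - 1,\,*\ttJ)$.

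The necessary formulas are already in place: Lemma~\ref{L:PD}(a,c) gives $\ttJ(w^*) = *\ttJ$ and $\sfa(w^*) = \tilde\a(Z_w) - \sfa - 1$; Remark~\ref{R:a*} supplies $\tilde\a(Z_w)$ in each classical type; Proposition~\ref{P:pd} describes the involution $*:I_\fp \to I_\fp$ on the Dynkin sub-diagram $\d_\fp$. I would then dispatch the cases of Theorem~\ref{T:BR} one at a time. For $A_n/P_\tti$, the involution $*$ acts on the two components of $\d_\fp$ by $\ttj\mapsto \tti - \ttj$ (for $\ttj<\tti$) and $\ttj\mapsto n+1+\tti - \ttj$ (for $\ttj>\tti$); this preserves $\sfq$ and reverses each gap sequence. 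Combined with $\sfa^* = \sfp - \sfa - 1$, the four suits transform as $\spadesuit \leftrightarrow \clubsuit$, $\heartsuit \to \heartsuit$, $\diamondsuit \to \diamondsuit$ (Table~\ref{t:suit}), and the gap inequalities in part~(a) of Theorem~\ref{T:BR} translate correctly onto those of the image suit. For $D_n/P_1$, the set $\{\{n-1\},\{n\}\}$ of admissible $\ttJ$ is preserved by $*$ (which swaps or fixes $n-1,n$ according to the parity of $n$). For $C_n/P_n$ and $D_n/P_n$, $\tilde\a(Z_w) = 2\sfp$ (or shifted by $\pm 1$ according to Remark~\ref{R:a*}(c)), and the shift $\sfa \mapsto \sfa^*$ interchanges the two admissibility sub-cases in parts~(c) and~(d), while the reversal $\ttj_\ell \mapsto n - \ttj_{\sfp+1-\ell}$ sends the gap conditions to their counterparts on the image side. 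The exceptional case~(e) is checked by inspecting Tables~\ref{t:E6} and~\ref{t:E7}, each of which is visibly invariant under $(w,\sfa,\ttJ) \leftrightarrow (w^*,\sfa^*,\ttJ^*)$.

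The main obstacle is the bookkeeping in the more intricate sub-cases, particularly in $D_n/P_n$ when $n-1\in\ttJ$, where admissibility depends both on the parity of $\sfp$ and on a distinguished gap $\ttj_\sfs - \ttj_{\sfs-1}$ with $\sfs = \lceil(\sfp+1)/2\rceil$ or $\lceil\sfp/2\rceil$; the dual swaps the sub-cases~(i) and~(ii) in part~(d) and shifts the distinguished gap index in the expected way. Throughout, the guiding observation is that $*$ reverses each block of consecutive gaps, while the shift $\sfa \mapsto \tilde\a(Z_w) - \sfa - 1$ permutes the extremal sub-cases within each classical family, so that the admissibility conditions listed in Theorem~\ref{T:BR} are sent bijectively onto themselves.
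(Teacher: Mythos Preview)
Your proposal is correct and follows essentially the same route as the paper: both use Corollary~\ref{C:pd}, Lemma~\ref{L:PD}, Remark~\ref{R:a*}, and Proposition~\ref{P:pd} to compute $(\sfa^*,\ttJ^*)$, and then check case-by-case that the conditions of Theorem~\ref{T:BR} are stable under $w\mapsto w^*$, with the exceptional cases read off from Tables~\ref{t:E6} and~\ref{t:E7}.

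The one difference worth noting is organizational. Rather than tracking how each individual gap inequality transforms under $*$ (as you sketch), the paper first reformulates the classical conditions of Theorem~\ref{T:BR} in Table~\ref{t:H+} using the notion of an \emph{orthogonal} subset of the Dynkin diagram and the sets $\ttD = \ttJ\cup\{\tti\}$, $\ttD^* = \ttJ^*\cup\{\tti\}$. In this language the duality symmetry becomes transparent by inspection: e.g., in $A_n/P_\tti$ the condition for $\spadesuit$ is ``$\ttD$ orthogonal'' and for $\clubsuit$ is ``$\ttD^*$ orthogonal'', so $\spadesuit\leftrightarrow\clubsuit$ under duality is immediate; likewise for $C_n/P_n$ the two sub-cases are ``$\ttD$ orthogonal'' and ``$\ttD^*$ orthogonal''. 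This packaging spares the index-chasing you anticipate in the $D_n/P_n$ case, though your direct verification would also go through.
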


\begin{proof}
It is clear from Tables \ref{t:E6} \& \ref{t:E7} that the corollary holds for the exceptional CHSS.  To prove the corollary for the classical CHSS we apply Proposition \ref{P:pd} and Remark \ref{R:a*} to rephrase Theorem \ref{T:BR} in Table \ref{t:H+}.  

We say that a subset $\ttS$ of the Dynkin diagram $\d_\fg$ is \emph{orthogonal} if distinct pairs $\ttj,\ttk \in \ttS$ are not connected.  Equivalently, the corresponding simple roots $\a_\ttj$ and $\a_\ttk$ are orthogonal with respect to the Killing form.  Additionally, define $\ttS_{<\tti} = \{ \ttj \in \ttS \ | \ \ttj < \tti\}$.  The sets $\ttS_{\le\tti}$, $\ttS_{>\tti}$ and $\ttS_{\ge\tti}$ are defined analogously.  Let 
$$
  \ttD \ := \ \ttJ \, \cup \, \{\tti\} \quad \hbox{ and } \quad 
  \ttD^* \ := \ \ttJ^* \, \cup \, \{\tti\}
$$
The corollary follows from inspection of Table \ref{t:H+}, which lists the proper Schubert varieties in the classical CHSS that satisfy $\sfH_+$.
\end{proof}

\begin{table}[h] \renewcommand{\arraystretch}{1.2}
\caption{The proper Schubert varieties in the classical CHSS satisfying $\sfH_+$}
\begin{tabular}{|c|ll|}
\hline
  & $\spadesuit$ & $\ttD$ orthogonal \\
$A_n/P_\tti$  
  & $\heartsuit$ & $\ttD_{<\tti}$, $\ttD_{\ge\tti}$, $\ttD_{\ge\tti}^*$ 
   orthogonal  \\
\footnotesize{$1 < \tti < n$}  
  & $\diamondsuit$ & $\ttD_{>\tti}$, $\ttD_{\le\tti}$, $\ttD_{\le\tti}^*$ 
   orthogonal  \\
  & $\clubsuit$ & $\ttD^*$  orthogonal  \\ \hline
\end{tabular}\smallskip

\begin{tabular}{|c|l|}
\hline
$D_n/P_1$ & $\sfa=0$ and $\ttJ = \{n-1\}$ or $\{n\}$\\
\hline
\end{tabular} \smallskip

\begin{tabular}{|c|ll|}
\hline
 & $\ttD^*$ orthogonal, & $\sfp=\sfa$ \\
\rb{$C_n/P_n$} & $\ttD$ orthogonal, & $\sfp=\sfa+1$ \\  \hline
\end{tabular} \smallskip

\begin{tabular}{|c|l|l|l|}
\hline
 & & $\sfp=\sfa$ {\small{\&}} $n-1\not\in\ttJ$ & \\
$D_n/P_n$ & $\ttJ$ orthogonal
 & $\sfp=\sfa+1$ {\small{\&}} $n-1 \in\ttJ$ 
 & \rb{$1 \not\in\ttJ$ {\small{\&}} $2 < \ttj_{\sfs}-\ttj_{\sfs-1}$,  
       $\sfs = \lceil \frac{\sfp+1}{2}\rceil$} \\ \cline{3-4}
 & & \multicolumn{2}{|l|}{$\sfp=\sfa+1$ {\small{\&}} $n-1\not\in\ttJ$ 
       with $2 < \ttj_{\sfs+1}-\ttj_\sfs$,  
       $\sfs = \lceil \frac{\sfp}{2}\rceil$} \\ \hline
\end{tabular}
\label{t:H+}
\end{table}


\begin{remark} \label{R:BRpart}
Applying Proposition \ref{P:part-aJ} and Table \ref{t:suit} to Theorem \ref{T:BR}(a) yields a partition description of the proper $X_\pi \subset \tGr(\tti,n+1)$ satisfying $\sfH_+$.  They are precisely those partitions $\pi = (p_1{}^{q_1},\ldots,p_r{}^{q_r})$ satisfying 
$$
  1 < q_\ell \,,\ q'_\ell \,, \quad \hbox{for all } 2 \le \ell \le r \,,
  \quad\hbox{and}
$$
\hbox{\hspace{20pt}}
\begin{tabular}{ll}
$1<q'_1$ when $\pi \in \heartsuit$;$\quad$ &
$1 < q_1\,,\ q_1'$ when $\pi \in \clubsuit$;\\
$1 < q_1$ when $\pi \in \diamondsuit$; &
no additional constraints when $\pi \in \spadesuit$.
\end{tabular}
\end{remark}

\begin{proof}[Outline of proof of Theorem \ref{T:BR}]
Sections \ref{S:B1} \& \ref{S:D1} identify the Schubert varieties of the quadric hypersurface that satisfy $\sfH_+$ yielding Theorem \ref{T:BR}(b).  Parts (c) \& (d) are proved in Sections \ref{S:Cn} \& \ref{S:Dn}, respectively.  The reader who works though these arguments will be able to prove (a) as an exercise.  We used LiE \cite{LiE} to verify Theorem \ref{T:BR}(e).
\end{proof}

Throughout the remainder of Section \ref{S:BR} the notation $\gamma$ will denote a highest root of $\fg_{1,C} \subset \fg_{1,\sfa}$; $-\b = -\a_{\ttj}$ will denote a highest root of $\fg_{0,B} \subset \fg_{0,-1}$, $\ttj \in \ttJ$; and $\e$ will denote a highest root of $\fg_{1,E} \subset \fg_{1,\sfa-1}$.  

\subsection{Quadric hypersurface \boldmath $Q^{2n-1} = B_n/P_1$ \unboldmath }\label{S:B1}

By Corollary \ref{C:bigone}, we have $\ttJ = \{\ttj\}$ and $\sfa \in \{ 0,1 \}$.  We begin with the case that $\sfa = 1$.  In this case $\fg_{1,1} = \fg_{1,C}$ is an irreducible $\fg_{0,0}$--module with $C = (1)$; cf. the proof of Lemma \ref{L:bigone}.  We have
\begin{eqnarray*}
  \c & = & \a_1+\cdots+\a_{\ttj}+2(\a_{\ttj+1} + \cdots + \a_n) \, ,\\
  \Delta(\fg_{1,1}) & = & \{ \a_1+\cdots+\a_k \,,\ 
  \a_1+\cdots+\a_k + 2(\a_{k+1} + \cdots +\a_n) \ | \ k\ge\ttj \} \, .
\end{eqnarray*}
Condition $\sfH_1$ fails for $\b = \a_{\ttj}$.

Next assume that $\sfa=0$.  Then $\fg_{1,0} = \fg_{1,C}$ is again irreducible with $C = (0)$, and 
\begin{equation} \label{E:quad0}
  \c \ = \ \a_1 + \cdots + \a_{\ttj-1} \quad \hbox{ and } \quad
  \Delta(\fg_{1,0}) \ = \
  \{ \a_1 + \cdots + \a_k \ | \ k < \ttj \} \, .
\end{equation}
The condition $\sfH_1$ fails for $\b = \a_{\ttj}$.

We conclude that none of the proper Schubert varieties $X_w \subset Q$ satisfy $\sfH_+$.

\subsection{Quadric hypersurface \boldmath $Q^{2n-2} = D_n/P_1$ \unboldmath }\label{S:D1}

There are four cases to consider in Corollary \ref{C:bigone}.  {\bf (1)} Begin with $\sfa=0$ and $\ttJ = \{\ttj\}$.  Suppose that $\ttj<n-1$.  Then $\c$ and $\Delta(\fg_{1,0})$ are given by \eqref{E:quad0}.  The condition $\sfH_1$ fails for $\b = \a_\ttj$.  

Now suppose $\ttj \in \{ n-1,n\}$.  The two cases are symmetric; so without loss of generality, $\ttj = n$.  Then
$$
  \c \ = \ \a_1 + \cdots + \a_{n-1} \,,\quad
  \Delta(\fg_{1,0}) \ = \ \{ \a_1 + \cdots + \a_j \ | \ j < n \} \, .
$$
Condition $\sfH_1$ is satisfied.  Condition $\sfH_2$ is vacuous as $\sfa=0$.

{\bf (2)} In the case $\sfa=0$ and $\ttJ = \{n-1,n\}$, $\gamma$ and $\Delta(\fg_{1,0})$ are given by \eqref{E:quad0} with $\ttj$ replaced by $n-1$.  The condition $\sfH_1$ fails for both $\b = \a_{n-1},\a_n$.  

{\bf (3)} Next suppose that $\sfa=1$ and $\ttJ = \{ \ttj < n-1 \}$.  Then
\begin{eqnarray*}
  \c & = & \a_1 + \cdots + \a_\ttj + 2(\a_{\ttj+1} 
  + \cdots + \a_{n-2}) + \a_{n-1} + \a_n \, \\
  \Delta(\fg_{1,1}) & = & \{ \a_1+\cdots+\a_{n-2}+\a_n \,, \ 
  \a_1 + \cdots + \a_k \,, \\
  & & \hbox{\hspace{5pt}}
  \a_1 + \cdots + \a_k + 2 (\a_{k+1} + \cdots + \a_{n-1}) 
  + \a_{n-1} + \a_n \ | \ k \ge \ttj \} .
\end{eqnarray*}
The condition $\sfH_1$ fails for $\b = \a_\ttj$.

{\bf (4)} If $\sfa = 1$ and $\ttJ = \{n-1,n\}$, then $\fg_{1,1} = \fg_{1,(1,0)} \op \fg_{1,(0,1)} = \fg_{\c_1} \op \fg_{\c_2}$ where 
$$
  \c_1 \ = \ \a_1 + \cdots + \a_{n-1} \quad \hbox{ and } \quad 
  \c_2 \ = \ \a_1 + \cdots + \a_{n-2} + \a_n \,.
$$
The condition $\sfH_1$ fails for the pairs $(\c_1,\b=\a_n)$ and $(\c_2,\b=\a_{n-1})$.

\subsection{Lagrangian grassmannian \boldmath $C_n/P_n$ \unboldmath }\label{S:Cn}

By Corollary \ref{C:bigone}, $\sfa \le |\ttJ| \le \sfa+1$.

\begin{proposition*} 
The proper Schubert varieties $X_w \subset C_n/P_n$ satisfying $\sfH_+$ are
\begin{i_list}
\item $|\ttJ|=\sfa$ with $1 < \ttj_1$ and $1< \ttj_\ell - \ttj_{\ell-1}$, for all $1 <  \ell \le \sfa$; and
\item $|\ttJ|=\sfa+1$ with $1<\ttj_\ell - \ttj_{\ell-1}$, for all $1 < \ell \le \sfa+1$, and $\ttj_{\sfa+1} < n-1$ \,.
\end{i_list}
\end{proposition*}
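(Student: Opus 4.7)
The strategy is to verify $\sfH_1$ and $\sfH_2$ from Definitions \ref{D:H1} and \ref{D:H2} by decomposing $\fg_{1,\sfa}$ and $\fg_{1,\sfa-1}$ into their irreducible $\fg_{0,0}$-summands and checking the failure conditions at each pair of highest weights.  From the proof of Lemma \ref{L:bigone}, every positive root of $\fg_1$ in $C_n/P_n$ has the form
$$
 \alpha_i + \cdots + \alpha_{j-1} + 2(\alpha_j + \cdots + \alpha_{n-1}) + \alpha_n \qquad (1 \le i \le j \le n),
$$
with $j = n$ giving $\alpha_i + \cdots + \alpha_n$.  The $\alpha_{\ttj_\ell}$-coefficient of such a root is $0$, $1$, or $2$ according to whether $\ttj_\ell < i$, $i \le \ttj_\ell < j$, or $j \le \ttj_\ell < n$.

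By the proof of Lemma \ref{L:bigone}, $\fg_{1,\sfa}$ decomposes into irreducible $\fg_{0,0}$-summands $\fg_{1,B_m}$ with $B_m = (0^{\sfp-\sfa+m}, 1^{\sfa-2m}, 2^m)$, $0 \le 2m \le \sfa$.  A direct computation identifies each highest weight $\gamma_m$ by the pair $(i_m, j_m)$ of smallest indices compatible with the $B_m$-constraints: for $\sfp = \sfa$, $i_m = \ttj_m + 1$ and $j_m = \ttj_{\sfa-m} + 1$; for $\sfp = \sfa+1$, $i_m = \ttj_{m+1} + 1$ and $j_m = \ttj_{\sfa-m+1} + 1$ (with $j_0 = n$ when $\ttj_\sfp = n-1$).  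An analogous description yields the irreducible summands $\fg_{1,E_m} \subset \fg_{1,\sfa-1}$ and their highest weights $\epsilon_m$.

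With these descriptions, $\sfH_1$ at a candidate pair $(\gamma_m, \alpha_{\ttj_\ell})$ reduces to root-system combinatorics: conditions (i) and (ii) can be read off from the coefficient structure of $\gamma_m$, and the delicate condition (iii) -- $[\fg_{\alpha_{\ttj_\ell}}, \fg_{1,\sfa}] = [\fg_{\alpha_{\ttj_\ell}}, \fg_{\gamma_m}]$ -- is a dimension count requiring $\gamma_m$ to be the unique root of $\fg_{1,\sfa}$ admitting a nonzero bracket with $\fg_{\alpha_{\ttj_\ell}}$.  A parallel analysis applies to $\sfH_2$ at pairs $(\epsilon_m, \gamma_{m'})$.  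I would enumerate all such pairs and show that the joint failure of $\sfH_1$ and $\sfH_2$ occurs precisely when the spacing conditions in the proposition are violated.  The mechanism is that each ``offending'' configuration (an internal adjacency $\ttj_\ell - \ttj_{\ell-1} = 1$, the boundary case $\ttj_1 = 1$ for $\sfp = \sfa$, or $\ttj_\sfp = n-1$ for $\sfp = \sfa+1$) makes the relevant bracket space one-dimensional, triggering condition (iii) of $\sfH_1$ or the analogous condition of $\sfH_2$.  Conversely, when the spacings are all $\ge 2$ and the boundary conditions hold, the bracket spaces have dimension strictly greater than one, so $\sfH_+$ holds.

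The main obstacle is the systematic case analysis over all $(m, \ell)$, made intricate by the asymmetry between $\sfp = \sfa$ and $\sfp = \sfa+1$ (reflecting that $\tti = n$ sits at the long end of the $C_n$ Dynkin diagram) and by the presence of the type-3 ``long roots'' $2(\alpha_i + \cdots + \alpha_{n-1}) + \alpha_n$, which create unexpected coincidences in the root lattice.  In particular, which of $\sfH_1$ or $\sfH_2$ catches a given offending configuration depends on the precise $(\sfa, \ttJ)$: for example, the boundary case $\ttj_1 = 1$ for $\sfp = \sfa$ is caught by $\sfH_1$ when $\sfa \ge 2$ but by $\sfH_2$ when $\sfa = 1$, because in the latter case a long root makes $\gamma_0 - \alpha_{\ttj_1}$ itself a root and condition (i) of $\sfH_1$ is not satisfied.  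The remaining enumeration is elementary but lengthy; the key verification is the dimension count for $[\fg_{\alpha_{\ttj_\ell}}, \fg_{1,B_m}]$ and $[\fg_{\epsilon_m}, \fg_{-\alpha}]$ under the spacing hypotheses.
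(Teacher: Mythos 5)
Your proposal takes essentially the same route as the paper: decompose $\fg_{1,\sfa}$ and $\fg_{1,\sfa-1}$ into irreducible $\fg_{0,0}$-summands $\fg_{1,B_m}$, read off highest weights from the $(i,j)$-parametrization of $\Delta(\fg_1)$, and test the failure conditions of $\sfH_1,\sfH_2$ at each pair by root-system combinatorics. Your observation about the long root $2\varepsilon_{\ttj_1+1}$ making $\c_0-\a_{\ttj_1}$ a root when $\sfa=1$ and $\ttj_1=1$ (so that condition (i) of $\sfH_1$ fails to apply and the constraint is instead caught by $\sfH_2$) is precisely the subtlety the paper handles by excluding $q=\sfr$ when $\sfa=2\sfr+1$ is odd from its $\sfH_1$-enumeration and recovering that constraint from $\sfH_2$, so your account is accurate; you have simply left the (admittedly lengthy but routine) case enumeration to the reader, as the paper also largely does.
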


\begin{proof}[Proof of (i)]
The proposition is proved in two claims.  We first warm-up with a simple case, and then prove the general case.

\begin{claim*} 
The Schubert variety $X_w$ associated to $\ttJ = \{ \ttj \}$ with $\sfa=1$ satisfies $\sfH_+$ if and only if $1<\ttj$.
\end{claim*}

\noindent The $\fg_{0,0}$--module $\fg_{1,\sfa} = \fg_{1,C}$ is irreducible with $C = (1)$, highest weight $\c = \a_1 + \cdots + \a_{\ttj} + 2 (\a_{\ttj+1} + \cdots + \a_{n-1}) + \a_n$ and 
\begin{eqnarray*}
  \Delta(\fg_{1,\sfa}) & = & \{ 
  \a_k + \cdots + \a_n \,,\ 
  \a_k + \cdots + \a_{\ell-1} + 2(\a_\ell + \cdots + \a_{n-1} ) + \a_n 
  \ | \ k \le \ttj < \ell \} \, .
\end{eqnarray*}
Condition $\sfH_1$ is always satisfied.  The $\fg_{0,0}$--module $\fg_{1,\sfa-1} = \fg_{1,0} = \fg_{1,E}$ is irreducible with $E = (0)$ and highest weight $\e = 2 (\a_{\ttj+1} + \cdots + \a_{n-1}) + \a_n$.  Condition $\sfH_2$ is satisfied if and only if $\ttj > 1$.  This establishes the claim.

\begin{claim*} 
The Schubert subvarieties $X_w \subset C_n/P_n$ with $\sfp=\sfa$ satisfy $\sfH_+$ if and only if $1 < \ttj_\ell - \ttj_{\ell-1}$ for all $2 \le \ell \le \sfa$, and $1 < \ttj_1$.
\end{claim*}

\noindent We have 
$$
  \fg_{1,\sfa} \ = \ \bigoplus_{q=0}^{\lfloor {\frac{\sfa}{2}} \rfloor} 
  \fg_{1,C_q} \quad \hbox{where} \quad
  C_q \ = \ (0^q , 1^{\sfa-2q} , 2^q) \, .
$$
Let $\c_q$ denote the highest weight of $\fg_{1,C_q}$.  Then
\begin{eqnarray*}
  \c_q & = & \a_{\ttj_q+1} + \cdots + \a_{\ttj_{\sfa-q}} + 
  2 ( \a_{\ttj_{\sfa-q}+1} + \cdots + \a_{n-1} ) + \a_n \\
  \Delta(\fg_{1,C_q}) & = & \{ \a_j + \cdots + \a_{k-1} + 
  2 ( \a_k + \cdots + \a_{n-1} ) + \a_n \ | \ \ttj_q < j \le \ttj_{q+1} \,,\\
  & & \hbox{\hspace{218pt}} \ttj_{\sfa-q} < k \le \ttj_{\sfa-q+1} \} \, .
\end{eqnarray*}
Excluding the case that $\sfa = 2\sfr+1$ is odd and $q=\sfr$, the conditions $[\fg_{-\b} \,,\, \fg_{\c_q}] = \{0\} \not= [\fg_\b \,,\, \fg_{\c_q}]$ will hold if and only if $\b = \a_{\ttj_q}$ or $\b = \a_{\ttj_{\sfa-q}}$.  Observe that $[\fg_{\a_{\ttj_q}} \,,\, \fg_{1,\sfa}] = [\fg_{\a_{\ttj_q}} \,,\, \fg_{1,C_q}]$ and $[\fg_{\a_{\ttj_{\sfa-q}}} \,,\, \fg_{1,\sfa}] = [\fg_{\a_{\ttj_{\sfa-q}}} \,,\, \fg_{1,C_q}]$.  We have $[\fg_{\a_{\ttj_q}} \,,\, \fg_{1,C_q}] = [\fg_{\a_{\ttj_q}} \,,\, \fg_{\c_q} ]$ if and only if $\ttj_{\sfa-q} + 1 = \ttj_{\sfa-q+1}$.  Similarly, $[\fg_{\a_{\ttj_{\sfa-q}}} \,,\, \fg_{1,C_q}] = [\fg_{\a_{\ttj_{\sfa-q}}} \,,\, \fg_{\c_q}]$ if and only if $\ttj_q +1 = \ttj_{q+1}$.  In summary, $\sfH_1$ is satisfied if and only if $1< \ttj_1$ and $1 < \ttj_{q+1} - \ttj_q$ (with the caveat that the last inequality is not required for $\ell=\sfr$ when $\sfa = 2\sfr+1$.)

To address the condition $\sfH_2$ note that 
$$
  \fg_{1,\sfa-1} \ = \ \bigoplus_{r=0}^{\lfloor {\frac{\sfa-1}{2}} \rfloor} 
  \fg_{1,E_r} \quad \hbox{where} \quad
  E_r \ = \ (0^{r+1} , 1^{\sfa-1-2r} , 2^r ) \, .
$$
The highest weight of $\fg_{1,E_r}$ is 
$$
  \e_r \ = \ \a_{\ttj_{r+1}+1} + \cdots + \a_{\ttj_{\sfa-r}} + 
  2 ( \a_{\ttj_{\sfa-r}+1} + \cdots + \a_{n-1} ) + \a_n \, .
$$
We have $[\fg_{-\e_r} \,,\, \fg_{\c_q}] \not=\{0\}$ if only if $r=q$ or $r=q-1$.  In particular, $[\fg_{-\e_r} \,,\, \fg_{1,\sfa}] \supset \fg_{\c_r-\e_r} \op \fg_{\c_{r+1}-\e_r}$.  Both summands are nonzero as long as $\sfa-1 \not=2r$.  (In this case there is no $C_{r+1}$.)   So $\sfH_2$ may only fail for $\sfa=2r+1$ and pairs of the form $(\e_r,\c_r)$.  The condition $\sfH_2$ is satisfied when $1 < \ttj_{r+1} - \ttj_r$. This completes the proof of the claim.
\end{proof}

\begin{proof}[Proof of (ii)]
The proof is given in a series of three claims.  The first two are warm-ups, the third establishes the general result.

\begin{claim*}
The Schubert variety associated to $\sfa=0$ and $\ttJ = \{ \ttj \}$ satisfies $\sfH_+$ if and only if $\ttj < n-1$.
\end{claim*}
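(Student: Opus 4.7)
Since $\sfa = 0$, the module $\fg_{1,\sfa-1} = \fg_{1,-1} = \{0\}$, so condition $\sfH_2$ is vacuous and the claim reduces to verifying exactly when $\sfH_1$ holds. The plan is therefore to pin down $\fg_{1,0}$ as a $\fg_{0,0}$--module (it is irreducible by Lemma \ref{L:Kostant}), compute its highest weight $\c$, and test the three conditions of Definition \ref{D:H1} against the unique candidate $\b = \a_\ttj$.

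Using the explicit description of $\Delta(\fg_1)$ for $C_n/P_n$ recalled in part (III) of the proof of Lemma \ref{L:bigone}, I would realize $\Delta(\fg_1) = \{\epsilon_i + \epsilon_k : 1 \le i \le k \le n\}$ and observe that the coefficient of $\a_\ttj$ in $\epsilon_i + \epsilon_k$ vanishes precisely when $\ttj < i \le k \le n$. Thus $\Delta(\fg_{1,0}) = \{\epsilon_i + \epsilon_k : \ttj < i \le k \le n\}$, with highest weight $\c = 2\epsilon_{\ttj+1}$.

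With $\b = \a_\ttj = \epsilon_\ttj - \epsilon_{\ttj+1}$, a direct calculation gives $\c - \b = 3\epsilon_{\ttj+1} - \epsilon_\ttj \notin \Delta$, so $[\fg_{-\b}, \fg_\c] = 0$; and $\c + \b = \epsilon_\ttj + \epsilon_{\ttj+1} \in \Delta$, so $[\fg_\b, \fg_\c] \neq 0$. The decisive condition is whether $[\fg_\b, \fg_{1,0}] = [\fg_\b, \fg_\c]$. Computing $\b + (\epsilon_i + \epsilon_k) = \epsilon_\ttj + \epsilon_i + \epsilon_k - \epsilon_{\ttj+1}$ for $\ttj < i \le k \le n$, this lies in $\Delta(\fg_1)$ exactly when $i = \ttj+1$, producing $\epsilon_\ttj + \epsilon_k$ for every $k$ with $\ttj < k \le n$. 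Hence
\[
  [\fg_\b, \fg_{1,0}] \ = \ \bigoplus_{\ttj < k \le n} \fg_{\epsilon_\ttj + \epsilon_k}\,,
\]
while $[\fg_\b, \fg_\c] = \fg_{\epsilon_\ttj + \epsilon_{\ttj+1}}$.

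The two subspaces coincide if and only if $k = \ttj+1$ is the only index in the range, i.e., if and only if $\ttj = n-1$. Therefore $\sfH_1$ fails precisely when $\ttj = n-1$, and $\sfH_+$ is satisfied precisely when $\ttj < n-1$, proving the claim. The only non-routine step is recognizing that $\mathrm{ad}_\b$ already sends the highest-weight vector $\fg_{2\epsilon_{\ttj+1}}$ nontrivially into $\fg_{\c+\b}$ but also sweeps out a strictly larger space via the lower weights $\epsilon_{\ttj+1} + \epsilon_k$ with $k > \ttj+1$; this is precisely what rules out failure of $\sfH_1$ when $\fg_{1,0}$ has more than one root, i.e., when $n - \ttj \ge 2$.
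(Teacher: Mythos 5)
Your proof is correct and follows essentially the same approach as the paper: identify the unique candidate pair $(\c,\b) = (2\epsilon_{\ttj+1},\,\a_\ttj)$, and determine exactly when the three conditions of $\sfH_1$ simultaneously hold. The only superficial difference is that you work in $\epsilon$--coordinates for the $C_n$ root system where the paper writes roots as sums of simple roots; your version has the pedagogical advantage of making the key computation explicit — namely that $[\fg_\b,\fg_{1,0}] = \bigoplus_{\ttj < k \le n}\fg_{\epsilon_\ttj+\epsilon_k}$ properly contains $[\fg_\b,\fg_\c] = \fg_{\epsilon_\ttj+\epsilon_{\ttj+1}}$ whenever $\ttj < n-1$, whereas the paper simply asserts ``Condition $\sfH_1$ will fail with $\b=\a_\ttj$ only if $\ttj = n-1$.'' One small imprecision in your closing remark: what rules out failure of $\sfH_1$ is not that $\fg_{1,0}$ has more than one root per se, but that there is at least one index $k>\ttj+1$ so that $\epsilon_{\ttj+1}+\epsilon_k \in \Delta(\fg_{1,0})$ brackets with $\fg_\b$ to land outside $\fg_{\c+\b}$; these conditions happen to coincide here, but the mechanism is the latter, not the former.
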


\noindent We have $\fg_{1,\sfa} = \fg_{1,C}$ where $C = (0)$.  The highest weight of $\fg_{1,C}$ is the root $\c = 2(\a_{\ttj_1+1} + \cdots + \a_{n-1})+\a_n$, and 
\begin{eqnarray*}
  \Delta(\fg_{1,\sfa}) & = & \{ 
  \a_k + \cdots + \a_n \,,\ 
  \a_k + \cdots + \a_{\ell-1} + 2(\a_\ell + \cdots + \a_{n-1} ) + \a_n \,,\\
  & & \hbox{\hspace{166pt}}
  2(\a_\ell + \cdots + \a_{n-1}) + \a_n \ | \ \ttj < k , \ell \} \, .
\end{eqnarray*}
Condition $\sfH_1$ will fail with $\b = \a_{\ttj}$ only if $\ttj = n-1$.  Since $\fg_{1,\sfa-1} = \{0\}$ condition $\sfH_2$ is vacuous.  The claim follows.

\begin{claim*} 
The Schubert variety $X_w$ associated to $\ttJ = \{\ttj_1 \,,\ \ttj_2 \}$ with $\sfa=1$ satisfies $\sfH_+$ if and only if $1 < \ttj_2 - \ttj_1$ and $\ttj_2 < n-1$.
\end{claim*}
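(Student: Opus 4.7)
The plan is to separately analyze conditions $\sfH_1$ and $\sfH_2$ for the Schubert variety $X_w$ corresponding to $\ttJ = \{\ttj_1, \ttj_2\}$ (with $\ttj_1 < \ttj_2$) and $\sfa = 1$ in $C_n/P_n$. To make root computations concrete, I would work with the standard basis $\varepsilon_1, \ldots, \varepsilon_n$ in which $\a_\ell = \varepsilon_\ell - \varepsilon_{\ell+1}$ for $\ell < n$, $\a_n = 2\varepsilon_n$, and $\Delta(\fg_1) = \{\varepsilon_i + \varepsilon_j : 1 \le i \le j \le n\}$.

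First, using Lemma \ref{L:Kostant} for the three-element index set $\{n, \ttj_1, \ttj_2\}$, I would identify the irreducible $\fg_{0,0}$-components of $\fg_{1,\sfa}$ and $\fg_{1,\sfa-1}$. Inspecting bidegrees $(a_{\ttj_1}, a_{\ttj_2})$ on $\Delta(\fg_1)$ under $(Z_{\ttj_1}, Z_{\ttj_2})$, I expect to find that $\fg_{1,1}$ consists of a single irreducible component of bidegree $(0,1)$, namely $\{\varepsilon_i + \varepsilon_j : \ttj_1 < i \le \ttj_2 < j\}$, with highest weight $\c = \varepsilon_{\ttj_1+1} + \varepsilon_{\ttj_2+1}$; and $\fg_{1,0}$ is the irreducible component of bidegree $(0,0)$, namely $\{\varepsilon_i + \varepsilon_j : \ttj_2 < i \le j\}$, isomorphic to $\tSym^2 \bC^{n-\ttj_2}$ for the $\fsl_{n-\ttj_2}$-factor of $\fg_{0,0}$, with highest weight $\e = 2\varepsilon_{\ttj_2+1}$.

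Next, for $\sfH_1$ I would test each of the two candidate $\b$'s. For $\b = \a_{\ttj_1}$: the coefficient of $\a_{\ttj_1}$ in $\c$ is zero, so $[\fg_{-\b},\fg_\c] = 0$, while $[\fg_\b,\fg_\c] = \fg_{\varepsilon_{\ttj_1} + \varepsilon_{\ttj_2+1}}$ and $[\fg_\b,\fg_{1,\sfa}] = \bigoplus_{\ttj_2 < j \le n} \fg_{\varepsilon_{\ttj_1} + \varepsilon_j}$; the equality required for failure of $\sfH_1$ forces $\ttj_2 = n-1$. For $\b = \a_{\ttj_2}$: a direct check shows $\c - \a_{\ttj_2}$ is not a root, so $[\fg_{-\b},\fg_\c] = 0$, while $[\fg_\b,\fg_\c] = \fg_{\varepsilon_{\ttj_1+1} + \varepsilon_{\ttj_2}}$ and $[\fg_\b,\fg_{1,\sfa}] = \bigoplus_{\ttj_1 < i \le \ttj_2} \fg_{\varepsilon_i + \varepsilon_{\ttj_2}}$; the equality forces $\ttj_2 - \ttj_1 = 1$. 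Hence $\sfH_1$ holds iff $\ttj_2 < n-1$ and $\ttj_2 - \ttj_1 > 1$.

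Finally, for $\sfH_2$ there is only one candidate pair $(\e, \c)$, and a similar computation gives $[\fg_\e, \fg_{-\c}] = \fg_{\varepsilon_{\ttj_2+1} - \varepsilon_{\ttj_1+1}}$ and $[\fg_\e, \fg_{-1,-\sfa}] = \bigoplus_{\ttj_1 < i \le \ttj_2} \fg_{\varepsilon_{\ttj_2+1} - \varepsilon_i}$, so failure of $\sfH_2$ is equivalent to $\ttj_2 - \ttj_1 = 1$. Combining, $\sfH_+$ is satisfied precisely when $\ttj_2 - \ttj_1 > 1$ and $\ttj_2 < n-1$. The only real obstacle is the bookkeeping of the root-system identifications and the verification of the $\fg_{0,0}$-irreducibility of the components via Lemma \ref{L:Kostant}; once the $\varepsilon$-basis description is in hand, the rest is pattern-matching dimension counts.
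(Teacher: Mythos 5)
Your proof is correct and takes essentially the same approach as the paper: identify $\fg_{1,\sfa}$ and $\fg_{1,\sfa-1}$ as single irreducible $\fg_{0,0}$-modules via the bidegree decomposition, determine their highest weights $\c$ and $\e$, and test the bracket conditions in Definitions \ref{D:H1} and \ref{D:H2} directly. The only genuine difference is that you explicitly check $\b = \a_{\ttj_2}$ in the $\sfH_1$ analysis and find that it fails precisely when $\ttj_2-\ttj_1=1$; the paper only spells out the $\b=\a_{\ttj_1}$ case for $\sfH_1$ (yielding $\ttj_2<n-1$) and lets the constraint $\ttj_2-\ttj_1>1$ arrive via $\sfH_2$ alone. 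Your version is more careful and the two sources of the constraint coincide, so the conclusion for $\sfH_+$ is unchanged; working in the $\varepsilon$-basis rather than the $\a$-basis is purely notational.
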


\noindent The module $\fg_{1,\sfa} = \fg_{1,C}$ is irreducible with $C = (0,1)$ and 
\begin{eqnarray*}
  \c & = &  \a_{\ttj_1+1} + \cdots + \a_{\ttj_2} + 
  2 ( \a_{\ttj_2+1} + \cdots + \a_{n-1} ) + \a_n \,, \\
  \Delta(\fg_{1,\sfa}) & = & \{ 
  \a_k + \cdots + \a_n \,,\\
  & & \hbox{\hspace{6pt}} 
  \a_k + \cdots + \a_{\ell-1} + 2(\a_\ell + \cdots + \a_{n-1} ) + \a_n 
  \ | \ \ttj_1 < k \le \ttj_2 < \ell \} \, .
\end{eqnarray*}
If $\b = \a_{\ttj_1}$, then $\sfH_1$ will fail if and only if $\ttj_2 = n-1$.  The module $\fg_{1,\sfa-1} = \fg_{1,E}$ is irreducible with $E = (0,0)$ and highest weight $\e = 2(\a_{\ttj_2+1} + \cdots + \a_{n-1}) + \a_n$.  Condition $\sfH_2$ will be satisfied if and only if $\ttj_2 - \ttj_1 > 1$.  This proves the claim.

\begin{claim*} 
The Schubert subvarieties $X_w \subset C_n/P_n$ with $\sfp=\sfa+1$ satisfy $\sfH_+$ if and only if satisfying $1 < \ttj_\ell - \ttj_{\ell-1}$ for all $2 \le \ell \le \sfa+1$, and $\ttj_{\sfa+1} < n-1$.
\end{claim*}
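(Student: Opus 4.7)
The proof parallels the preceding $\sfp=\sfa$ claim, adapted to the extra leading zero in the admissible multi-indices. The plan has three stages.

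First I would decompose $\fg_{1,\sfa}$ and $\fg_{1,\sfa-1}$ into irreducible $\fg_{0,0}$-submodules using Lemma \ref{L:Kostant} and the proof of Lemma \ref{L:bigone}(III). With $\sfp=\sfa+1$, the irreducible summands of $\fg_{1,\sfa}$ are the $\fg_{1,C_q}$ with
$$C_q\ =\ (0^{q+1},\,1^{\sfa-2q},\,2^q),\qquad 0\le 2q\le\sfa,$$
having highest weight
$$\c_q\ =\ \a_{\ttj_{q+1}+1}+\cdots+\a_{\ttj_{\sfa+1-q}}+2(\a_{\ttj_{\sfa+1-q}+1}+\cdots+\a_{n-1})+\a_n.$$
Analogously, $\fg_{1,\sfa-1}=\bigoplus_r\fg_{1,E_r}$ with $E_r=(0^{r+2},\,1^{\sfa-1-2r},\,2^r)$ for $0\le 2r\le\sfa-1$, whose highest weight $\e_r$ is obtained by shifting all the $\ttJ$-indices in $\c_q$ by one position to the right.

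Second I would verify $\sfH_1$. Given $\b=\a_{\ttj_k}\in\Delta^+(\fg_{0,0})$, the coefficient of $\a_{\ttj_k}$ in $\c_q$ is $0,\,1,\,$ or $2$ as $k\le q+1$, $q+2\le k\le \sfa+1-q$, or $k\ge\sfa+2-q$. As in the $\sfp=\sfa$ claim, a direct inspection of $\c_q\pm\b$ using the long/short root shapes of $C_n$ shows that the only potential failures of $\sfH_1$ come from the $0/1$- and $1/2$-block boundaries $\b=\a_{\ttj_{q+1}}$ and $\b=\a_{\ttj_{\sfa+1-q}}$, together with the special case $\b=\a_{\ttj_1}$ (which lies in the leading-zero block of every $C_q$). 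In each of the two boundary cases, the failure of condition~(3) in Definition \ref{D:H1} counts the number of long roots $\c''\in\Delta(\fg_{1,\sfa})$ with $\c''+\b\in\Delta$; this count reduces to $1$ precisely when the appropriate adjacent pair of $\ttJ$-indices is consecutive. Letting $q$ run over $\{0,1,\ldots,\lfloor\sfa/2\rfloor\}$ then sweeps out exactly the required conditions $1<\ttj_\ell-\ttj_{\ell-1}$ for $2\le\ell\le\sfa+1$. The special case $\b=\a_{\ttj_1}$ fails condition~(3) precisely when the $2$-tail $2(\a_{\ttj_{\sfa+1}+1}+\cdots+\a_{n-1})+\a_n$ of $\c_0$ degenerates, which happens exactly when $\ttj_{\sfa+1}=n-1$; this is the source of the final constraint $\ttj_{\sfa+1}<n-1$, and parallels precisely the warm-ups $\sfa=0$ and $\sfa=1$ already treated.

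Third I would verify $\sfH_2$. The candidate pairs $(\e_r,\c_q)$ with $[\fg_{\e_r},\fg_{-\c_q}]\ne 0$ are the "adjacent" pairs $(\e_r,\c_r)$ and $(\e_r,\c_{r+1})$, in parallel with the $\sfa=1$, $\ttJ=\{\ttj_1,\ttj_2\}$ warm-up. The failure analysis is again of the same shape as for $\sfH_1$, and every potential obstruction is prevented by a gap condition $1<\ttj_\ell-\ttj_{\ell-1}$ already imposed in step two together with $\ttj_{\sfa+1}<n-1$; no new constraint is generated. The main obstacle will be the bookkeeping across the parity corner cases ($\sfa=2r$ versus $\sfa=2r+1$) and the extreme values $q=0$ and $q=\lfloor\sfa/2\rfloor$, where one of the three sub-blocks of $C_q$ is empty and must be treated separately; the underlying root-system manipulations are the same elementary computations already carried out in the $\sfp=\sfa$ claim and the two warm-ups, and require no new ideas.
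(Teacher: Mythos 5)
Your proposal is correct and carries out exactly the argument the paper leaves to the reader, modeled on its explicit treatment of the $\sfp=\sfa$ case: the shifted decomposition $C_q=(0^{q+1},1^{\sfa-2q},2^q)$, the boundary failures at $\b=\a_{\ttj_{q+1}}$ and $\b=\a_{\ttj_{\sfa+1-q}}$ sweeping out the gap conditions $1<\ttj_\ell-\ttj_{\ell-1}$ for $2\le\ell\le\sfa+1$, the identification of the $q=0$ case $\b=\a_{\ttj_1}$ as the source of $\ttj_{\sfa+1}<n-1$, and the observation that $\sfH_2$ can fail only (for odd $\sfa=2\sfr+1$, at the adjacent pair $(\e_\sfr,\c_\sfr)$) under a gap condition $\ttj_{\sfr+2}-\ttj_{\sfr+1}>1$ already forced by $\sfH_1$ — all of this is right. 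One small slip: in the condition-(3) count the roots $\c''\in\Delta(\fg_{1,\sfa})$ with $\c''+\b\in\Delta$ need not be long roots of $C_n$ (for $\b=\a_{\ttj_1}$ and $\c=\c_0$ the $n-\ttj_{\sfa+1}$ contributing roots are all short); deleting the word ``long'' leaves the argument intact.
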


\noindent The proof of the claim, which is very similar to the proof of general case of (i), is left to the reader.
\end{proof}

\subsection{Spinor variety \boldmath $D_n/P_n$ \unboldmath }\label{S:Dn}

\begin{proposition*} 
\begin{a_list}
\item
If $\sfa=0$, then $\sfH_+$ is satisfied if and only if $\ttJ \not= \{ n-2 \}$.
\item
If $0 < \sfa = 2\sfr-1,\,2\sfr$ and $n-1 \not\in \ttJ$, then $\sfH_+$ is satisfied if and only if $\ttj_\ell - \ttj_{\ell-1} > 1$, for all $\sfp-\sfa < \ell \le \sfp$, and $\ttj_{\sfp-\sfr+1} - \ttj_{\sfp-\sfr} > 2$.
\item
If $0 < \sfa = 2\sfr,\,2\sfr+1$ and $n-1 \in \ttJ$, then $\sfH_+$ is satisfied if and only if $|\ttJ| = \sfa+1$ and $\ttj_\ell - \ttj_{\ell-1} > 1$, for all $1 \le \ell \le \sfp$, and $\ttj_{\sfp-\sfr} - \ttj_{\sfp-\sfr-1} > 2$.
\end{a_list}
\end{proposition*}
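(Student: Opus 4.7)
The plan is to parallel the Lagrangian grassmannian argument of Section \ref{S:Cn}, but to keep careful track of the distinguishing feature of $D_n/P_n$: the extra family of roots $\a = \a_i + \cdots + \a_{n-2} + \a_n$ in $\Delta(\fg_1)$, and the fact that the simple root $\a_{n-1}$ occurs in $\Delta(\fg_1)$ only through the $2(\a_j+\cdots+\a_{n-2}) + \a_{n-1} + \a_n$ family. Throughout, Kostant's Lemma \ref{L:Kostant} identifies the irreducible $\fg_{0,0}$--submodules of $\fg_{1,\sfa}$ and $\fg_{1,\sfa-1}$ with the multi-graded pieces $\fg_{1,C}$ and $\fg_{1,E}$; the proof in case (IV) of Lemma \ref{L:bigone} and Remark \ref{R:Dn.A}, \ref{R:Dn.B} have already catalogued the admissible tuples.

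For case (a), $\sfa = 0$ forces $\fg_{1,\sfa} = \fg_{1,0}$ to be irreducible with highest weight $\c$ obtained by summing the simple roots along the path from $\a_\tti = \a_n$ towards $\ttj_1$ (possibly including the $\a_n$ branch). Since $\fg_{1,\sfa-1} = \{0\}$, condition $\sfH_2$ is vacuous; $\sfH_1$ is then a single bracket computation in the spirit of Section \ref{S:D1}. The failure $[\fg_{-\a_\ttj},\fg_\c] = \{0\} \neq [\fg_{\a_\ttj},\fg_\c]$ occurs precisely when $\a_\ttj$ sits at an endpoint of the support of $\c$ opposite to $\a_n$, which I claim happens only when $\ttJ = \{n-2\}$, giving the unique exclusion in (a).

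For case (b), $n-1 \notin \ttJ$, the decomposition $\fg_{1,\sfa} = \bigoplus_m \fg_{1,B_m}$ with $B_m = (0^{\sfp-\sfa+m},1^{\sfa-2m},2^m)$ has highest weights $\c_m$ whose support covers the simple roots from $\a_{\ttj_{\sfp-\sfa+m}+1}$ through $\a_n$, with the ``doubled'' interval starting at $\a_{\ttj_{\sfp-m}+1}$. For $\sfH_1$ with $\b = \a_{\ttj_\ell}$, an analysis identical to the Lagrangian case shows the condition fails precisely when a simple root $\a_{\ttj_\ell}$ lies adjacent (in $\d_\fg$) to the support of some $\c_m$ in a way that forces $[\fg_{\a_{\ttj_\ell}},\fg_{1,\sfa}] = [\fg_{\a_{\ttj_\ell}},\fg_{\c_m}]$. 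Tracing which adjacencies produce this yields the inequalities $\ttj_\ell - \ttj_{\ell-1} > 1$ for $\sfp-\sfa < \ell \le \sfp$. For $\sfH_2$, decompose $\fg_{1,\sfa-1}$ via the tuples of Remark \ref{R:Dn.A} and compute $[\fg_\e,\fg_{-\c_m}]$ versus $[\fg_\e,\fg_{-1,-\sfa}]$; the single extra strict gap $\ttj_{\sfp-\sfr+1} - \ttj_{\sfp-\sfr} > 2$ enforces that the distinguished ``middle'' summand $\e_r$ fails to bracket back into every $\c_m$ other than the one it came from.

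For case (c), $n-1 \in \ttJ$, Remark \ref{R:Dn.B} shows $\fg_{1,\sfa}$ splits as $\fg_{1,B^0} \oplus \bigoplus_m \fg_{1,B_m}$. The irreducible $\fg_{1,B^0}$ has highest weight supported only on the short ($\a_n$--branch) roots, while the $\fg_{1,B_m}$ use the doubled family. For $\sfH_1$, in addition to the bracket obstructions as in (b), one must now handle $\b = \a_{n-1}$, which brackets nontrivially with $\fg_{1,B^0}$; one checks that avoiding the failure requires $|\ttJ| = \sfa+1$ (so that $B^0$ is actually absent or governed by the outer endpoint) and then the same separation inequalities $\ttj_\ell - \ttj_{\ell-1} > 1$ for $1\le \ell \le \sfp$. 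For $\sfH_2$, the relevant gap comes from the pairing of the tuple $B = (0^{\sfp-\sfr-1},2^\sfr,1)$ or $(0^{\sfp-\sfr-1},1,2^{\sfr-1},1)$ (Remark \ref{R:Dn.B}) against the adjacent $\e_r$, giving the condition $\ttj_{\sfp-\sfr} - \ttj_{\sfp-\sfr-1} > 2$.

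The main obstacle will be the bookkeeping in case (c): the presence of the $B^0$ summand with its different root support, combined with the role of $\a_{n-1}$, creates several near-coincidences where $[\fg_\b,\fg_{1,\sfa}]$ is a proper sum of $[\fg_\b,\fg_{\c_m}]$ for distinct $m$. One must verify that in each such case the ``extra'' summand kills the $\sfH_1$ failure $[\fg_\b,\fg_{\c_m}] = [\fg_\b,\fg_{1,\sfa}]$, so the only obstructions come from isolated pairs as catalogued. Once this verification is done, the three parts of the proposition follow by combining the $\sfH_1$ and $\sfH_2$ lists, with the equalities in Table \ref{t:H+} (and the $|\ttJ| = \sfa+1$ requirement in (c)) being the net constraints.
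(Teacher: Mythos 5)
Your overall strategy — mimic the $C_n/P_n$ analysis of Section~\ref{S:Cn}, keeping track of the $\a_{n-1}$/$\a_n$ branch and the extra $\a_i+\cdots+\a_{n-2}+\a_n$ family — is exactly what the paper does (its proof in Section~\ref{S:Dn} is itself only a case list ``left to the reader'').  However, the reasoning you give for part~(a) is not correct as stated, even though the conclusion is.  You claim that $\sfH_1$ fails when ``$[\fg_{-\a_\ttj},\fg_\c]=\{0\}\neq[\fg_{\a_\ttj},\fg_\c]$''; but these two conditions hold for \emph{every} $\ttj\in\ttJ$, since by \eqref{E:nwstab} every $\a_\ttj$ is adjacent to (but outside) the support of $\c$, so $\c-\a_\ttj$ is never a root and $\c+\a_\ttj$ always is.  The condition that actually discriminates in Definition~\ref{D:H1} is the third one, $[\fg_\b,\fg_\c]=[\fg_\b,\fg_{1,\sfa}]$.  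For $\sfa=0$, $n-1\notin\ttJ=\{\ttj\}$, one must compare the 1-dimensional $[\fg_{\a_\ttj},\fg_\c]$ against all of $[\fg_{\a_\ttj},\fg_{1,0}]$; these coincide only when $\Delta(\fg_{1,0})$ consists of the single root $\a_n$, i.e.\ when $\ttj=n-2$.  Any proof of (a) must hinge on this counting, not on the adjacency of $\a_\ttj$.

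A second gap: in (c) the paper first disposes of the sub-case $\sfp=\sfa+2$, exhibiting the explicit failing pair $\b=\a_{\ttj_1}$ and $\c$ the highest weight of $\fg_{1,C}$ with $C=(0,1^\sfa,0)$, before restricting to $\sfp=\sfa+1$.  Your remark that ``one checks that avoiding the failure requires $|\ttJ|=\sfa+1$'' points in the right direction but does not identify the pair; if one wants the proposition to be more than an assertion, this exclusion needs to be made explicit, particularly because the identity $[\fg_{\a_{\ttj_1}},\fg_{1,\sfa}]=[\fg_{\a_{\ttj_1}},\fg_{1,B^0}]$ requires checking that the other summands $\fg_{1,B_m}$ are killed by $\fg_{\a_{\ttj_1}}$ (they are, since $b_1=0$ there but the roots start at $i>\ttj_1+1$).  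Finally, for (b) and (c) the claimed gap inequalities $1<\ttj_\ell-\ttj_{\ell-1}$ and $2<\ttj_{\sfp-\sfr+1}-\ttj_{\sfp-\sfr}$ (resp.\ $2<\ttj_{\sfp-\sfr}-\ttj_{\sfp-\sfr-1}$) are the correct answers, but your write-up invokes them by analogy rather than by carrying out the bracket count from Lemma~\ref{L:knapp}; to finish, one must — as in Section~\ref{S:Cn} — identify the highest weights $\c_m$ of each $\fg_{1,B_m}$, the lowest weight $\b=\a_\ttj$, and verify the three conditions of Definition~\ref{D:H1} root by root, and similarly for $\sfH_2$ with $\e$ from $\fg_{1,\sfa-1}$.
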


\begin{proof}
Review Corollary \ref{C:bigone} and Remark \ref{R:Dn}.  The analysis proceeds as in Section \ref{S:Cn}.  We leave it to the reader to verify the following.

{\bf (A)}  Assume that $n-1 \not\in \ttJ$.  Then $\sfa \le \sfp \le \sfa+1$.  

{\bf (A.1)}  Suppose that $\sfa=0$.  Then $\ttJ = \{\ttj\}$ and $\sfH_1$ is satisfied if and only if $\ttj \not= n-2$.  The condition $\sfH_2$ is vacuous.

{\bf (A.2)}  Suppose that 
$\sfa = 2\sfr-1$ or $\sfa=2\sfr$, with $\sfr>0$.  Then $\sfH_1$ is satisfied if and only if $\ttj_\ell - \ttj_{\ell-1} > 1$, for all $\sfp-\sfa < \ell \le \sfp$, and $\ttj_{\sfp-\sfr+1} - \ttj_{\sfp-\sfr} > 2$.  In this case $\sfH_2$ is also satisfied.


{\bf (B)}  Suppose that $n-1 \in \ttJ$.  Then $\sfa+1 \le |\ttJ|=\sfp \le \sfa+2$.  If $\sfp = \sfa+2$, then $\sfH_1$ fails for $\b = \a_{\ttj_1}$ and $\c$ the highest weight of $\fg_{1,C} \subset \fg_{1,\sfa}$ with $C = (0,1^\sfa,0)$.  So we assume $\sfp = \sfa+1$.  

{\bf (B.1)}  If $\sfa = 0$, then $\sfH_1$ is satisfied and $\sfH_2$ is vacuous.  


{\bf (B.2)}  Suppose that $0 < \sfa = 2\sfr ,\,2\sfr+1$.  Then $\sfH_1$ is satisfied if and only if $\ttj_\ell - \ttj_{\ell-1} > 1$, for all $1 \le \ell \le \sfp$, and $\ttj_{\sfp-\sfr} - \ttj_{\sfp-\sfr-1} > 2$.  In this case $\sfH_2$ is also satisfied.
\end{proof}

\section{The Schur system $\cR_w$} \label{S:schur}

\subsection{Definition}\label{S:Rw}

Define $R_w = \bP \bI_w \cap \tGr({|w|},\fg_{-1})$.   The \emph{Schur system} $\cR_w \subset \tGr({|w|} , T \, X)$ is defined by $\cR_{w,z} = g_* R_w$, $z = gP$, $g \in G$.  A ${|w|}$--dimensional complex submanifold $\mfd \subset X$ is an \emph{integral manifold of $\cR_w$} if $T\mfd \subset \cR_w$.   A subvariety $Y \subset X$ is an \emph{integral variety of $\cR_w$} if the smooth locus $Y^0 \subset Y$ is an integral manifold of $\cR_w$.  The Schur system is \emph{rigid} if for every integral manifold $M$, there exists $g \in G$ such that $M \subset g\cdot X_w$.  If every integral variety $Y$ is of the form $g\cdot X_w$, then we say $X_w$ is \emph{Schur rigid}.

\begin{remark*} Similar to $\cB_w$, we have that $\cR_w$ rigid implies $X_w$ is Schur rigid.
\end{remark*}

The following theorem is proved in \cite{Walters} and \cite[\S2.8.1]{SchurRigid}. 

\begin{theorem}\label{T:Rw_homology}
A subvariety $Y \subset X$ is an integral variety of $\cR_w$ if and only if $[Y] = r[X_w] \in H_k(X)$ with $0 < r \in \bZ$.
\end{theorem}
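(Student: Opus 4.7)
The plan is to prove Theorem \ref{T:Rw_homology} via a Wirtinger--style calibration argument in the spirit of Walters and Bryant. For each $v \in W^\fp(k)$ with $k=|w|$, I will construct a closed, pointwise nonnegative $(k,k)$-form $\phi_v$ on $X$ whose de Rham class is a strictly positive multiple of the Poincar\'e dual of $[X_{v^*}]$, and such that $\phi_v$ vanishes on a complex $k$-plane $E\subset T_zX$ iff $E$ has no component in the $\bI_v$ summand of $\bigwedge^k T_zX$. The biconditional of the theorem then reduces to pointwise vanishing of nonnegative integrands.

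The construction proceeds as follows. Since $\fg_+$ acts trivially on $\fg/\fp\simeq\fg_{-1}$ (by \eqref{E:abelian}), the Kostant decomposition \eqref{E:tw^k} is $P$-equivariant and extends to a $G$-equivariant bundle decomposition $\bigwedge^k TX=\bigoplus_{v\in W^\fp(k)}\bI_v$. Equip $TX$ with the $G$-invariant Hermitian inner product induced by \eqref{E:herm_prod} and let $P_v:\bigwedge^k TX\to\bI_v$ be the orthogonal projection. Define $\phi_v$ to be the unique real $G$-invariant $(k,k)$-form satisfying $\phi_v(E)=\langle P_v\eta,P_v\eta\rangle$ for any unit decomposable $\eta\in\bigwedge^k T_zX$ representing a complex $k$-plane $E$; equivalently, via the Schur--lemma identification $(\bI_v\otimes\overline{\bI_v})^{G_0}\simeq\mathbb{C}\cdot\mathrm{id}_{\bI_v}$, $\phi_v$ corresponds to a positive multiple of $\mathrm{id}_{\bI_v}$. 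Closedness of $\phi_v$ follows from $G$-invariance on the compact Hermitian symmetric space $X$ (where $G$-invariant forms are harmonic, hence closed). By construction $\phi_v(E)\ge 0$ on complex $k$-planes, with equality iff $P_v(E)=0$.

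Next I identify the cohomology class. At the smooth point of $X_v$ whose tangent space is the highest-weight line of $\bI_v$, $\phi_u$ vanishes for $u\ne v$ while $\phi_v$ is strictly positive; integration gives $\int_{X_v}\phi_u=0$ for $u\ne v$ and $\int_{X_v}\phi_v>0$. Since $\{[X_u]\}_{u\in W^\fp(k)}$ is a basis of $H_{2k}(X,\mathbb{Z})$ with $[X_u]\cdot[X_{u'}]=\delta_{u,u'^*}$ (Corollary \ref{C:pd}), this uniquely identifies $[\phi_v]=c_v[X_{v^*}]^{\mathrm{PD}}$ with $c_v>0$, and so $\int_Y\phi_v=c_vr_v$ whenever $[Y]=\sum_u r_u[X_u]$. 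The forward direction of the theorem is now immediate: if $Y$ is an integral variety of $\cR_w$, then at every smooth $z\in Y$ we have $T_zY\in R_w$, so $\phi_v(T_zY)=0$ for $v\ne w$ and $\phi_w(T_zY)>0$; integration gives $r_v=0$ for $v\ne w$ and $r_w\in\mathbb{Z}_{>0}$. For the converse: if $[Y]=r[X_w]$, then $\int_Y\phi_v=0$ for $v\ne w$; since smooth tangent planes of the complex variety $Y$ are complex $k$-planes on which $\phi_v$ is pointwise nonnegative, the vanishing of the integral forces $\phi_v|_{Y^0}\equiv 0$, whence $P_v(T_zY)=0$ for every smooth $z$ and every $v\ne w$. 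The decomposition $T_zY=\sum_v P_v(T_zY)\in\bigoplus_v\bI_v$ then collapses to $T_zY\in\bI_w$; being a decomposable complex $k$-plane, $T_zY\in R_w=\bP\bI_w\cap\tGr(k,\fg_{-1})$, so $Y$ is integral for $\cR_w$.

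The main obstacle is setting up the calibrating forms so that $c_v$ comes out \emph{positive} (not merely nonzero): this requires combining Schur's lemma on $\bI_v$ with a positivity-preserving normalization of the Hermitian inner product, and checking that the generic smooth tangent space of $X_v$ is genuinely the highest-weight line of $\bI_v$. Once that is settled, the Wirtinger-style pointwise argument is routine.
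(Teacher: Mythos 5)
The paper does not supply its own proof of Theorem \ref{T:Rw_homology}; it cites Walters and Bryant, and your calibration (Wirtinger-type) argument is in fact a faithful reconstruction of the proof that appears in those references. The overall strategy is sound: the $P$-equivariance of the Kostant decomposition \eqref{E:tw^k} (using that $\fg_{1}$ acts trivially on $\fg/\fp$) gives the $G$-equivariant bundle decomposition $\tw^k TX = \bigoplus_v \bI_v$, the Hermitian projections produce nonnegative closed $(k,k)$-forms $\phi_v$, and the two directions of the biconditional follow by the pointwise-nonnegativity trick.

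Two things need to be repaired, neither fatal. First, you repeatedly say ``$G$-invariant'' where you must say ``$G_c$-invariant'' (or $K$-invariant), $G_c$ being a compact real form acting transitively on $X$: the Hermitian inner product \eqref{E:herm_prod} is built from the compact form $\fk$ and is invariant under the compact isotropy group, not under $P$ or $G$. There is no $G$-invariant Hermitian metric on $X$ (the complex group $G$ cannot preserve a positive form), and the statement ``$G$-invariant forms are harmonic, hence closed'' is false as written; the correct statement is that $G_c$-invariant forms on the compact symmetric space $G_c/K_0$ are parallel, hence closed. The $P_v$ are still $G$-equivariant bundle maps, and the Hermitian forms $H_v(\eta,\eta')=\langle P_v\eta,P_v\eta'\rangle$ are well-defined at $o$ and $K_0$-invariant, so extending by $G_c$ gives the desired closed nonnegative $(k,k)$-forms; everything downstream goes through unchanged. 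Second, you invoke Corollary \ref{C:pd} to identify $[\phi_v]$, but in the paper Corollary \ref{C:pd} is deduced \emph{from} Theorem \ref{T:Rw_homology}, so on its face this is circular. Fortunately you don't need it: from $\int_{X_u}\phi_v=c_v\delta_{uv}$ (which you establish directly by evaluating at smooth points) together with the fact that $\{[X_u]\}_{u\in W^\fp(k)}$ is a $\bZ$-basis of $H_{2k}(X)$, you already know $\int_Y\phi_v=c_v r_v$ whenever $[Y]=\sum_u r_u[X_u]$. The identification of $[\phi_v]$ with a multiple of $[X_{v^*}]^{\mathrm{PD}}$ is decorative and should be omitted, or else cited independently (Chevalley, or \cite[(6.4.5)]{MR0142697}) rather than via Corollary \ref{C:pd}.
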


\noindent The significance of the theorem in relation to our motivating question is that it implies: \emph{if $X_w$ is singular and Schur rigid, then $X_w$ is not homologous to a smooth variety.}

Lemma \ref{L:dual}(c) and Theorem \ref{T:Rw_homology} imply

\begin{corollary} \label{C:pd}
 $[X_{w^*}]$ is Poincar\'e dual to $[X_w]$.
\end{corollary}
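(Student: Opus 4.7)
The goal is to show that in the Schubert basis of $H_*(X)$, the intersection pairing satisfies $[X_w]\cdot[X_{w^*}]=1$ and $[X_w]\cdot[X_u]=0$ for every $u\in W^\fp$ with $|u|=|w^*|$ and $u\neq w^*$. My plan is to identify $X_{w^*}$, up to a $G$-translate, with the classical \emph{opposite} Schubert variety to $X_w$, and then invoke the standard transversality of opposite Bruhat cells.

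First, the dimensional count is immediate from Lemma~\ref{L:dual}(c), which gives $|w|+|w^*|=|\Delta(\fg_+)|=\dim_\bC X$, so $[X_w]$ and $[X_{w^*}]$ live in Poincar\'e-complementary degrees. Next, using $w^*=w^0_\fp w w_0$ together with $w^0_\fp\cdot o=o$ (since $w^0_\fp\in W_\fp$),
\[
  C_{w^*} \ = \ B(w^*)^{-1}\cdot o \ = \ B w_0 w^{-1}(w^0_\fp)^{-1}\cdot o
  \ = \ w_0\cdot\bigl(B^- w^{-1}\cdot o\bigr),
\]
where $B^- := w_0 B w_0^{-1}$ is the opposite Borel. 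Thus $w_0^{-1}\cdot X_{w^*}=\overline{B^- w^{-1}\cdot o}$ is precisely the opposite Schubert variety to $X_w$.

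It remains to check that $X_w$ and $w_0^{-1}\cdot X_{w^*}$ meet transversally in the single point $w^{-1}\cdot o$. Local transversality is a tangent-space computation: translating to $o$, Lemma~\ref{L:dual}(c) says $\Delta(w)$ and $w^0_\fp\,\Delta(w^*)$ partition $\Delta(\fg_+)$, so the two tangent subspaces of $T_oX=\fg_{-1}$ are complementary. Global uniqueness of the intersection point is the standard Bruhat disjointness of $B$- and $B^-$-orbits at distinct $u\in W^\fp$; in our CHSS setting, \eqref{E:abelian} makes the big cell $N^-\cdot o$ an affine space $\simeq\fg_{-1}$, on which both varieties cut out transverse affine subvarieties, so the verification reduces to a short coordinate calculation. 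This yields $[X_w]\cdot[X_{w^*}]=1$, and the same Bruhat-transversality argument produces the vanishing $[X_w]\cdot[X_u]=0$ for $u\neq w^*$ of complementary dimension. Theorem~\ref{T:Rw_homology} underwrites the geometric interpretation of the pairing, since any variety in the class $r[X_w]$ is an integral variety of $\cR_w$ and therefore provides an honest effective representative against which to intersect.

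The main obstacle is the global transversality / uniqueness of the set-theoretic intersection $X_w\cap(w_0^{-1}\cdot X_{w^*})$. In general flag varieties this is a non-trivial Bruhat computation, but in the CHSS context it trivializes thanks to \eqref{E:abelian} and the bigrading \eqref{E:ZiZw}: the intersection inside the big cell becomes the intersection of two complementary coordinate subspaces of $\fg_{-1}$, and intersection points outside the big cell are excluded by Bruhat disjointness. Once this is in hand, Poincar\'e duality follows formally.
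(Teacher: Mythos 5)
Your proof is correct and is the classical opposite--Schubert--variety argument: identify $w_0\cdot X_{w^*}$ with the opposite Schubert variety $X^w := \overline{B^-w^{-1}\cdot o}$, verify tangent-space transversality at $w^{-1}\cdot o$ from Lemma~\ref{L:dual}(c), and invoke the Bruhat/Richardson disjointness to get a single intersection point and vanishing against the other Schubert classes of complementary degree.

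The paper itself gives only a one-line derivation, citing Lemma~\ref{L:dual}(c) together with Theorem~\ref{T:Rw_homology}.  Your argument uses Lemma~\ref{L:dual}(c) in the essential way (both for $|w|+|w^*|=\tdim X$ and for $\Delta(w)\sqcup w^0_\fp\Delta(w^*)=\Delta(\fg_+)$, i.e.\ the tangent complementarity $T_o(wX_w)=\fn_w$, $T_o(wX^w)=\fn_w^\perp$).  However, your attempt to incorporate Theorem~\ref{T:Rw_homology} --- that it ``underwrites the geometric interpretation'' by supplying effective representatives --- is not doing real work: the intersection pairing is already computed against the Schubert varieties themselves, which are effective.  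So you have in effect given a self-contained proof that does not need Theorem~\ref{T:Rw_homology}, which is a perfectly good (and arguably cleaner) route, though it means your argument is not literally the implication chain the paper asserts.

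Two small cautions.  First, your claim that the global step ``trivializes'' in the CHSS setting because of \eqref{E:abelian} is overstated: the \emph{local} transversality does reduce to the linear statement $\fn_w\cap\fn_w^\perp=\{0\}$ inside $\fg_{-1}$, but excluding set-theoretic intersection points outside the big cell is exactly the standard Bruhat/Richardson computation ($C_u\cap C^v=\emptyset$ unless $u\ge v$); abelianness does not shortcut this.  You should either cite that fact or carry out the (short, but nontrivial) Bruhat bookkeeping.  Second, in the derived tangent computation you should be explicit that the relevant complement of $\Delta(w)$ is $\Delta(\fg_1)\backslash\Delta(w)$, which equals $w^0_\fp\Delta(w^*)$ and not $\Delta(w^*)$ itself; the $w^0_\fp$ twist matters for which root spaces appear, even though it does not affect the dimension count.
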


\begin{proposition} \label{P:conj}
Let $G/P$ be an irreducible compact Hermitian symmetric space.  Let $\varphi$ be an automorphism of the Dynkin diagram $\d_\fg$ and $\varphi : \fg \to \fg$ denote the induced Lie algebra automorphism.  Set $\fp' = \varphi(\fp)$.  Given $w \in W^\fp$, set $w' = \varphi(w) \in W^{\fp'}$.  Then the Schubert system $\cB_w$ (resp. the Schur system $\cR_w$) on $G/P$ is rigid if and only if the Schubert system $\cB_{w'}$ (resp. the Schur system $\cR_{w'}$) on $G/P'$ is rigid.
\end{proposition}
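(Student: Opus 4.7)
The plan is to show that the Dynkin automorphism $\varphi$ lifts to a Lie group automorphism $\Phi \colon G \to G$ which descends to a biholomorphism $\bar\Phi \colon G/P \to G/P'$ intertwining all of the data (Schubert varieties, Schubert systems, Schur systems, and $G$-actions). Once this is set up, a bijection between integral manifolds on the two sides will transfer rigidity in either direction.

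First I would lift $\varphi \colon \fg \to \fg$ to an automorphism $\Phi$ of $G$ (passing to the simply-connected form if necessary, which is harmless since $G/P$ depends only on the isogeny class). Because $\varphi$ permutes the simple roots and $\fp$ is generated by $\Sigma_\fp$, we have $\Phi(P) = P'$, so $\Phi$ descends to a biholomorphism $\bar\Phi \colon G/P \to G/P'$ sending $o$ to $o'$. The induced action on the Weyl group is exactly the $\varphi$ of Section \ref{S:PD}, so $\bar\Phi(B w^{-1} \cdot o) = B' (w')^{-1} \cdot o'$; hence $\bar\Phi(C_w) = C_{w'}$ and $\bar\Phi(X_w) = X_{w'}$.

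Next I would check that $\bar\Phi$ carries the Schubert and Schur systems to their primed counterparts. Under $T_o(G/P) \simeq \fg_{-1}$ as in \eqref{E:tgtsp0}, the differential $d\bar\Phi_o$ is $\varphi\vert_{\fg_{-1}} \colon \fg_{-1} \to \fg_{-1}'$, where $\fg_{-1}'$ is defined by the grading element $Z_{\fp'} = \varphi(Z_\fp)$. Because $\Delta(w') = \varphi(\Delta(w))$, the formula \eqref{E:n_w} for $\fn_w$ gives $\varphi(\fn_w) = \fn_{w'}$. Consequently $\varphi$ carries the $G_0$-orbit $B_w \subset \tGr(|w|, \fg_{-1})$ to $B_{w'} \subset \tGr(|w'|, \fg_{-1}')$, and $G$-equivariance of the bundles $\cB_w$ and $\cB_{w'}$ then yields $\bar\Phi_* \cB_w = \cB_{w'}$. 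For the Schur system, $\bI_w$ is the irreducible $\fg_0$-summand of $\bigwedge^{|w|} \fg_{-1}$ of highest weight $w(\rho) - \rho$ (see \eqref{E:tw^k}). Since $\varphi$ fixes $\rho = \sum \omega_i$ and intertwines the actions of $w$ and $w'$, we get $\varphi(w(\rho) - \rho) = w'(\rho) - \rho$, so $\varphi(\bI_w) = \bI_{w'}$; therefore $\bar\Phi_*(R_w) = R_{w'}$ and $\bar\Phi_* \cR_w = \cR_{w'}$.

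Finally, $\bar\Phi$ induces a bijection between integral manifolds of $\cB_w$ (resp.\ $\cR_w$) and those of $\cB_{w'}$ (resp.\ $\cR_{w'}$). Using $\Phi$ to identify the two $G$-actions, $G$-translates of $X_w$ correspond to $G$-translates of $X_{w'}$, so rigidity of one system is equivalent to rigidity of the other. The only nontrivial point in this plan is the lift of $\varphi$ from $\fg$ to $G$, but this is standard for Dynkin automorphisms; every other step is a naturality check following directly from the definitions and from the compatibility $\Delta(w') = \varphi(\Delta(w))$.
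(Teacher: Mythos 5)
Your proposal is correct and takes essentially the same approach as the paper: lift $\varphi$ to a Lie group automorphism, observe that $\varphi(P)=P'$ and $\varphi(\fn_w)=\fn_{w'}$ (hence $\varphi_*(\bI_w)=\bI_{w'}$), and transfer both systems and their integral manifolds through the induced biholomorphism. The only minor difference is that you justify $\varphi(\bI_w)=\bI_{w'}$ by computing highest weights, whereas the paper notes that $\varphi\vert_{\fg_{-1}}$ is a $\fg_0\simeq\fg_0'$--module map and reads off the equality from $\varphi(\fn_w)=\fn_{w'}$; both are fine.
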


\begin{proof}
The algebra isomorphism $\varphi : \fg \to \fg$ induces a Lie group isomorphism $\varphi : G \to G$ with the property that $\varphi(P) = P'$.  Thus, $\varphi$ induces a biholomorphism $\varphi : G/P \to G/P'$.  Let $\varphi_* : \tGr(|w|,T(G/P)) \to \tGr(|w'|,T(G/P'))$ denote the map induced by the push-forward.  Review the definitions of \emph{rigid} for the Schubert (Section \ref{S:schubert}) and Schur (above) systems.  

Let $\fg = \fg_{-1}' \op \fg_0' \op \fg_1'$ denote the graded decomposition of $\fg$ induced by $\fp'$ (Section \ref{S:Z}).  Note that $\varphi$ maps $\fg_0$ to $\fg_0'$, inducing a Lie algebra isomorphism, and $\fg_{-1}$ to $\fg_{-1}'$; moreover, the latter is a $\fg_0 \simeq \fg_0'$--module map.  Additionally, $\Delta(w') = \varphi(\Delta(w))$, so that $\varphi(\fn_w) = \fn_{w'}$.  Thus, $\varphi_*(\bI_w) = \bI_{w'}$.  It follows that $\varphi_*(\cR_w) = \cR_{w'}$ and $\varphi_*(\cB_w) = \cB_{w'}$.
\end{proof}

\begin{remark} \label{R:tw_triv}
It may happen that $\tw^{|w|}\fg_{-1}$ is irreducible as an $\fg_0$--module, so that $R_w = \tGr(|w|,\fg_{-1})$.  In this case $H_{2|w|}(X) = \bZ$ is generated by a single $[X_w]$.  Every subvariety $Y \subset X$ of dimension $|w|$ is an integral variety of $\cR_w$, so $X_w$ cannot be Schur rigid.  This is the case for (i) all $w$ of length one; (ii) all $w$ associated the CHSS $\bP^n = A_n/P_1 \simeq A_n/P_n$ and $Q^{2n-1} = B_n/P_1$; and (iii) for all $w$ such that $|w|\not= n-1$ associated to the CHSS $Q^{2n-2} = B_n/P_1$.   By Theorem \ref{T:BR}, none of these $X_w$ satisfy $\sfH_+$.
\end{remark}

Note that $B_w \subset R_w$ is the $G_0$--orbit of the highest weight line $\fn_w \in \bI_w$.   The following proposition was proven by Bryant in the case that $X$ is a Grassmannian \cite{SchurRigid}, and by Hong in the general setting \cite[Proposition 2.10]{MR2276624}.  (The hypotheses of Hong's Proposition 2.10 are slightly more restrictive, but her proof establishes our Proposition \ref{P:Rrigid}.)

\begin{proposition} \label{P:Rrigid}
The Schur system $\cR_w$ is rigid if and only if the Schubert system $\cB_w$ is rigid and $B_w = R_w$.
\end{proposition}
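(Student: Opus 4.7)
The proposition is an equivalence; I handle each direction separately. For $(\Leftarrow)$, I would observe that $B_w$ and $R_w$ are both $P$-invariant subsets of $\tGr(|w|, \fg_{-1}) \cong \tGr(|w|, T_oX)$: the $G_0$-invariance is clear ($B_w$ by definition as a $G_0$-orbit, $R_w$ since $\bI_w$ is a $\fg_0$-submodule of $\bigwedge^{|w|}\fg_{-1}$), and $\fg_+$ acts trivially on $T_oX$. The bundles $\cB_w, \cR_w \subset \tGr(|w|, TX)$ are built by $G$-translation from these fibers, so $B_w = R_w$ immediately gives $\cB_w = \cR_w$, and rigidity transfers.

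For $(\Rightarrow)$, rigidity of $\cB_w$ is immediate: any integral manifold of $\cB_w \subset \cR_w$ is also one of $\cR_w$, hence by hypothesis contained in some $g \cdot X_w$. The substantive content is the equality $B_w = R_w$; since $B_w \subset R_w$ always, I would establish the reverse inclusion by contradiction. Suppose $E \in R_w \setminus B_w$. The plan is to exhibit an integral variety of $\cR_w$ whose tangent plane at $o$ (under the identification $T_o X \cong \fg_{-1}$) is $E$, and then invoke rigidity to derive a contradiction. The key construction exploits the abelianness of $\fg_{-1}$ (see \eqref{E:abelian}): the map $\exp:\fg_{-1} \to N^- \subset G$ is a Lie group isomorphism, so the subspace $E$ gives rise to a closed subgroup $\exp(E) \subset N^-$, and $Y_E := \exp(E) \cdot o \subset X$ is a complex submanifold through $o$ of dimension $|w|$. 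By $G$-equivariance, every tangent plane of $Y_E$ corresponds to $E$ under the bundle identification, so $TY_E \subset \cR_w$. Its closure $\overline{Y_E}$ is an irreducible algebraic subvariety by Chow, with smooth locus containing $Y_E$; continuity plus closedness of $\cR_w$ as a sub-bundle extends the integrality to all smooth points, making $\overline{Y_E}$ an integral variety of $\cR_w$.

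Rigidity of $\cR_w$ then forces $\overline{Y_E} = g \cdot X_w$ for some $g \in G$ (irreducible, matching dimensions). Since $o \in Y_E$ is a smooth point of $g \cdot X_w$, the point $g^{-1} \cdot o$ lies in the smooth cell $N_w \cdot o \subset X_w$; writing $g^{-1} \cdot o = n \cdot o$ with $n \in N_w$, we get $gn \in P$ (the stabilizer of $o$), so setting $q := gn$ the tangent computation yields
\[
E \;=\; T_o Y_E \;=\; T_o(g \cdot X_w) \;=\; g_* n_* \fn_w \;=\; q_* \fn_w \;\in\; G_0 \cdot \fn_w \;=\; B_w,
\]
where the final inclusion uses that $P$ acts on $T_oX$ through $G_0$. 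This contradicts $E \notin B_w$. I expect the main obstacle to be the production of the integral variety $Y_E$ with prescribed tangent, since Theorem \ref{T:Rw_homology} characterizes integral varieties only by homology class, not by tangent data; the construction $Y_E = \exp(E) \cdot o$ resolves it cleanly by leveraging precisely the feature of the CHSS setting that $\fg_{-1}$ is abelian, and the only remaining subtlety is the continuity argument extending integrality from $Y_E$ to the full smooth locus of $\overline{Y_E}$.
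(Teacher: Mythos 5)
The paper does not include its own proof of this proposition---it is cited to Hong \cite[Proposition 2.10]{MR2276624}---so I am assessing your argument on its merits. The structure is right: the $(\Leftarrow)$ direction and the ``rigidity of $\cR_w$ implies rigidity of $\cB_w$'' part of $(\Rightarrow)$ are both fine, and the key construction of the model integral manifold $Y_E = \texp(E)\cdot o$ for $E \in R_w$, exploiting the abelianness of $\fg_{-1}$, is exactly the right idea. But there is a genuine gap in the last paragraph: you claim that because $o$ is a smooth point of $g\cdot X_w$, the point $g^{-1}\cdot o$ lies in the cell $N_w\cdot o$. That does not follow. The paper only says (\S\ref{S:Xw_eds}) that $N_w\cdot o$ is \emph{dense} in the smooth locus $X_w^0$; the smooth locus is strictly larger in general (already when $X_w$ is smooth, $X_w^0 = X_w \supsetneq N_w\cdot o$). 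So you are not entitled to an $n\in N_w$ with $g^{-1}\cdot o = n\cdot o$, and the displayed chain $E = g_*n_*\fn_w = q_*\fn_w$ with $q = gn \in P$ is unavailable at the point $o$.

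The repair is short and you already have the ingredients. Either (a) move to a generic point: $Y_E$ and $g\cdot(N_w\cdot o)$ are both open dense in $\overline{Y_E} = g\cdot X_w$, so pick $y = \texp(u)\cdot o = gn\cdot o$ in their intersection with $u\in E$, $n\in N_w$; then $p := \texp(-u)\,gn \in P$ and, comparing the tangent identifications at $y$, one gets $E = p_*\fn_w \in B_w$ by $P$-stability of $B_w$. Or (b) promote your continuity remark (which you spent on the unneeded assertion that $\overline{Y_E}$ is an integral \emph{variety} of $\cR_w$) to the statement you actually need: since $B_w$ is a compact $G_0$-orbit, $\cB_w$ is closed in $\tGr(|w|,TX)$, and the Gauss map is continuous on $X_w^0$, density of $N_w\cdot o$ in $X_w^0$ gives that all of $X_w^0$ is an integral manifold of $\cB_w$; then $T_{g^{-1}\cdot o}X_w \in \cB_{w,\,g^{-1}\cdot o} = g^{-1}_*B_w$, so $E = g_*T_{g^{-1}\cdot o}X_w \in B_w$, the desired contradiction.
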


\noindent Having studied the Schubert system (Section \ref{S:BR}), we now wish to determine when $B_w = R_w$.  

\subsection{Lemmas}

We claim that $R_w$ is connected.  To see this note that each connected component  of $R_w$ contains a closed orbit in $\bP\bI_w$.  Because the reductive $\fg_0$ acts irreducibly on $\bI_w$, the space $\bP\bI_w$ contains a unique closed orbit.  Hence $R_w$ is connected.  From this we deduce the following.

\begin{lemma}[{\cite{SchurRigid,MR2276624}}]  \label{L:Rw_cntd}
The equality $B_w = R_w$ holds if and only if $\widehat T_{\fn_w} B_w = \widehat T_{\fn_w} R_w$.
\end{lemma}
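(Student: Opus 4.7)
The forward direction is immediate from $B_w \subseteq R_w$. For the converse, assume $\widehat T_{\fn_w} B_w = \widehat T_{\fn_w} R_w$; the plan is to show that this tangent-space equality forces $R_w$ to be smooth and irreducible of dimension $\dim B_w$, and then invoke the closed-orbit analysis made in the paragraph just above the lemma to conclude $B_w = R_w$.

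First I would record the relevant properties of $B_w$. As the orbit of the highest weight line $\fn_w \in \bI_w$ under the connected reductive group $G_0$, the set $B_w$ is smooth and irreducible, and it is the unique closed $G_0$-orbit in $\bP\bI_w$ (the standard highest-weight orbit in an irreducible projective representation). In particular, $B_w$ is closed in $R_w \subset \bP\bI_w$, and $\dim B_w = \dim \widehat T_{\fn_w} B_w$.

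Next I would chain the inequalities
\[
  \dim B_w \ = \ \dim \widehat T_{\fn_w} B_w \ = \ \dim \widehat T_{\fn_w} R_w \ \ge \ \dim_{\fn_w} R_w \ \ge \ \dim B_w,
\]
where the first equality uses smoothness of $B_w$, the second is the hypothesis, the first inequality is the standard Zariski-tangent-space estimate, and the last uses $B_w \subseteq R_w$. Equality throughout yields $\dim_{\fn_w} R_w = \dim B_w$ together with smoothness, and hence local irreducibility, of $R_w$ at $\fn_w$.

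The final step uses the argument immediately preceding the lemma. Every irreducible component of $R_w$ is stabilized by the connected group $G_0$, so each such component contains a closed $G_0$-orbit of $\bP\bI_w$, and therefore contains $B_w$; in particular every component of $R_w$ passes through $\fn_w$. Since $R_w$ is locally irreducible at $\fn_w$, there can be only one such component, and so $R_w$ is globally irreducible. It is then an irreducible projective variety of dimension $\dim B_w$ containing the closed irreducible subvariety $B_w$ of the same dimension, whence $R_w = B_w$. The main obstacle is the middle step --- bootstrapping a pointwise tangent-space equality at $\fn_w$ to smoothness and local irreducibility of $R_w$ there --- which is what allows uniqueness of the closed orbit to deliver global irreducibility. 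Beyond this, no computation is needed; the entire argument is structural, built from reductive group actions on projective varieties.
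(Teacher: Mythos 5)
Your proof is correct and rests on the same key observation as the paper's: since $\fg_0$ acts irreducibly on $\bI_w$, $\bP\bI_w$ contains a unique closed $G_0$--orbit (namely $B_w$), so every component of $R_w$ passes through $\fn_w$; combined with the tangent-space hypothesis (which forces $R_w$ to be smooth, hence locally irreducible, at $\fn_w$) this collapses $R_w$ to a single component. The paper phrases the endgame slightly differently --- it records connectedness of $R_w$ and would then conclude via $B_w$ being open (by $G_0$-equivariance of the smoothness) and closed in the connected $R_w$ --- whereas you establish irreducibility and finish with a dimension count; these are minor variants of the same argument, and yours is fully rigorous.
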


Regard $\fg_{0,-}$ as a $\fg_{0,0}$--submodule of $\fn_w^\perp \ot \fn_w^*$.  Since $\fg_{0,0}$ is reductive, there exists a $\fg_{0,0}$--module $\ft_w$ with the property that $\fn_w^\perp \ot \fn_w^* = \fg_{0,-} \op \ft_w$.  From the definition of $\partial$ it follows that 
$$
  H^1_0(\fn_w,\fg^\perp_w) \ \stackrel{\eqref{E:Hk_deg}}{=} 
  \ \frac{\fn_w^\perp \ot \fn_w^*}
  {\tim \{ \partial : \fg_{0,-} \to \fn_w^\perp \ot \fn_w^* \}} \ = \ \ft_w \, .
$$
The identifications $\fg_{0,-} \simeq T_{\fn_w} B_w$ and $\fn_w^\perp \ot \fn_w^* \simeq T_{\fn_w} \tGr({|w|},\fg_{-1})$ yield $T_{\fn_w} \tGr({|w|},\fg_{-1}) \simeq T_{\fn_w} B_w \,\op\, \ft_w$.  This establishes the following.

\begin{lemma}[{\cite[Proposition 3.5]{MR2276624}}] \label{L:H10}
The graded cohomology $H^1_0(\fn_w,\fg^\perp_w)$ is the $\fg_{0,0}$--module complement of $T_{\fn_w} B_w$ in $T_{\fn_w} \tGr(|w|,\fg_{-1})$.
\end{lemma}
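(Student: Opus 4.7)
The plan is to combine two standard geometric identifications with a short $Z_\tti$--degree count in the cohomology complex; all the substantive pieces are already sketched in the paragraph immediately preceding the statement, so the proposal is really just to organize them in the right order.

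First I would identify the ambient tangent space. Since $\fg_{-1} = \fn_w \op \fn_w^\perp$ by \eqref{E:gw_dfns}, the standard formula $T_E\tGr(k,V) \cong \tHom(E, V/E)$ specialises to a canonical $\fg_{0,0}$--module isomorphism
\[
T_{\fn_w}\tGr(|w|,\fg_{-1}) \ \cong \ \tHom(\fn_w, \fn_w^\perp) \ \cong \ \fn_w^\perp \ot \fn_w^* \,.
\]
Next I would locate $T_{\fn_w} B_w$ inside this model. Since $B_w = G_0 \cdot \fn_w$, this tangent space is the image of the infinitesimal action $\fg_0 \to \fn_w^\perp \ot \fn_w^*$ sending $\xi \mapsto \bigl( u \mapsto [\xi,u]_{\fn_w^\perp} \bigr)$. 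By \eqref{E:nwstab} the kernel is precisely the stabilizer $\fg_{0,\ge 0}$, so the map factors through an injection $\fg_{0,-} \hookrightarrow \fn_w^\perp \ot \fn_w^*$ whose image \emph{is} $T_{\fn_w} B_w$.

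The third step is the cohomological bookkeeping in $Z_\tti$--degree zero. Under the identification \eqref{E:nw+}, elements of $\fn_w^*$ carry $Z_\tti$--grading $+1$, and a direct inspection of \eqref{E:d} shows that $\partial$ preserves the $Z_\tti$--grading. Since $\fg_w^\perp = \fn_w^\perp \op \fg_{0,-} \op \fg_{1,<\sfa}$ has $Z_\tti$--gradings $-1, 0, 1$, the degree-zero slice of the complex is
\[
(\fg_w^\perp)_0 \ = \ \fg_{0,-} \ \xrightarrow{\ \partial^0\ } \ (\fg_w^\perp \ot \fn_w^*)_0 \ = \ \fn_w^\perp \ot \fn_w^* \ \xrightarrow{\ \partial^1\ } \ (\fg_w^\perp \ot \tw^2 \fn_w^*)_0 \,.
\]
The final target would require a component of $\fg_w^\perp$ in $Z_\tti$--grading $-2$, which does not exist, so it vanishes and $\tker\,\partial^1 = \fn_w^\perp \ot \fn_w^*$. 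On the other hand $(\partial^0 \xi)(u) = [u,\xi]_{\fn_w^\perp}$ is (up to sign) exactly the infinitesimal action map of the previous step, so $\tim\,\partial^0 = T_{\fn_w} B_w$ and
\[
H^1_0(\fn_w, \fg_w^\perp) \ = \ \bigl( \fn_w^\perp \ot \fn_w^* \bigr) \big/ T_{\fn_w} B_w \,.
\]

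Finally, reductivity of $\fg_{0,0}$ lets me realise this quotient as a $\fg_{0,0}$--invariant complement $\ft_w \subset \fn_w^\perp \ot \fn_w^*$ to $T_{\fn_w} B_w$, yielding the advertised direct sum decomposition $T_{\fn_w}\tGr(|w|,\fg_{-1}) = T_{\fn_w} B_w \,\op\, H^1_0(\fn_w, \fg_w^\perp)$. There is no real obstacle here; the only thing that needs care is the grading arithmetic that kills the target of $\partial^1$ in degree zero, so that $H^1_0$ genuinely collapses to a quotient of the ambient Grassmannian tangent space by $T_{\fn_w} B_w$.
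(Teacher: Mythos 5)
Your argument is correct and follows essentially the same route as the paper: identify $T_{\fn_w}\tGr(|w|,\fg_{-1})$ with $\fn_w^\perp\ot\fn_w^*$, realize $T_{\fn_w}B_w$ as the image of $\partial$ on $\fg_{0,-}$, and split off a $\fg_{0,0}$--complement by reductivity. The only difference is that you spell out the $Z_\tti$--degree count showing $\partial^1$ vanishes in degree zero (since $\fg_w^\perp$ has no component in grade $-2$), a detail the paper leaves implicit when it writes $H^1_0 = (\fn_w^\perp\ot\fn_w^*)/\tim\,\partial$.
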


Let $\{ \c_1 \,,\, \c_2 \,,\, \ldots \,,\, \c_{|w|} \}$ be an enumeration of the roots $\Delta(w)$.  Define
$$
  \langle w \rangle \ := \ \textstyle\sum_j \c_j \quad \hbox{ and } \quad
  \bv_w \ := \ E_{-\c_1} \wedge \cdots \wedge E_{-\c_{|w|}} \ \in \ \bI_w \, .
$$
Then $\bv_w$ spans the highest weight line $\fn_w \hookrightarrow \bP\bI_w$ of weight $-\langle w\rangle$.  Note that $\xi\in\fn_w^\perp \ot \fn_w^*$ naturally acts on $\bv_w$ by 
$$
  \xi\bv_w \ := \ E_{-\b} \wedge (E_{\c} \lefthook \bv_w) \,,\quad
  \hbox{ when } \xi = E_{-\b} \ot E_\c \, . 
$$
Moreover,   
\begin{eqnarray*}
  \widehat T_{\fn_w} \tGr({|w|},\fg_{-1}) & = & 
  \tspan\{ \bv_w \,,\, (\fn_w^\perp \ot \fn_w^*)\bv_w\} \ \subset \ 
  \tw^{|w|} \fg_{-1} \, ,\\
  \widehat T_{\fn_w} R_w & = & 
  \bI_w \,\cap\, \tspan\{ \bv_w \,,\, (\fn_w^\perp \ot \fn_w^*)\bv_w\} \,,\\
  \widehat T_{\fn_w} B_w & = &  \tspan\{ \bv_w \,,\, (\fg_{0,-})\bv_w \} \, .
\end{eqnarray*}
Thus, $\widehat T_{\fn_w}B_w = \widehat T_{\fn_w} R_w$ (and $B_w = R_w$ by Lemma \ref{L:Rw_cntd}) if and only if $\bI_w \cap (\ft_w)\bv_w = \{0\}$.

\begin{lemma}[{\cite[Proposition 3.4]{MR2276624}}] \label{L:B=R}
The equality $B_w = R_w$ \emph{fails} if and only if there exists a  $\fg_{0,0}$--highest weight vector $0 \not= \xi \in \ft_w = H^1_0(\fn_w,\fg^\perp_w) \simeq \cH^1_0$ with the property that $\xi\bv_w\in\bI_w$.
\end{lemma}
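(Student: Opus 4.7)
My approach is to compare tangent spaces using Lemma \ref{L:Rw_cntd}, convert the equality $\widehat T_{\fn_w}B_w = \widehat T_{\fn_w}R_w$ into a linear-algebra statement about a single map $\Psi : \fn_w^\perp \ot \fn_w^* \to \tw^{|w|}\fg_{-1}$, $\Psi(\xi) := \xi\bv_w$, and then upgrade the resulting condition to a highest-weight criterion using a $\fg_{0,0}$-equivariance (up to character twist) argument.

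First I would verify that $\Psi$ is injective: on a basis element $\xi = E_{-\a} \ot E_\c$ with $\a \in \Delta(\fg_1) \backslash \Delta(w)$ and $\c \in \Delta(w)$, the formula $\Psi(\xi) = \pm E_{-\a} \wedge (E_\c \lefthook \bv_w)$ produces a wedge of $|w|$ distinct root vectors (since $\a \not\in \Delta(w)$ precludes repetition), and distinct pairs $(\a,\c)$ yield linearly independent wedges. Combined with the splitting $\fn_w^\perp \ot \fn_w^* = \fg_{0,-} \op \ft_w$ and the observation that $\Psi(\fg_{0,-}) \subset \bI_w$ (because $\fg_{0,-} \subset \fg_0$ acts on the $\fg_0$-submodule $\bI_w$, so the $\fg_0$-action of $\eta \in \fg_{0,-}$ on $\bv_w$ differs from $\Psi(\eta)$ only by an element of $\bC\bv_w \subset \bI_w$), the three displayed tangent-space formulas give the decomposition
\begin{equation*}
  \widehat T_{\fn_w}R_w \ = \ \widehat T_{\fn_w}B_w \,\op\, \big( \Psi(\ft_w) \cap \bI_w \big) \, .
\end{equation*}
Hence, by Lemma \ref{L:Rw_cntd}, $B_w = R_w$ if and only if $K := \{\xi \in \ft_w \ | \ \xi\bv_w \in \bI_w\}$ is trivial.

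Next I would show that $K$ is a $\fg_{0,0}$-submodule of $\ft_w$. Since $\fg_{0,0}$ preserves $\fn_w$ (by \eqref{E:nwstab}), the one-dimensional line $\bC\bv_w = \tw^{|w|}\fn_w$ is $\fg_{0,0}$-stable; let $\chi$ be the associated character. The Leibniz rule applied to $\lefthook$ and $\wedge$ then yields
\begin{equation*}
  \Psi(z \cdot \xi) \ = \ z \cdot \Psi(\xi) \ - \ \chi(z)\,\Psi(\xi) \,,
  \qquad z \in \fg_{0,0} \, ,
\end{equation*}
so $\Psi$ is equivariant up to the character twist $\chi$. Because $\bI_w$ is a $\fg_0$-submodule of $\tw^{|w|}\fg_{-1}$ it is stable under both summands on the right, so $K$ is $\fg_{0,0}$-invariant. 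As $\fg_{0,0}$ is reductive, $K \neq \{0\}$ if and only if $K$ contains a $\fg_{0,0}$-highest weight vector; combined with the identification $\ft_w = H^1_0(\fn_w,\fg^\perp_w) \simeq \cH^1_0$ (the first from the definition of $\ft_w$, the second from Proposition \ref{P:new_coh}), this gives the claim.

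The step requiring the most care is the verification of the twisted equivariance formula, in particular checking that the Killing-form identification $\fn_w^* \simeq \fn_w^+$ used to define $\Psi$ is compatible with the action of $\fg_{0,0}$ on both sides; the remaining arguments are linear algebra applied to the explicit descriptions of the tangent spaces already recorded in the text.
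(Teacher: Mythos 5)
Your proof is correct and follows essentially the same route as the paper's: reduce via Lemma \ref{L:Rw_cntd} to the statement that $B_w = R_w$ if and only if $\Psi(\ft_w) \cap \bI_w = \{0\}$, then exploit that this intersection is a $\fg_{0,0}$-submodule --- the paper phrases this by considering irreducible $U \subset \ft_w$ and noting $U\bv_w \cap \bI_w$ is all-or-nothing, while you phrase it via the kernel $K = \Psi^{-1}(\bI_w)\cap\ft_w$, but the content is identical. You also make explicit two points the paper leaves implicit, namely the injectivity of $\Psi$ and the character-twisted $\fg_{0,0}$-equivariance $\Psi(z\cdot\xi) = z\cdot\Psi(\xi) - \chi(z)\Psi(\xi)$ that underlies the assertion ``$(\ft_w)\bv_w$ is a $\fg_{0,0}$-module''; both are correctly verified.
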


\begin{proof}
Note that both $\bI_w$ and $(\ft_w)\bv_w$ are $\fg_{0,0}$--modules.  Let $U \subset \ft_w$ be an irreducible $\fg_{0,0}$--submodule with highest weight vector $\xi \in U$.  Then either $U\bv_w \subset \bI_w$ or $\bI_w \cap U\bv_w = \{0\}$.  The former holds if and only if $\xi\bv_w \subset \bI_w$.
\end{proof}

In Section \ref{S:RR} we apply Proposition \ref{P:Rrigid} and Lemma \ref{L:B=R} to show that the Schubert varieties satisfying $\sfH_+$ (Section \ref{S:BR}) are Schur rigid.

\begin{definition*}
Let $\Pi(w)$ be the set of pairs $(\c,\b) \in \Delta(\fg_1) \times \Delta(\fg_1)$ such that:
 \begin{enumerate}
 \item the root $\c$ is the highest weight of an irreducible $\fg_{0,0}$--module $\fg_{1,C} \subset \fn_w^+$; 
 \item the root $-\b$ is the highest weight of an irreducible $\fg_{0,0}$--module $\fg_{-1,-B} \subset \fn^\perp_w$ (equivalently, $\b$ is a lowest weight of $\fg_{1,B}$); and
 \item $\c-\b$ is \emph{not} a root of $\fg$.  
 \end{enumerate} Let $U_{\c-\b} \subset \fg_{-1,-B} \ot \fg_{1,C}$ denote the Cartan product.
\end{definition*}

\begin{lemma} \label{L:cH10}
$\cH^1_0 = \bigoplus_{(\c,\b) \in \Pi(w)}  U_{\c-\b}$.
\end{lemma}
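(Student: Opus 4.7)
The plan is to reduce the computation of $\cH^1_0$ to the kernel of a bracket map, and then to apply a Casimir eigenvalue analysis analogous to that of Section \ref{S:step1}.

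First, by \eqref{E:partial}, $\partial$ annihilates $\fg_{-1,-s} \otimes \sw^\bullet \fn_w^+$ whenever $s > \sfa$, so $\partial$ vanishes identically on $\fn_w^\perp \otimes \fn_w^+$. Thus $\square = \partial \partial^*$ on this subspace, and the positivity identity $\langle \phi, \square \phi\rangle = \|\partial^* \phi\|^2$ gives $\cH^1_0 = \ker(\partial^*|_{\fn_w^\perp \otimes \fn_w^+})$. By \eqref{E:d*}, $\partial^*$ is the restricted bracket: $\partial^*(v \otimes \eta) = [\eta, v] \in \fg_{0,-}$. Using Kostant's Lemma \ref{L:Kostant}, decompose $\fn_w^\perp = \bigoplus_B \fg_{-1,-B}$ and $\fn_w^+ = \bigoplus_C \fg_{1,C}$ into irreducible $\fg_{0,0}$--modules; since distinct pairs $(B,C)$ correspond to distinct multi-weights under the center of $\fg_{0,0}$, any $\fg_{0,0}$--irreducible of $\fn_w^\perp \otimes \fn_w^+$ lies in a single summand $\fg_{-1,-B} \otimes \fg_{1,C}$. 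It thus suffices to analyze $\partial^*$ on each such summand, and this map takes values in $\fg_{0, c-b}$ (with $b = |B|$, $c = |C|$), which is irreducible or zero by another application of Kostant's Lemma.

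Second, I would analyze the Cartan product $U_{\c-\b} \subset \fg_{-1,-B} \otimes \fg_{1,C}$: its highest weight vector is $E_{-\b} \otimes E_\c$, which maps under $\partial^*$ to $[E_\c, E_{-\b}]$. Since $\c(Z_w) \le \sfa < \b(Z_w)$ forces $\c \ne \b$, this bracket is a nonzero multiple of $E_{\c-\b}$ exactly when $\c-\b \in \Delta$, and vanishes otherwise. Hence $U_{\c-\b} \subseteq \ker \partial^*$ if and only if $(\c,\b) \in \Pi(w)$; this establishes the inclusion $\bigoplus_{(\c,\b) \in \Pi(w)} U_{\c-\b} \subseteq \cH^1_0$.

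The hard part will be the reverse inclusion. For this I plan to derive an explicit Casimir-style formula for $\square|_{\fg_{-1,-B} \otimes \fg_{1,C}}$, paralleling \eqref{E:sqL1} and \eqref{E:sqL2}, by combining Lemma \ref{L:formulas} with the expression \eqref{E:partial} for $\partial$ on $\fg_{0,-\ell}$. On an irreducible $\fg_{0,0}$--submodule $U_\pi \subset \fg_{-1,-B} \otimes \fg_{1,C}$ of highest weight $\pi$, Lemmas \ref{L:C00} and \ref{L:knapp} would then yield a lower bound on the Laplacian eigenvalue, with strict positivity unless $\pi = \c - \b$ (forcing $U_\pi$ to be the Cartan product) together with the bracket condition $[E_\c, E_{-\b}] = 0$ (equivalently, $\c - \b \notin \Delta$). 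The main obstacle is the bookkeeping needed to verify that the Casimir inequality is strict on every non-Cartan irreducible: this reduces to showing that any non-Cartan summand of $\fg_{-1,-B} \otimes \fg_{1,C}$ has $\fg_{0,0}$--highest weight matching that of $\fg_{0, c-b}$ and is then mapped isomorphically onto $\fg_{0, c-b}$ by $\partial^*$ via Schur's lemma applied to the $\fg_{0,0}$--equivariant restriction. Combining the two inclusions yields $\cH^1_0 = \bigoplus_{(\c,\b) \in \Pi(w)} U_{\c-\b}$.
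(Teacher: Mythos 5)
Your first paragraph contains a genuine and useful simplification that is not in the paper: since $\partial$ vanishes on $\fn_w^\perp\ot\tw^\bullet\fn_w^+$ by \eqref{E:partial}, you get $\square = \partial\partial^*$ in bidegree $(0,\cdot)$, and positivity then gives $\cH^1_0 = \ker(\partial^*|_{\fn_w^\perp\ot\fn_w^+})$ directly, bypassing the identification $\cH^1_0 \simeq H^1_0(\fn_w,\fg_w^\perp)$. Your second paragraph — that the Cartan product $U_{\c-\b}$ is killed by $\partial^*$ exactly when $\c-\b\notin\Delta$ — is also correct and gives the inclusion $\bigoplus_{(\c,\b)\in\Pi(w)}U_{\c-\b}\subseteq\cH^1_0$.

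The gap is in the reduction to a per-$(B,C)$ analysis, and it is not a small one. First, the claim that ``distinct pairs $(B,C)$ correspond to distinct multi-weights under the center of $\fg_{0,0}$'' is false: the center acts on $\fg_{-1,-B}\ot\fg_{1,C}$ by the multi-degree $C-B$, and distinct pairs $(B,C)\ne(B',C')$ can have $C-B = C'-B'$. So a $\fg_{0,0}$--irreducible of $\fn_w^\perp\ot\fn_w^+$ need not lie in a single summand, and $\ker\partial^*$ need not decompose as $\bigoplus_{B,C}\ker(\partial^*|_{\fg_{-1,-B}\ot\fg_{1,C}})$ — different pieces with the same $C-B$ all map into the same irreducible target $\fg_{0,C-B}$, so there can be kernel elements that are diagonal across pieces. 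Second, the Casimir plan for the reverse inclusion refers to ``$\square|_{\fg_{-1,-B}\otimes\fg_{1,C}}$,'' but $\square=\partial\partial^*$ does \emph{not} preserve $\fg_{-1,-B}\ot\fg_{1,C}$: after $\partial^*$ lands in $\fg_{0,C-B}$, the operator $\partial$ scatters across all $\fg_{-1,-B'}\ot\fg_{1,C'}$ with $C'-B'=C-B$, so ``$\square|_{\text{piece}}$'' is not an endomorphism of the piece and does not act by a scalar on its irreducibles. The paper avoids both problems via the spectral-sequence decomposition of Lemma \ref{L:BRcoh}, which replaces $\square$ by the \emph{partial} Laplacians $\square_m=\partial_m\circ\partial_m^*$ on each bigraded component $T^1_m$; only after expressing $\square_m=-\sum_{\z_j\in\fg_{0,0}}\cL''_{\z_j}\circ\cL'_{\xi_j}$ (which manifestly preserves the $(B,C)$--pieces) does the Casimir inequality of Lemma \ref{L:C00} apply cleanly. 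Your proposal effectively skips Lemma \ref{L:BRcoh}; you would need either to prove that intermediate decomposition or to supply a different argument (e.g.\ a direct dimension count using $\ker\partial=\{0\}$) before the per-piece Casimir analysis can close the reverse inclusion.
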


\noindent The proof of Lemma \ref{L:cH10} is given in Section \ref{S:PcH10}.  \medskip

Given an element $\pi$ of the $\fg_{0,0}$--root lattice, let $\bI^{-\pi}_w \subset \bI_w$ be the weight space of weight $-\langle w \rangle-\pi$.  By Lemma \ref{L:B=R}, 
\begin{equation} \label{E:B=Rtest}
\begin{array}{l}
\hbox{\emph{the equality $B_w = R_w$ fails if and only if}}\\
\hbox{\emph{there exists $(\c,\b) \in \Pi(w)$ such that $\xi\bv_w \in \bI^{\c-\b}_w$.}}
\end{array}\end{equation}

Let $\be = \{ \e_j\}_{j=1}^t \subset \Delta(\fg_0)$ be an ordered sequence.  Define 
\begin{equation*} 
  \be . \bv_w \ := \ 
  E_{\e_t} . ( \ldots E_{\e_2} .( E_{\e_1} . \bv_w ) \ldots  ) 
  \ \in \ \bI_w \, . 
\end{equation*}

\begin{lemma} \label{L:Iwt}
Fix $(\c,\b) \in \Pi(w)$ and set $\sfs := (\b-\c)(Z_w)$.  The weight space $\bI^{\c-\b}_w$ is spanned by vectors of the form $\bb.\bv_w$ where $\bb$ ranges over ordered sequences $\bb = \{-\b_j\}_{j=1}^\sfs \subset \Delta(\fg_{0,-1})$ such that $\b-\c = \sum_1^\sfs \b_j$.  In particular, $\b-\c$ is a sum of positive roots, and $\sfs > 1$.
\end{lemma}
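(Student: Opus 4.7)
The plan is to split the proof into two parts: first deducing the two ``in particular'' assertions from the main spanning statement, then proving the spanning. For the consequences, note that any vector $\bb . \bv_w$ has $\fh$-weight $-\langle w\rangle - \sum_j \b_j$, so membership in $\bI_w^{\c-\b}$ forces $\sum_j \b_j = \b-\c$; since each $\b_j \in \Delta(\fg_{0,1}) \subset \Delta^+$ is a positive root, this exhibits $\b-\c$ as a sum of positive roots. If moreover $\sfs = 1$, then $\b-\c = \b_1 \in \Delta$, whence $\c-\b = -\b_1 \in \Delta$, contradicting $(\c,\b) \in \Pi(w)$; hence $\sfs > 1$.

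For the main spanning statement, I would decompose $\bI_w = \bigoplus_k \bI_w^{(k)}$ into $Z_w$-eigenspaces and set $W_0 := \bI_w^{(k_0)}$ with $k_0 = -\langle w\rangle(Z_w)$. Then $W_0$ is the $\fg_{0,0}$-irreducible submodule of highest weight $-\langle w\rangle$, annihilated by $\fg_{0,+}$ and generated by $\bv_w$. The key structural input is that the opposite nilradical $\fg_{0,-}$ of the parabolic $\fg_{0,\ge 0} \subset \fg_0$ (for the $Z_w$-grading on $\fg_0$) is Lie-generated by $\fg_{0,-1}$ -- an analogue of Lemma \ref{L:nondeg}(b) transplanted to $\fg_0$. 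Expanding iterated brackets via PBW then gives $U(\fg_{0,-}) = U(\fg_{0,-1})$ as associative subalgebras of $U(\fg_0)$, so that together with $\bI_w = U(\fg_{0,-}) \cdot W_0$ one obtains
\[
  \bI_w^{(k_0 - \sfs)} \ = \ \tspan\{\, E_{-\b_\sfs}\cdots E_{-\b_1} . v_0 \ :\ v_0 \in W_0,\ \b_j \in \Delta(\fg_{0,1}) \,\}.
\]

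To finish, I would reduce an arbitrary $v_0 \in W_0$ to $\bv_w$. Writing $v_0 = E_{-\eta_\ell}\cdots E_{-\eta_1} . \bv_w$ with $\eta_i \in \Delta^+(\fg_{0,0})$ and iteratively applying $[E_{-\eta}, E_{-\b}] = c\, E_{-(\eta + \b)}$ (with $c \ne 0$ precisely when $\eta + \b \in \Delta(\fg_{0,1})$), each $E_{-\eta_i}$ can be commuted past the $E_{-\b_j}$'s. A commutation either absorbs an $(\eta_i, \b_j)$ pair into a single $\fg_{0,-1}$-generator $E_{-(\eta_i + \b_j)}$, or leaves $E_{-\eta_i}$ to act directly on $\bv_w$, producing a strictly lower $\fg_{0,0}$-weight vector in $W_0$. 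Since $\c-\b$ is $\fg_{0,0}$-dominant (as both $\c$ and $-\b$ are themselves $\fg_{0,0}$-highest weights of irreducible $\fg_{0,0}$-modules), residual lower-weight contributions land in $\fg_{0,0}$-weight spaces strictly below $-\langle w\rangle + (\c-\b)$ and so vanish in $\bI_w^{\c-\b}$. What survives is precisely the claimed pure $\fg_{0,-1}^\sfs$-span on $\bv_w$.

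The main obstacle will be justifying this final dominance-based cancellation: showing that after the commutator reduction, ``residual'' terms in which some $E_{-\eta_i}$ survives to act on $\bv_w$ do not recontribute to the target weight space. This rests on identifying $-\langle w\rangle + (\c-\b)$ as a highest $\fg_{0,0}$-weight of the relevant isotypic component of $\bI_w^{(k_0-\sfs)}$ -- a plausible consequence of $\c$ and $-\b$ being extremal choices of $\fg_{0,0}$-highest weights in $\fn_w^+$ and $\fn_w^\perp$ respectively. A cleaner alternative might bypass the commutator bookkeeping entirely via a direct dimension comparison using Kostant's multiplicity formula, reducing the claim to combinatorics of weight multiplicities in the $\fg_0$-module $V_{-\langle w \rangle}$.
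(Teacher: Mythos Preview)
Your handling of the two ``in particular'' assertions is correct and matches the paper.

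The gap is in your final reduction step. Your ``dominance-based cancellation'' does not work as stated: the residual terms where some $E_{-\eta_i}$ survives still have $\fh$-weight $-\langle w\rangle + (\c-\b)$ (all terms in the computation do, since you started in that weight space and $\fh \subset \fg_{0,0}$), so they cannot be discarded by weight considerations. The assertion that $\c-\b$ being $\fg_{0,0}$-dominant forces these terms into a strictly lower weight space is simply false.

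The fix, however, is much simpler than you realize, and it is the key observation you are missing: $W_0 = \bC\bv_w$ is one-dimensional. By construction of $\ttJ$ and $Z_w$, \emph{all} of $\fg_{0,0}$ (not just its Borel) stabilizes $\fn_w$, hence preserves the line $\tw^{|w|}\fn_w = \bC\bv_w$. So for $\eta \in \Delta^+(\fg_{0,0})$ we have $E_{-\eta}.\bv_w \in \bC\bv_w$, and since it has weight $-\langle w\rangle - \eta \ne -\langle w\rangle$, it vanishes. Thus the $\fg_{0,0}$-submodule generated by $\bv_w$ is trivial, $W_0 = \bC\bv_w$, and your argument $\bI_w = U(\fg_{0,<0}).W_0 = U(\fg_{0,-1}).\bv_w$ already gives the spanning claim with no further work.

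The paper's proof uses exactly this observation but organizes it more directly: start from the standard fact that weight spaces are spanned by products of negative \emph{simple} root vectors on $\bv_w$; each simple root of $\fg_0$ lies in $\Delta(\fg_{0,0})$ or $\Delta(\fg_{0,1})$; then commute the $\fg_{0,0}$-generators to the inside (toward $\bv_w$), where they vanish by the stabilizer argument above, while their commutators with $\fg_{0,-1}$-generators land back in $\fg_{0,-1}$. This bypasses the $U(\fg_{0,<0}) = U(\fg_{0,-1})$ and $W_0$ machinery entirely.
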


\begin{remark} \label{R:s}
Suppose $\sfa(w) = 0$ and $\fg_{-1} = \fg_{-1,0} \op \fg_{-1,-1}$.  The condition $\sfs>1$ of Lemma \ref{L:Iwt} implies that $B_w = R_w$ holds.
\end{remark}

\begin{proof}[Proof of Lemma \ref{L:Iwt}]
By standard weight theory, $\bI^{-\pi}_w$ is spanned by vectors of the form 
$\be . \bv_w$ where $\be$ ranges over all ordered sequences $\{ -\e_j \}_{j=1}^t$, with each $\e_j$ a simple root and $-\langle w \rangle - \sum_j \e_j = -\langle w \rangle - \pi$; equivalently $\sum_j \e_j = \pi$.  (Here $t$ may vary with the choice of sequence $\{-\e_j\}$.)  

Note that $\e_j$ must lie in $\Delta(\fg_{0,1}) \sqcup \Delta(\fg_{0,0})$.  Since $\fg_{0,\ge0}$ is the stabilizer of $\fn_w$ in $\fg_0$, we have $E_{-\e_j} . \bv_w = 0$ if and only if $\e_j \in \Delta(\fg_{0,0})$.  So, without loss of generality, $\e_1 \in \Delta(\fg_{0,1})$.  If $\e_2 \in \Delta(\fg_{0,0})$, then 
\begin{eqnarray*}
  E_{-\e_2} . (E_{-\e_1} . \bv_w) & = &
  [E_{-\e_2} \,,\, E_{-\e_1}] . \bv_w + E_{-\e_1} . (E_{-\e_2} . \bv_w ) \\
  & = & [E_{-\e_2} \,,\, E_{-\e_1}] . \bv_w \ + \ 
  c\, E_{-\e_1} . \bv_w \, ,
\end{eqnarray*}
for some $c\in\bC$.  Either $[E_{-\e_2} \,,\, E_{-\e_1}] = 0$, or $\e_1+\e_2 \in \Delta(\fg_{0,1})$.  It follows now from an inductive argument that the vectors $\bb.\bv_w$ span $\bI^{-\pi}_w$.

Fix $(\c,\b) \in \Pi(w)$ with $\b-\c=\pi$.  The inequality $\sfs \ge 1$ follows from the definition of $\Pi(w)$.  If equality holds, $\b-\c= \b_1$ is a root.  This contradicts the definition of $\Pi(w)$.  
\end{proof}

\begin{remark} \label{R:Iwt}
Suppose that \emph{(a)} for every ordered sequence $\bb$ of Lemma \ref{L:Iwt} it is the case that $[E_{\b_j} \,,\, E_{\b_k} ] = 0$ for all $1\le j,k\le \sfs$, and \emph{(b)} for any every pair $\bb$ and $\bb'$ of ordered sequences, the unordered sets are equal.  Then $\bb . \bv_w = \bb' . \bv_w$.  It follows that $\tdim\,\bI^{-\pi}_w = 1$.
\end{remark}

\subsection{Proof of Lemma \ref{L:cH10}} \label{S:PcH10}

Given $0 < \ell \in \bZ$, let $H^1_{0,-\ell}$ be the component of $H^1_0(\fn_w,\fg^\perp_w)$ of $Z_w$--degree $-\ell$.  Then 
\begin{equation} \label{E:opH10}
  H^1_0(\fn_w,\fg_w^\perp) \ = \ \bigoplus_{0 < \ell} H^1_{0,-\ell} \, .
\end{equation}
Fix $0 < \ell \in \bZ$, let $T^k$ be the component of $\fg^\perp_w \ot \tw^k\fn_w^+$ of $(Z_\tti,Z_w)$--bidegree $(0,-\ell)$.  Then 
\begin{eqnarray}
  \nonumber
  T^0 & = & \fg_{0,-\ell} \,,\\
  \label{E:F1}
  T^1 & = & T^1_0 \, \op \, 
 T^1_1 \,\op \,\cdots\,\op\,T^1_{\ell-1}\,,\quad
  \hbox{with } \ T^1_m \ := \ \fg_{-1,m-\sfa-\ell} \ot \fg_{1,\sfa-m} \,,
\end{eqnarray}
and $T^k = \{0\}$ for all $k\not=0,1$.  Since the Lie algebra cohomology differential preserves the $(Z_\tti,Z_w)$--bidegree, we have a subcomplex
\begin{equation} \label{E:subcpx}
  \{0\} \ \longrightarrow \ T^0 \ 
  \stackrel{\partial}{\longrightarrow} \ T^1 \ 
  \longrightarrow \ \{0\} \, ,
\end{equation}
and
$$
  H^1_{0,-\ell} \ = \ T^1 \ \tmod \ \partial T^0 
  \ =: \ H^1(T^\bullet , \partial) \, .
$$
Let $\partial_m$ denote the component of $\partial:T^0\to T^1$ taking value in $T^1_m$.  Consider the associated subcomplex
\begin{equation*}
  \{0\} \ \longrightarrow \ T^0 \ 
  \stackrel{\partial_m}{\longrightarrow} \ T^1_m \ 
  \longrightarrow \ \{0\} \, .
\end{equation*}
Note that $\partial_m$ is a $\fg_{0,0}$--module map.  Therefore, the cohomology 
\begin{equation}   \label{E:Hm}
  H^1(T^\bullet,\partial_m) \ := \ T^1_m \ \tmod \ \partial_m(T^0)
\end{equation}
is a $\fg_{0,0}$--module.

\begin{lemma} \label{L:BRcoh}
The $\fg_{0,0}$--modules $H^1_{0,-\ell} = T^1/\partial(T^0)$ and $\op_m H^1(T^\bullet,\partial_m)$ are isomorphic.
\end{lemma}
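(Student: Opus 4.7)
The plan is to construct a canonical $\fg_{0,0}$-module map
$$
\Phi \ : \ T^1/\partial(T^0) \ \longrightarrow \ \bigoplus_m T^1_m/\partial_m(T^0)
$$
from the projections $\pi_m : T^1 \to T^1_m$. Since $\partial_m = \pi_m\circ\partial$ by the very definition of $\partial_m$, each $\pi_m$ sends $\partial(T^0)$ into $\partial_m(T^0)$ and hence descends to a $\fg_{0,0}$-equivariant map $\bar\pi_m$ on the quotient; set $\Phi = \bigoplus_m \bar\pi_m$. Surjectivity of $\Phi$ is immediate: any tuple $(y_m)_m$ lifts componentwise to $\sum_m\tilde y_m \in T^1$, and the class of that element maps to $(y_m)_m$. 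The substantive content is injectivity, which unwinds to the identity
$$
\partial(T^0) \ = \ \bigoplus_m \partial_m(T^0)
$$
as $\fg_{0,0}$-submodules of $T^1$, equivalently the joint surjectivity of $(\partial_m)_m : T^0 \to \prod_m \partial_m(T^0)$.

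To establish this I would refine $T^0$ and each $T^1_m$ using the multi-grading by $(Z_{\ttj_1},\ldots,Z_{\ttj_\sfp})$ that underlies $Z_w$. By Kostant's Lemma~\ref{L:Kostant}, $T^0 = \bigoplus_{|L|=\ell} \fg_{0,-L}$ is a sum of irreducible $\fg_{0,0}$-modules, and each $T^1_m$ refines as a direct sum of tensor products $\fg_{-1,-(C+L)}\otimes\fg_{1,C}$ with $|C| = \sfa - m$ and $|C+L|>\sfa$. The formula~\eqref{E:partial} then identifies $\partial$ with a sum of $\fg_{0,0}$-equivariant maps $\partial^{L,C}:\fg_{0,-L}\to\fg_{-1,-(C+L)}\otimes\fg_{1,C}$, each of which is scalar-determined on the Cartan-product component by Schur's Lemma.

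The hard step is producing a $\fg_{0,0}$-stable splitting $T^0 = \bigoplus_m T^0_m$ with $\partial_{m'}(T^0_m) = 0$ for $m'\neq m$ and $\partial_m(T^0_m) = \partial_m(T^0)$; such a splitting yields the direct-sum identity by realizing each summand of $\bigoplus_m \partial_m(T^0)$ independently inside $\partial(T^0)$. Constructing it requires tracking, for each irreducible $\fg_{0,-L}$, the set of values $m$ for which $\partial_m|_{\fg_{0,-L}}\neq 0$, and then showing these maps can be separated without cross-interference using the abelianness~\eqref{E:abelian} of $\fg_{\pm1}$, Lemma~\ref{L:nondeg}, and the combinatorial structure of $\Delta(w)$ described by Proposition~\ref{P:nw2graded}. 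This combinatorial disentanglement of which irreducible $\fg_{0,-L}$ feeds which $T^1_m$ is where the main technical work lies; once it is in hand, the direct-sum identity follows and $\Phi$ is an isomorphism of $\fg_{0,0}$-modules.
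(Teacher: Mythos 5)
Your reduction is clean and correct: $\Phi$ is manifestly surjective, and since both sides are finite-dimensional $\fg_{0,0}$-modules, the lemma is equivalent to $\Phi$ being an isomorphism, which (because $\partial(T^0)\subset\bigoplus_m\partial_m(T^0)$ is automatic) is equivalent to the identity $\partial(T^0)=\bigoplus_m\partial_m(T^0)$. The logic that a $\fg_{0,0}$-stable splitting $T^0=\bigoplus_m T^0_m$ with $\partial_{m'}(T^0_m)=0$ ($m'\neq m$) and $\partial_m(T^0_m)=\partial_m(T^0)$ would deliver this identity is also sound; indeed, in this semisimple category such a splitting exists if and only if the identity holds.

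However, that is exactly where the argument stops being a proof. You write ``the hard step is producing a $\fg_{0,0}$-stable splitting \ldots{} this combinatorial disentanglement \ldots{} is where the main technical work lies; once it is in hand, the direct-sum identity follows.'' That is an accurate self-assessment: the entire mathematical content of Lemma~\ref{L:BRcoh} resides in precisely the claim you postpone. The refinement of $T^0$ and $T^1_m$ by the $(Z_{\ttj_1},\ldots,Z_{\ttj_\sfp})$-multigrading and the invocation of Kostant's Lemma, Lemma~\ref{L:nondeg}, \eqref{E:abelian} and Proposition~\ref{P:nw2graded} are reasonable ingredients to cite, but none of the actual disentanglement is carried out, and the existence of the splitting is far from obvious: if some irreducible constituent $V$ of $T^0$ were hit nontrivially by several $\partial_m$ without appearing with compensating multiplicity, no such splitting could exist. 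The paper takes a different route -- it filters the two-term complex $T^0\to T^1$ so that the graded pieces isolate $T^1_0$ from $T^1_{\geq1}$, computes the associated spectral sequence, identifies $E^{0,1}_\infty$ with $H^1(T^\bullet,\partial_0)$, and then iterates the filtration on the remaining complex $\cC^\bullet_m$ -- thereby peeling off one $H^1(T^\bullet,\partial_m)$ at a time without ever exhibiting a global splitting of $T^0$. Your proposal is therefore a genuinely different strategy (a direct domain decomposition rather than an inductive filtration on the target), but as written it leaves the decisive step unproved, so it does not yet constitute a proof of the lemma.
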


The lemma is proved in Section \ref{S:PBRcoh}.  We continue here with a discussion of the consequences.  Observe that $\partial^*_m := \sfd^*{}_{|T^1_m} : T^1_m \to T^0$ is adjoint to $\partial_m$ with respect to the positive definite Hermitian inner product $\langle \cdot , \cdot \rangle$ of Section \ref{S:adj}.  Let 
$$
  \square_m \ := \ \partial_m\circ\partial_m^* \, : \, T^1_m \, \to \, T^1_m
$$ 
denote the associated Laplacian.  Analogous to Proposition \ref{P:new_coh}, there exists a $\fg_{0,0}$--module isomorphism $H^1(T^\bullet,\partial_m) \simeq \cH^1_{\partial_m} := \tker\,\square_m$.  Therefore, corollary to Lemma \ref{L:BRcoh}, we have
\begin{equation} \label{E:BRcoh}
  H^1_{0,-\ell} \ \simeq \ \bigoplus_{m=0}^{\ell-1} \cH^1_{\partial_m} 
\end{equation}
as $\fg_{0,0}$--modules

Section \ref{S:action} yields the following expression
\begin{eqnarray*}
  \square_m & \stackrel{\eqref{E:partial}}{=} & 
  \sum_{\a\in\Delta(w,\sfa-m)} \epsilon_{E_\a}\circ\cL'_{E_{-\a}}\circ\partial^*_m \\
  & \stackrel{\eqref{E:abelian}}{=} & \sum_{\a\in\Delta(w,\sfa-m)} \left( 
  \epsilon_{E_\a}\circ\cL'_{E_{-\a}}\circ\partial^*_m \,+\, 
  \partial^* \circ \epsilon_{E_\a}\circ\cL'_{E_{-\a}} \right) \\
  & \stackrel{(\ast)}{=} & 
  \sum_{\a\in\Delta(w,\sfa-m)} \cL'_{E_\a}\circ\cL'_{E_{-\a}} \ - 
  \sum_{\z_j \in \fg_{0,0}} \cL''_{\z_j}\circ\cL'_{\xi_j}
  \ \stackrel{\eqref{E:abelian}}{=} \ 
  - \sum_{\z_j \in \fg_{0,0}} \cL''_{\z_j}\circ\cL'_{\xi_j} \, .
\end{eqnarray*}
The equality $(\ast)$ is obtained from an application of Lemma \ref{L:formulas}(b), followed by applications of Lemma \ref{L:formulas}(a,c).

Let $\fg_{-1,-B} \subset \fg_{-1,m-\sfa-\ell}$ and $\fg_{1,C} \subset \fg_{1,\sfa-m}$ be irreducible $\fg_{0,0}$--modules of highest weights $-\b\in\Delta(\fg_{-1,-B})$ and $\c \in \Delta(\fg_{1,C})$, respectively.  Let $U \subset \fg_{-1,-B} \ot \fg_{1,C} \subset T^1_m$ be an irreducible module of highest weight $\pi$.  By Lemma \ref{L:C00}, the Laplacian $\square_m$ acts on $U$ by a scalar $c \ge (\b,\c)$, with equality if and only if $\pi = \c-\b$.  By \eqref{E:abelian} $\c+\b \not\in \Delta(\fg)$.  So $(\c,\b) \ge 0$, and equality holds if and only if $\c-\b \not\in\Delta(\fg)$.  Thus $U = U_\pi \subset \cH^1_{\partial_m}$ if and only if $\pi = \c-\b$ for some $(\c,\b) \in \Pi(w)$.  Given \eqref{E:BRcoh}, this establishes Lemma \ref{L:cH10}.

\subsection{Proof of Lemma \ref{L:BRcoh}} \label{S:PBRcoh}

The proof of the lemma is by induction.  Define a filtration $\{0\} \subset F^1 T^\bullet \subset F^0 T^\bullet = T^\bullet$ of the complex \eqref{E:subcpx} by 
 \[
 F^1T^0 = \{0\}, \qquad F^1T^1 = T^1_1 \oplus \cdots \oplus T^1_{\ell-1}.
 \] 
 We may identify $\tGr^0 T^1 := F^0T^1/F^1T^1$ with $T^1_0$ as $\fg_{0,0}$--modules.  Consider the associated spectral sequence $\{E^{p,q}_r\}$.  (See \cite[Chapter 3.5]{MR1288523} for a treatment of the spectral sequence associated to a filtered complex.)  We have $E_2^{p,q} = E_\infty^{p,q}$ with $E_\infty^{0,0} = \tker\{\partial: T^0 \to T^1\}$, 
$$
  E_\infty^{0,1} \ = \ \frac{T^1}{F^1T^1 + \partial T^0} \,,
  \hbox{ and } \quad
  E_\infty^{1,0} \ = \ \frac{F^1T^1}{F^1T^1 \cap \partial T^0} \quad
$$ 
and all other $E_\infty^{p,q} = \{0\}$.  The vector space identification (see \cite[Chapter 3.5]{MR1288523})
$$
  H^1_{0,-\ell} \ = \ H^1(T^\bullet,\partial) \ \simeq \ 
  \tGr^0H^1(T^\bullet,\partial) \, \op \, \tGr^1H^1(T^\bullet,\partial) 
  \ = \ E_\infty^{0,1} \,\op\, E_\infty^{1,0}
$$
is a $\fg_{0,0}$--module identification.  Additionally, $E_\infty^{0,1} \simeq H^1(T^\bullet , \partial_0)$ as $\fg_{0,0}$--modules, cf. \eqref{E:Hm}.  

The module $E_\infty^{1,0}$ is described as follows.  Let $F^1\partial = \partial_1+\cdots+\partial_{\ell-1}$ denote the component of $\partial : T^0 \to T^1$ taking values in $F^1T^1$, with respect to the decomposition \eqref{E:F1}.  Then 
$$
  \{0\} \ \longrightarrow \ \cC^0 \ := \ T^0  \ 
  \stackrel{F^1\partial}{\longrightarrow} \ \cC^1 \ := \ F^1T^1
  \ \longrightarrow \  \{0\}
$$ 
is a complex, and $E_\infty^{1,0} = H^1(\cC^\bullet,F^1\partial)$.  Thus,
\begin{equation}\label{E:m=1}
  H^1_{0,-\ell} \ \simeq \ 
  H^1(T^\bullet,\partial_0) \, \op \, H^1(\cC^\bullet,F^1\partial) \, .
\end{equation}

This completes the first step in the induction.  In order to state the inductive hypothesis let $0 \le m < \ell$ and define $\cC_m^0 := T^0$ and 
\begin{equation} \label{E:C1m}
  \cC_m^1 \ := \ T^1_m \ \op \ 
  \underbrace{T^1_{m+1} \op \cdots \op T^1_{\ell-1}}_{=: \ F^1\cC^1_m} \, .
\end{equation}
Let $F^m\partial = \partial_m + \cdots + \partial_{\ell-1}$ denote the component of $\partial : T^0 \to T^1$ taking values in $\cC_m^1$, with respect to the decomposition \eqref{E:F1}.  Then 
\begin{equation} \label{E:cpx_m}
  0 \ \longrightarrow \ T^0 \ \stackrel{F^m\partial}{\longrightarrow} \ 
  \cC^1_m \ \longrightarrow \ \{0\}
\end{equation}
is a complex; let $H^1(\cC^\bullet_m , F^m\partial)$ denote the associated cohomology.  Observe that  
\begin{equation} \label{E:indhyp}
  H^1(T^\bullet,\partial) \ = \ 
  \bigg( \bigoplus_{m<r} H^1(T^\bullet,\partial_m) \bigg)
  \ \op \ H^1(\cC^\bullet_{r} , F^r\partial) \, .
\end{equation}
holds for $r=1$ by \eqref{E:m=1}.  Inductive hypothesis:  \eqref{E:indhyp} holds for some $r=r_o$ with $0 < r_o < \ell-1$.  We will show that \eqref{E:indhyp} holds for $r=r_o+1$.  The lemma will then follow.

Define a filtration $\{0\} \subset F^1\cC^\bullet_m \subset F^0\cC^\bullet_m := \cC^\bullet_m$ on the complex \eqref{E:cpx_m} by $F^1\cC^0_m = \{0\}$ and \eqref{E:C1m}.  Spectral sequence computations identical to those yielding \eqref{E:m=1} produce
\begin{equation*} 
  H^1(\cC^\bullet_m , F^m\partial) \ = \ 
  H^1(T^\bullet , \partial_m ) \ \op \ 
  H^1(\cC^\bullet_{m+1} , \partial^{m+1}) \, .
\end{equation*}
Combined with the inductive hypothesis, this implies that \eqref{E:indhyp} holds for $r = r_o+1$.  This completes the induction and establishes the lemma.

\section{Schur rigidity} \label{S:RR}

Theorem \ref{T:BR} lists the proper Schubert varieties which satisfy $\sfH_+$; these are the varieties for which there exist first-order obstructions to Schubert flexibility.  The main result of the paper is
 
\begin{theorem} \label{T:RR}
The varieties listed in Theorem \ref{T:BR} are Schur rigid.
\end{theorem}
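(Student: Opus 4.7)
The plan is to invoke Proposition \ref{P:Rrigid}: Theorem \ref{T:BR} already establishes Schubert rigidity for each $X_w$ on the list (via condition $\sfH_+$ and Theorem \ref{T:Bw_rigid}), so Schur rigidity reduces to verifying $B_w = R_w$. By the test \eqref{E:B=Rtest} combined with Lemma \ref{L:cH10}, this amounts to checking, for every pair $(\c,\b) \in \Pi(w)$ and every highest--weight vector $\xi = E_{-\b} \ot E_\c \in U_{\c-\b} \subset \cH^1_0$, that
$$
  \xi \bv_w \ = \ E_{-\b} \wedge (E_\c \lefthook \bv_w)
  \ \in \ \tw^{|w|}\fg_{-1}
$$
does \emph{not} lie in the weight space $\bI^{\c-\b}_w$.

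First I would enumerate $\Pi(w)$ for each variety in Theorem \ref{T:BR}: the highest weights $\c$ of irreducible $\fg_{0,0}$--summands of $\fn_w^+ = \fg_{1,\le\sfa}$ and the lowest weights $\b$ of irreducible summands of $\fg_{1,>\sfa}$ are already explicit from Lemma \ref{L:bigone} and the case analyses of Sections \ref{S:B1}--\ref{S:Dn}. Next, Lemma \ref{L:Iwt} describes $\bI^{\c-\b}_w$ as the span of vectors $\bb.\bv_w$ indexed by ordered sequences $\bb = \{-\b_j\}_{j=1}^\sfs$ in $\Delta(\fg_{0,-1})$ with $\sum_j \b_j = \b-\c$ and $\sfs = (\b-\c)(Z_w) \ge 2$. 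The final step is to compare $\xi \bv_w$ directly against these generators. Remark \ref{R:s} already disposes of all cases with $\sfa(w) = 0$ and $\fg_{-1} = \fg_{-1,0} \op \fg_{-1,-1}$ (all of Theorem \ref{T:BR}(b) and the $\sfa = 0$ subcases of (a), (c), (d)), so only the $\sfa \ge 1$ varieties require real work.

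For these, in the classical CHSS the argument is case-by-case, organized by type and (for Grassmannians) by suit. Using the matrix model for $\fn_w$ from the proof of Proposition \ref{P:part-aJ} (and its analogues for $C_n/P_n$ and $D_n/P_n$ via the standard symmetric/skew-symmetric models), $\bv_w$ is a decomposable wedge of matrix units, and both $E_\c \lefthook \bv_w$ and $E_{-\b} \wedge (E_\c \lefthook \bv_w)$ can be read off combinatorially. The gap and orthogonality conditions of Table \ref{t:H+} that characterize $\sfH_+$ are precisely what force the index patterns appearing in the spanning vectors $\bb.\bv_w$ to be incompatible with those occurring in $\xi\bv_w$, yielding $\xi \bv_w \not\in \bI_w$. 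Remark \ref{R:Iwt} is useful in many subcases: when all brackets $[E_{\b_j},E_{\b_k}]$ vanish and the sequences $\bb$ are permutations of one another, $\dim \bI^{\c-\b}_w = 1$, so a single comparison finishes that subcase.

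The main obstacle will be the Grassmannian case $A_n/P_\tti$ with $\pi \in \spadesuit$ and $\sfa$ large, where $\Pi(w)$ proliferates and $\bI^{\c-\b}_w$ is typically multi-dimensional; there a careful combinatorial accounting of the ordered sequences $\bb$ modulo the bracketing and permutation relations is required, likely organized by induction on $\sfa$ or by a uniform argument tracking the corner structure of the partition. For the exceptional CHSS $E_6/P_6$ and $E_7/P_7$, the finitely many entries of Tables \ref{t:E6}--\ref{t:E7} can be verified directly with the LiE package \cite{LiE}---decomposing $\bI_w$, extracting $\cH^1_0$, and testing \eqref{E:B=Rtest} on each highest--weight vector---exactly as was done for Theorem \ref{T:BR}(e).
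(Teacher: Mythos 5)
Your overall strategy agrees with the paper's outline: reduce via Proposition \ref{P:Rrigid} to checking $B_w = R_w$, then use \eqref{E:B=Rtest}, Lemma \ref{L:cH10} and Lemma \ref{L:Iwt}. However, there is a genuine gap that you partially acknowledge but do not resolve: when $(\b-\c)(Z_w)$ is large, the weight space $\bI^{\c-\b}_w$ is typically multi-dimensional and the comparison $\xi\bv_w \notin \bI^{\c-\b}_w$ becomes a genuinely difficult linear-algebra problem; your appeal to ``a careful combinatorial accounting\ldots likely organized by induction'' names the obstacle without surmounting it. The paper's crucial ingredient, absent from your plan, is Lemma \ref{L:B=R_2}: for each classical $G$, one shows inductively (by acting on $\xi\bv_w$ with carefully chosen root vectors $\be \subset \Delta(\fg_0)$ that shift $(\c,\b)$ toward each other) that if $\xi\bv_w \in \bI^{\c-\b}_w$ for some $(\c,\b) \in \Pi(w)$ then there is also a pair $(\c_o,\b_o) \in \Pi(w)$ with $(\b_o-\c_o)(Z_w) = 2$ witnessing the same failure. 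After this reduction, Remark \ref{R:Iwt} gives $\dim \bI^{\c_o-\b_o}_w = 1$, so the test collapses to a linear-independence check between $\bb.\bv_w$ and $\xi_o\bv_w$; the expansion \eqref{E:b.vw} and the criterion \eqref{E:3rdSum} then translate this into the existence of distinct roots $\nu,\mu \in \Delta(\fg_{1,\sfa})$ with $\nu+\b_1$, $\mu+\b_2$ distinct roots, which is exactly what the gap/orthogonality conditions of Theorem \ref{T:BR} guarantee. Your proposal never produces this degree reduction, so as written it would bog down in precisely the large-$\sfa$ cases you flag.

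Two smaller points. First, your invocation of Remark \ref{R:s} for ``the $\sfa=0$ subcases of (a), (c), (d)'' is too broad: the remark requires $\fg_{-1} = \fg_{-1,0} \op \fg_{-1,-1}$ (only two $Z_w$-grades), which holds for $|\ttJ|=1$ in $A_n/P_\tti$ and in $D_n/P_1$, but fails for the $\spadesuit$-type $\sfa=0$ Schubert varieties in $A_n/P_\tti$ (which have $|\ttJ|=2$) and for $\sfa=0$ in $C_n/P_n$ and $D_n/P_n$ where roots of $\fg_1$ can have $Z_w$-coefficient $2$. Those cases require the full argument. Second, for the exceptional CHSS the paper does not reduce to $(\b-\c)(Z_w)=2$ (Lemma \ref{L:B=R_2} fails there); instead it verifies directly that $\xi\bv_w$ has a nontrivial component in some $\bI_{w'}$ with $w' \ne w$, which is a slightly different test than literally checking \eqref{E:B=Rtest} against all of $\bI^{\c-\b}_w$, though both amount to the same verification in LiE.
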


\begin{proof}[Outline of proof of Theorem \ref{T:RR}]   By hypothesis, in all sections to follow, $X_w$ will be assumed to satisfy $\sfH_+$, hence is listed in Theorem \ref{T:BR} and is Schubert rigid.
 By Proposition \ref{P:Rrigid}, it suffices to show that the equality $B_w = R_w$ holds.  This is done in the sections that follow.  The computational tools are \eqref{E:B=Rtest} and Lemma \ref{L:Iwt}. 
\end{proof}

Be aware that the action of $\bb$ on $\tw^{|w|}\fg_{-1}$ is induced by the adjoint action of $\fg_0$ on $\fg_{-1}$, while the action of $\xi = E_{-\b} \ot E_{\c}\in \fn_w^\perp \ot \fn_w^*$ is given by $\epsilon_{E_{-\b}} \circ \iota_{E_\c}$; that is $\xi\bv_w := E_{-\b} \wedge( E_\c \lefthook \bv_w)$.

\subsection{Comparison with Hong's results} \label{S:hong}

Before proving the theorem, we discuss the relationship of Theorem \ref{T:RR} to the main results in \cite{MR2276624, MR2191767}.  In the smooth case ($\sfa = 0$) we recover precisely Hong's result in Section \ref{S:hist}; see Table \ref{t:sm}.

\begin{table}[h] 
\renewcommand{\arraystretch}{1.2}
\caption{The proper smooth Schubert varieties satisfying $\sfH_+$.}
\begin{tabular}{|c|c|l|}
\hline
$G/P$ & $\ttJ$ & Description of $X_w$ 
    \\ \hline \hline 
 & $\{\tti-1\}$ {\small{\&}} $\{\tti+1\}$ 
    & maximal linear subspaces $\bP^{n-\tti+1}$ {\small{\&}} $\bP^{\tti}$, resp.
    \\ \cline{2-3}
$A_n/P_\tti$ & $\{ \ttj_1 , \ttj_2 \}$ 
     & $\tGr( \tti-\ttj_1 \,,\, \ttj_2-\ttj_1)$ 
     \\ \cline{2-3}
 & \multicolumn{2}{|l|}{with $1 < \tti < n$ 
                        and $1 < \tti-\ttj_1 \,, \ \ttj_2 - \tti$.}    
     \\ \hline
$D_n/P_1$ & $\{n-1\}$ {\small{\&}} $\{n\}$ & maximal linear subspaces $\bP^{n-1}$
     \\ \hline
$C_n/P_n$ & $\{\ttj\}$ & $C_{n-\ttj}/P_{n-\ttj}$ with $\ttj<n-1$ 
     \\ \hline
 & & $D_{n-\ttj}/P_{n-\ttj}$, if $\ttj<n-3$; 
     \\ \cline{3-3} 
\raisebox{1.5ex}[0pt]{$D_n/P_n$} & \raisebox{1.5ex}[0pt]{$\{ \ttj \}$} 
     & $\bP^3$ with $\ttj=n-3$, and $\bP^{n-1}$ with $\ttj=n-1$
     \\ \hline 
$E_6/P_6$ & $\{1\}$, $\{2\}$ {\small{\&}} $\{3\}$ 
     & $Q^8 = D_5/P_1$, $\bP^5$ {\small{\&}} $\bP^4$, resp. 
     \\ \hline 
$E_7/P_7$ & $\{1\}$, $\{2\}$ {\small{\&}} $\{3\}$ 
     & $Q^{10} = D_6/P_1$, $\bP^6$ {\small{\&}} $\bP^5$, resp.  
     \\ \hline
\end{tabular} 
\label{t:sm}
\end{table}

The main result of \cite{MR2191767} is 

\begin{theorem*}[Hong] \label{T:Hong-singular} 
Let $\pi = (p_1{}^{q_1},\ldots,p_r{}^{q_r}) \in \ttP(\tti,n+1)$, $p_r \not= 0$ with conjugate $\pi' = (p_1'{}^{q_1'},\ldots,p_r'{}^{q_r'}) \in \ttP(n+1-\tti,n+1)$, $p_r' \not= 0$. If $q_i,q_i' \geq 2$ for all $1 \leq i \leq r$, then the Schubert variety $X_\pi \subset \tGr(\tti,n+1) = A_n / P_\tti$ is Schur rigid.
\end{theorem*}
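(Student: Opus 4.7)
The plan is to derive Hong's theorem as an immediate corollary of Theorem \ref{T:RR} combined with the partition-theoretic description of condition $\sfH_+$ provided in Remark \ref{R:BRpart}. Since both of these results are already established in the excerpt, the proof reduces to a short hypothesis-check.

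First I would recall that Remark \ref{R:BRpart} characterizes precisely those partitions $\pi = (p_1^{q_1}, \ldots, p_r^{q_r}) \in \ttP(\tti, n+1)$ for which $X_\pi \subset \tGr(\tti, n+1)$ satisfies $\sfH_+$. The common requirement across all four suits $\spadesuit, \heartsuit, \diamondsuit, \clubsuit$ (Definition \ref{D:suits}) is that $1 < q_\ell$ and $1 < q_\ell'$ for every $2 \le \ell \le r$. The only place the four suits differ is in the treatment of $q_1$ and $q_1'$: $\spadesuit$ imposes no additional condition, $\heartsuit$ requires $1 < q_1'$, $\diamondsuit$ requires $1 < q_1$, and $\clubsuit$ requires both $1 < q_1$ and $1 < q_1'$.

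Hong's blanket assumption $q_i, q_i' \ge 2$ for all $1 \le i \le r$ trivially subsumes each of these suit-specific constraints, independent of which of the four types $\pi$ belongs to (every partition with $p_r \neq 0$ falls into exactly one of the four suits). Hence by Remark \ref{R:BRpart} and Proposition \ref{P:part-aJ} the Schubert variety $X_\pi$ satisfies condition $\sfH_+$, and Theorem \ref{T:RR} then yields Schur rigidity. No further input is needed.

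In this sense, the ``hard part'' of the argument is entirely concentrated upstream: the cohomological analysis of Sections \ref{S:cohcomp}--\ref{S:BR} that establishes Theorem \ref{T:BR}, the Schur-versus-Schubert comparison of Section \ref{S:schur}, and the dictionary of Proposition \ref{P:part-aJ}/Remark \ref{R:BRpart}. Once these are in place, Hong's theorem follows by a one-line implication, and the authors' Theorem \ref{T:RR} is in fact strictly stronger in the Grassmannian setting --- it also covers, for instance, partitions with $q_1 = 1$ when $\pi \in \spadesuit \cup \heartsuit$ and partitions with $q_1' = 1$ when $\pi \in \spadesuit \cup \diamondsuit$, both of which lie outside Hong's hypothesis.
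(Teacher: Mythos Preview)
Your proposal is correct and matches the paper's own treatment: Hong's theorem is not reproved from scratch but is observed (in Section~\ref{S:hong}) to be a special case of Theorem~\ref{T:RR} via the partition dictionary of Remark~\ref{R:BRpart}, exactly as you argue. The paper's only additional comment is the same one you make at the end---that the hypothesis $q_1,q_1'\ge 2$ is unnecessarily strong for $\pi\in\spadesuit\cup\heartsuit\cup\diamondsuit$.
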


\noindent Hong's theorem assumes, in particular, that $1 < q_1 \,,\ q_1'$.  Comparing with Remark \ref{R:BRpart} we see that Hong's theorem omits some Schur rigid $X_\pi$ with $\pi \in \spadesuit$, $\heartsuit$ and $\diamondsuit$.
 
\subsection{Grassmannian \boldmath $\tGr(\tti,n+1) = A_n/P_\tti$ \unboldmath} \label{S:Ak_RR}

In this section we will prove that $B_w = R_w$ holds for the varieties in Theorem \ref{T:BR}(a).  By Proposition \ref{P:Rrigid} it suffices to show that $R_w = B_w$ when condition $\sfH_+$ holds.  

The Schubert variety $X_w$ is given by Corollary \ref{C:bigone}.  In the case that $|\ttJ|=1$, we have $\fg_{-1} = \fg_{-1,0} \op \fg_{-1,-1}$, and $R_w = B_w$ is given by Remark \ref{R:s}.

Next consider the case that $|\ttJ|>1$.  By the lemma below we may assume $(\b-\c)(Z_w) = 2$.

\begin{lemma} \label{L:B=R_2}
The equality $B_w = R_w$ fails if and only if there exists $(\c,\b) \in \Pi(w)$ with $(\b-\c)(Z_w) = 2$ such that $\xi\bv_w = E_{-\b}\wedge(E_\c \lefthook \bv_w) \in \bI^{\c-\b}_w$.
\end{lemma}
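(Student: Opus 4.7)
The \emph{if} direction is immediate from the general criterion \eqref{E:B=Rtest}. For the \emph{only if} direction, suppose $(\c, \b) \in \Pi(w)$ satisfies $\xi\bv_w \in \bI^{\c-\b}_w$ with $\sfs := (\b-\c)(Z_w) \geq 2$ (the inequality being automatic by Lemma \ref{L:Iwt}). I will show that if $\sfs \geq 3$ there is a related pair $(\c', \b') \in \Pi(w)$ with $\sfs' = \sfs - 1$ and $\xi'\bv_w \in \bI^{\c'-\b'}_w$; iterating yields the desired $\sfs = 2$ pair.

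To set up the reduction, exploit the $A_n$-specific product structure $\fg_0 = \fs(\fgl_\tti \op \fgl_{n+1-\tti})$. Write $\c = e_i - e_{j+1}$ and $\b = e_k - e_{l+1}$ with $i, k \leq \tti < j, l$; the weight $\b - \c$ splits canonically as a left part $e_k - e_i$ in the root lattice of $\fgl_\tti$ plus a right part $e_{j+1} - e_{l+1}$ in the root lattice of $\fgl_{n+1-\tti}$, with $Z_w$-degrees $s_L$ and $s_R$ satisfying $s_L + s_R = \sfs$. The non-root condition on $\c - \b$ in $\Pi(w)$ forces both parts to be nonzero, so $s_L, s_R \geq 1$. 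The highest-/lowest-weight conditions further place the indices $i, j+1, k, l+1$ at extreme positions within their $\fg_{0,0}$-blocks relative to $\ttJ \cup \{\tti\}$. If $\sfs \geq 3$, without loss of generality $s_L \geq 2$; the proposed reduction replaces $k$ by the index $k'$ at the next $\fg_{0,0}$-block boundary moving toward $i$, yielding a new lowest weight $\b' = e_{k'} - e_{l+1}$ of another irreducible $\fg_{0,0}$-submodule of $\fn_w^\perp$. Membership $(\c, \b') \in \Pi(w)$ follows directly from the block structure.

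The main obstacle, and the technical heart of the lemma, is to verify that the containment $\xi\bv_w \in \bI^{\c-\b}_w$ passes to $\xi'\bv_w \in \bI^{\c-\b'}_w$ under each reduction step. By Lemma \ref{L:Iwt}, $\bI^{\c-\b}_w$ is spanned by vectors $\bb.\bv_w$ for ordered sequences $\bb = (-\b_1,\ldots,-\b_\sfs) \subset \Delta(\fg_{0,-1})$ summing to $\b - \c$. The natural attack is to apply the raising operator $E_{\b-\b'} \in \fg_{0,1}$ (noting $\b - \b' = e_k - e_{k'}$ is a positive root of $Z_w$-degree $1$): weight-space considerations show it maps $\bI^{\c-\b}_w$ into $\bI^{\c-\b'}_w$. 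One must then show this map sends $\xi\bv_w$ to a nonzero multiple of $\xi'\bv_w$, which requires tracking the Leibniz-induced \emph{spreading} corrections that arise when $E_{\b-\b'}$ acts on the product $\bv_w = E_{-\c_1}\wedge\cdots\wedge E_{-\c_{|w|}}$, and verifying that non-trivial corrections vanish on account of the $\fg_{0,0}$-block decomposition and the choice of $k'$ at a block boundary. This calculation is best carried out using the explicit Cauchy decomposition $\bigwedge^{|w|}\fg_{-1} \simeq \bigoplus_\lambda S^\lambda V^* \otimes S^{\lambda^T} W$ together with the compatible $\fg_{0,0}$-block decomposition of $\fg_1 = V \otimes W^*$, and it is precisely where the type $A$ hypothesis is used essentially.
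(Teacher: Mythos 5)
Your proposal matches the paper's strategy. By \eqref{E:B=Rtest} only the ``only if'' direction requires work, and you reduce $(\b-\c)(Z_w)$ one step at a time by acting with $\fg_0$ and using that $\bI_w$ is $\fg_0$-invariant; this is exactly what the paper does, the only cosmetic difference being that the paper applies an ordered chain $\be$ of \emph{simple} raising operators whose sum is $\b-\b_o$, whereas you apply the single compound root vector $E_{\b-\b'}$ directly (which differs from the chain only by commutator terms, all of which still lie in $\fg_0$ and preserve $\bI_w$). Your choice of $k'$ ``at the next block boundary'' is the paper's $\b_o$ with left endpoint $\ttj_{r+1}$.

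That said, two things should be made explicit. First, like the paper, you leave the crucial assertion -- that $E_{\b-\b'}$ sends $\xi\bv_w$ to a \emph{nonzero} multiple of $\xi'\bv_w$, so that containment in $\bI_w$ propagates -- as a claimed computation rather than a verified one. You have correctly identified this as the technical heart and offered a plausible route (Leibniz corrections produce a repeated wedge factor since the shifted root lands back among the $\c_m \in \Delta(w)$, hence kill the term), but the proof needs this carried out; in particular you must rule out the borderline case where the shifted root equals $\c$ itself (i.e., $\c + (\b-\b') \in \Delta(w)$), which is where the highest-/lowest-weight placement of $i$ and $k'$ at block boundaries actually gets used. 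Second, the reduction must be iterated on both the $\fgl_\tti$ and $\fgl_{n+1-\tti}$ sides, alternating between shrinking $\b$ and growing $\c$, with a check at each step that the new pair remains in $\Pi(w)$ -- in particular that $\c - \b'$ is still a \emph{non}-root; the paper handles this through four explicit sub-cases, and your ``follows directly from the block structure'' compresses this into one phrase that needs to be unpacked.
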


\noindent The lemma is proven below.  It also holds for $C_n$ and $D_n$, but not $E_6$ and $E_7$.

Recall that $\b-\c$ is a sum of positive roots (Lemma \ref{L:Iwt}).  Since $\b$ is a lowest $\fg_{0,0}$--weight, it is of the form $\b = \a_\ttj + \cdots + \a_{\ttj'}$, where $\ttj,\ttj' \in \ttJ$ and $\ttj < \tti < \ttj'$.  Similarly, since $\c$ is a highest $\fg_{0,0}$--weight, it is of the form  $\c = \a_{\ttk+1} + \cdots + \a_{\ttk'-1}$, where $\ttk,\ttk' \in \ttJ \cup \{0,n+1\}$ and $\ttk < \tti < \ttk'$.  The condition that $\b-\c$ be a sum of positive roots, but not a root itself, implies that $\ttj \le \ttk$ and $\ttk' \le \ttj'$.  Moreover, $(\b-\c)(Z_w) = 2$ forces $\ttj = \ttk$ and $\ttk' = \ttj'$ so that $\b-\c = \a_\ttj + \a_{\ttj'}$.  By Remark \ref{R:Iwt} the weight space $\bI^{\c-\b}_w$ is one-dimensional and spanned by $\bb . \bv_w$, where $\bb = \{ -\a_\ttj \,,\, -\a_{\ttj'}\}$.  From Lemma \ref{L:B=R_2} we conclude that 
\begin{equation} \label{E:B=R_LI}
\begin{array}{c}
  \hbox{\emph{$B_w = R_w$ holds if and only if $\bb.\bv_w$ and $\xi.\bv_w$
              are linearly}} \\
  \hbox{\emph{independent for all $(\c,\b) \in \Pi(w)$ with 
              $(\b-\c)(Z_w)=2$.}}
\end{array}\end{equation}

Define $\b_1,\b_2 \in \Delta(\fg_{0,1})$ by $\bb = \{ -\b_1 , -\b_2 \}$.  A priori
$$
  E_{-\b_1} . \bv_w \ = \ \sum_{\stack{\n\in\Delta(w)}{\nu+\b_1\in\Delta}} 
  c_\n \, E_{-\n-\b_1} \wedge (E_\n \lefthook \bv_w) \, ,
$$
for some $c_\n\in\bC$.  However, since $E_{-\n-\b_1} \in \fn_w$ for all $\n$ such that $\n(Z_w) < \sfa$, we have  
\begin{eqnarray*}
  E_{-\b_1} . \bv_w & = & 
  \sum_{\stack{\nu\in\Delta(\fg_{1,\sfa})}{\nu+\b_1\in\Delta}}
  c_\nu \, E_{-\nu-\b_1} \wedge (E_{\nu} \lefthook \bv_w)
  \ = \ 
  \sum_{\stack{\nu\in\Delta(\fg_{1,\sfa})}{\nu+\b_1\in\Delta}}
  c_\nu \,(E_{-\nu-\b_1} \ot E_\nu).\bv_w
\end{eqnarray*}
for nonzero coefficients $c_\n$.  Similarly,
\begin{equation} \label{E:b.vw}
\begin{array}{rcl}
  \bb . \bv_w & = & \displaystyle
  \sum_{\stack{\nu\in\Delta(\fg_{1,\sfa})}{\stack{\nu+\b_1\in\Delta}
                                          {\nu+\b_1+\b_2\in\Delta}}}
  c^1_\nu \, E_{-\nu-\b_1-\b_2} \wedge (E_{\nu} \lefthook \bv_w)
  + 
  \sum_{\stack{\nu\in\Delta(\fg_{1,\sfa})}{\stack{\nu+\b_1\in\Delta}
              {\mu=\nu-\b_2\in\Delta}}}
  c^2_\nu \, E_{-\nu-\b_1} \wedge (E_{\mu} \lefthook \bv_w)
  \\ & & \displaystyle 
  + \sum_{\stack{\nu,\mu\in\Delta(\fg_{1,\sfa})}
                {\mu+\b_2,\nu+\b_1\in\Delta}} 
  c_{\nu,\mu} \,( E_{-\n-\b_1}\wedge E_{-\b_2-\mu}) \wedge 
  \left( (E_\n\wedge E_\m) \lefthook \bv_w \right) \, ,
\end{array}\end{equation}
for nonzero coefficients $c^1_\nu$, $c^2_\n$ and $c_{\n,\m}$.  

Consider the first sum of \eqref{E:b.vw}.  The condition that both $\nu+\b_1$ and $\nu + \b_1+\b_2$ be roots uniquely determines $\nu$: it must be the case that $\nu = \c$.  Thus, the sum is empty if $\c(Z_w) = \sfa-1$, and a nonzero multiple of $\xi\bv_w$ if $\c(Z_w) = \sfa$.  

Next observe that second sum of \eqref{E:b.vw} is over the set of all 
$$
  \{\mu \in \Delta(\fg_{1,\sfa-1}) \ | \ \mu+\b_2 \,,\, 
    \mu + \b_1+\b_2\in \Delta\} \, .
$$
Again, the condition that both $\mu+\b_2$ and $\mu+\b_2+\b_1$ be roots uniquely determines $\mu$: it must be the case that $\mu = \c$.  So the sum is a nonzero multiple of $\xi\bv_w$ if $\c(Z_w) = \sfa-1$, and empty otherwise.  

From the observations above we conclude that $\bb.\bv_w$ and $\xi\bv_w$ are linearly independent if and only if the third sum of \eqref{E:b.vw} is nonzero.  Equivalently, 
\begin{equation} \label{E:3rdSum}
\begin{array}{c}
  \hbox{\emph{$\bb.\bv_w$ and $\xi\bv_w$ are linearly independent 
              if and only if there exist}} \\
  \hbox{\emph{distinct $\nu,\mu\in\Delta(\fg_{1,\sfa})$ such that 
              $\nu+\b_1$ and $\mu+\b_2$ are distinct roots.}}
\end{array}\end{equation}
There are two cases to consider: $\b(Z_w) = \sfa+2$ and $\c(Z_w) = \sfa$; or $\b(Z_w) = \sfa+1$ and $\c(Z_w) = \sfa-1$.  Suppose that $\ttj = \ttj_r$ and $\ttj' = \ttj_s$.
In the first case, the $\mu,\nu$ of \eqref{E:3rdSum} exist if and only if one of the following holds:
\begin{eqnarray*}
  1 < \ttj_{r+1} - \ttj_r & 
  \hbox{(or $\tti-\ttj_r > 1$ if $\ttj_r < \tti < \ttj_{r+1}$),} & 
  \quad \hbox{ or} \\
  1 < \ttj_s - \ttj_{s-1} &
  \hbox{(or $\ttj_s - \tti > 1$ if $\ttj_{s-1} < \tti < \ttj_s$).} & 
\end{eqnarray*}
In the second case the $\mu,\nu$ of \eqref{E:3rdSum} exist if and only if one of the following holds:
$$
  1 < \ttj_{r} - \ttj_{r-1} \qquad \hbox{or} \qquad  
  1 < \ttj_{s+1} - \ttj_s \, .
$$
(If $r = 1$, then $\ttj_0 := 0$; if $s = |\ttJ|$, then $\ttj_{s+1} := n$.)

Modulo the proof of Lemma \ref{L:B=R_2}, this completes the proof of the proposition. 

\begin{proof}[Proof of Lemma \ref{L:B=R_2} in the case that $G=A_n$]
Given \eqref{E:B=Rtest}, it suffices to show the following:  if there exists $(\b,\c)\in\Pi(w)$ such that $\xi\bv_w \in \bI^{\c-\b}_w$, then there exists $(\b_o,\c_o) \in \Pi(w)$ such that $\xi_o\bv_w \in \bI^{\c_o-\b_o}_w$ and $(\b_o-\c_o)(Z_w) = 2$.

The condition $(\b-\c)(Z_w) > 2$ holds if and only if either $\ttj < \ttk$ or $\ttk' < \ttj'$.  Suppose that $\ttj = \ttj_r$ and $\ttj < \ttk$.  Suppose that $\b(Z_w) > \sfa+1$.  Set $\be = \{ \a_{\ttj_r} \,,\, \a_{\ttj_r+1} \,,\ldots,\, \a_{\ttj_{r+1}-1}\}$.  Then $\be.(\xi\bv_w)$ is a nonzero multiple of $E_{-\b_o} \wedge (E_{\c} \lefthook \bv_w) = \xi_o\bv_w$ where $\b_o = \a_{\ttj_{r+1}} + \cdots + \a_{\ttj'}\in\Delta(\fg_1)$ is, like $\b$, a lowest $\fg_{0,0}$--weight, and $\b_o(Z_w) = \b(Z_w)-1$.  In particular, if $\xi\bv_w \in \bI_w^{\c-\b}$, then $\xi_o\bv_w \in \bI_w^{\c-\b_o}$.  Continuing inductively, we may assume that either $\ttj = \ttk$, or $\b(Z_w) = \sfa+1$.

Similarly, if $\ttk' < \ttj'=\ttj_s$ and $\b(Z_w) > \sfa+1$, set $\be = \{ \a_{\ttj_s} \,,\, \a_{\ttj_s-1} \,,\ldots,\, \a_{\ttj_{s-1}+1}\}$.  Then $\be.(\xi\bv_w)$ is a nonzero multiple of $E_{-\b_o} \wedge (E_{\c} \lefthook \bv_w) = \xi_o\bv_w$ where $\b_o = \a_{\ttj} + \cdots + \a_{\ttj_{s-1}}\in\Delta(\fg_1)$ is, like $\b$, a lowest $\fg_{0,0}$--weight, and $\b_o(Z_w) = \b(Z_w)-1$.  Again, if $\xi\bv_w \in \bI_w^{\c-\b}$, then $\xi_o\bv_w \in \bI_w^{\c-\b_o}$.  Continuing inductively, we may assume that either $\ttj = \ttk$ and $\ttj'=\ttk'$, or $\b(Z_w) = \sfa+1$.

If $\ttj = \ttk$ and $\ttj' = \ttk'$, then $(\b-\c)(Z_w)=2$.  So assume that $\ttj < \ttk$ and $\b(Z_w) = \sfa+1$.   Set $\ttk = \ttj_t$ and $\be = \{ \a_{\ttj_t} \,,\, \a_{\ttj_t-1} \,,\ldots,\, \a_{\ttj_{t-1}+1} \}$.  Then $\be.(\xi\bv_w)$ is a nonzero multiple of $\xi_o\bv_w = E_{-\b} \wedge (E_{\c_o} \lefthook \bv_w)$, where $\c_o = \a_{\ttj_{t-1}+1} + \cdots + \a_{\ttk'}$ is, like $\c$, a highest $\fg_{0,0}$--weight, and $\c_o(Z_w) = \c(Z_w)+1$.  As above, if $\xi\bv_w \in \bI_w^{\c-\b}$, then $\xi_o\bv_w \in \bI_w^{\c_o-\b}$.  Continuing inductively we may assume that $\ttj = \ttk$.  

By an analogous argument we may assume that $\ttj' = \ttk'$.
\end{proof}

\subsection{Quadric hypersurface \boldmath $Q^{2n-2} = D_n/P_1$ \unboldmath }

It is an immediate consequence of Remark \ref{R:s} that $B_w = R_w$ holds for the varieties in Theorem \ref{T:BR}(b).

\subsection{Lagrangian grassmannian \boldmath $C_n/P_n$ \unboldmath} \label{S:Cn_RR}

In this section we show that $B_w = R_w$ holds for the varieties in Theorem \ref{T:BR}(c).

Let $(\c,\b) \in \Pi(w)$.  As a lowest $\fg_{0,0}$--weight $\b$ is of one of the following forms
\begin{subequations} 
\begin{eqnarray}
  \label{E:Cb1}
  \b & = & \a_{\ttj_r} + \cdots + \a_{\ttj_s-1} + 
  2 ( \a_{\ttj_s} + \cdots + \a_{n-1}) + \a_n \,,\\
  \label{E:Cb2}
  \b & = & 2 (\a_{\ttj_{s}} + \cdots + \a_{n-1}) + \a_n \,,
  \qquad \b \ = \ \a_{\ttj_r} + \cdots + \a_n 
\end{eqnarray}
\end{subequations}
for some $\ttj_r , \ttj_s \in \ttJ = \{ \ttj_1 , \ldots , \ttj_\sfp\}$.  (Cf. the proof of Lemma \ref{L:bigone}.  Also, Corollary \ref{C:bigone} asserts that $\sfa \le p \le \sfa+1$.)  Similarly, as a $\fg_{0,0}$--highest weight, $\c$ is of one of the following forms
\begin{subequations} 
\begin{eqnarray}
  \label{E:Cc1}
  \c & = & \a_{\ttj_t+1} + \cdots + \a_{\ttj_u} + 
  2 ( \a_{\ttj_u+1} + \cdots + \a_{n-1}) + \a_n \,,\\
  \label{E:Cc2}
  \c & = & 2 (\a_{\ttj_u+1} + \cdots + \a_{n-1}) + \a_n \,, \qquad
  \c \ = \ \a_{\ttj_t+1} + \cdots + \a_n 
\end{eqnarray}
\end{subequations}
for some $\ttj_t,\ttj_u \in \ttJ\cup\{0\}$, with $\ttj_0 := 0$.  (The third $\c$ assumes that $n-1 \in \ttJ$.)  The requirement that $\b-\c$ be a sum of positive roots, but not a root itself, rules out the second root in \eqref{E:Cb2}.

Lemma \ref{L:B=R_2} also holds for $G = C_n$.  The proof is given at the end of this section.  The condition $(\b-\c)(Z_w) = 2$ implies that the pair $(\c,\b)$ is of one of the two following forms
\begin{eqnarray*}
  \b \ = \ \c + \a_\ttj + \a_\ttj' & = & \a_\ttj + \cdots + \a_{\ttj'-1} + 
  2 ( \a_\ttj + \cdots + \a_{n-1}) + \a_n \,,\\
  \hbox{or}\quad
  \b \ = \ \c + 2\,\a_\ttj & = & 2 (\a_\ttj + \cdots + \a_{n-1}) + \a_n \, .
\end{eqnarray*}
We express $\b-\c$ uniformly as $\a_\ttj + \a_\ttj'$ with $\ttj\le\ttj'\in \ttJ$.  The condition that $\b-\c$ is not a root forces $0 \le \ttj' -\ttj \not= 1$.  

Set $\bb = \{ -\a_\ttj \,,\, -\a_{\ttj'} \} =: \{ -\b_1 \,,\,-\b_2\}$.  By Remark \ref{R:Iwt} the weight space $\bI^{\c-\b}_w$ is one-dimensional and spanned by $\bb.\bv_w$.  In particular, \eqref{E:B=R_LI} holds.  The vector $\bb.\bv_w$ is given by \eqref{E:b.vw}.  Arguments analogous to those of Section \ref{S:Ak_RR} yield \eqref{E:3rdSum}.  Suppose $\ttj = \ttj_r$ and $\ttj' = \ttj_s$.  There are two cases to consider: 

{\bf (A)} Suppose $\b(Z_w) = \sfa+2$ and $\c(Z_w) = \sfa$. First assume $1 < \ttj'-\ttj$.  The pair $\mu,\nu$ of \eqref{E:3rdSum} exists if and only if at least one of the following holds:
\begin{equation} \label{E:A.2}
  1< \ttj_{r+1} -\ttj_r 
  \qquad \hbox{or} \qquad 
  1 < \ttj_{s+1} - \ttj_s \,. \qquad
  \hbox{(If $s = |\ttJ|$, then $\ttj_{s+1} := n$.)}
\end{equation}  

Next suppose $\ttj' = \ttj$.  The pair $\mu,\nu$ exists if and only if \eqref{E:A.2} holds.

{\bf (B)} Suppose $\b(Z_w) = \sfa+1$ and $\c(Z_w) = \sfa-1$.  First assume $1 < \ttj'-\ttj$.  The pair $\mu,\nu$ of \eqref{E:3rdSum} exists if and only if at least one of the following holds:
\begin{equation} \label{E:B.1}
  1< \ttj_r -\ttj_{r-1} 
  \qquad \hbox{or} \qquad 
  1 < \ttj_s - \ttj_{s-1} \,.  \qquad
  \hbox{(If $r :=1$, then $\ttj_{0} := 0$.)}
\end{equation}

Next suppose $\ttj' = \ttj$.  The pair $\mu,\nu$ exists if and only if \eqref{E:B.1} holds.

\medskip

\noindent Modulo Lemma \ref{L:B=R_2}, which is proven below, this establishes the proposition.

\begin{proof}[Proof of Lemma \ref{L:B=R_2} for $G=C_n$]
Given \eqref{E:B=Rtest}, it suffices to show the following:  if there exists $(\b,\c)\in\Pi(w)$ such that $\xi\bv_w \in \bI^{\c-\b}_w$, then there exists $(\b_o,\c_o) \in \Pi_(w)$ with $(\b_o-\c_o)(Z_w)=2$ and $\xi_o\bv_w \in \bI^{\c_o-\b_o}_w$.

{\bf (a.a)} Begin with the case that $\b$ and $\c$ are of the form \eqref{E:Cb1} and \eqref{E:Cc1}, respectively.  The condition that $\b-\c$ be a sum of positive roots, but not a root itself, implies that either $\ttj_q < \ttj_s+1 < \ttj_r < \ttj_t+1$, or  $\ttj_r < \ttj_s+1$. 

{\bf (a.a.1)}  Assume $\ttj_q < \ttj_s+1 < \ttj_r < \ttj_t+1$.  Suppose that $\b(Z_w) > \sfa+1$ and $\ttj_q < \ttj_s$.  Set $\be = \{ \a_{\ttj_q} \,,\, \a_{\ttj_q+1} \,,\ldots,\, \a_{\ttj_{q+1}-1} \}$.  Then $\be . (\xi\bv_w)$ is a nonzero multiple of $E_{-\b_o} \wedge (E_\c\lefthook\bv_w) = \xi_o\bv_w$, where $\b_o  = \a_{\ttj_{q+1}} + \cdots + \a_{\ttj_{r}-1} + 2 ( \a_{\ttj_{r}} + \cdots + \a_{n-1}) + \a_n$.  In particular, $\b_o$ is also a $\fg_{0,0}$--lowest weight, and $\b_o(Z_w) = \b(Z_w) - 1$.  Continuing inductively, we may assume that either $(\b-\c)(Z_w) = 2$, or $\b(Z_w) = \sfa+1$, or $\ttj_q = \ttj_s$.  In the first case we are done.

Suppose $\b(Z_w) > \sfa+1$ and $\ttj_r < \ttj_t$, and set $\be = \{ \a_{\ttj_r} \,,\, \a_{\ttj_r+1} \,,\ldots,\, \a_{\ttj_{r+1}-1} \}$.  Then $\be.(\xi\bv_w)$ is a nonzero multiple of $E_{-\b_o} \wedge (E_\c\lefthook\bv_w) = \xi_o\bv_w$, where $\b_o  = \a_{\ttj_q} + \cdots + \a_{\ttj_{r+1}-1} + 2 ( \a_{\ttj_{r+1}} + \cdots + \a_{n-1}) + \a_n$.  In particular, $\b_o$ is also a $\fg_{0,0}$--lowest weight, and $\b_o(Z_w) = \b(Z_w) - 1$.  Continuing inductively, we may assume that either $\b(Z_w) = \sfa+1$, or both $\ttj_q = \ttj_s$ and $\ttj_r = \ttj_t$.  In the latter case $(\b-\c)(Z_w) = 2$ and we are done.  So we assume $\b(Z_w) = \sfa+1$.

If $\ttj_q < \ttj_s$, set $\be = \{ \a_{\ttj_s} \,,\, \a_{\ttj_s-1} \,,\ldots,\, \a_{\ttj_{s-1}+1} \}$.  Then $\be.(\xi\bv_w)$ is a nonzero multiple of $E_{-\b} \wedge (E_{\c_o}\lefthook\bv_w) = \xi_o\bv_w$ where $\c_o = \a_{\ttj_{s-1}+1} + \cdots + \a_{\ttj_t} + 2 ( \a_{\ttj_t+1} + \cdots + \a_{n-1}) + \a_n$.  Note that $\c_o$ is, like $\c$, a $\fg_{0,0}$--highest weight, and $\c_o(Z_w) = \c(Z_w) + 1$.  Continuing inductively, we may assume that $\ttj_q = \ttj_s$.  

It remains to consider the case that $\ttj_r < \ttj_t$.  As above, an inductive argument, with $\be = \{ \a_{\ttj_t} \,,\,\a_{\ttj_t-1} \,,\ldots,\, \a_{\ttj_{t-1}+1} \}$, reduces us to the case that $\ttj_q = \ttj_s$ and $\ttj_r = \ttj_t$.  To summarize: we have produced an element $\xi_o\bv_w$ in the $\fg_0$ orbit of $\xi\bv_w$ of the form 
\begin{equation} \label{E:Cjk}
  \begin{array}{rcl}
  \b_o & = & \a_\ttj + \cdots + \a_{\ttk-1} + 2(\a_\ttk + \cdots + \a_{n-1}) + \a_n
  \,,\\
  \c_o & = & \a_{\ttj+1} + \cdots + \a_\ttk + 
  2(\a_{\ttk+1} + \cdots + \a_{n-1}) + \a_n \,,
  \end{array}
\end{equation}
for some $\ttj , \ttk \in \ttJ$, satisfying $(\c_o,\b_o) \in \Pi(w)$ and $(\b_o-\c_o)(Z_w) = 2$.

{\bf (a.a.2)}  Manipulations similar the those of (a.a.1) yield an element $\xi_o\bv_w$ in the $\fg_0$--orbit of $\xi\bv_w$ of the form 
\begin{equation} \label{E:Cjj}
  \b_o \ = \ 2(\a_\ttj + \cdots + \a_{n-1}) + \a_n
  \,,\quad
  \c_o \ = \ 2(\a_{\ttj+1} + \cdots + \a_{n-1}) + \a_n \,,
\end{equation}
for some $\ttj \in \ttJ$.  Note that $(\c_o,\b_o) \in \Pi(w)$ and $(\b_o-\c_o)(Z_w) = 2$.

{\boldmath $(\ast,\ast)$\unboldmath} Similar arguments in the remaining cases yield $\xi_o\bv_w$ in the $\fg_0$--orbit of $\xi\bv_w$ with $\xi_o = E_{-\b_o} \ot E_{\c_o}$ and $(\c_o , \b_o) \in \Pi(w)$ of the form \eqref{E:Cjk} or \eqref{E:Cjj}.
\end{proof}

\subsection{Spinor variety \boldmath $D_n/P_n$ \unboldmath } \label{S:Dn_RR}

In this section we show that $B_w = R_w$ holds for the varieties in Theorem \ref{T:BR}(d).

Let $(\c,\b) \in \Pi(w)$.  As a lowest $\fg_{0,0}$--weight $\b$ is of one of the following forms
\begin{subequations} 
\begin{eqnarray}
  \label{E:Db1}
  \b & = & \a_{\ttj_r} + \cdots + \a_n \quad (n-1 \in \ttJ) \,,
  \qquad
  \b \ = \ \a_{\ttj_r} + \cdots + \a_{n-2} + \a_n \,,\\
  \label{E:Db2}
  \b & = & \a_{\ttj_r} + \cdots + \a_{\ttj_s-1} + 
  2 (\a_{\ttj_s} + \cdots + \a_{n-2} ) + \a_{n-1} + \a_n \,,\\
  \label{E:Db3}
  \b & = & \a_{\ttj_s-1} + 
  2 (\a_{\ttj_s} + \cdots + \a_{n-2} ) + \a_{n-1} + \a_n  
  \quad (\ttj_s-1 \not\in \ttJ) \,,
\end{eqnarray}
\end{subequations}
for some $\ttj_r , \ttj_s \in \ttJ = \{ \ttj_1 , \ldots , \ttj_\sfp\}$.  (Cf. the proof of Lemma \ref{L:bigone} and Corollary \ref{C:bigone}.)    Similarly, as a $\fg_{0,0}$--highest weight, $\c$ is of one of the following forms
\begin{subequations} 
\begin{eqnarray}
  \label{E:Dc1}
  \c & = & \a_{\ttj_t+1} + \cdots + \a_n 
  \quad\hbox{or}\quad \a_{\ttj_t+1} + \cdots + \a_{n-2} + \a_n \quad 
  (n-1\in \ttJ) \,,\\
  \label{E:Dc3}
  \c & = & \a_{\ttj_t+1} + \cdots + \a_{\ttj_u} + 
  2 (\a_{\ttj_u+1} + \cdots + \a_{n-2} ) + \a_{n-1} + \a_n \,,\\
  \label{E:Dc4}
  \c & = & \a_{\ttj_t+1} + 
  2 (\a_{\ttj_t+2} + \cdots + \a_{n-2} ) + \a_{n-1} + \a_n  \quad
  (\ttj_t+1 \not\in \ttJ) \,,
\end{eqnarray}
\end{subequations}
for some $\ttj_t,\ttj_u \in \ttJ\cup\{0\}$, with $\ttj_0 := 0$.  The requirement that $\b-\c$ be a sum of positive roots, but not a root itself, rules out the second root in \eqref{E:Db1}.  The conditions $\ttj_s-1 \not\in \ttJ$ in \eqref{E:Db3} and $\ttj_t+1 \not\in \ttJ$ in \eqref{E:Dc4} hold for all $X_w$ satisfying $\sfH_+$; see Theorem \ref{T:BR}(d).

Lemma \ref{L:B=R_2} holds for $G=D_n$; see the end of this section.    The condition $(\b-\c)(Z_w) = 2$ implies that the $\b-\c$ is of one of the two following forms
$$
  \a_\ttj + \a_{\ttj'} \quad \hbox{ or } \quad 
  \a_{\ttj-1} + 2\a_\ttj + \a_{\ttj+1} \,,
$$
with $\ttj \not= \ttj' \in \ttJ$.  Thus, 
\begin{subequations}
\begin{eqnarray} \label{E:bb1}
  \bb & = & \{ -\a_\ttj \,,\, -\a_{\ttj'} \} \quad\hbox{or}\\
  \label{E:bb2}
  \bb & = & \{ -\a_{\ttj-1} - \a_\ttj \,,\, -\a_\ttj - \a_{\ttj+1} \} \, .
\end{eqnarray}
\end{subequations}

By Remark \ref{R:Iwt} the weight space $\bI^{\c-\b}_w$ is one-dimensional and spanned by $\bb.\bv_w$.  As in Sections \ref{S:Ak_RR} \& \ref{S:Cn_RR}, the statements \eqref{E:B=R_LI} and \eqref{E:3rdSum} hold.  

Suppose $\ttj = \ttj_t < \ttj' = \ttj_u$.  First suppose that $\bb$ is of the form \eqref{E:bb1}.  Then $\c$ is of the form \eqref{E:Dc3}.  The pair $\mu,\nu$ of \eqref{E:3rdSum} exists if and only if either
\begin{eqnarray*}
  1< \ttj_{t+1} - \ttj_{t}  \quad \hbox{or} \quad 
  1< \ttj_{u+1} - \ttj_{u} & & 
  \hbox{(when $\c(Z_w) = \sfa$);} \\
  1< \ttj_{t} - \ttj_{t-1}  \quad \hbox{or} \quad 
  1< \ttj_{u} - \ttj_{u-1} & & 
  \hbox{(when $\c(Z_w) = \sfa-1$).}
\end{eqnarray*}
(If $t=1$, then $\ttj_0 := 0$.  If $u = |\ttJ|$, then $\ttj_{u+1} := n-2$.)  Next suppose $\bb$ is of the form \eqref{E:bb2}.  Then $\c$ is of the form \eqref{E:Dc4}.  The pair $\mu,\nu$ of \eqref{E:3rdSum} exists if and only if
\begin{eqnarray*}
  2< \ttj_{t+1} - \ttj_{t} & & \hbox{(when $\c(Z_w) = \sfa$);} \\
  2< \ttj_{t} - \ttj_{t-1} & & \hbox{(when $\c(Z_w) = \sfa-1$).}
\end{eqnarray*}
When comparing these inequalities to Theorem \ref{T:BR}(d), it is helpful make the following observations:
\begin{i_list}
\item  Suppose $n-1\not\in\ttJ$.  In this case $\c(Z_w) = 2(\sfp - t)$.  If $\c(Z_w) = \sfa-1$, then $t = \sfp-\sfr+1$.  If $\c(Z_w) = \sfa$, then $t = \sfp-\sfr$.
\item Assume $n-1 \in \ttJ$.  In this case $\c(Z_w) = 2(\sfp-t)-1$.  If $\c(Z_w) = \sfa-1$, then $t = \sfp-\sfr$.  If $\c(Z_w) = \sfa$, then $t = \sfp-\sfr-1$.
\end{i_list}

\noindent Modulo Lemma \ref{L:B=R_2}, this establishes the proposition.

\begin{proof}[Proof of Lemma \ref{L:B=R_2} for $G=D_n$]
The proof is similar to those for $G = A_n$ (Section \ref{S:Ak_RR}) and $G = C_n$ (Section \ref{S:Cn_RR}).  Details are left to the reader.
\end{proof}

\subsection{The exceptional CHSS}

It remains to show that $B_w = R_w$ holds for the varieties in Tables \ref{t:E6} \& \ref{t:E7}.

Lemma \ref{L:B=R_2} does not hold for the exceptional CHSS.  Rather than argue as in Sections \ref{S:Ak_RR}, \ref{S:Cn_RR} \& \ref{S:Dn_RR}, we verified (with the assistance of LiE \cite{LiE}) that for each $(\c,\b) \in \Pi(w)$, the corresponding $\xi\bv_w$ contains a nontrivial component in some $\bI_{w'} \not= \bI_w$ with $|w'| = |w|$.  Schur rigidity then follows from Theorem \ref{T:BR}(e), Proposition \ref{P:Rrigid} and Lemmas \ref{L:B=R} and \ref{L:cH10}.  (Indeed we can select $w'$ so that $\bv_{w'} = E_{-\b_o} \wedge ( E_{\c_o} \lefthook \bv_w)$, where $\c_o$ and $\b_o$ are respectively highest and lowest $\fg_{0,0}$--weights, $\c_o(Z_w) = \sfa$ and $\b_o(Z_w) = \sfa+1$.  From here, it is then not difficult to see -- in each case -- that $\xi\bv_w$ contains a nontrivial component in $\bI_{w'}$.)

\bibliography{refs.bib} 
\bibliographystyle{plain}
\end{document}